\documentclass[12pt,letterpaper,oneside,openany]{book}

\usepackage{graphicx}

\usepackage{amsmath}

\usepackage{amsfonts}

\usepackage{amsthm}

\usepackage{amssymb}

\usepackage{latexsym}
\usepackage{fancyhdr}

\usepackage[small,nohug,heads=littlevee]{diagrams}

\setcounter{MaxMatrixCols}{30}

\usepackage{wrapfig}

\usepackage{enumerate}
\usepackage{soul}

\newtheorem{thm}{Theorem}
\newtheorem{lemma}[thm]{Lemma}
\newtheorem{cor}[thm]{Corollary}
\newtheorem{prop}[thm]{Proposition}
\theoremstyle{definition}

\newtheorem{remark}[thm]{Remark}
\newtheorem{defn}[thm]{Definition}
\newtheorem{ques}[thm]{Question}

\newtheorem{note}[thm]{Note}
\newtheorem{claim}[thm]{Claim}
\numberwithin{thm}{section}

\newcommand{\intr}{\text{int}}

\newcommand{\bdrysum}{\stackrel{\partial}{\sharp}}

\newcommand{\cl}{\text{cl}}

\newcommand{\mb}{\mathbb}

\newcommand{\Bd}{\text{Bd}}
\newcommand{\donut}{S^1 \times \mathbb{B}^3}

\newcommand{\mytitle}{
	      \Large \textsc{\mbox{CONTRACTIBLE $n$-MANIFOLDS}}\\
	      
 \mbox{AND THE}\\
 \mbox{
DOUBLE $n$-SPACE PROPERTY
 }
 \\[20pt] \normalsize
      by\\[20pt]
         Pete Sparks  \\[\pts] \vs }

		\newcommand{\pts}{12pt}
		\newcommand{\vs}{\vspace{0.7cm}}

\begin{document}
\setlength{\baselineskip}{19pt}

\pagenumbering{roman}

\pagestyle{plain}
\thispagestyle{empty}
\vs \vs \vs
\begin{center}
	\mytitle
	\vs
	A Thesis Submitted in \\
	Partial Fulfillment of the \\
	Requirements for the Degree of \\
	\vs \vs
	\textsc{Doctorate of Philosophy} \\
	in \\
	\textsc{Mathematics} 
	\vs \vs

	at\\
	 \ \\
	The University of Wisconsin-Milwaukee\\ 
	December, 2014
\end{center}

\pagebreak


\begin{center} 
	{\large ABSTRACT} 
\end{center}
\begin{center}
	\mytitle  The University of Wisconsin-Milwaukee, 2014 \\
	Under the Supervision of Professor Craig Guilbault
\end{center}

\vspace{1cm}


We are interested in contractible manifolds $M^n$ which decompose or \emph{split} as $M^n = A \cup_C B$ where $A,B,C \approx \mathbb{R}^n$ or $A,B,C \approx \mathbb{B}^n$. We introduce a  4-manifold $M$ containing a spine which can be written as $A \cup_C B$ with $A,B,$ and $C$ all collapsible which in turn implies $M$ splits as $\mathbb{B}^4 \cup_{\mathbb{B}^4} \mathbb{B}^4.$ From $M$ we obtain a countably infinite collection of distinct 4-manifolds all of which split as $\mathbb{B}^4 \cup_{\mathbb{B}^4} \mathbb{B}^4.$ Connected sums at infinity of interiors of manifolds from sequences contained in this collection constitute an uncountable set of open 4-manifolds each of which splits as $\mathbb{R}^4 \cup_{\mathbb{R}^4} \mathbb{R}^4 .$

\vspace{2cm}

\clearpage


\tableofcontents 


\clearpage
\listoffigures
\clearpage


\begin{center}{\large ACKNOWLEDGMENTS} \end{center}\vspace{1cm}

 First and most important, I would like to express my gratitude to Craig Guilbault, my Ph.D advisor, teacher, UWM Math Department Graduate Coordinator, and mentor for his patient sharing of his knowledge with me. I am also indebted to him for his wise guidance, belief in my abilities and encouragement in general.
  
 I would like to acknowledge the work of the many professors and other teachers who imparted their wisdom and knowledge to me. In particular, I would like to thank Boris Okun, Ric Ancel, Chris Hruska, and Jeb Willenbring for their helpful suggestions, friendly encouragement, and sense of humor. From Portland State I want to mention professors Gavin Bjork and Len Swanson whose instruction and advice helped me to find my way to the mathematical sciences. 
 
 Many thanks to the numerous mathematical authors I have read and even more thanks to those from whom I have borrowed. Particularly (but in no particular order), thank you Rourke, Sanderson \cite{RoSa}, Cohen \cite{Coh}, James Munkres, Hatcher \cite{Hat}, Glaser \cite{Gla65}, \cite{Gla66}, Curtis, Kwun \cite{CuKw}, Wright \cite{Wri}, Ancel, and Guilbauilt \cite{AG95}, \cite{Gui}.
 
 Thank you to my many peers and study buddies in my studies of math especially those in the UW-Milwaukee topology research group and other topology students whose academic paths have crossed with mine.
 
 Mike Howen has my thanks for spending his valuable time illustrating and teaching me to illustrate this document. 
 
 Thank you to my wife Irina for her patience and my son Daniel for giving me perspective and needed cheering up.
 
 Much thanks is due also to my many students.  
 
\clearpage

\pagenumbering{arabic}
\pagestyle{plain}
\markright{\hspace{6.0in}}


\pagebreak
\chapter{Introduction To Manifold Splitting}
\section{Definitions, Motivation, and Summary of Results}

Our results will generally be in the topological category but because of the niceness of the spaces involved we are able to work in both the piecewise linear and smooth categories in our effort to obtain them. We will primarily be working in the PL category. We may choose to construct manifolds (and other objects) to be piecewise-linear or smooth. Unless stated otherwise the reader should view such constructions as PL. By a PL manifold we mean a simplicial complex in which the link of every vertex is a sphere.

\begin{defn} We will write $A \cup_C B$ to indicate a union $A \cup B$ with intersection $C=A\cap B.$ We say a manifold $M^n$ \emph{splits} if $M^n = A \cup_C B$ with $A,B,$ and ${C=A \cap B \approx \mathbb{B}^n}$ or $A,B,$ and $C=A \cap B \approx \mathbb{R}^n$. In the former case we say $M$ ``splits into closed balls" or $M$ is a ``closed splitter" and write $M^n = \mathbb{B}^n \cup_{\mathbb{B}^n} \mathbb{B}^n.$ In the latter case we say $M$ ``splits into open balls" or $M$ is an ``open splitter" and write $M^n = \mathbb{R}^n \cup_{\mathbb{R}^n} \mathbb{R}^n.$
\end{defn}

We are interested in contractible manifolds $M^n$ which are open or closed splitters. We introduce a 4-manifold $M$ containing a spine, which we call a Jester's Hat, that can be written as $A \cup_C B$ with $A,B,$ and $C$ all collapsible. We'll show that this implies $M$ is a closed splitter. From $M$ we obtain a countably infinite collection of distinct 4-manifolds all of which are closed splitters. Connected sums at infinity of interiors of manifolds from sequences contained in this collection constitute an uncountable set of open 4-manifolds each of which splits as $\mathbb{R}^4 \cup_{\mathbb{R}^4} \mathbb{R}^4.$ These last two statements constitute our two main theorems.

Our motivation comes from David Gabai's result that the Whitehead 3-manifold, $Wh^3,$ splits into open 3-balls
\[Wh^3=  \mathbb{R}^3 \cup_{\mathbb{R}^3} \mathbb{R}^3 \ \ \ \cite{Gab}.\]
Other terminology in use which is synonymous with open splitting includes \textit{double $n$-space property} and \textit{Gabai splitting}.

\section{Elementary Results}
It is clear that the unit ball $\mathbb{B}^n$ splits into two ``subballs" overlapping in a $n$-ball. Likewise, Euclidean space itself splits into two Euclidean spaces meeting in a Euclidean space. More generally, we have the following.

\begin{prop} If a manifold $M^n$ splits as ${M^n = \mathbb{B}^n \cup_{\mathbb{B}^n} \mathbb{B}^n}$ then \emph{int}$M^n$ splits as $\intr M^n = \mathbb{R}^n \cup_{\mathbb{R}^n} \mathbb{R}^n$.
\label{ints of splitters split}
\end{prop}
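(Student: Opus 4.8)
The plan is to pass to intrinsic interiors. Writing $M^n = A \cup_C B$ with $A,B,C \approx \mathbb{B}^n$ and $C = A \cap B$, I would set $A' = \intr A$, $B' = \intr B$, and $C' = \intr C$, where $\intr$ now denotes the \emph{manifold interior} of each ball. Since a ball's interior is $\mathbb{R}^n$, immediately $A',B',C' \approx \mathbb{R}^n$, so the entire content of the proposition reduces to verifying the two set-theoretic identities $A' \cup B' = \intr M$ and $A' \cap B' = C'$; everything else is automatic from $A,B,C \approx \mathbb{B}^n$.

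The first thing I would establish is the bridge between the intrinsic interior of $A$ and its position inside $M$, namely $\intr A = (\intr_M A) \cap (\intr M)$, where $\intr_M A$ is the topological interior of $A$ as a subspace of $M$. This rests on invariance of domain: no interior point of $A$ can lie on $\partial M$ (such a point would carry both a Euclidean and a half-space neighborhood in $M$), so the boundary sphere splits as a disjoint union $\partial A = (\partial A \cap \partial M) \sqcup F_A$, where $F_A := \mathrm{Fr}_M A$ is the frontier of $A$ in $M$ and is exactly the part of $\partial A$ lying in $\intr M$. Removing $\partial A$ from $A$ then amounts to deleting $\partial M \cap A$ together with $F_A$, which gives the displayed identity; in particular $A' \subseteq \intr M$, and likewise for $B'$ and $C'$.

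To prove $A' \cup B' = \intr M$ I would use the geometric fact that each frontier wall lies in the \emph{interior} of the opposite ball, $F_A \subseteq \intr_M B$ and $F_B \subseteq \intr_M A$; this expresses that $C = A \cap B$ is a genuine full-dimensional collar rather than a shared face. Granting it, take $x \in \intr M$; as $A \cup B = M$ we may assume $x \in A$, and then either $x \in \intr A$ or else $x$ lies on the part of $\partial A$ inside $\intr M$, i.e.\ $x \in F_A \subseteq \intr_M B$, whence $x \in (\intr_M B) \cap (\intr M) = \intr B$. Either way $x \in A' \cup B'$. For $A' \cap B' = C'$ I would show $\partial C \cap \intr M = F_A \cup F_B$: a point of $\intr A \cap \intr B$ lies in $A \cap B = C$ and avoids both walls, so it lies in $\intr C$; conversely a point of $\intr C$ lies on neither wall nor on $\partial M$, hence lies in both $\intr A$ and $\intr B$.

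The main obstacle is the geometric input $F_A \subseteq \intr_M B$ together with the identification $\partial C \cap \intr M = F_A \cup F_B$; these are precisely what make the union cover all of $\intr M$ and force the overlap of the open pieces to be exactly $C'$. I expect to justify them from the PL collar structure of the splitting — the overlap ball $C$ carries a product structure $F_A \times [0,1]$ with the two walls as its ends — so that $F_A$ and $F_B$ are disjoint, properly embedded $(n-1)$-balls in $\intr M$, each lying strictly inside the opposite piece. Should the walls ever coincide in a degenerate splitting, I would first shrink $A$ and $B$ slightly within their collar to separate them, which does not disturb the homeomorphism types. Once these frontier facts are in hand, combining them with the identities above yields $\intr M = A' \cup_{C'} B'$ with $A',B',C' \approx \mathbb{R}^n$, completing the proof.
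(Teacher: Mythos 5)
Your reduction to the two set-theoretic identities is the right framing, and your identity $\intr A=(\intr_M A)\cap(\intr M)$ (via invariance of domain) is sound. But the argument has a genuine gap exactly where you locate the ``main obstacle'': the claim that the frontier walls satisfy $F_A\subseteq \intr_M B$ and $F_B\subseteq\intr_M A$ (equivalently, that no point of $\intr M$ lies on both manifold boundaries $\partial A$ and $\partial B$) \emph{is} the entire content of the proposition, and your proposed justification does not follow from the hypotheses. You want to derive it from ``the PL collar structure of the splitting --- the overlap ball $C$ carries a product structure $F_A\times[0,1]$ with the two walls as its ends.'' No such structure is assumed: the hypothesis is only that $A$, $B$, $C$ are \emph{homeomorphic} to $\mathbb{B}^n$, with no bicollaring, no product structure on $C$, and no assertion that the frontiers are $(n-1)$-balls or even disjoint. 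Your fallback of ``shrinking $A$ and $B$ slightly within their collar'' is also not available: besides presupposing the same collar, shrinking both pieces would in general destroy the property $A\cup B=M$, which must be preserved since the conclusion is about $\intr M$ itself. (A smaller slip: $\mathrm{Fr}_M A$ is generally \emph{not} the part of $\partial A$ lying in $\intr M$; it also contains points of $\partial A\cap\partial M$, as one sees already for two overlapping subcylinders of $[0,2]\times\mathbb{B}^{n-1}$.)

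For comparison, the paper proves the frontier statement from scratch rather than assuming it. Supposing $x\in\intr M\cap\partial A\cap\partial B$, it intersects half-space neighborhoods of $x$ in $A$ and $B$ with a single Euclidean neighborhood $W\approx\mathbb{R}^n$ of $x$ in $M$, and first shows each resulting frontier must lie in the other piece (otherwise an open half-space would be open in $W\approx\mathbb{R}^n$, which is impossible). It then uses that both frontiers lie in $\partial C\approx S^{n-1}$, a closed $(n-1)$-manifold, so near $x$ they coincide along an $(n-1)$-ball $D$; but $D$ would then be a neighborhood of $x$ in $C\approx\mathbb{B}^n$, a contradiction. Some argument of this kind --- using only that $A$, $B$, $C$ are balls and that $M$ is a manifold --- is what your proof still needs to supply before the rest of your outline goes through.
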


\begin{proof}

Suppose $M^n = A \cup_C B$ with $A,B,C \approx \mathbb{B}^n.$ We will show that \[\intr M= \intr A \cup_{\intr C} \intr B.\] In order to do this we show\\
 (1) $\intr A \cap \intr B =  \intr C$ and \\
 (2) $\intr M =  \intr A  \cup  \intr B.$
 
For (1), suppose $x \in  \intr C$.
Then, as $C \approx \mathbb {B}^n$, there exists $N \subset C$ an open (Euclidean) $n$-ball neighborhood of $x$. Then $N$ is an open ball neighborhood of $x$ in both $A$ and $B$ and thus $x \in $ int$A$  $\cap$ int$B$.

For the reverse inclusion, let $x \in  \intr A \cap \intr B$ and $N_A$ and $N_B$ be neighborhoods of $x$ in $A$ and $B$ each homeomorphic to an open ball of $\mathbb{R}^n$. Then $N_A \cap N_B$ is a neighborhood of $x$ in $C$ and it contains a neighborhood of $x$ homeomorphic to an open $\mathbb{R}^n$ ball as it is a neighborhood of $x$ in $M$. Thus, $x$ is an interior point of $C$ and we have shown int$A \ \cap \ $int$B \subset \ $int$C$.
 
For (2), it is clear that int$A \ \cup \ $int$B \subset$ int$M.$ To see the reverse inclusion suppose for contradiction that there exists $x\in \intr M \cap \partial A \cap \partial B$ so we can choose $U_A, V_B\approx \mathbb{R}^n_+$ neighborhoods of $x$ in $A$ and $B,$ respectively. Then $U_A=A\cap U$ and $V_B=B\cap V$ for some open sets $U,V \subset M^n.$ Let $W\approx \mathbb{R}^n$ be a neighborhood of $x$ in $M^n$ contained in $U \cap V,$ and let $U_A'=A \cap W$ and $V_B'=B\cap W.$ Then $U_A' \cup V_B'=W,$ with $U_A'$ and $V_B'$ each homeomorphic to an open subset of $\mathbb{R}^n_+.$ Notice that $\partial U_A' \subset V_B'$ and $\partial V_B' \subset U_A',$ for if $y \in \partial U_A'$ does not lie in $V'_B,$ then a small half-space neighborhood of $y$ in $U'_A$ is open in $W$; an impossibility since $W \approx \mathbb{R}^n.$ Similarly, we cannot have $y \in \partial V'_B$ that does not lie in $U'_A.$

Now notice that $U'_A \cap V'_B$ is a neighborhood of $x$ in $A\cap B=C,$ which by the previous observation, contains $\partial U'_A \cup \partial V'_B.$ Moreover, by (1), every point of $\partial U'_A \cup \partial V'_B$ lies in $\partial C.$ Since $\partial C \approx S^{n-1}$ is a closed ($n-1$)-manifold, small Euclidean ($n-1$)-space neighborhoods must coincide. That is, there exists an $(n-1)$-ball $D$ in $\partial C$ containing $x$ lying in $\partial U'_A \cap \partial V'_B.$ We see that $D$ is the intersection of an $A$ neighborhood of $x$ with a $B$ neighborhood of $x$ so that $D\approx \mathbb{B}^{n-1}$ is a neighborhood of $x$ in $A\cap B=C\approx \mathbb{B}^n.$ This is our desired contradiction.

\end{proof}

\section{History and Current Work}

Some classical knowledge about manifold splitting is contained in the following theorem \cite{Gla65}, \cite{Gla66}.
\begin{thm}
\emph{(Glaser)}
(a) For each $n\geq 4$ there exists a compact contractible PL $n$-manifold with boundary $W^n$ not homeomorphic to $\mathbb{B}^n$ such that ${W^n \approx \mathbb{B}^n \cup_{\mathbb{B}^n} \mathbb{B}^n.}$

(b) For each $n \ge 3$ there exist an open contractible $n$-manifold $O^n$ not homeomorphic to $\mathbb{R}^n$ such that $O^n \approx \mathbb{R}^n \cup_{\mathbb{R}^n} \mathbb{R}^n.$

\end{thm}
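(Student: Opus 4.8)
I would treat the two parts separately, since they live in different dimension ranges, and I would organize everything around regular neighborhoods of spines --- the same mechanism that drives the closed-splitter results of this thesis.

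\textbf{Part (a).} For each $n \geq 4$ the plan is to build $W^n$ and its two-ball decomposition simultaneously from a single combinatorial object. First I would construct a compact polyhedron $K$ (which will be a spine of $W^n$) that is contractible but \emph{not} collapsible, embedded in a PL $n$-manifold, and set $W^n = N(K)$, a regular neighborhood. Since $K$ is contractible, $W^n$ is a contractible homology ball, so $\partial W^n$ is a homology $(n-1)$-sphere; the construction is arranged so that $\pi_1(\partial W^n) \neq 1$ (possible exactly because $n-1 \geq 3$), which forces $\partial W^n \not\approx S^{n-1}$ and hence $W^n \not\approx \mathbb{B}^n$. To obtain the splitting I would decompose the spine as $K = K_A \cup_{K_C} K_B$ with $K_A,K_B,K_C$ each collapsible, and then take $A = N(K_A)$, $B = N(K_B)$, $C = N(K_C)$ as regular (second-derived) neighborhoods with respect to a common subdivision. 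A regular neighborhood of a collapsible polyhedron in a PL manifold is a PL $n$-ball (see \cite{RoSa}), so $A,B,C \approx \mathbb{B}^n$; and the standard behavior of derived neighborhoods gives $A \cup B = N(K_A \cup K_B) = W^n$ and $A \cap B = N(K_A \cap K_B) = C$. This is precisely the form $W^n = \mathbb{B}^n \cup_{\mathbb{B}^n} \mathbb{B}^n$.

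The hard part is the combinatorial heart of this plan: producing a polyhedron that is contractible yet non-collapsible (so that $N(K)$ is genuinely exotic) but which nonetheless splits as a union of three collapsible subcomplexes glued along a collapsible subcomplex. Contractible-but-non-collapsible complexes (the dunce hat, Bing's house) are the governing phenomenon here, and the delicate requirement is to find one whose non-collapsibility is ``shared out'' among three collapsible pieces while still forcing $\pi_1(\partial N(K)) \neq 1$. I expect essentially all the difficulty of (a) to live at this step; once $K$ is in hand, the neighborhood bookkeeping is routine.

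\textbf{Part (b).} For $n \geq 4$ I would simply take $O^n = \operatorname{int} W^n$ for the $W^n$ of part (a). Proposition~\ref{ints of splitters split} then gives $O^n \approx \mathbb{R}^n \cup_{\mathbb{R}^n} \mathbb{R}^n$ with no further work, and $O^n \not\approx \mathbb{R}^n$ because the end of $O^n$ inherits the nontrivial fundamental group of $\partial W^n$, whereas $\mathbb{R}^n$ is simply connected at infinity. The case $n = 3$ cannot be reached this way: every compact contractible PL $3$-manifold is a $3$-ball, so part (a) genuinely begins at $n = 4$ and there is no closed $3$-dimensional splitter whose interior we could take. Instead I would construct $O^3$ directly as a Whitehead-type open contractible $3$-manifold, realized as an ascending union of $3$-balls clasped together so that the end fails to be simply connected, and then produce the open splitting by hand. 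The obstacle in this case is the limiting argument: one must check that the infinite union is an open contractible manifold not homeomorphic to $\mathbb{R}^3$, and that the two halves together with their overlap can each be arranged to be honestly homeomorphic to $\mathbb{R}^3$ rather than merely contractible open subsets.
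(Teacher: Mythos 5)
Your outline for part (a) and for the $n\geq 4$ case of part (b) is the same route the paper attributes to Glaser: he produces a contractible $(n-2)$-complex PL embedded in $S^n$ whose regular neighborhoods are non-balls that split, and the open splitters in dimensions $\geq 4$ are obtained by taking interiors of the compact ones (via what is here Proposition \ref{ints of splitters split}). But be aware that your write-up is a reduction, not a proof: essentially all of the content of (a) sits inside the step you yourself flag as ``the hard part,'' namely exhibiting a contractible, non-collapsible complex $K=K_A\cup_{K_C}K_B$ with $K_A$, $K_B$, $K_C$ collapsible and $\pi_1(\partial N(K))\neq 1$. That construction is exactly what occupies \cite{Gla65} (for $n\geq 5$) and \cite{Gla66} (for $n=4$, which required a separate and harder argument that your sketch does not distinguish); the regular-neighborhood bookkeeping you describe around it is indeed routine and is the same mechanism this thesis uses for the Jester's manifolds.

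The $n=3$ case of (b) is where your proposal genuinely diverges from the paper and where it breaks down. Glaser's construction there is the complement of a particular embedding of a double Fox--Artin arc in $S^3$, for which he verifies the splitting and the failure to be $\mathbb{R}^3$ directly. You propose instead a Whitehead-type ascending union of clasped balls and say you would ``produce the open splitting by hand.'' Splitting the Whitehead manifold is precisely Gabai's theorem \cite{Gab} --- a result that was open for decades and is the stated motivation for this entire thesis --- so it cannot be treated as a checkable limiting argument appended to the construction. Either you must reproduce Gabai's (or Glaser's) substantial work at this point, or the $n=3$ case of your proof is missing.
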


For the compact case, Glaser shows the existence of a contractible $(n-2)$-complex piecewise linearly embedded in $S^n$ with non-ball regular neighborhoods which split. The $n\geq5$ case was shown in \cite{Gla65} and the $n=4$ case was shown in \cite{Gla66}.

For the noncompact $n \ge 4$ case he takes the interiors of the compact splitters found in (a). For the noncompact $n=3$ case, Glaser shows that the complement of a certain embedding of a double Fox-Artin arc in $S^3$ splits and is not a (open) ball \cite{Gla66}. 

In \cite{Gab}, Gabai asks
\begin{ques}
Is there a reasonable characterization of open contractible 3-manifolds that are the union of two embedded submanifolds each homeomorphic to $\mathbb{R}^3$ and that intersect in a $\mathbb{R}^3$?
\end{ques}
Renewed interest in this topic, motivated by Gabai's splitting of the Whitehead manifold and the resulting above question, has led to the following recent results \cite{GRW}.

\begin{thm} \emph{(Garity, Repovs, Wright)} There exist uncountably many distinct contractible 3-manifolds that are open splitters.
\end{thm}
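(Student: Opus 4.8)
The plan is to build a large family of Whitehead-like contractible open 3-manifolds, all defined by Gabai's scheme, and then argue simultaneously that (i) each member is an open splitter and (ii) the family contains uncountably many homeomorphism types. Recall that the Whitehead manifold arises as $Wh^3 = \bigcup_i C_i$, an increasing union of genus-one handlebody complements, where each solid torus $T_{i+1}$ sits inside $T_i$ as a Whitehead curve---null-homotopic in $T_i$ but not contained in any 3-ball of $T_i$. First I would isolate from Gabai's proof the precise geometric features of this nesting that his splitting actually uses, and then define an \emph{admissible} class of defining sequences (allowing, say, varying numbers of clasps or extra twisting at each stage) that retain exactly those features.

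Next I would reprove Gabai's splitting for an arbitrary admissible sequence. The argument is inductive: at each stage one exhibits the relevant solid-torus pair together with two ``half'' pieces $A_i, B_i$ meeting in a cell, arranged so that the pieces telescope as $i \to \infty$ to give open sets $U = \bigcup_i A_i$ and $V = \bigcup_i B_i$ with $U \cup V = M$, each of $U,V$ homeomorphic to $\mathbb{R}^3$, and $U \cap V \approx \mathbb{R}^3$. The delicate point, as in Gabai's original, is controlling the meridian-disk data across stages so that the half-spaces assemble coherently; here I would verify that admissibility is exactly what makes this bookkeeping go through uniformly over the whole family.

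To produce uncountably many distinct members, I would encode an $\omega$-indexed parameter sequence into the admissible defining sequences---for instance letting the $i$-th embedding carry a prescribed number of clasps---following McMillan's construction of uncountably many contractible open 3-manifolds. The separation of homeomorphism types would come from an end/embedding invariant of the defining sequence that is insensitive to the choice of defining sequence for a fixed manifold but records the prescribed data up to tail equivalence, so that two manifolds are homeomorphic only if their parameter sequences eventually agree. Since there are uncountably many tail-equivalence classes of such sequences, this yields uncountably many distinct manifolds.

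The main obstacle is the interaction of the two requirements. The splitting condition constrains how each $T_{i+1}$ may be embedded in $T_i$, while distinctness demands enough variation in those same embeddings to realize uncountably many types; the crux is to show that the admissible class is simultaneously (a) broad enough that a McMillan-style invariant separates uncountably many of its members and (b) narrow enough that Gabai's splitting applies to every one of them. Establishing the requisite invariant---and verifying that it is a genuine homeomorphism invariant rather than merely an invariant of the chosen defining sequence---is where I expect the real work to lie.
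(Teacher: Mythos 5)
This statement is not proved in the paper at all: it is quoted as a result of Garity, Repov\v{s}, and Wright, cited to their preprint, and the thesis supplies no argument for it. So there is no internal proof to compare yours against; the only fair evaluation is of your proposal on its own terms.

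On those terms, what you have written is a research plan rather than a proof, and the two places where you yourself locate ``the real work'' are precisely the places where the entire mathematical content lives. First, the splitting half: you propose to isolate the features of the Whitehead nesting that Gabai's argument uses and define an ``admissible'' class retaining them, but you never say what those features are, so the claim that the splitting ``goes through uniformly over the whole family'' is not verifiable --- and it is genuinely delicate, since Garity--Repov\v{s}--Wright also prove that uncountably many contractible open $3$-manifolds \emph{fail} to split, so whatever condition you impose must actually do some work and cannot be vacuous. Second, the distinctness half: you invoke a McMillan-style end/embedding invariant ``insensitive to the choice of defining sequence but recording the prescribed data up to tail equivalence,'' which is exactly the hard theorem one needs (typically proved via geometric index computations showing that any two defining sequences for homeomorphic manifolds interleave); asserting its existence is not the same as establishing it. The overall architecture --- vary the clasp/link data at each stage of a Whitehead-like defining sequence, prove splitting for the admissible subfamily, and separate homeomorphism types by a defining-sequence invariant --- does match the strategy of the cited work, so the outline is headed in the right direction, but as it stands neither of the two claims that constitute the theorem has been proved.
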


\begin{thm}\emph{(Garity, Repovs, Wright)} There are uncountably many distinct contractible 3-manifolds that are not open splitters.
\end{thm}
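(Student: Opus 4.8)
The plan is to produce an uncountable family of open contractible 3-manifolds by a Whitehead-type construction and then to separate two tasks: distinguishing uncountably many of them up to homeomorphism, and showing that none of them is an open splitter. First I would fix a generalized Whitehead construction: a nested defining sequence of genus-$g_i$ handlebodies $T_0 \supset T_1 \supset \cdots$ in which $T_{i+1}$ sits inside $T_i$ in a null-homotopic but geometrically essential (clasped) fashion, and I would allow the genus of the $i$th stage to vary. Each admissible genus sequence $(g_i)$ produces a Whitehead-type continuum $W = \bigcap_i T_i$ and an associated open contractible 3-manifold $M_{(g_i)} = S^3 \setminus W$, which is contractible but \emph{not} homeomorphic to $\mathbb{R}^3$ when the winding is essential. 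Drawing the genera from an uncountable ``antichain'' of sequences will be the source of the uncountable family.

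Second, to see that uncountably many of these are genuinely distinct, I would invoke a homeomorphism invariant measuring the complexity of the end, namely the \emph{genus} of the defining sequence (equivalently, the local genus of the embedded continuum $W$ at its points). The essential point is that this genus data is intrinsic to the manifold and cannot be lowered by any homeomorphism, so sequences with mutually non-cofinal, unbounded genus profiles must yield non-homeomorphic manifolds. A cardinality argument then extracts uncountably many pairwise distinct manifolds from the construction.

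Third, and this is the crux, I would show that if the genus is forced to grow without bound then $M_{(g_i)}$ cannot be written as $\mathbb{R}^3 \cup_{\mathbb{R}^3} \mathbb{R}^3$. The strategy is to assume a splitting $M = A \cup_C B$ with $A,B,C \approx \mathbb{R}^3$ and to analyze how the separating copy $C$ and the halves $A,B$ must meet the defining handlebodies $T_i$ deep in the end. Because $C$ is a single copy of $\mathbb{R}^3$ separating $M$ into two halfspaces, a splitting ought to impose an eventual bounded-complexity (product-like) structure on the end — a uniform ceiling on how much winding the $T_i$ can carry — which directly contradicts the unbounded genus built into the sequence. Concretely I would isolate a winding-number or geometric-index quantity that stays bounded whenever a splitting exists but that is pinned to the prescribed genus $g_i$ at stage $i$ in our manifolds.

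The main obstacle is exactly this last step: converting the soft hypothesis ``$M$ is an open splitter'' into a hard quantitative constraint on the defining sequence. The difficulty is that $A$, $B$, and $C$ are only abstractly homeomorphic to $\mathbb{R}^3$ and are not assumed to respect the tori $T_i$ in any controlled way, so I must first replace them by well-positioned representatives (properly embedded, transverse to the $\partial T_i$, and suitably engulfed) before any winding or genus estimate can be read off. Passing from an arbitrary splitting to one in good position relative to the end structure, without destroying the splitting property, is where the real work lies; once the geometry is controlled, the genus contradiction should follow comparatively formally.
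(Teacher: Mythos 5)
This statement is not proved in the paper at all: it is quoted as a result of Garity, Repov\v{s}, and Wright and cited from their preprint \cite{GRW}, so there is no in-paper argument to compare yours against. Judged on its own, your proposal assembles plausible ingredients --- nested handlebody defining sequences, a genus/geometric-index invariant to distinguish uncountably many manifolds, and a hoped-for obstruction to splitting --- but the decisive step is missing, and you say so yourself. The claim that a splitting $M = A \cup_C B$ with $A,B,C \approx \mathbb{R}^3$ ``ought to impose an eventual bounded-complexity structure on the end'' is precisely the theorem to be proved, not a lemma you can defer. Nothing in the proposal explains how to put an abstract copy of $\mathbb{R}^3$ (which need not be properly embedded, need not respect the $\partial T_i$, and can wander arbitrarily in the end) into a position where a winding number or geometric index can be read off and bounded; that repositioning is the entire content of the non-splitting half of the Garity--Repov\v{s}--Wright argument, and without it you have only a restatement of the goal.

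There is also reason to doubt that your proposed dichotomy (unbounded genus growth implies non-splitter) is even the correct criterion. Gabai's theorem shows the classical Whitehead manifold splits, and the companion Garity--Repov\v{s}--Wright theorem quoted just above this one produces uncountably many manifolds of the same general Whitehead type that \emph{do} split; the property that separates splitters from non-splitters in their work is a finer feature of how consecutive stages clasp one another (measured by geometric index in the sense of McMillan), not simply whether the genus is bounded. So even granting the positioning machinery, you would still need to identify and verify the correct quantitative invariant. As it stands, the proposal is an outline of a research program rather than a proof.
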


\begin{note} In dimension 3, the Poincar$\acute{\mbox{e}}$ conjecture gives that every compact contractible manifold is homeomorphic to $\mathbb{B}^3$ so the question of closed splitters in this case is uninteresting.
\end{note}

Ancel and Guilbault have recently worked out the general compact case for $n \geq 5$  as well as for high dimensional Davis manifolds \cite{AG14+} (see \cite{AG95} for the main ideas).

\begin{thm} \emph{(Ancel and Guilbault)} If $C^n$ $(n \geq 5)$ is a compact, contractible $n$-manifold then $C^n$ splits as $\mathbb{B}^n \cup_{\mathbb{B}^n} \mathbb{B}^n.$ 
\end{thm}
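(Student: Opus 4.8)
The plan is to deduce the theorem from the single principle that already powers this thesis's examples, and then to verify the hypotheses of that principle for an arbitrary compact contractible $C^n$ using high-dimensional handle theory. Throughout I would work PL and freely invoke the handle-cancellation machinery behind Smale's $h$-cobordism theorem, which is available precisely because $n \geq 5$.

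First I would isolate the reduction. \emph{Claim:} if a compact PL $n$-manifold $M$ collapses to a subcomplex that can be written $K = A \cup_C B$ with $A$, $B$, and $C = A \cap B$ each collapsible, then $M = \mathbb{B}^n \cup_{\mathbb{B}^n} \mathbb{B}^n$. This is pure regular-neighborhood theory: since $M \searrow K$, the manifold $M$ is a regular neighborhood of $K$, and passing to second-derived neighborhoods in a common triangulation yields regular neighborhoods $N(A) \supset A$ and $N(B) \supset B$ with $N(A) \cap N(B) = N(A \cap B) = N(C)$ and $N(A) \cup N(B) = N(K) = M$. Because a regular neighborhood of a collapsible polyhedron in a PL $n$-manifold is a PL $n$-ball, each of $N(A)$, $N(B)$, $N(C)$ is $\approx \mathbb{B}^n$, and this is exactly a closed splitting. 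So it suffices to produce, for every compact contractible $C^n$, a spine that itself splits into collapsible halves.

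Second I would manufacture an economical spine. Beginning from any handle decomposition of $C^n$ and using that $C^n$ is contractible, I would cancel the $0$-handles down to one and then trade away the high-index handles: since $H_\ast(C^n) \cong H_\ast(\mathrm{pt})$ the handles pair off homologically, and the Whitney trick (valid for $n \geq 5$) realizes these cancellations geometrically, leaving a handle decomposition whose dual spine $K$ has codimension at least three in $C^n$. Then $C^n \searrow K$ with $K$ a contractible complex of codimension $\geq 3$. As a sanity check and as the natural home of the Poincar\'e input, note that the double $D(C^n) = C^n \cup_\partial C^n$ is a simply connected homology $n$-sphere, hence $\approx S^n$, so $C^n$ sits in $S^n$ as one half of a sphere; this is the same machinery that underlies the handle arithmetic above.

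The crux --- and the step I expect to be hardest --- is to rewrite the contractible spine $K$ as $A \cup_C B$ with all three pieces collapsible. This is genuinely subtle, because a contractible complex need not be collapsible (the dunce hat is the standard warning), so one cannot simply collapse $K$ and must instead cut it cleverly. Here I would cash in the codimension: with at least three dimensions of room around $K$ in $C^n$, engulfing and finger-move arguments of Stallings--Zeeman type let me separate the cells of $K$ into two subcomplexes that each collapse, with overlap arranged to collapse as well; the contractibility of $K$ (trivial $\pi_1$ together with vanishing homology) is exactly what removes the obstructions to carrying these collapses out. Assembling a collapsible splitting of $K$ in this way and feeding it into the reduction of the second paragraph yields $C^n = \mathbb{B}^n \cup_{\mathbb{B}^n} \mathbb{B}^n$. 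The dimension hypothesis $n \geq 5$ is used twice and essentially: once for the Whitney-trick handle reduction and once for the codimension-three engulfing.
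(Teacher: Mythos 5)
A preliminary remark: the paper does not prove this theorem. It is quoted as an external result of Ancel and Guilbault from \cite{AG14+}, with the main ideas credited to \cite{AG95}, so there is no internal proof to measure your argument against. Judged on its own terms, your first step is correct and is literally the machinery of this thesis: the reduction ``$M \searrow A\cup_C B$ with $A,B,C\searrow 0$ implies $M=\mathbb{B}^n\cup_{\mathbb{B}^n}\mathbb{B}^n$'' is the regular-neighborhood proposition of Section 3.1, and it is exactly how the author handles the Jester's manifolds.

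The gap is your third step, and it is not a small one. You assert that engulfing and finger moves of Stallings--Zeeman type will rewrite an arbitrary contractible spine $K$ as $A\cup_C B$ with $A$, $B$, and $C$ all collapsible, but you supply no mechanism, and no such general theorem exists. Engulfing produces ambient isotopies and collapses of the surrounding manifold onto subsets containing $K$; it does not re-partition a fixed complex $K$ into two collapsible subcomplexes with collapsible overlap. That partitioning problem is precisely the hard part: the paper poses its two-dimensional instance --- whether the dunce hat can be written as $A\cup_C B$ with $A,B,C\searrow 0$ (Question \ref{D split?}) --- as open, notes that a positive answer would settle Question \ref{Ma Split?} for the Mazur manifold, and devotes Chapters 2--3 to constructing one specific spine (the Jester's hat) for which such a decomposition can be exhibited by hand. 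If your step worked as stated for contractible $2$-complexes it would resolve those open questions. There is also a secondary error in step two: you cannot drive the spine down to codimension at least three in general. Cancelling all the $2$/$3$-handle pairs would force $C^n\approx\mathbb{B}^n$ (the Whitney trick fails here because the Whitney disc cannot be made disjoint from the $(n-3)$-dimensional belt spheres, and dually because $\pi_1(\partial C^n)\neq 1$), so the spine is genuinely a noncollapsible $3$-complex; for $n=5$ that is codimension $2$, where the engulfing hypotheses you invoke are unavailable. The route indicated by the paper's citations is different in kind: the arc-spine (mapping cylinder) structure theorem of \cite{AG95} decomposes $C^n$ itself directly into balls meeting in a ball, rather than passing through a collapsible splitting of a spine.
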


\begin{cor} \emph{(Ancel and Guilbault)} For $n \geq 5:$
	\begin{enumerate}
  	\item the interior of every compact contractible $n$-manifold is an open splitter, and
  	\item there are uncountably many non-homeomorphic $n$-manfolds which are open splitters.
	\end{enumerate}
\end{cor}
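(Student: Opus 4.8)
The plan is to treat the two parts separately: part (1) will be an immediate consequence of the preceding Ancel--Guilbault theorem together with Proposition~\ref{ints of splitters split}, while part (2) will require an additional infinite construction in order to escape the countable world of compact manifolds.

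For part (1) I would argue directly. Let $C^n$ be any compact contractible $n$-manifold with $n \geq 5$. The Ancel--Guilbault theorem supplies a splitting $C^n = \mathbb{B}^n \cup_{\mathbb{B}^n} \mathbb{B}^n$, so $C^n$ is a closed splitter. Applying Proposition~\ref{ints of splitters split} to this splitting yields $\intr C^n = \mathbb{R}^n \cup_{\mathbb{R}^n} \mathbb{R}^n$, i.e. $\intr C^n$ is an open splitter. Since $C^n$ was arbitrary, every interior of a compact contractible $n$-manifold is an open splitter.

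For part (2) the first observation is that part (1) alone cannot suffice: in the PL category there are only countably many compact $n$-manifolds up to homeomorphism (each is a finite complex), so their interiors furnish only countably many open splitters. To reach uncountably many I would first assemble a countably infinite supply of pairwise distinct compact contractible $n$-manifolds and then amalgamate them at infinity. For the supply I would realize superperfect finitely presented groups as fundamental groups of homology $(n-1)$-spheres (Kervaire, for $n-1\geq 5$; the case $n=5$ needs a separate low-dimensional input) and bound each such homology sphere by a compact contractible $C_i$, arranging that the groups $\pi_1(\partial C_i)$ are pairwise non-isomorphic. By part (1) each $\intr C_i$ is a one-ended open splitter. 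For the amalgamation I would take, for each sequence $s=(i_1,i_2,\ldots)$, the connected sum at infinity (end sum) $X_s = \intr C_{i_1}\,\osum\,\intr C_{i_2}\,\osum\cdots$, producing a one-ended open contractible $n$-manifold whose end records the pattern $s$.

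Two things then must be verified, and these are where the real work lies. The first is that the open-splitter property survives the (infinite) end-sum operation, so that every $X_s$ is again an open splitter; this is the higher-dimensional analogue of the connected-sum-at-infinity results developed later in this thesis, and I would prove it by splitting each summand compatibly with the rays along which the sums are formed and then reassembling the two halves. The second, and I expect this to be the main obstacle, is to exhibit uncountably many pairwise non-homeomorphic $X_s$. Here I would use the fundamental group at infinity: the inverse sequence of groups attached to the end of $X_s$ encodes the pro-isomorphism type of the pattern $\bigl(\pi_1(\partial C_{i_1}),\pi_1(\partial C_{i_2}),\ldots\bigr)$, and since there are uncountably many sequences that are pairwise inequivalent as pro-groups (using the non-isomorphic $\pi_1(\partial C_i)$ as rigid ``letters''), this would yield uncountably many homeomorphism types. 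Making the pro-$\pi_1$ invariant genuinely separate distinct patterns, rather than collapsing different sequences to the same inverse limit, is the delicate point and would require a careful rigidity and recognition argument for the end structure of the end sums.
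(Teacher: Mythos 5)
Your plan is essentially the one this thesis itself carries out (in dimension $4$), so the approach is the right one. The paper states the corollary without proof, but part (1) is exactly your argument: apply the Ancel--Guilbault splitting theorem and then Proposition~\ref{ints of splitters split}; nothing more is needed. Your observation that part (2) cannot follow from part (1) alone (only countably many compact manifolds up to homeomorphism) is correct, and your program for part (2) --- assemble countably many distinct compact contractible $n$-manifolds, form connected sums at infinity of their interiors indexed by sequences, show the splitting survives the infinite sum, and distinguish the results by $\text{pro-}\pi_1$ at infinity --- is precisely the content of Theorem~\ref{uncountable CSI's}, Theorem~\ref{Craig's no.2}, and Proposition~\ref{infinite csi splits}, applied with the $C_i$'s in place of the Jester's manifolds.

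The one substantive gap is the step you explicitly defer, because it is where the real content lies and because it imposes a hypothesis your construction does not yet guarantee. The ``rigidity and recognition argument'' that prevents different sequences from becoming pro-isomorphic at infinity is Theorem~\ref{CuKw type}, and its proof (via the Kurosh subgroup theorem and normal forms in free products) requires each letter group to be \emph{indecomposable} and \emph{not infinite cyclic} --- pairwise non-isomorphism of the $\pi_1(\partial C_i)$ is not enough, since a decomposable letter could be absorbed as a free factor across summands and the counting argument would collapse. So your Kervaire-style supply of homology $(n-1)$-spheres must be arranged so that the resulting boundary groups are indecomposable and non-cyclic as well as pairwise distinct (this is achievable, but it must be said). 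With that added, and with the uncountability realized concretely (e.g.\ the sums $\natural_{i\in S}\intr C_i$ over infinite subsets $S\subset\mathbb{Z}_+$ are pairwise distinguished because some letter appears in one sequence and not the other), your outline becomes a complete proof along the same lines as the paper's own Chapter~5.
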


\begin{thm} \emph{(Ancel and Guilbault)} For $n \geq 5,$ every Davis $n$-manifold is an open splitter.
\end{thm}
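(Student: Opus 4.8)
The plan is to exploit the standard description of a Davis $n$-manifold as a monotone union of compact contractible pieces and to split each piece in a way that is compatible across the gluings. Recall that a Davis manifold $\mathcal{D}^n$ is built from a fixed compact contractible $n$-manifold $C^n$ (with $\partial C^n$ a non-simply-connected homology sphere) by gluing together countably many copies of $C^n$ along $(n-1)$-balls in their boundaries; concretely one obtains an increasing union $\mathcal{D}^n = \bigcup_{i=1}^\infty D_i$ in which each $D_i$ is itself a compact contractible $n$-manifold and $D_{i+1}$ is obtained from $D_i$ by attaching further copies of the block $C^n$ along $(n-1)$-balls in $\partial D_i$. The goal is to produce a splitting $\mathcal{D}^n = \mathcal{A} \cup_{\mathcal{T}} \mathcal{B}$ with $\mathcal{A}, \mathcal{B}, \mathcal{T} \approx \mathbb{R}^n$.

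First I would apply the compact splitting theorem to the building block: each copy of $C^n$, and hence each stage $D_i$, splits as a closed splitter. The essential point, however, is not merely that a splitting exists, but that it can be chosen to be standard near the $(n-1)$-balls along which successive copies are glued. So the first real step is to upgrade the compact splitting theorem to a boundary-controlled version: I want a splitting $C^n = A \cup_T B$ (with $A, B, T \approx \mathbb{B}^n$) for which, in a collar of each gluing ball $\Delta \subset \partial C^n$, the triple $(A, B, T)$ restricts to the trivial splitting of a half-space neighborhood --- that is, $A$, $B$, and $T$ meet $\Delta$ in standard sub-balls and the overlap is a product collar there. Granting this, when two copies are glued along matching balls I can glue $A$ to $A'$, $B$ to $B'$, and $T$ to $T'$ so that the unions are again balls with the new seam a standard $(n-1)$-ball.

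Next I would assemble the global pieces by setting $\mathcal{A} = \bigcup_i A_i$, $\mathcal{B} = \bigcup_i B_i$, and $\mathcal{T} = \bigcup_i T_i$, where $A_i, B_i, T_i$ are the compatibly chosen splitting pieces of $D_i$. By construction $\mathcal{A} \cup \mathcal{B} = \mathcal{D}^n$ and $\mathcal{A} \cap \mathcal{B} = \mathcal{T}$, so what remains is to identify $\mathcal{A}, \mathcal{B}, \mathcal{T}$ each as $\mathbb{R}^n$. Here I would arrange the compatible splittings so that $A_i \subset \intr A_{i+1}$ with $\cl(A_{i+1} \setminus A_i) \approx \partial A_i \times [0,1]$ a collar --- that is, so that the $A_i$ are standardly nested balls --- and likewise for the $B_i$ and $T_i$. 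A monotone union of $n$-balls nested in this collared fashion is homeomorphic to $\mathbb{R}^n$ (one straightens the successive collars), which yields $\mathcal{A}, \mathcal{B}, \mathcal{T} \approx \mathbb{R}^n$ and completes the splitting.

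The hard part will be exactly this control: producing the splitting of $C^n$ so that the nesting of pieces at infinity is standard. The subtlety is genuine rather than bookkeeping --- a monotone union of $n$-balls need not be $\mathbb{R}^n$, and indeed exotic monotone unions of cells are the very mechanism making Davis manifolds and exotic Euclidean spaces non-standard, so merely knowing each $D_i$ and each $A_i$ is a ball is not enough. I therefore expect the crux to be a careful relative version of the compact splitting theorem near the gluing balls, together with a verification via the collared-nesting criterion that $\mathcal{A}$, $\mathcal{B}$, $\mathcal{T}$ are standard even though $\mathcal{D}^n$ itself is not $\mathbb{R}^n$. A reasonable alternative organization would carry out the construction with interiors from the start, splitting $\intr C^n \approx \mathbb{R}^n \cup_{\mathbb{R}^n} \mathbb{R}^n$ via the interior proposition and gluing open pieces, but the same standardness-at-infinity obstacle reappears and must be handled in the same way.
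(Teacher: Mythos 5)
Your overall architecture (split each compact block, make the splittings match across the gluing balls, take a monotone union) is reasonable, but it hinges entirely on a statement you flag as the crux and then assume: a \emph{relative} version of the compact splitting theorem producing $C = A \cup_T B$ with $A,B,T \approx \mathbb{B}^n$ \emph{and} with the triple $(A,B,T)$ standard in a collar of each prescribed gluing ball $\Delta \subset \partial C$. The compact splitting theorem as available gives no control over how $A$, $B$, and $T$ meet $\partial C$; there is no reason a given splitting can be isotoped so that $\Delta$ straddles the seam $T \cap \partial C$ in a standard half-space pattern, and no such relative theorem is proved here or available in the cited sources. Since every subsequent step (gluing $A$ to $A'$ across the seam, the nesting of the $A_i$, and hence the identification of $\mathcal{A}$, $\mathcal{B}$, $\mathcal{T}$ with $\mathbb{R}^n$) depends on this control, the argument is incomplete at its essential point. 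A smaller issue: if $A_{i+1}$ is obtained from $A_i$ by boundary connected sum, then $A_i$ meets $\partial A_{i+1}$ and cannot satisfy $A_i \subset \intr A_{i+1}$ with a collar in between; one must instead pass to manifold interiors and invoke Brown's monotone union theorem.

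The route used in this paper avoids the boundary entirely, and that is what makes it work. One identifies the Davis manifold with $\intr(C\bdrysum -C \bdrysum C \bdrysum \cdots)$ (Ancel--Siebenmann, Note \ref{AnSi}), splits each compact summand by the unrelativized compact theorem, passes to interiors via Proposition \ref{ints of splitters split}, and then forms the connected sum at infinity along regular neighborhoods of rays chosen to lie in the intersection pieces and to be proper in both halves of each splitting. The existence of such rays is the real technical content (the lemma in Section 5.4, proved by a Sard's theorem argument producing a properly embedded codimension-one submanifold of the intersection), and it plays the role of the boundary control you were seeking. With the rays so placed, Proposition \ref{infinite csi splits} shows the CSI decomposes as $(A_1 \natural A_2 \natural \cdots) \cup_{C_1 \natural C_2 \natural \cdots} (B_1 \natural B_2 \natural \cdots)$, and each piece is an ascending union of copies of $\mathbb{R}^n$ nested as open subsets, hence is $\mathbb{R}^n$ by Brown's theorem; the ``standardness at infinity'' you worry about comes for free because each finite stage is literally $\mathbb{R}^n$ with a proper tame half space removed. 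If you want to salvage your approach, you would need to prove the relative splitting theorem; otherwise the ray-based CSI argument is the way through.
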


\begin{note} 
\label{AnSi}
A result of Ancel and Siebenman states that a Davis manifold generated by $C$ is homeomorphic to the interior of an alternating boundary connected sum $\intr(C\bdrysum -C \bdrysum C \bdrysum -C \bdrysum ...). $ Here $-C$ is a copy of $C$ with the opposite orientation \cite{Gui}. 
We will show in Section 5.4 that the interior of an infinite boundary connect sum of closed splitters is an open splitter. Thus there also exists (non-$\mathbb{R}^4$) 4-dimensional Davis manifold splitters. 
\end{note}

\chapter{The Mazur and Jester's Manifolds}
\section{The Mazur Manifold}

\begin{figure}[h!]
\centering
\vspace{-6.7in}
\includegraphics[height=8in]{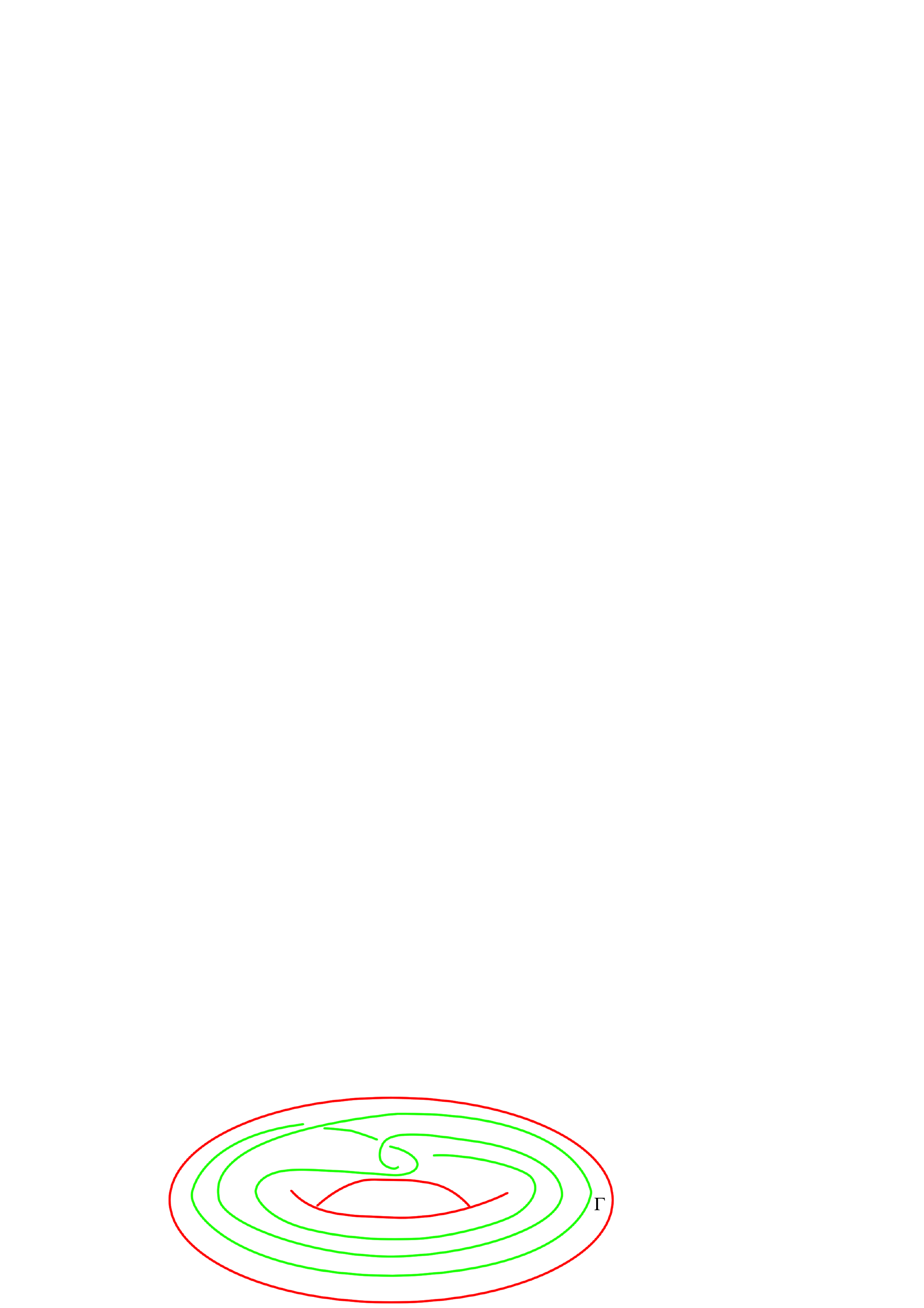}                   
\caption{$\Gamma \subset \partial( S^1 \times \mathbb{B}^3) \subset$ the Mazur Manifold}
\label{Mazcurve}
\end{figure}


 

In \cite{Maz}, Barry Mazur described what are now often called \emph{Mazur manifolds}.  
Starting with a $S^1\times\mathbb{B}^3$ one adds a 2-handle ${h^{(2)}\approx \mathbb{B}^2 \times \mathbb{B}^2}$ along the curve $\Gamma$ is as in the above figure. 
That is,   
\[Ma^4_\Phi = S^1\times\mathbb{B}^3\cup_\Phi \mathbb{B}^2 \times \mathbb{B}^2\]
is a \emph{Mazur manifold}. Here $\Phi$ is the framing $\Phi:S^1\times\mathbb{B}^2\rightarrow T_{\Gamma}$, $T_{\Gamma}$ is a tubular neighborhood of $\Gamma$ in $\partial(\donut)$ and the domain $S^1\times\mathbb{B}^2$ is the first term in the union \[S^1\times\mathbb{B}^2\cup \mathbb{B}^2\times S^1=\partial(\mathbb{B}^2\times \mathbb{B}^2)\]

For each Dehn twist of the $S^1 \times S^1 = \partial(S^1\times \mathbb{B}^2)$ sending $S^1 \times p$ $(p \in S^1)$ to a closed curve (that is, an integer number of full twists), there exists a framing $\Phi.$ Thus the number of framings is infinite. Mazur chose a specific framing $\varphi$ yielding a specific manifold, which we'll denote $Ma^4,$ for which he showed $\partial Ma^4 \not\approx S^3$ so $Ma^4 \not\approx \mathbb{B}^4.$ The chosen framing corresponds to a parallel copy of $\Gamma$ say $\Gamma'=\varphi(S^1 \times p)$ which lies at the ``top" (the up direction is perpendicular to the page, toward the viewer) of $S^1 \times \mathbb{B}^2.$ Thus there are no twists with this framing.

\begin{figure}[!ht]
\includegraphics[trim=0 250 0 200,height=3.5in, clip]{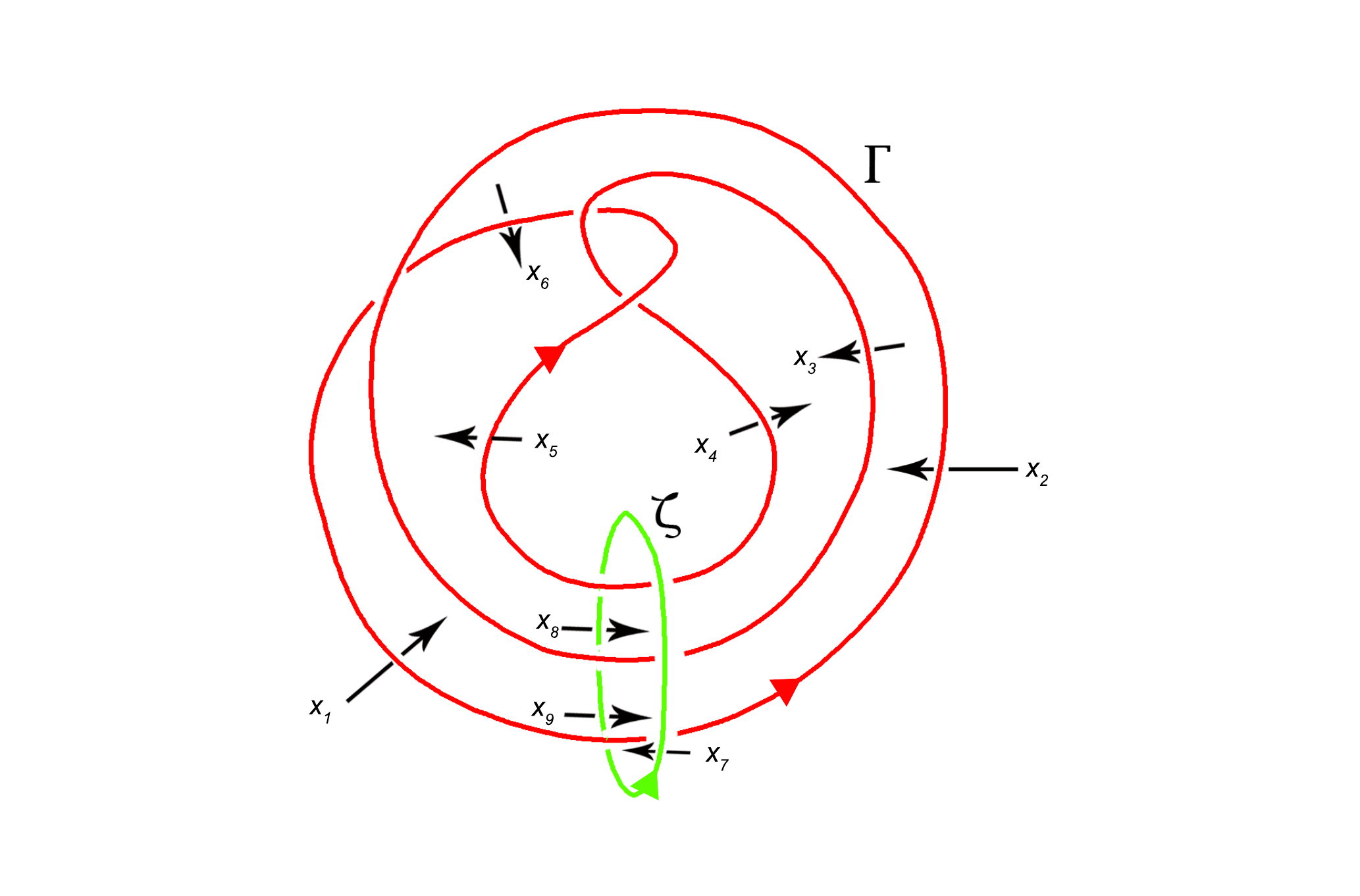}                   
\centering
\caption{Wirtinger diagram of the Mazur link}
\label{Mazurlink}
\end{figure}

Here we'll describe our interpretation of his argument for the nontriviality of $\pi _1(\partial Ma^4).$ Starting with the link $\Gamma \cup \zeta$ in $S^3$ pictured in Figure \ref{Mazurlink}, we obtain said figure's Wirtinger presentation (see \cite[p. 56]{Rol} for a treatment of Wirtinger presentations). This gives a presentation with exactly one generator for each arc in the link diagram. These generators correspond to the loops in $S^3$ which start at the viewer's nose (the basepoint), travel under the arc, and then return home (to the nose). Thus in our picture the generators are the $x_i$ as pictured. The relators in the presentation correspond to the undercrossings of pairs of arcs. As there are 9 undercrossings the Wirtinger presentation of this link diagram has 9 generators and 9 relators: $\left\langle x_1,...,x_9|r_1,...,r_9\right\rangle.$ 
We then perform a Dehn drilling on a tubular neighborhood, $N(\zeta)\approx \mathbb{B}^2\times S^1,$ of $\zeta.$ That is, we remove $\intr N(\zeta).$ Next, we perform a Dehn filling by sewing in $N(\zeta)$ backwards (ie sewing in a $S^1 \times \mathbb{B}^2$) along  $\partial N(\zeta).$ This Dehn surgery on ${S^3\approx (S^1\times \mathbb{B}^2) \cup_{S^1 \times S^1} (\mathbb{B}^2 \times S^1)}$ results in an $(S^1\times \mathbb{B}^2) \cup_{S^1 \times \partial \mathbb{B}^2} (S^1 \times \mathbb{B}^2)\approx S^1 \times S^2 $ with $\Gamma$ embedded as in Figure \ref{Mazcurve}. This surgery exchanges $N(\zeta)$'s meridian with its longitude. Thus the group element corresponding to following around $\zeta$ 
is killed and we must add in a relator, say $r_\zeta=x_5x_2^{-1}x_1^{-1}=1,$ to our presentation to adjust for this.

Adding a 2-handle along $\Gamma$ (and throwing out its portion of $Ma^4$'s interior) gives our $\partial Ma^4=(S^1 \times S^2 -\intr N(\Gamma))\cup_{\partial N(\Gamma)} (\mathbb{B}^2 \times S^1).$ We describe the gluing of $\mathbb{B}^2 \times S^1$ in two steps. We first glue in a thickened meridional disc, $D,$ which kills off $\Gamma'$ the curve to which it is it is attached (see Figure \ref{meridional disc}). Thus to our Wirtinger presentation we introduce a relator $r_\Gamma=x_7^{-1}x_5^{-1}x_7x_3^{-1}x_2^{-1}x_7^{-1}=1.$ We next glue on the rest of $\mathbb{B}^2 \times S^1.$ The closed complement of $D$ in $\mathbb{B}^2 \times S^1$ is a 3-ball and it is attached along its entire boundary. Adding such does not change the fundamental group and thus $\pi_1(\partial Ma^4)\cong  \left\langle x_1,...,x_9|r_1,...,r_9,r_\zeta,r_\Gamma\right\rangle.$

\begin{figure}[!ht]
\vspace{-5in}
\includegraphics[height=8in]{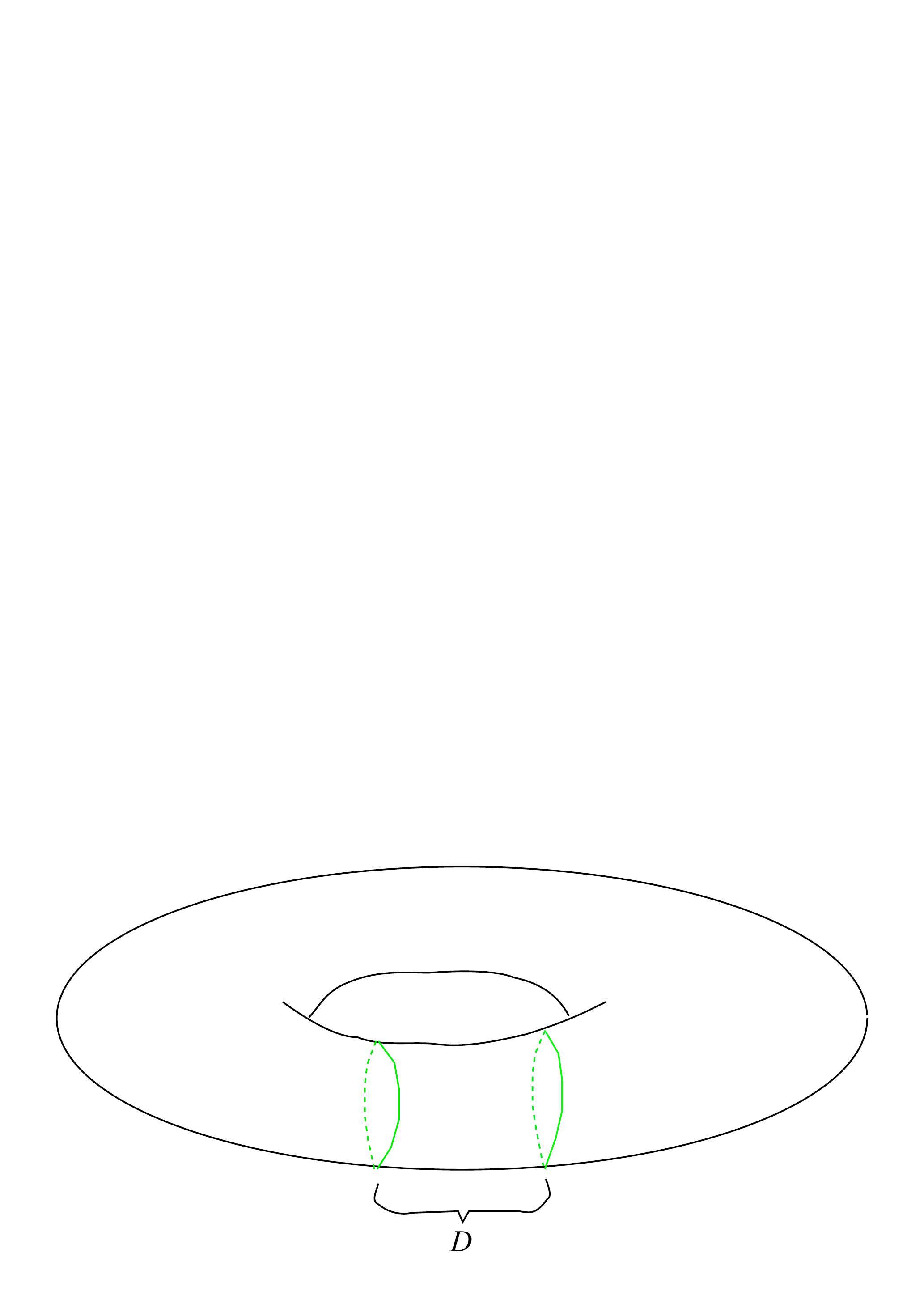}                   
\centering
\caption{Thickened Meridional Disc}
\label{meridional disc}
\end{figure}

Proceeding as in \cite{Maz}, let $\beta = x_7,$ $\lambda=x_2,$ (see fig.  \ref{Mazurlink}) and $\alpha=\beta \lambda.$ Via Tietze transformations (see \cite [p.~79]{Geo} for a treatment on Tietze transformations), it was shown in \cite{Maz} that 
	\[\pi_1(\partial Ma^4) \cong <\alpha, \beta | \beta^5=\alpha^7, 				\beta^4=\alpha^2\beta\alpha^2> \text{ and }\]
	\[G:=\pi_1(\partial Ma^4)/\text{nc}\{\beta^5=1\} \cong <\beta,\gamma|\gamma^7=\beta^5=(\beta\gamma)^2=1>\]
where $\gamma=\alpha^2.$  
We claim $G$ maps nontrivially into the subgroup of the isometries of the hyperbolic plane generated by reflections in the geodesics containing the edges of a triangle with angles $\pi/7,$ $\pi/5,$ and $\pi/2.$ 
That is, there exists a homomorphism
 \[h:G \rightarrow \text{Isom}(\mb{H}^2)\]
so that $\text{Im} h$ can be generated by rotations with centers at the vertices of a triangle with angles $\pi/7,$ $\pi/5,$ and $\pi/2.$ 
See Figure \ref{Triangle in H^2}. Here $h(\beta)=$ rotation with angle $-2\pi/5$ at $C$ and $h(\gamma)=$ rotation with angle $2\pi /7$ at $A$. 

We'll show the relator $h((\beta \gamma)^2)=1$ is satisfied. Let $r_{XY}$ be reflection in the geodesic containing $X$ and $Y.$ Then $h(\beta)=r_{BC}\circ r_{AC}$ and  $h(\gamma)=r_{AC}\circ r_{AB},$ so that $h(\beta)h(\gamma)= r_{BC}\circ r_{AC}\circ r_{AC}\circ r_{AB}=r_{BC} \circ r_{AB}.$ This last isometry is a rotation at $B$ with angle $-\pi$ and $h(\beta \gamma)$ is shown to have order 2.

This shows $\text{Im} h$ is nontrivial. Hence $\pi_1(\partial Ma^4)$ is nontrivial and thus $\partial Ma^4 \not\approx S^3.$

\begin{figure}[!ht]
\includegraphics[width=1.5in]{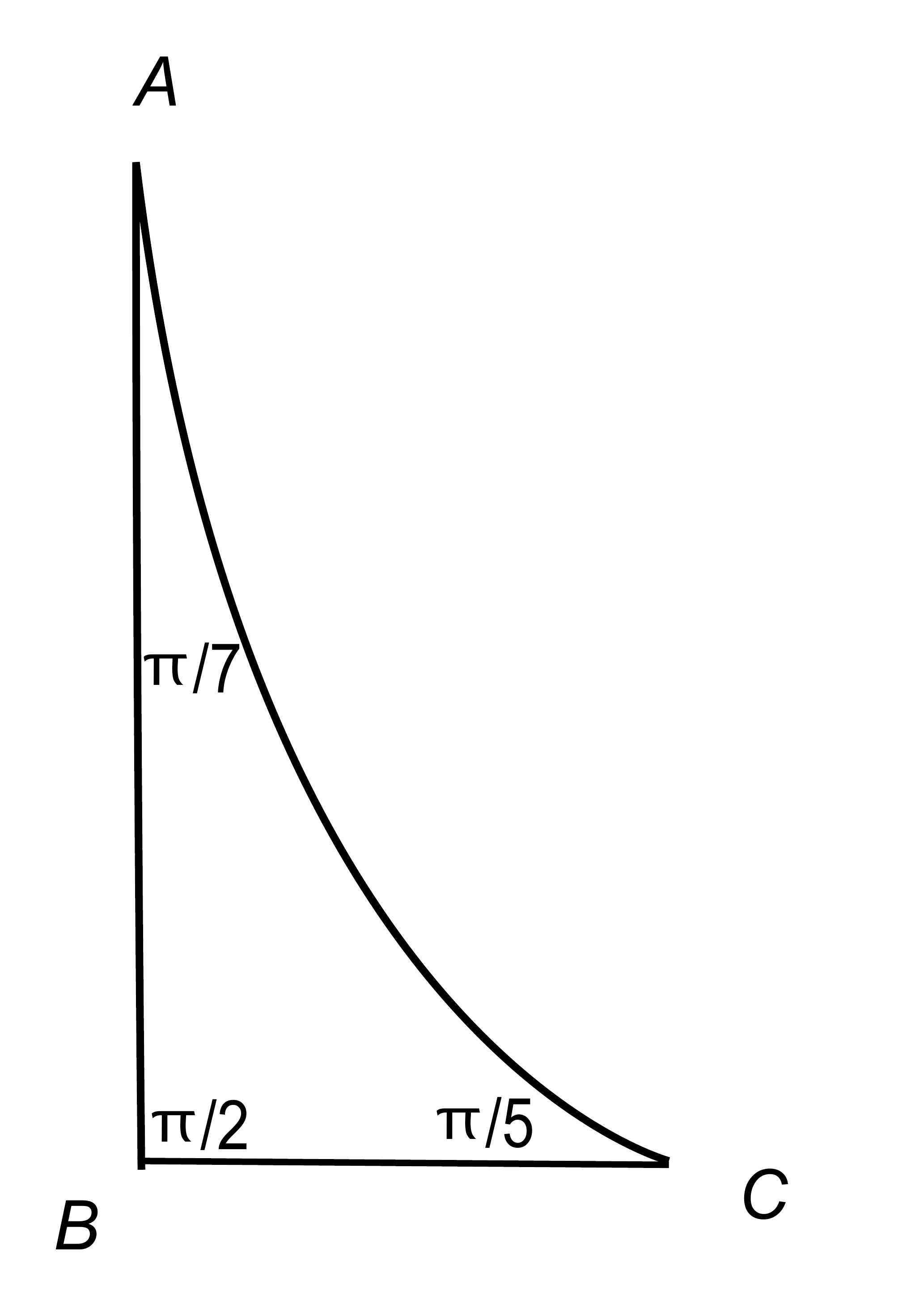}                   
\centering
\caption{Triangle in $\mathbb{H}^2$}
\label{Triangle in H^2}
\end{figure}

We now state and prove the following Proposition which we will employ in Section \ref{Theorem of Wright}.

\begin{prop}
\label{gamma nontrivial}
Let $m_\Gamma$ be the meridian of the torus $\partial T_\Gamma.$ Then $m_\Gamma$ is nontrivial in $S^1 \times S^2 - \text{int}(T_\Gamma).$
\end{prop}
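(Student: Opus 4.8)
The plan is to transport the nontriviality already established for $\pi_1(\partial Ma^4)$ down into the complement $X := S^1 \times S^2 - \intr(T_\Gamma)$. Recall that $\partial Ma^4$ is built from $X$ by gluing the solid torus $\mathbb{B}^2 \times S^1$ along $\partial T_\Gamma$, and that under this gluing the meridian of $\mathbb{B}^2 \times S^1$ is sent to the framing curve $\Gamma'$ (the longitude of $T_\Gamma$), which accounts for the relator $r_\Gamma$, while the longitude of $\mathbb{B}^2 \times S^1$ is sent to $m_\Gamma$. By van Kampen's theorem the inclusion $X \hookrightarrow \partial Ma^4$ therefore induces a surjection $q : \pi_1(X) \twoheadrightarrow \pi_1(\partial Ma^4) \cong \pi_1(X)/\langle\langle \Gamma' \rangle\rangle$. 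Composing with the maps already in hand gives a homomorphism
\[
\pi_1(X) \xrightarrow{\ q\ } \pi_1(\partial Ma^4) \longrightarrow G \xrightarrow{\ h\ } \text{Isom}(\mathbb{H}^2),
\]
where the middle arrow is the quotient by $\text{nc}\{\beta^5=1\}$ and $h$ is the representation into the $(2,5,7)$ triangle group constructed above.

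The crux is then a one-line logical reduction: a group homomorphism carries the identity to the identity, so to prove $m_\Gamma \neq 1$ in $\pi_1(X)$ it suffices to exhibit a single homomorphism out of $\pi_1(X)$ under which $m_\Gamma$ has nontrivial image, and the composite above is the natural candidate. Now $m_\Gamma$, being the meridian of the tubular neighborhood $T_\Gamma$, is a Wirtinger meridian of the component $\Gamma$; since all meridians of a single link component are mutually conjugate in the complement (a property preserved when we pass to the quotient $\pi_1(X)$ after the $\zeta$-surgery), $m_\Gamma$ is conjugate to the generator $x_7 = \beta$. Hence its image under the composite is conjugate to $h(\beta)$, which is the rotation through $-2\pi/5$ centered at $C$. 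This rotation has order $5$, so it is nontrivial, and nontriviality is conjugation invariant; therefore $m_\Gamma \neq 1$ in $\pi_1(X)$, as claimed.

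The only point requiring genuine care — and the expected main obstacle — is the bookkeeping in identifying $m_\Gamma$ with a meridian lying on $\Gamma$ rather than on $\zeta$, i.e.\ confirming from the link diagram that the arc labelled $x_7$ genuinely belongs to $\Gamma$ (this is consistent with $x_7$ appearing throughout the longitude relator $r_\Gamma$, whereas $\zeta$ is governed by $r_\zeta$). One must also check that $\beta$ is not accidentally killed in passing to $G$, but this is immediate since $h(\beta)$ has order exactly $5$, so $\beta$ survives in $G$ with order $5$. Once these identifications are pinned down, the chain of homomorphisms does all the work and no further computation is needed; in particular, no new group-theoretic argument beyond the triangle-group representation already established for $\partial Ma^4$ is required.
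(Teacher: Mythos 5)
Your overall strategy---push $m_\Gamma$ through the chain $\pi_1(X)\twoheadrightarrow\pi_1(\partial Ma^4)\twoheadrightarrow G\xrightarrow{h}\text{Isom}(\mathbb{H}^2)$ and exhibit a nontrivial image---is exactly the paper's, and your setup is sound: the solid torus is indeed glued so that its meridian kills $\Gamma'$, giving the surjection $q$, and a homomorphism detecting $m_\Gamma$ suffices. The gap is the step you yourself flag as ``the expected main obstacle'' and then do not close: the identification of $m_\Gamma$, up to conjugacy, with $x_7=\beta$. That identification requires the arc labelled $x_7$ to lie on $\Gamma$ rather than on $\zeta$, and the evidence you offer---that $x_7$ appears throughout $r_\Gamma$---points the wrong way. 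The word $r_\Gamma$ is spelled by the meridians of the arcs that $\Gamma'$ passes \emph{under}; since $\text{lk}(\Gamma,\zeta)=1$, that word must contain a $\zeta$-meridian with total exponent $\pm 1$, so appearing in $r_\Gamma$ is no evidence of lying on $\Gamma$, and the fact that $x_7$ is the one generator occurring three times in $r_\Gamma$ (the others once each) is exactly what one expects of the $\zeta$-arc that $\Gamma$ repeatedly threads under. The two kinds of meridian are genuinely different elements: in $H_1(S^1\times S^2-\text{int}(T_\Gamma))\cong\mathbb{Z}$ the meridians of $\Gamma$ die (the surgery kills the longitude of $\zeta$, which is homologous to $m_\Gamma$) while the meridians of $\zeta$ generate, so conjugating $m_\Gamma$ onto a $\zeta$-meridian is impossible and your argument stands or falls on a fact about Figure 2 that you have not verified.

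The paper instead takes $x_5$ as the representative of $m_\Gamma$ and does the relator computation you are trying to avoid: $r_9$ gives $x_1=\beta^{-2}\alpha\beta$, then $r_\zeta$ gives $x_5=\beta^{-2}\alpha^2=\beta^{-2}\gamma$, whose image under $h$ is $(\text{rotation by }4\pi/5\text{ at }C)(\text{rotation by }2\pi/7\text{ at }A)$---shown nontrivial only by observing that it moves the point $A$, with no evident conjugacy to $h(\beta)$. Had $m_\Gamma$ really been conjugate to $\beta$, its image would be forced to be an elliptic of order $5$ and this more delicate check would be unnecessary; that the paper resorts to it is further indication that your conjugacy claim fails. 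To repair the argument you must either verify from the link diagram that $x_7$ lies on $\Gamma$ (which the evidence above suggests is false), or choose a generator that demonstrably does (e.g.\ $x_5$) and then carry out the computation of its image---at which point you have reproduced the paper's proof.
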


\begin{proof}
We choose $x_5$ as our representative of $m_\Gamma.$ By the relator \[r_9:x_1=x_7^{-1}x_2x_7=\beta^{-1}\lambda\beta=\beta^{-1}(\beta^{-1}\alpha)\beta\] we get $x_1=\beta^{-2}\alpha\beta.$ By $r_\zeta: x_5=x_1x_2$ we obtain \[x_5=(\beta^{-2}\alpha\beta)(\beta^{-1}\alpha)=\beta^{-2}\alpha^2=\beta^{-2}\gamma.\]

Thus 
\begin{eqnarray}
h(x_5)&=&h(\beta^{-2}\gamma)\nonumber \\
			&=& h(\beta^{-2})h(\gamma)\nonumber\\
			&=&	\text{(rotation of }4\pi/5 \text{ at } C)\text{(rotation of }2\pi/7 \text{ at } A)\nonumber\\
			&\neq& 1_{\mb{H}^2} \nonumber \hspace{1cm}\mbox{(since $A$ is not fixed).}
\end{eqnarray}
Thus $x_5$ is not trivial in $\partial Ma^4.$ Hence $x_5$ is nontrivial in $S^1 \times S^2- \text{int}(T_\Gamma).$ This concludes the proof of Proposition \ref{gamma nontrivial}. 
\end{proof}

We believe the following question is open.

\begin{ques} 
\label{Ma Split?}
Does $Ma^4$ split into closed balls?
\end{ques}

\begin{ques} 

Does there exist an infinite number of closed 4-dimensional splitters?
\end{ques}

We will give an answer to this question in Section \ref{Theorem of Wright}.

\section{The Jester's Manifolds}
Our definition of the Jester's manifolds is analogous to our definition of the Mazur manifolds. We start with a $S^1 \times \mathbb{B}^3$ and within its $S^1 \times S^2$ boundary we select a curve $C$ as follows. Let $T$ be a tubular neighborhood of $C$ in our $S^1 \times S^2$. We have chosen $C$ so that it is the preimage of the Mazur curve $\Gamma$ under the standard double covering map ${p:S^1 \times \mathbb{B}^3 \rightarrow S^1 \times \mathbb{B}^3 }$ which is a degree 2 map in the first coordinate and the identity in the second. 

\begin{figure}[!ht]
\centering
\vspace{-7in}
\includegraphics[height=8in]{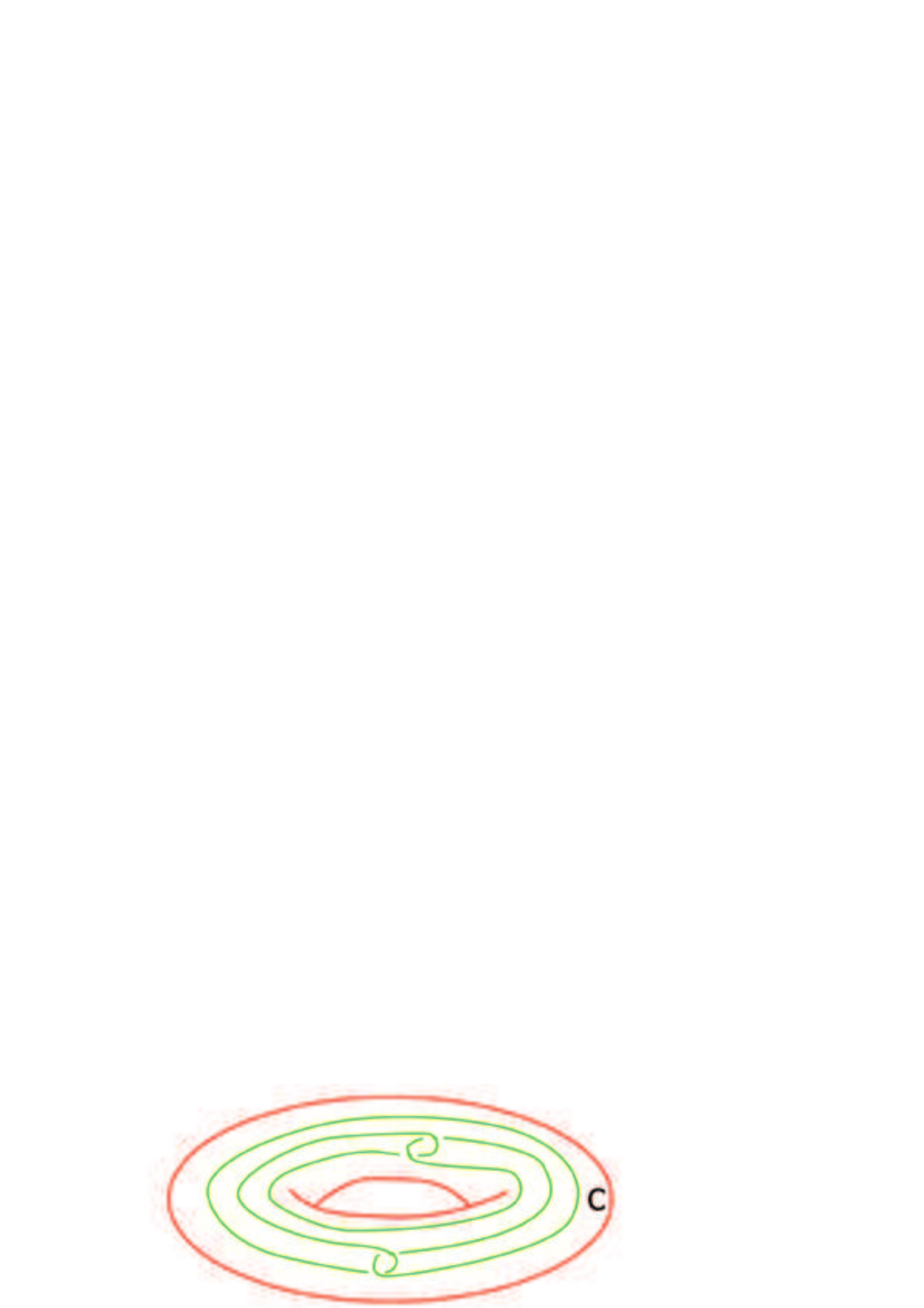}                  
\caption{$C \subset \partial(S^1 \times \mathbb{B}^3) \subset$ a Jester's Manifold}
\end{figure}

Then, given a framing ${\Psi:S^1 \times \mathbb{B}^2 \rightarrow T,}$ define  
\[M_{\Psi}=S^1\times\mathbb{B}^3\cup_{\Psi} \mathbb{B}^2 \times \mathbb{B}^2\]
where the domain is the $S^1\times\mathbb{B}^2$ factor in the boundary of our $2$-handle $h^{(2)}\approx \mathbb{B}^2 \times \mathbb{B}^2.$ We call such an  $M_{\Psi}$ a \emph{Jester's manifold.}

(In Chapter 4, we will expand our definition of Jester's manifold to include analogous handle attachments but using \emph{pseudo}-handles.)

Initially, we had hoped that, by altering the framings, we could prove the existence of an infinite collection of these Jester's manifolds. We proceeded with the aim of showing the fundamental groups of the boundaries were distinct and nontrivial. Unfortunately, due to the significantly more complicated Wirtinger presentations involved, we did not meet this goal. Fortunately, however, we were able to get around this problem by employing a technique of David Wright's (see section \ref{Theorem of Wright}).

The following is still open.

\begin{ques} 
Does there exist a Jester's manifold that is not homeomorphic to a ball? Are there an infinite number of Jester's manifolds (as defined above)?

\end{ques}

\chapter{Spines}

\section{Collapses}

We borrow our definitions (and some figures) of collapse from Marshall Cohen's \cite[pp.~3,4,14,15]{Coh}. We will be denoting the join of two simplicial complexes $A$ and $B$ by $AB.$

\begin{defn}
If $K$ and $L$ are finite simplicial complexes we say that there is an \emph{elementary simplicial collapse} from $K$ to $L,$ and write $K \searrow^e L,$ if $L$ is a subcomplex of $K$ and $K=L \cup aA$ where $a$ is a vertex of $K$, $A$ and $aA$ are simplexes of $K$, and $aA \cap L= a(\partial A).$ We call such an $A$ a \emph{free face} of $K.$
\end{defn}

\begin{figure}[!ht]
\vspace{-5.5in}
\includegraphics[height=8in]{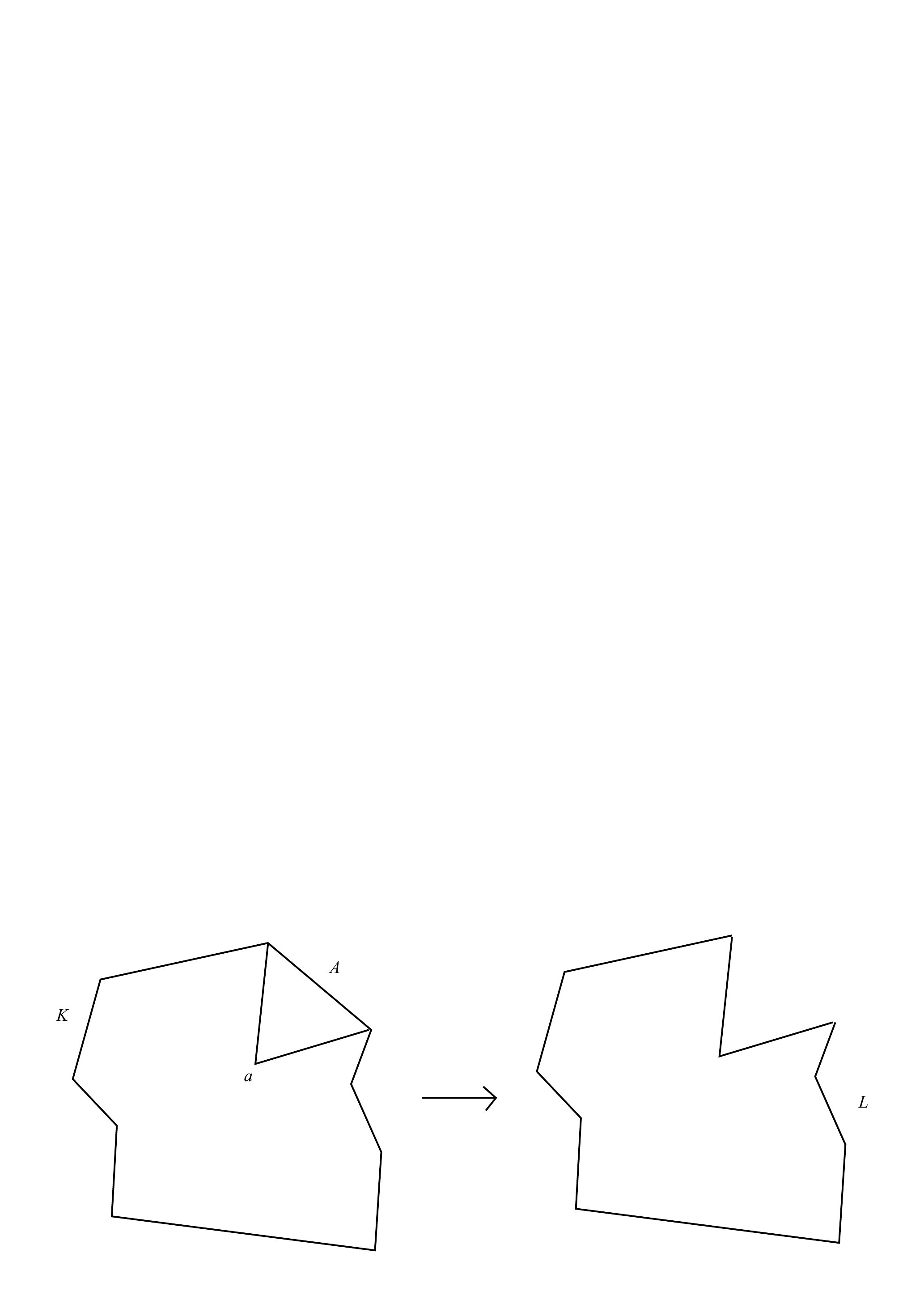}
\centering                   
\caption{Elementary Collapse (simplicial) $K \searrow^e L$, $A$ is a free face}
\end{figure}

Observe that a free face completely specifies an elementary simplicial collapse.

\begin{defn} Suppose that $(K,L)$ is a finite CW pair. Then $K \searrow^e L$--i.e. $K$ \emph{collapses to} $L$ \emph{by an elementary collapse}--iff
	\begin{enumerate}
	\item $K=L \cup e^{n-1} \cup e^n$ where $e^n$ and $e^{n-1}$ are not in $L,$
	\item there exists a ball pair $(Q^n, Q^{n-1}) \approx (\mathbb{B}^n, \mathbb{B}^{n-1})$ and a map $\varphi: Q^n \rightarrow K$ such that
		\begin{enumerate}[a)]
		\item $\varphi$ is a characteristic map for $e^n$
		\item $\varphi |Q^{n-1}$ is a characteristic map for $e^{n-1}$
		\item $\varphi (P^{n-1}) \subset L^{n-1},$ where $P^{n-1}\equiv \text{cl}(\partial Q^n - Q^{n-1}).$
		\end{enumerate}
	\end{enumerate}
\end{defn}	

In both the simplicial and CW cases we define

\begin{defn}
$K$ \emph{collapses} to $L$, denoted $K \searrow L,$ if there is a finite sequence of elementary collapses
\[K=K_0\searrow^e K_1 \searrow^e K_2 \searrow^e ... \searrow^e K_l=L.\]
If $K$ collapses to a point we say $K$ is \emph{collapsible} and write $K \searrow 0$.
\end{defn}

\begin{figure}[!ht]
\vspace{-5in}
\includegraphics[height=7in]{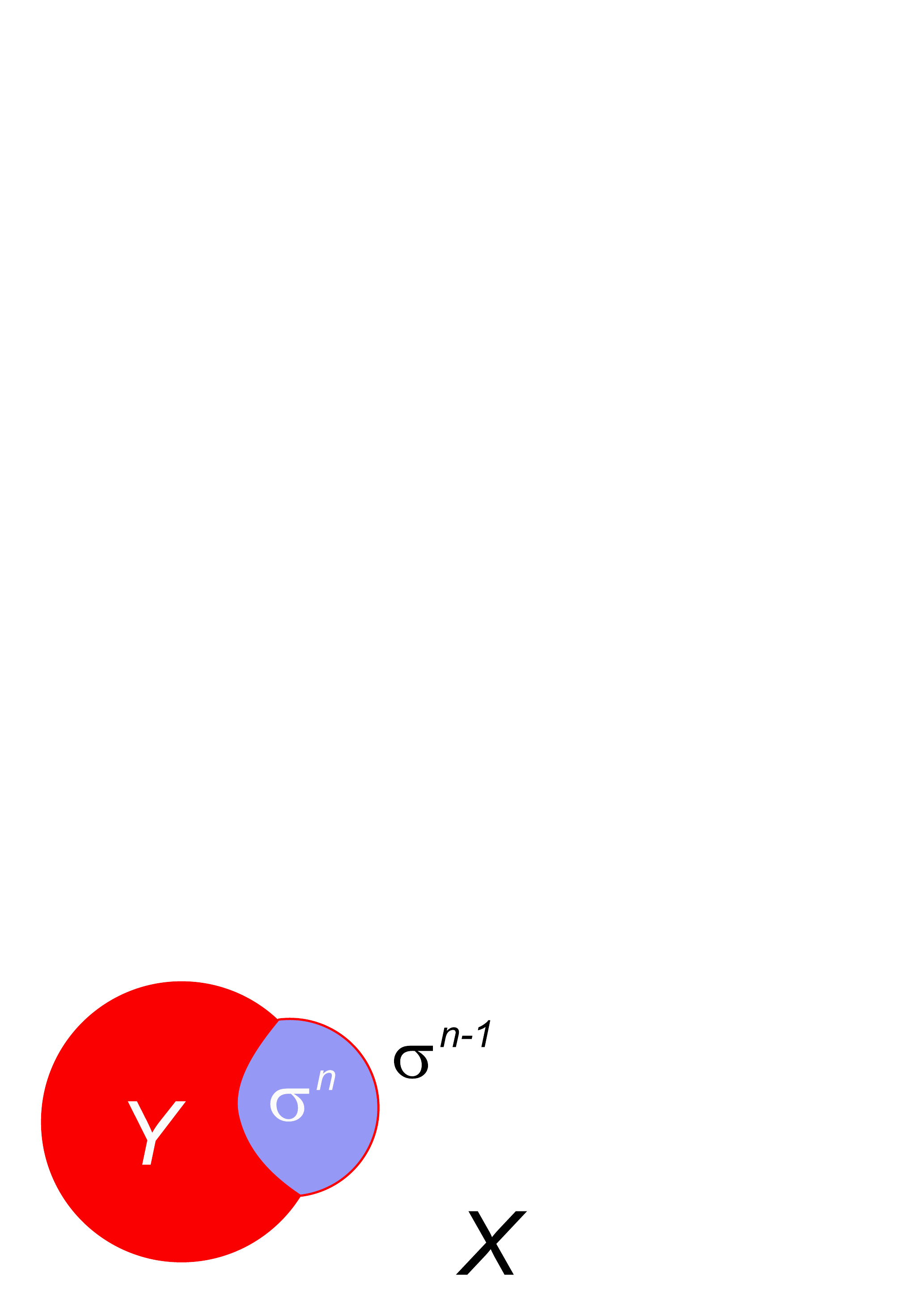}
\centering                   
\caption{Elementary Collapse (CW) $X \searrow Y$}
\end{figure}

\begin{defn} Suppose $M$ is a compact PL manifold. If $K$ is a PL manifold subcomplex of $M$ contained in $\intr M$ with $M \searrow K$ we say $K$ is a \emph{spine} of $M.$
\end{defn}

We will make use of the following regular neighborhood theory due to J. H. C. Whitehead. The following two propositions, theorem, and corollary can be found in \cite[pp.~40,41]{RoSa}.

\begin{prop} Suppose $M \supset M_1$ are PL $n$-manifolds with $M \searrow M_1.$ Then there exists a homeomorhism $h:M\rightarrow M_1.$ 
\end{prop}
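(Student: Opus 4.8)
The plan is to show that the region swept out by the collapse is a collar on $\partial M_1$, so that $M$ is obtained from $M_1$ by absorbing a collar and is therefore PL homeomorphic to $M_1$. The starting observation controls what an elementary simplicial collapse can do inside an $n$-manifold. If $A$ is a free face of the current complex, the unique simplex $aA$ properly containing it is maximal; since every simplex of a PL $n$-manifold is a face of some $n$-simplex, $aA$ must itself be an $n$-simplex, forcing $\dim A = n-1$. An $(n-1)$-simplex that is a face of exactly one $n$-simplex lies in the boundary, so $A$ lies in the boundary of the current manifold. Thus, as long as we remain at the manifold level, each elementary collapse simply shaves an $n$-simplex off the boundary along a free $(n-1)$-face.

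First I would analyze a single shaving move in the case where the shaved complex is again an $n$-manifold. Removing the open simplices $\intr(aA)$ and $\intr A$ deletes an $n$-ball $aA \approx \mb{B}^n$ that meets the rest of the manifold along $\cl(\partial(aA)\setminus A) \approx \mb{B}^{n-1}$ and meets the old boundary along the complementary face $A \approx \mb{B}^{n-1}$. Pushing $A$ across $aA$ onto $\cl(\partial(aA)\setminus A)$ is an Alexander-type PL homeomorphism of $M$ onto the shaved manifold, supported in a neighborhood of $aA$ and fixing everything outside it. Stacking these moves across the whole collapse $M = K_0 \searrow^e \cdots \searrow^e K_l = M_1$ should assemble the closure $R = \cl(M \setminus M_1)$ as a mapping-cylinder neighborhood of $\partial M_1$, that is, a product collar $\partial M_1 \times [0,1]$ meeting $M_1$ in $\partial M_1 \times \{0\}$. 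Granting this, the conclusion follows by collar absorption: $M = M_1 \cup_{\partial M_1} R \approx M_1 \cup_{\partial M_1}(\partial M_1 \times [0,1]) \approx M_1$, the last homeomorphism being the standard one that swallows an external collar using the internal collar of $\partial M_1$ in $M_1$.

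The main obstacle is that the intermediate complexes $K_i$ in a collapse need not themselves be $n$-manifolds (a half-collapsed boundary simplex can leave lower-dimensional debris), so the stacking in the previous paragraph is not a naive manifold-to-manifold induction, and the accumulated shavings must be shown to fit together as an honest product rather than as something wilder. This is precisely the situation handled by J.~H.~C. Whitehead's regular neighborhood theory: the hypothesis $M \searrow M_1$ exhibits $M$ as a regular neighborhood of $M_1$, and since $M_1$ is a full-dimensional PL submanifold, $M_1$ together with an external collar of $\partial M_1$ is another regular neighborhood of $M_1$; the uniqueness of regular neighborhoods up to PL homeomorphism rel $M_1$ then identifies $M$ with $M_1 \cup (\text{collar}) \approx M_1$. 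I would therefore either invoke this uniqueness directly, or else prove $R \approx \partial M_1 \times [0,1]$ by induction on $l$ together with a general-position argument tracking how each newly removed $n$-simplex attaches to the previously removed region.
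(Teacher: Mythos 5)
The paper offers no proof of this proposition at all --- it is quoted directly from Rourke--Sanderson's regular neighborhood theory --- and your final paragraph reproduces exactly the argument of that source: $M$ itself is a regular neighborhood of $M_1$ in $M$ (by the characterization of regular neighborhoods via collapsing), $M_1$ together with a collar on $\partial M_1$ is another, so uniqueness of regular neighborhoods plus collar absorption gives $M\approx M_1$. Your first two paragraphs give a correct local picture of a single elementary collapse in a manifold (the free face is forced to be an $(n-1)$-face of a single $n$-simplex lying in the boundary), but as you note yourself this does not iterate because the intermediate complexes $K_i$ need not be $n$-manifolds, so the regular-neighborhood uniqueness step is carrying the entire proof; that is acceptable and is precisely the standard argument.
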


\begin{thm} Suppose $X \subset M,$ where $M$ is a PL manifold, $X$ is  compact polyhedron, and $X \searrow Y.$ Then a regular neighborhood of $X$ in $M$ collapses to a regular neighborhood of $Y$ in $M.$
\end{thm}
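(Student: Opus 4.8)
The plan is to reduce the statement to the case of a single elementary collapse and then dispatch that case by a local thickening argument, the point being to construct the regular neighborhoods as derived (simplicial) neighborhoods so that they intersect cleanly.

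\emph{Reduction to one elementary collapse.} By definition $X \searrow Y$ is a finite sequence $X = X_0 \searrow^e X_1 \searrow^e \cdots \searrow^e X_k = Y$. First I would prove the theorem for a single elementary collapse $X_i \searrow^e X_{i+1}$, producing regular neighborhoods with $N(X_i) \searrow N(X_{i+1})$. To chain these together I would invoke the uniqueness of regular neighborhoods (Whitehead's regular neighborhood theorem, part of the theory cited just above): after the first collapse $N(X_0)\searrow N(X_1)$, the set $N(X_1)$ is again a regular neighborhood of $X_1$ in $M$, so by uniqueness it may be identified, up to PL homeomorphism, with the regular neighborhood used at the next stage. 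Iterating gives $N(X_0)\searrow N(X_1)\searrow\cdots\searrow N(X_k)$, and hence $N(X)\searrow N(Y)$.

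\emph{The single collapse.} Here $X = Y \cup aA$ with $A$ a free face and $aA \cap Y = a(\partial A)$. I would triangulate $M$ by a derived subdivision $M'$ in which $X$ and $Y$ are full subcomplexes, and take the regular neighborhoods to be the derived (simplicial) neighborhoods $N(X) = N(X,M')$ and $N(Y) = N(Y,M')$; the existence part of the regular neighborhood theorem guarantees these are genuine regular neighborhoods of $X$ and $Y$ in $M$. By the way derived neighborhoods decompose over a union of subcomplexes, one gets $N(X) = N(Y) \cup N(aA)$ with the two pieces meeting exactly in $N(a\partial A)$, where the fullness of the subcomplexes and the free-face condition are precisely what force this clean intersection.

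\emph{The local model and the main obstacle.} The simplex $aA$ is a cone, hence collapsible, so its regular neighborhood $N(aA)$ in the manifold $M$ is a PL ball $B$, and $N(a\partial A)$ is a ball sitting in the frontier of $B$. The elementary collapse $aA \searrow a(\partial A)$ that pushes in the free face $A$ should thicken to a collapse of $B$ onto $N(a\partial A)$ together with the part of the frontier of $B$ facing away from $Y$, and performing this thickened collapse exhibits $N(X)\searrow N(Y)$. I expect the genuine technical obstacle to be exactly this local step: one must verify that $\bigl(N(aA),\,N(a\partial A)\bigr)$ really is a ``ball collapsing onto a face'' pair, and, more importantly, that the thickened collapse can be carried out \emph{rel} $N(Y)$ — that is, that thickening the free face does not disturb the neighborhood of $Y$. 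This is where the second-derived construction and the full-subcomplex hypothesis earn their keep, since they are what guarantee $N(aA)\cap N(Y)=N(a\partial A)$ with no stray intersections elsewhere.
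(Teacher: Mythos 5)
First, a caveat about the comparison itself: the paper does not prove this statement --- it is quoted, together with the surrounding propositions, from Rourke and Sanderson (pp.~40--41) --- so there is no in-paper argument to measure yours against. That said, your outline is the standard Rourke--Sanderson strategy: reduce to a single elementary collapse, take derived neighborhoods of full subcomplexes so that $N(X)=N(Y)\cup N(aA)$ with $N(Y)\cap N(aA)=N(a\partial A)$, and then thicken the elementary collapse locally. The reduction step and the clean-intersection step are both sound; for the chaining you should only add that the uniqueness isotopy can be taken to fix $X_i$ pointwise, hence fixes $X_{i+1}\subset X_i$, so the image of the next-stage neighborhood really is a regular neighborhood of $X_{i+1}$ and the collapses concatenate.

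The gap is in the local step, and it is concrete. You assert that $N(a\partial A)$ is ``a ball sitting in the frontier of $B=N(aA)$'' and that one should verify $\bigl(N(aA),N(a\partial A)\bigr)$ is a ball-collapsing-onto-a-face pair. Neither is correct: $N(a\partial A)$ is a regular neighborhood in the $n$-manifold $M$, hence a codimension-zero $n$-ball contained in $N(aA)$, not an $(n-1)$-ball in its frontier, so $\bigl(N(aA),N(a\partial A)\bigr)$ cannot be a ball--face pair. Moreover, the collapse you propose --- $N(aA)$ onto $N(a\partial A)$ together with the part of the frontier of $N(aA)$ facing away from $Y$ --- would, after applying the union lemma, yield $N(X)\searrow N(Y)\cup P$ where $P$ is that extra frontier piece; this strictly contains $N(Y)$ and is not even a manifold, so it does not establish $N(X)\searrow N(Y)$. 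What the union lemma actually requires is the exact collapse $N(aA)\searrow N(aA)\cap N(Y)=N(a\partial A)$. The correct ball--face pair to exhibit is $\bigl(\cl(N(aA)-N(a\partial A)),\ \cl(N(aA)-N(a\partial A))\cap N(a\partial A)\bigr)$: using the lemma that such a closed complement is a regular neighborhood, meeting the boundary regularly, of a collapsible set (here $aA$ truncated near $a\partial A$), one shows $\cl(N(aA)-N(a\partial A))$ is an $n$-ball meeting $N(a\partial A)$ in a common $(n-1)$-face; a ball collapses to any face, and then $N(aA)=N(a\partial A)\cup\cl(N(aA)-N(a\partial A))\searrow N(a\partial A)$, which is exactly what the induction needs.
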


Thus if $K$ is a spine of $M$ then for any regular neighborhood $N(K)$ of $K$ in M we have $N(K) \approx M.$

\begin{prop} If $X\searrow 0$ then a regular neighborhood of $X$ is a ball.
\end{prop}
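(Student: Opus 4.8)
The plan is to bootstrap directly off the two immediately preceding results of Whitehead. Fix a PL $n$-manifold $M$ in which $X$ is PL embedded, and let $N$ be a regular neighborhood of $X$ in $M$; being a regular neighborhood of a compact polyhedron in a PL manifold, $N$ is itself a compact PL $n$-manifold. Since $X \searrow 0$, the collapse terminates at a single point $p \in X$, so the hypothesis reads $X \searrow \{p\}$.

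First I would apply the preceding Theorem (that regular neighborhoods collapse to regular neighborhoods) to the collapse $X \searrow \{p\}$. This yields that $N$, a regular neighborhood of $X$, collapses to a regular neighborhood $N_0$ of the point $\{p\}$ in $M$; that is, $N \searrow N_0$. Next I would identify $N_0$ as an $n$-ball: a regular neighborhood of a single point in a PL $n$-manifold is the underlying space of the simplicial star of that point in a suitable subdivision, hence homeomorphic to $\mathbb{B}^n$. This is the base case of regular neighborhood theory, and in particular $N_0$ is a compact PL $n$-manifold.

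Finally, both $N$ and $N_0$ are PL $n$-manifolds with $N \supset N_0$ and $N \searrow N_0$, so the preceding Proposition (collapse of $n$-manifolds implies homeomorphism) produces a homeomorphism $N \approx N_0 \approx \mathbb{B}^n$. Thus the chosen regular neighborhood is a ball, and since regular neighborhoods are unique up to homeomorphism, every regular neighborhood of $X$ is a ball.

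The only genuinely delicate point is the base case invoked in the middle step, namely confirming that a regular neighborhood of a point really is an $n$-ball, together with bookkeeping to ensure the ambient dimension is handled uniformly so that $N$ and $N_0$ are honest PL $n$-manifolds to which the manifold-collapse Proposition applies. Everything else is a direct composition of the two quoted results.
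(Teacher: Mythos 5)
Your argument is correct and is precisely the standard derivation: the paper itself offers no proof of this proposition, simply citing Rourke--Sanderson, and the chain you describe ($N(X)\searrow N(\{p\})$ by the preceding theorem, $N(\{p\})\approx\mathbb{B}^n$ as the star of a point, then the manifold-collapse proposition plus uniqueness of regular neighborhoods) is exactly the intended dependency on the two preceding quoted results. No gaps; the base case you flag as delicate is handled by the paper's own definition of PL manifold, since the star of a vertex is the cone on its link.
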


\begin{cor}
Suppose $M$ is a manifold with a spine $K$ and $K\searrow 0.$ Then $M$ is a ball.
\end{cor}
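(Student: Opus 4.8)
The plan is to use a regular neighborhood of $K$ as a bridge between $M$ and a ball. Since $K$ is a spine of $M$, it is by definition a compact polyhedron lying in $\intr M$, so I may choose a regular neighborhood $N(K)$ of $K$ taken in $M$ and contained entirely in the interior. The whole proof will then amount to observing that this single object $N(K)$ is simultaneously homeomorphic to $M$ and homeomorphic to a ball.

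Both of the needed facts are already supplied above. First, because $K$ is a spine, the remark following the regular neighborhood theorem tells me directly that \emph{any} regular neighborhood satisfies $N(K) \approx M$; at bottom this is the regular neighborhood theorem applied to the trivial collapse $K \searrow K$ together with the earlier proposition that a collapse of $n$-manifolds $M \searrow N(K)$ produces a homeomorphism. Second, the standing hypothesis $K \searrow 0$ lets me invoke the proposition stating that a regular neighborhood of a collapsible polyhedron is a ball, giving $N(K) \approx \mathbb{B}^n$. Chaining these, $M \approx N(K) \approx \mathbb{B}^n$, which is exactly the assertion that $M$ is a ball.

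Because every ingredient is handed to me by the preceding results, I do not expect a genuine obstacle here; the corollary is essentially a formal consequence, and the only real work is bookkeeping. The two points I must be careful about are (i) taking $N(K)$ in the \emph{interior} of $M$, so that it is an honest interior regular neighborhood and the ``regular neighborhood of a collapsible set is a ball'' proposition applies with no boundary complications, and (ii) confirming that I am entitled to treat $M$ itself as a regular neighborhood of its spine when I assert $N(K) \approx M$ — which is precisely what the remark (resting on uniqueness of regular neighborhoods) guarantees. Once these are pinned down, the conclusion follows immediately.
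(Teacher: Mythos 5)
Your proof is correct and is exactly the argument the paper intends: the corollary is stated without proof precisely because it follows by chaining the remark that any regular neighborhood of a spine is homeomorphic to $M$ with the proposition that a regular neighborhood of a collapsible polyhedron is a ball. (One small quibble: the homeomorphism $N(K)\approx M$ comes from applying the regular neighborhood theorem to the collapse $M\searrow K$, not to the trivial collapse $K\searrow K$, but since you are invoking the paper's already-established remark this does not affect the validity of your argument.)
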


\begin{prop} 
Suppose $W$ is a PL manifold and $A$ and $B$ are simplicial complexes $A,B \subset \intr W.$ If $W\searrow A \cup B$ with $A,B,A\cap B \searrow 0$ then $W$ splits into closed balls.
\end{prop}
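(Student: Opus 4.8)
The plan is to realize the hypothesized decomposition $A\cup_C B$ (writing $C=A\cap B$) by a triple of regular neighborhoods inside $W$, each of which is a PL ball, and then transport the resulting splitting from a regular neighborhood of the spine $A\cup B$ back to $W$ itself. Concretely, I would produce compact $n$-manifolds $N_A,N_B,N_C\subset W$ that are regular neighborhoods of $A,B,C$ respectively and that fit together as $N_A\cap N_B=N_C$ and $N_A\cup N_B=N(A\cup B)$, a regular neighborhood of $A\cup B$. Granting this, each of $N_A,N_B,N_C$ is a regular neighborhood of a collapsible complex, hence a ball by the proposition ``if $X\searrow 0$ then a regular neighborhood of $X$ is a ball.'' Thus $N(A\cup B)=N_A\cup_{N_C}N_B=\mathbb{B}^n\cup_{\mathbb{B}^n}\mathbb{B}^n$ already splits into closed balls.

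To build these neighborhoods with the required intersection behaviour, I would first fix a triangulation of $W$ in which $A$ and $B$ (and therefore $C=A\cap B$) are subcomplexes, and then pass to a barycentric subdivision so that all three are \emph{full} subcomplexes. Taking $N_A,N_B,N_C$ to be the corresponding derived (second-derived) simplicial neighborhoods, standard regular-neighborhood theory \cite{RoSa} guarantees both that these are genuine regular neighborhoods in the PL manifold $W$ and that they obey the simplicial identities $N(A)\cap N(B)=N(A\cap B)$ and $N(A)\cup N(B)=N(A\cup B)$. Since each of $N_A,N_B,N_C$ is a regular neighborhood inside the $n$-manifold $W$, it is itself a compact $n$-manifold, and collapsibility of $A,B,C$ upgrades each to a PL $n$-ball, so the intersection and union are of the right dimension to match the definition of a closed splitter.

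Finally, because $A\cup B\subset\intr W$ and $W\searrow A\cup B$, the complex $A\cup B$ is a spine of $W$, so by the remark that a manifold is PL homeomorphic to any regular neighborhood of its spine we get $W\approx N(A\cup B)=N_A\cup_{N_C}N_B$. The splitting of $N(A\cup B)$ into closed balls therefore pulls back to a splitting of $W$, completing the argument. The main obstacle is the middle step: securing the intersection identity $N_A\cap N_B=N_C$ (rather than merely $N_A\cap N_B\supset N_C$) simultaneously with each neighborhood being a bona fide regular neighborhood. This is exactly what the fullness hypothesis and the passage to a second derived subdivision are designed to provide, and it is the only place where the combinatorics must be handled with care; the remaining steps are immediate applications of the regular-neighborhood results quoted above.
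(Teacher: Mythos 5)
Your proof follows essentially the same route as the paper: construct regular neighborhoods $N_A$, $N_B$ of $A$, $B$ with respect to a common triangulation so that $N_A\cap N_B$ is a regular neighborhood of $C$, invoke that regular neighborhoods of collapsible complexes are balls, and identify $W$ with the regular neighborhood $N_A\cup N_B$ of its spine $A\cup B$. Your explicit attention to fullness and second derived neighborhoods merely supplies the standard justification for the intersection identity that the paper asserts without comment, so the argument is correct and not materially different.
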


\begin{proof} Let $A, B,$ and $C$ be such that $W \searrow A \cup_C B$ with $A,B,C \searrow 0.$ Regular neighborhoods of collapsible subcomplexes are piecewise linear balls. So given a triangulation of $W$ with $A$ and $B$ as subcomplexes, we construct (with respect to this triangulation) regular neighborhoods $N_A$ of $A$ and $N_B$ of $B$ and we have that $N_A$ and $N_B$ are balls and $N_A \cap N_B$ is a regular neighborhood of $C$ and as such is also a ball. $N_A \cup N_B$ is a regular neighborhood of $A \cup B$, a spine of $W$, so $N_A \cup N_B$ is homeomorphic to $W.$
\end{proof}

\section{The Dunce Hat}
The \emph{dunce hat}, $D,$ is defined as the quotient space obtained by identifying the edges of a triangular region as pictured in Figure \ref{D}. It has a triangulation as shown in Figure \ref{D tri'd}.

\begin{figure}[!ht]

\vspace{-3in}
\includegraphics[height=5in]{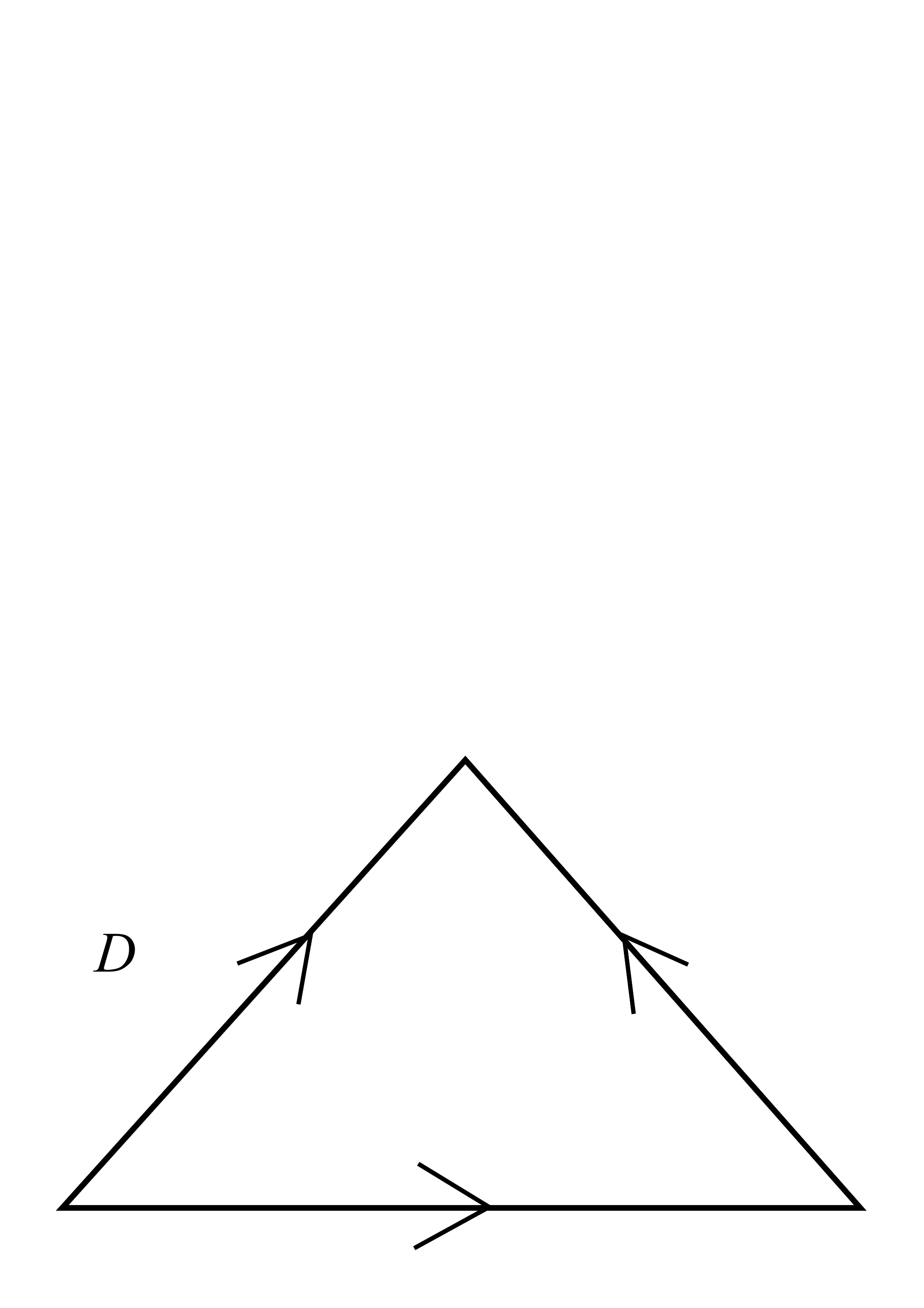}
\centering                 
\caption{The Dunce Hat}
\label{D}
\end{figure}

\begin{figure}[!ht]
\vspace{-2in}
\includegraphics[height=5in]{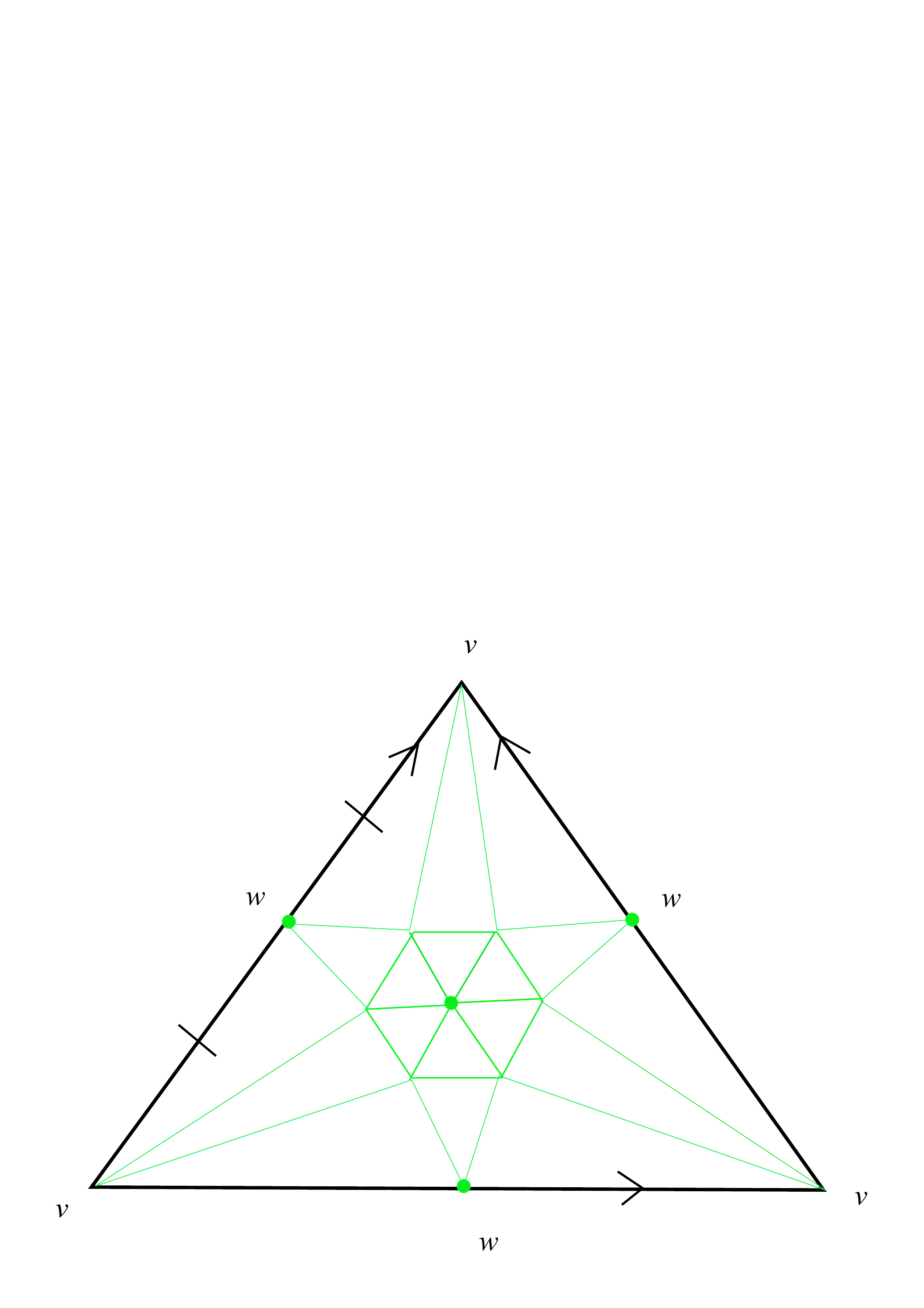} 
\centering                 
\caption{Dunce Hat Triangulated}
\label{D tri'd}
\end{figure}
 
$D$ can also be realized by sewing a disc $\mathbb{B}^2$ to a circle $S^1$ (along the boundary of the disc) with an attaching map as follows. Sew, in the counterclockwise direction, the first third (say $[0,2\pi/3)$) of the disc's boundary circle bijectively onto $S^1.$ Likewise, continuing in the same direction sew the second third onto $S^1$. The last third we sew bijectively in the reverse direction. See Figure \ref{D map}.
 
\begin{figure}[!ht]
\vspace{-3.5in}
\includegraphics[height=6in]{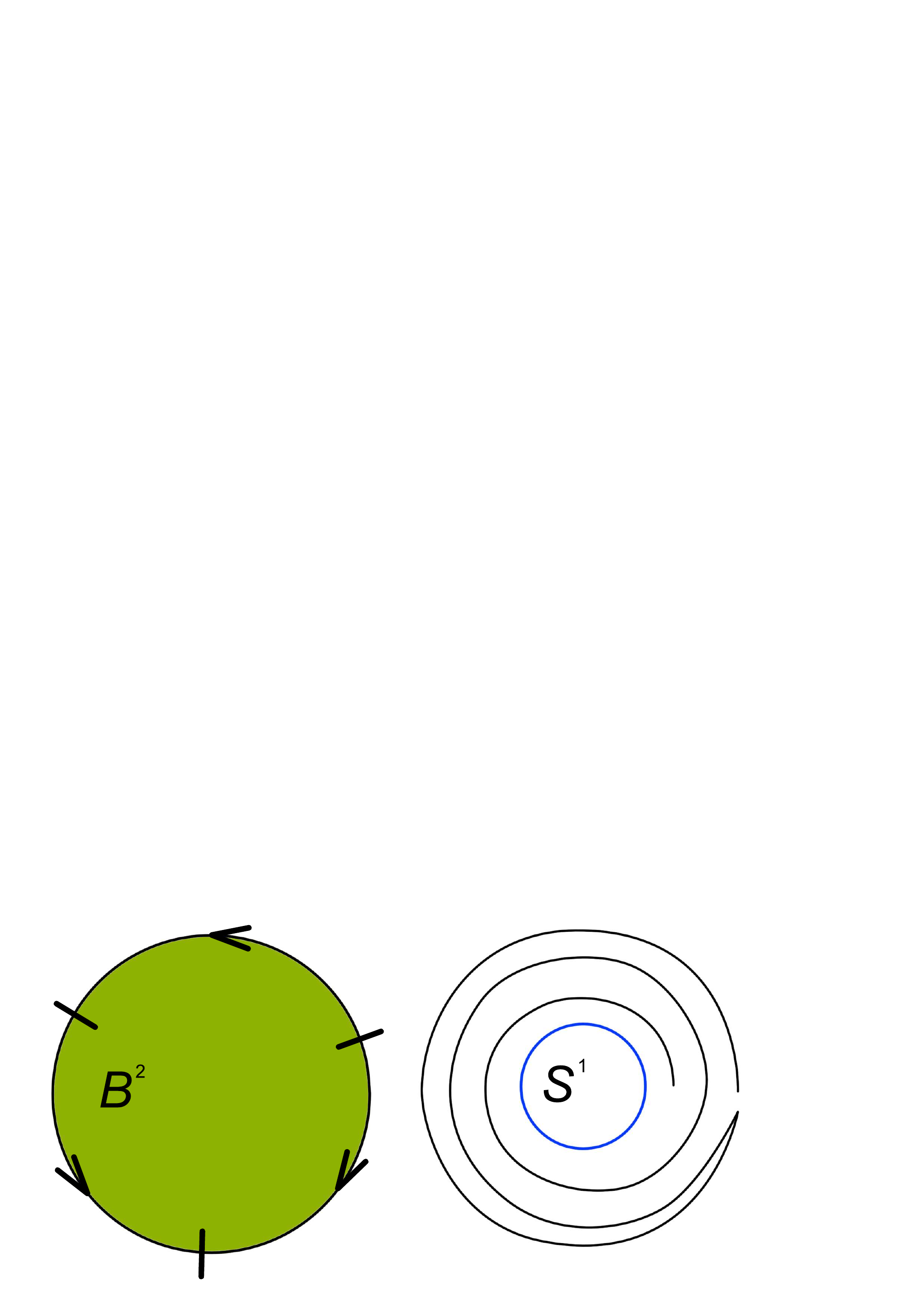} 
\centering                 
\caption{The Dunce Hat Attaching Map}
\label{D map}
\end{figure}

The dunce hat was one of the first examples of a contractible but not collapsible simplicial complex. It is contractible since the attaching map described above is homotopic to the identity and thus $D$ is homotopy equivalent to the the disc which is contractible \cite[p.~16]{Hat}. It is not collapsible as it has no free face. A well know result by Zeeman is that the Mazur manifold has a dunce hat spine [Zee]. That observation will become clear in the following section, when we identify a spine of a slightly more complicated example.
 
To the best of our knowledge the following question is open.

\begin{ques}
\label{D split?}
Can the dunce hat be expressed as $D=A\cup_{C} B$ with $A,B,C \searrow 0?$ If so, the answer to question \ref{Ma Split?} is yes: $Ma^4 \approx \mathbb{B}^4 \cup_{\mathbb{B}^4} \mathbb{B}^4.$
\end{ques}

\section{The Jester's Hat}
We define the \emph{Jester's hat}, $J$, to be the quotient space obtained from gluing the hexagonal region of the plane as in Figure \ref{J}. Figure \ref{J triangulated} shows a triangulation of $J.$  We can also realize this space by attaching a disc to a circle with the attaching map in Figure \ref{J attaching map}. We describe said map here. Attach the first third, say $[0, 2\pi/3)$ of the disc boundary to the circle bijectively in the counterclockwise direction. Then map bijectively in the clockwise direction the next sixth of the disc boundary to the bottom half of the circle. Then map the next third all the way around the circle in the clockwise direction. Finally, sew the last sixth to the top half of the circle.

\begin{figure}[!ht]
\vspace{-2in}
\includegraphics[height=5in]{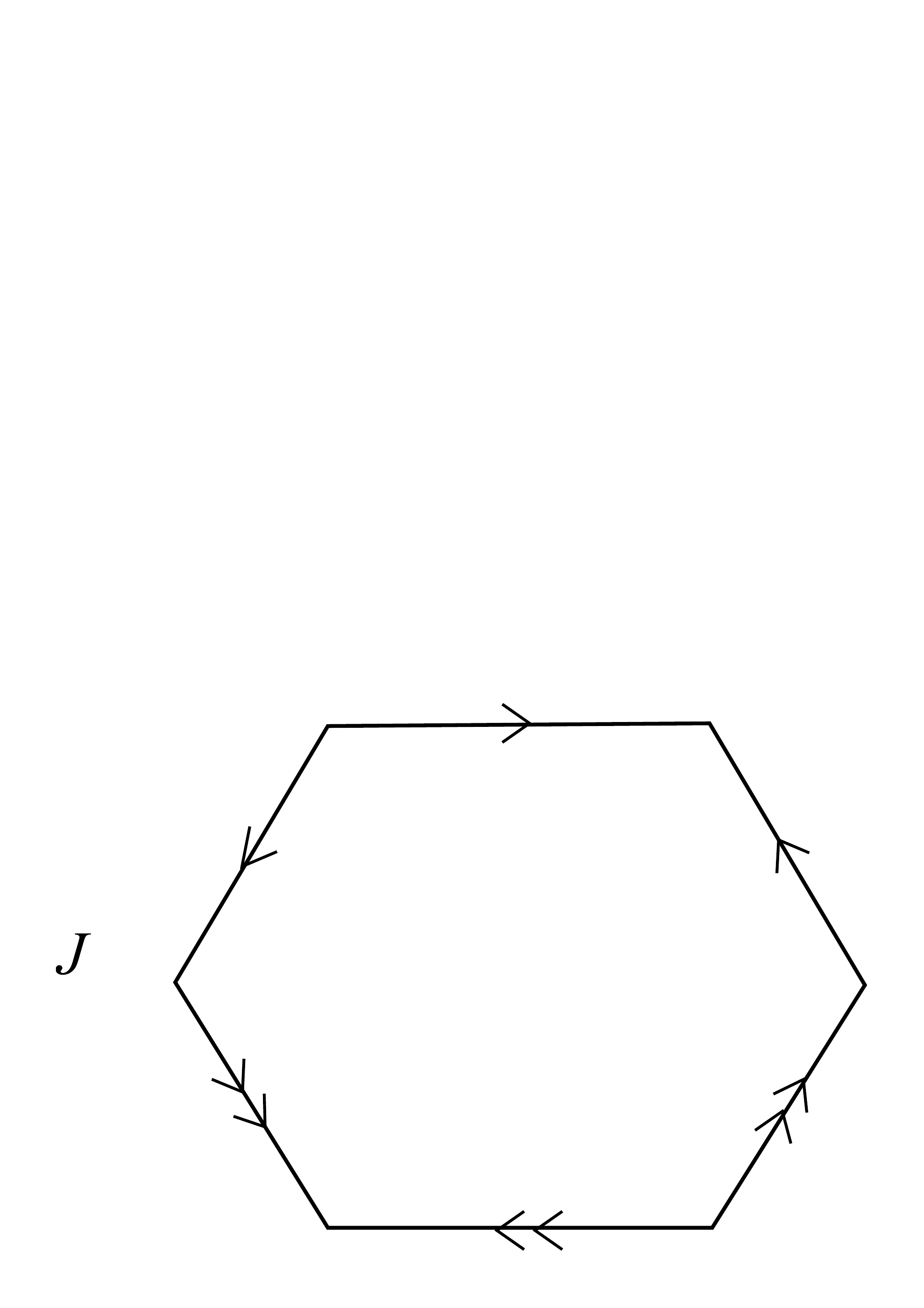} 
\centering                  
\caption{The Jester's Hat}
\label{J}
\end{figure}

\begin{figure}[!ht]
\vspace{-2.5in}
\includegraphics[height=5in]{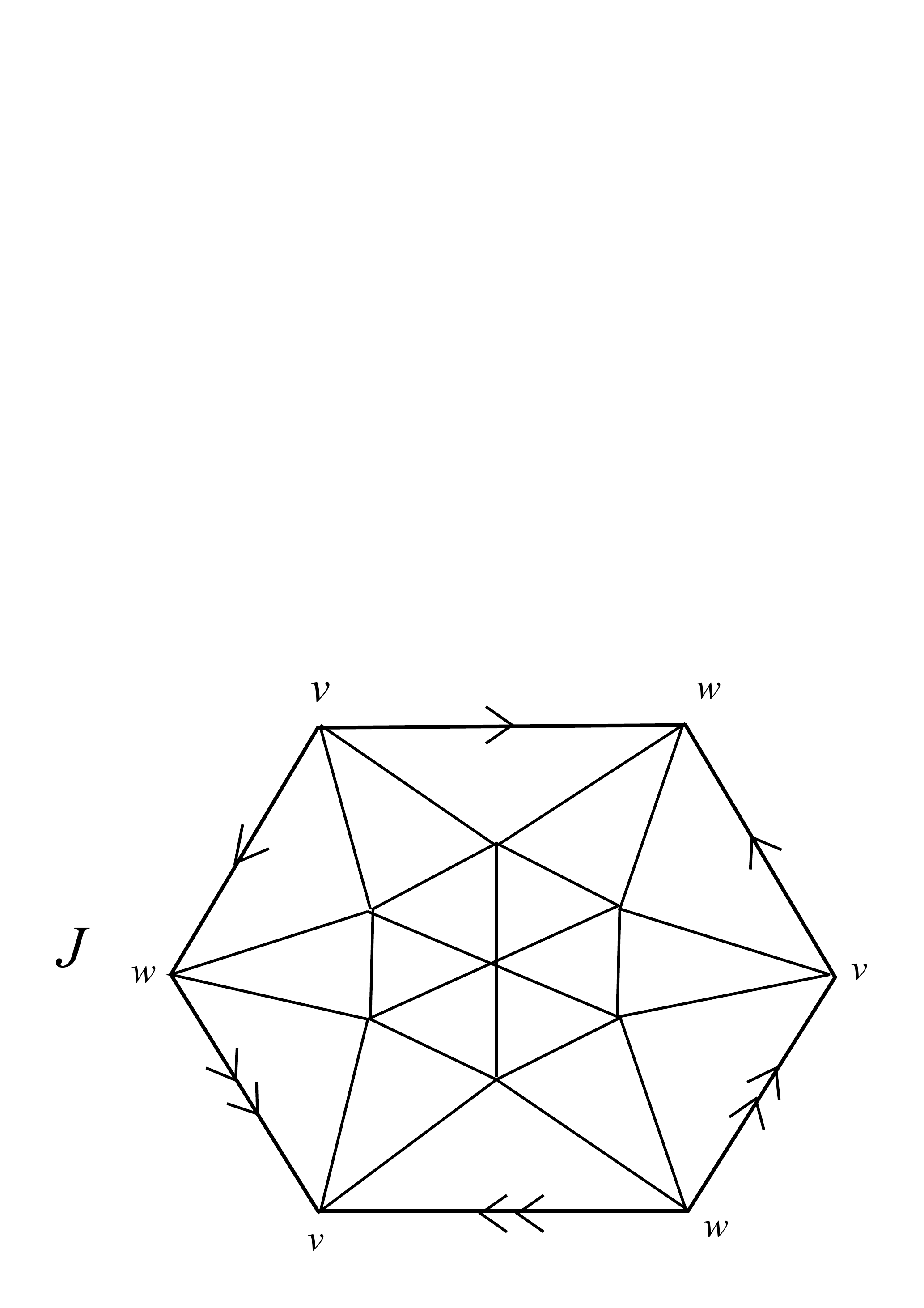} 
\centering                  
\caption{$J$ Triangulated}
\label{J triangulated}
\end{figure}

\begin{figure}[!ht]
\vspace{-2in}
\includegraphics[height=5in]{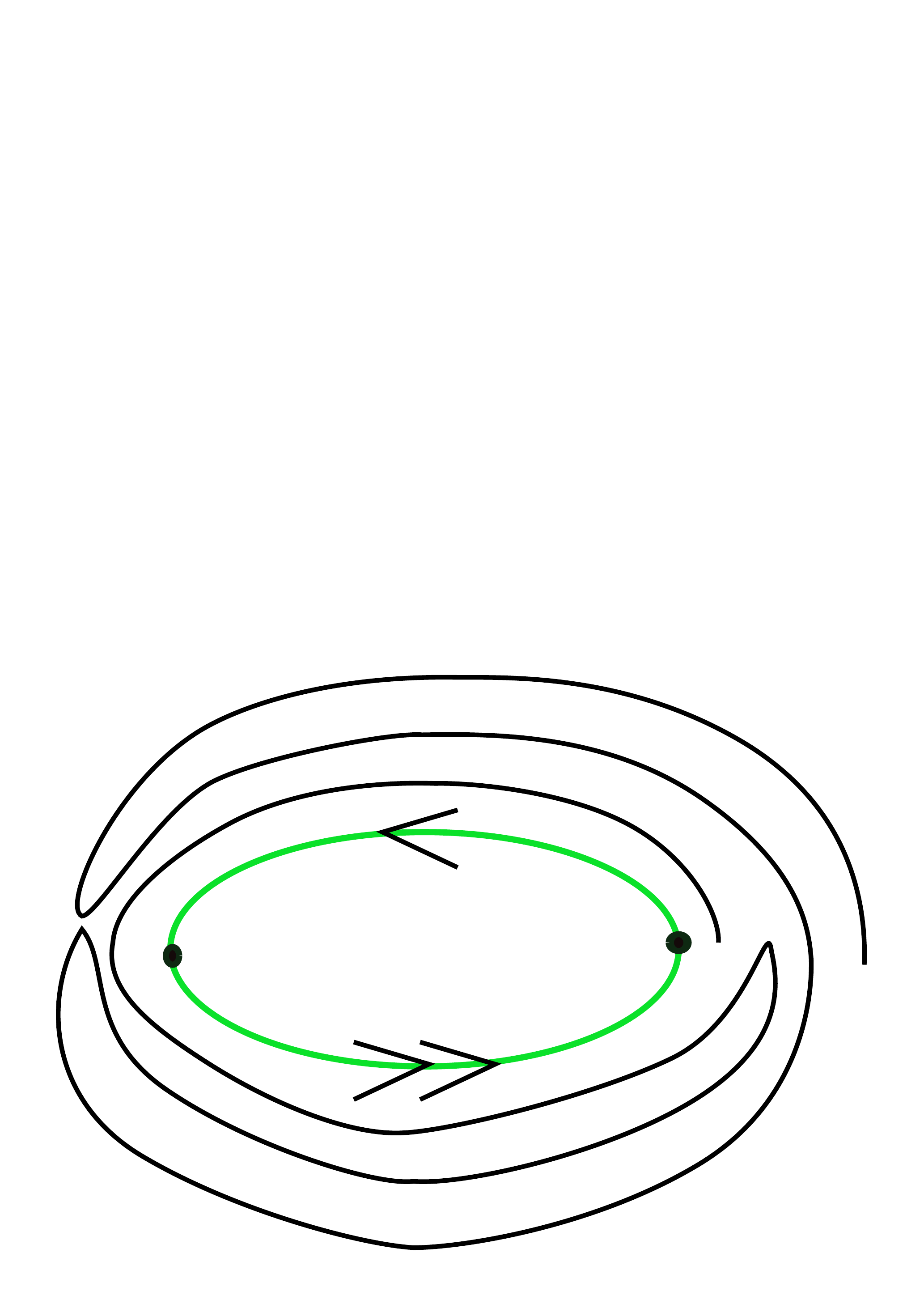} 
\centering                  
\caption{Attaching map for $J$}
\label{J attaching map}
\end{figure}
We observe that since the attaching map is homotopic to the identity, $J$ is contractible. 
$J$ is not collapsible as it has no free edge. 
We note that $J$ is the union of two collapsibles which intersect in a collapsible. That is, 
\[J=A\cup_C B \;\text{with} \; A,B,C \searrow 0.\]
Figures \ref{J splits} and \ref{J's splittands} illustrate such a decomposition and associated collapses. Observe $A \cap B$ has no identifications and is thus a PL ball. PL balls are collapsible. We now elaborate on the collapses in \ref{J's splittands}. For $A \cap B,$ the first collapse can be obtained from the sequence of elementary collapses specified by the following sequence of free faces: $wd,\ de,\ ef,\ fv,\ fg,\ dg,\ cg,\ ag,\ d,\ e,\ f,\ g.$ The second corresponds to the following sequence of free faces: $ 
w,\ c,\ b,\ a.$ For $A,$ we first collapse $A \cap B$ as we did in the first collapse of Figure \ref{J's splittands}. We then perform the collapse with free face sequence $cb, \ ab$ yielding the ``tri-fin" as illustrated.

\begin{figure}[!ht]
\vspace{-6in}
\includegraphics[height=8in]{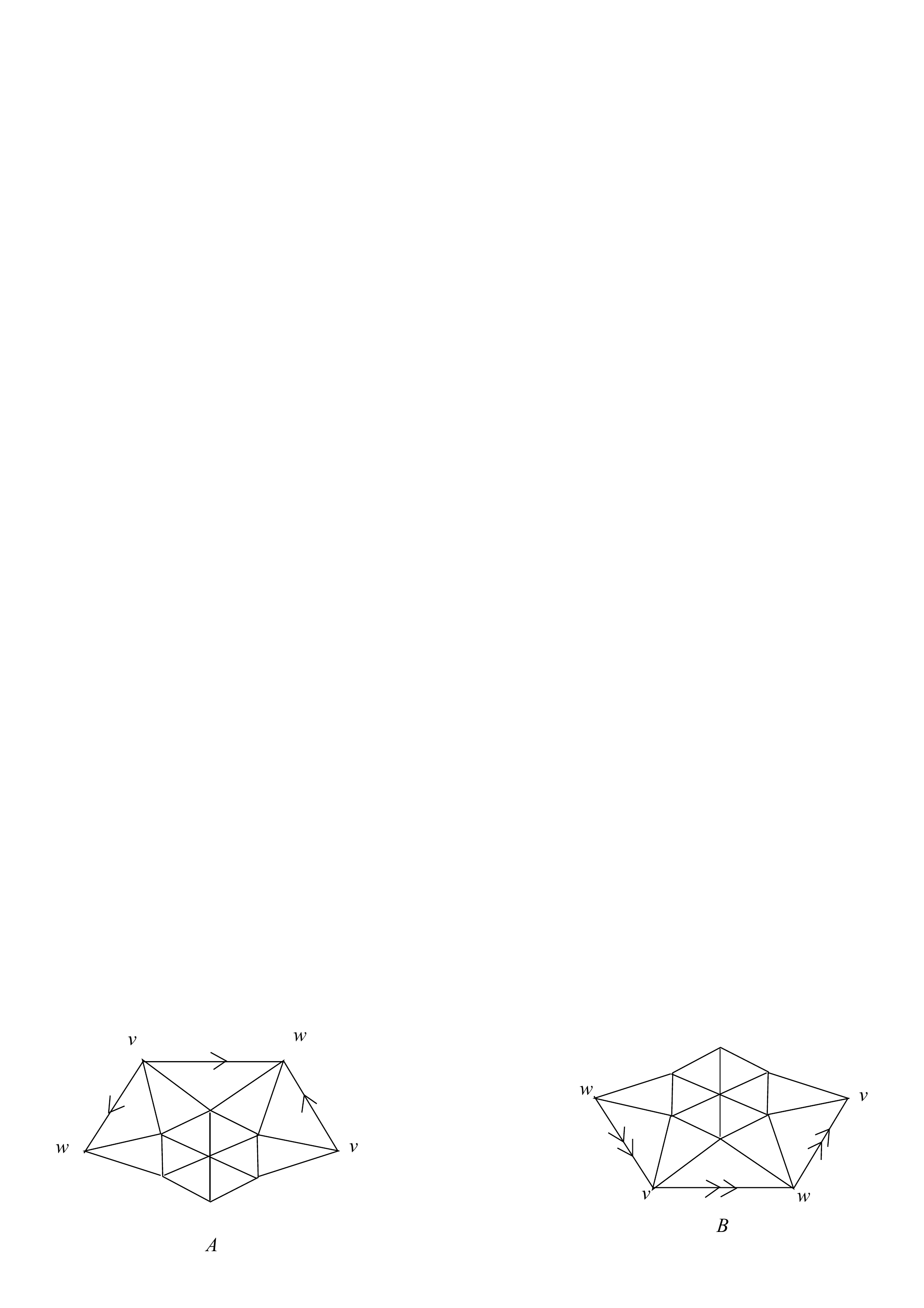}
\centering                   
\caption{$J$ ``splits" into Collapsibles}
\label{J splits}
\end{figure}

\begin{figure}[!ht]
\centering 
\vspace{-2.5in}
\includegraphics[height=6in]{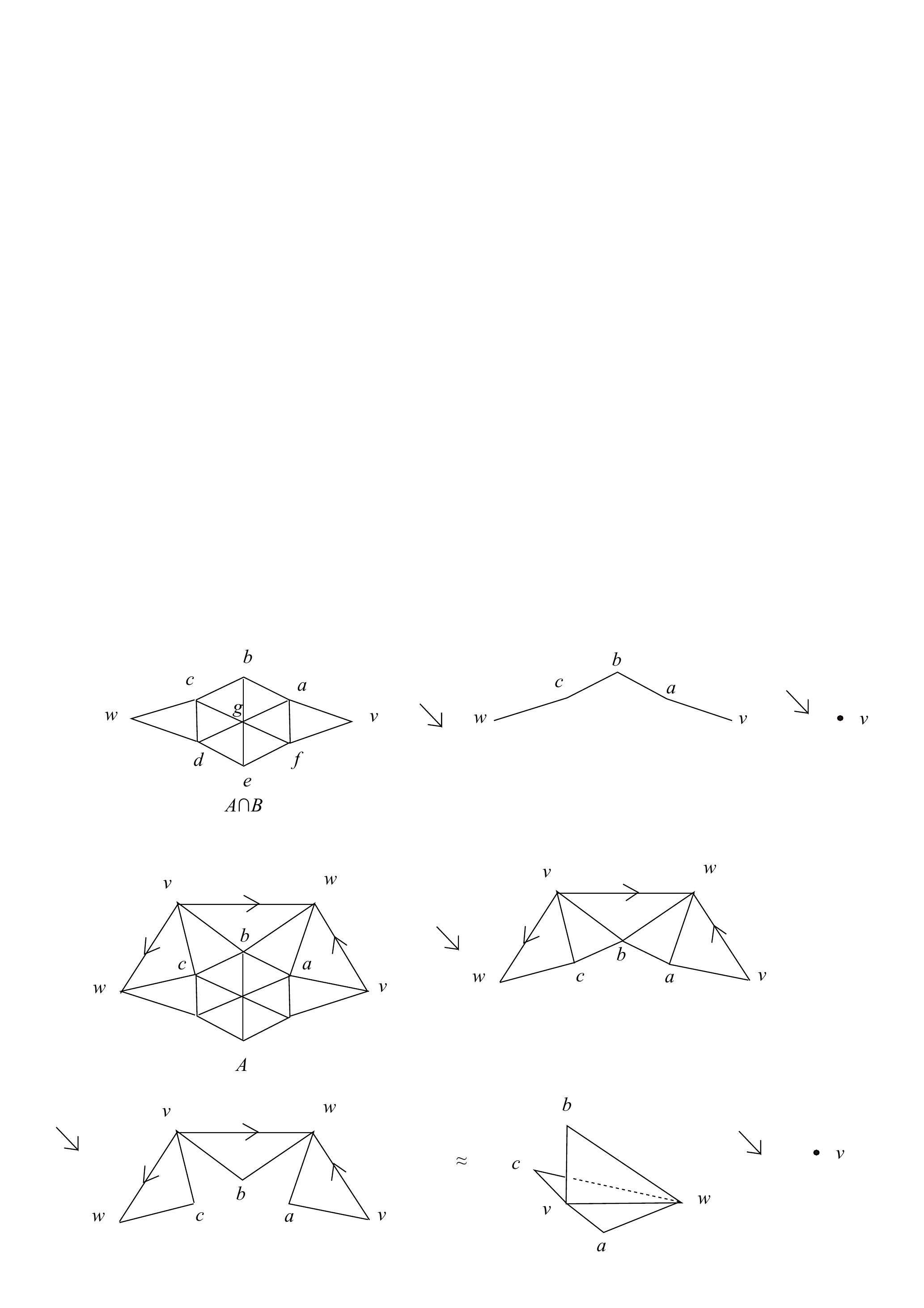}
\caption{Collapses of $J$'s ``Splittands"}
\label{J's splittands}
\end{figure}

\begin{prop}
\label{M collapses to J}

Every Jester's manifold has a Jester's hat spine.
\end{prop}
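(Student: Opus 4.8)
The plan is to reduce the four-dimensional collapse to a statement about the attaching map of a single $2$-cell onto a circle, exactly paralleling Zeeman's observation that the Mazur manifold collapses to the dunce hat. Write $M_\Psi = (S^1\times\mathbb{B}^3)\cup_\Psi(\mathbb{B}^2\times\mathbb{B}^2)$, let $D_0 = \mathbb{B}^2\times\{0\}$ be the core of the attached $2$-handle, and let $\ell_0 = S^1\times\{0\}$ be the core circle of $S^1\times\mathbb{B}^3$. I claim $M_\Psi\searrow K$, where $K = \ell_0\cup_\phi D_0$ is the mapping cone of a map $\phi\colon \bdry D_0\to\ell_0$, and that $K$ is PL homeomorphic to the Jester's hat $J$. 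The decisive structural point is that $D_0$ is glued to $S^1\times\mathbb{B}^3$ only along its boundary circle $\Psi(S^1\times\{0\}) = C$, which does \emph{not} depend on the framing $\Psi$ (the framing only rotates the normal $\mathbb{B}^2$-directions of the tube $T$). Consequently the spine $K$ is the same complex for every choice of $\Psi$, which is exactly what lets the conclusion hold for \emph{every} Jester's manifold.

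I would produce the collapse in two stages, each an instance of the regular-neighborhood and handle-collapse machinery recorded earlier in this chapter. First, collapse the handle $\mathbb{B}^2\times\mathbb{B}^2$ from its free face $\mathbb{B}^2\times S^1$, which lies in $\bdry M_\Psi$ and is disjoint from $S^1\times\mathbb{B}^3$; the standard product collapse gives $\mathbb{B}^2\times\mathbb{B}^2\searrow D_0\cup(S^1\times\mathbb{B}^2)$, and since the absorbed tube $S^1\times\mathbb{B}^2 = T$ already sits in $\bdry(S^1\times\mathbb{B}^3)$, this yields $M_\Psi\searrow (S^1\times\mathbb{B}^3)\cup_C D_0$. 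Second, collapse $S^1\times\mathbb{B}^3$ radially in the $\mathbb{B}^3$-factor onto $\ell_0$; carrying the attached disc $D_0$ along, the attaching circle $C\subset S^1\times S^2$ is pushed to $\ell_0$ by the first-coordinate projection $\pi\colon S^1\times S^2\to S^1$. The output is precisely $K = \ell_0\cup_\phi D_0$ with $\phi = \pi|_C$. Care is needed to perform the second collapse rel a neighborhood of $C$ (equivalently, to absorb the mapping cylinder of $\pi|_C$ into $D_0$) so that it is a genuine PL collapse and not merely a deformation retraction; this is where a fixed triangulation adapted to $C$, in the spirit of the triangulations drawn for $J$, is used.

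It then remains to identify $\phi$ and recognize $K\approx J$. Here I would use that $C = p^{-1}(\Gamma)$ for the connected double cover $p$, together with the result of Mazur and Zeeman that the analogous projection for the Mazur curve $\Gamma$ is the dunce-hat attaching map, i.e. the word $a\,a\,a^{-1}$ read around $\bdry\mathbb{B}^2$. Because $[\Gamma]$ is a generator of $H_1(S^1)$, its preimage $C$ is a single circle on which $p$ restricts to a connected degree-two cover, so $\phi$ is the ``square root'' of the dunce-hat map obtained by pulling $a\,a\,a^{-1}$ back along $\theta\mapsto 2\theta$. Tracking how each full revolution downstairs becomes a half-revolution upstairs converts the three full arcs of the dunce map into the alternating pattern of thirds and sixths prescribed in the definition of $J$ (one third once around, a sixth over a half, a third back around, a final sixth over the complementary half), so that $K$ carries exactly the Jester's-hat attaching map of Figure~\ref{J attaching map} and $K\approx J$.

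The main obstacle is this last step: verifying by explicit bookkeeping that the degree-two pullback of the dunce-hat word is \emph{literally} the Jester's-hat word, with the correct clockwise/counterclockwise senses of the two half-arcs, rather than some other contractible disc-on-circle complex with the same homology. Everything else — the two collapses and the framing-independence — is formal once the handle-collapse lemma and the product structure of $S^1\times\mathbb{B}^3$ are in hand; the genuine content lies in matching the combinatorics of the lifted attaching map to the attaching map defining $J$.
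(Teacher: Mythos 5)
Your architecture is the same as the paper's (and Zeeman's): collapse the $2$-handle to its core union attaching tube, then collapse $S^1\times\mathbb{B}^3$ toward its core circle so that the trace of $C$ becomes a mapping cylinder absorbed into the attached disc, and finally recognize the resulting mapping cone of a circle-to-circle map as $J$. You even correctly flag the mapping-cylinder subtlety in the second collapse. But there is a genuine gap exactly where you say the ``main obstacle'' lies, and it is not merely bookkeeping you have postponed: your candidate attaching map $\phi=\pi|_C$ is not the right map. The raw first-coordinate projection restricted to $C$ has fold points scattered through each clasp region, at parameters where $C$ doubles back inside $I_i\times S^2$, and the mapping cone of that map is a $2$-complex whose identification pattern does not match the hexagon quotient defining $J$ (two sheets lying over an entire subarc of the target circle is locally a different complex from two cone-fins meeting the circle at a single vertex). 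The paper's device for fixing this is to precompose the projection with the quotients $f_1\circ f_2$ that crush the two clasp intervals $I_1,I_2$ to points $p_1,p_2$; only after this crushing does $h=f_1\circ f_2\circ\pi|_C$ have its entire clasp preimage sitting over a single target point, which is what produces the vertex identifications of $J$. Your proposal never introduces these maps, so the complex $K=\ell_0\cup_{\pi|_C}D_0$ you construct is a spine, but you have not shown it is $J$, and as a simplicial complex it generally is not.

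The same omission undermines your proposed identification via the double cover. The statement that the Mazur projection realizes the dunce-hat word $aaa^{-1}$ is itself only true for the clasp-crushed map, not for $\pi|_\Gamma$; so ``pulling back the dunce-hat word along $\theta\mapsto 2\theta$'' is an argument about the crushed maps, which you have not put yourself in a position to use. Even granting that setup, the lift computation (which arcs of $C$ lie over which half of the base circle, and with which orientation, so that the pattern of thirds and sixths in the definition of $J$ comes out with the correct clockwise/counterclockwise senses) is the entire content of the proposition and is left undone in your write-up. To repair the proof: introduce the arcs $I_1,\dots,I_4$ and the crushing maps $f_1,f_2$, form $g=f_1\circ f_2\circ\pi$ and $h=g|_C$, use Whitehead's theorem that the mapping cylinder $M(g)\approx S^1\times\mathbb{B}^3$ collapses to the subcylinder $M(h)$, and then carry out (or read off from an explicit picture of $M(h)\cup_{\Psi|^C}B^2$, as the paper does) the verification that this complex is the hexagon quotient $J$.
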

\begin{proof} The proof is analogous to Zeeman's proof that the Mazur manifold has a dunce hat spine \cite{Zee}. Let $M=M_\Psi$ be a Jester's manifold for a given framing $\Psi.$
We divide the $S^1$ of the $S^1 \times S^2$ in which $C$ resides into four arcs $I_1,I_2,I_3,$ and $I_4$ so that $I_1 \times S^2$ and $I_2 \times S^2$ each contain a ``clasp" of $C$ (see Figure \ref{intervals and their clasps}).
 
\begin{figure}[!ht]
\centering 
\vspace{-5.5in}
\includegraphics[height=8in]{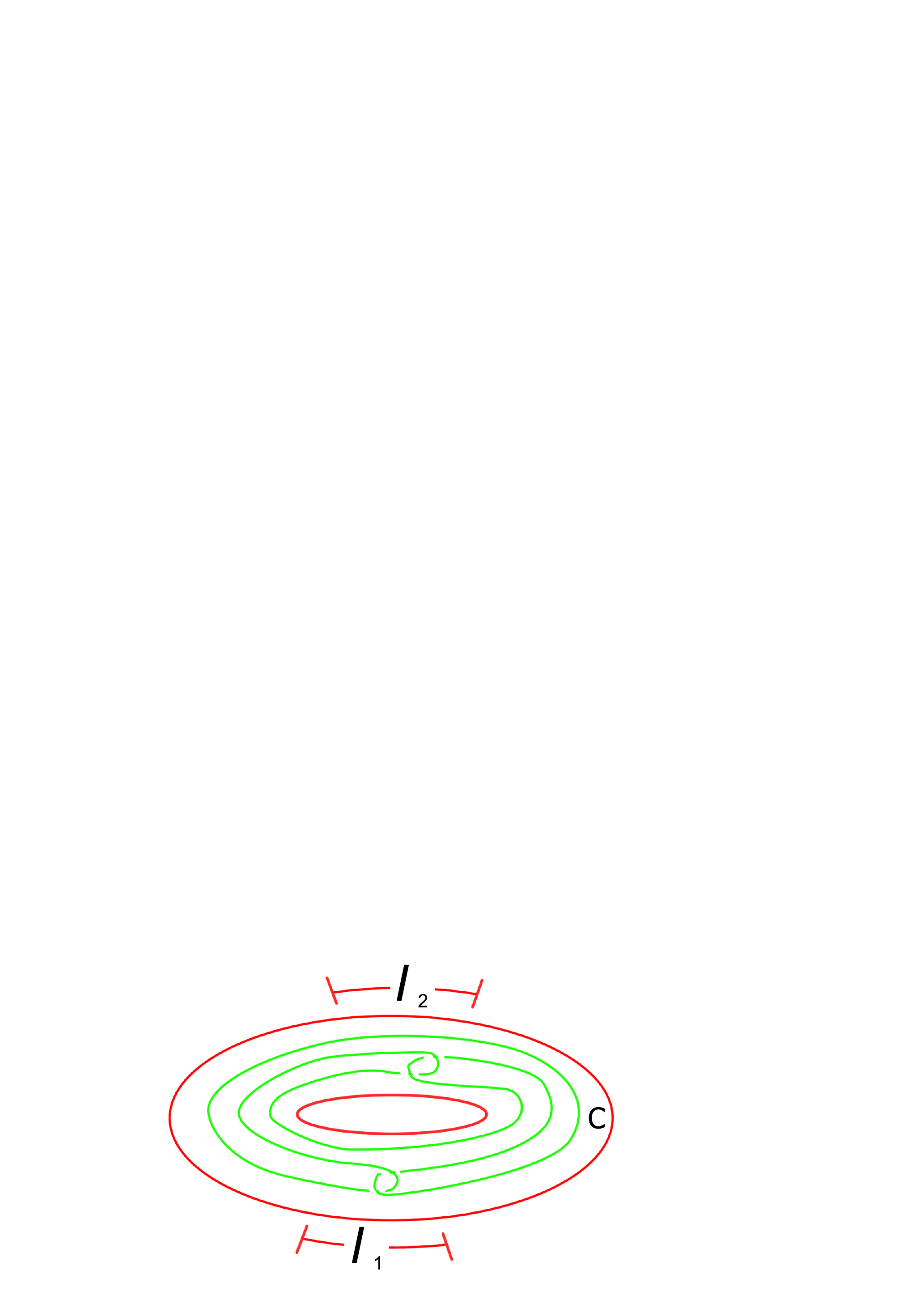}
                

\caption{Intervals of $S^1$ and their clasps}
\label{intervals and their clasps}
\end{figure}

For $i=1,2$, let $f_i:S^1 \rightarrow S^1$ be the map that shrinks $I_i$ to a point, say $p_i,$ and is a homeomorphism on the complement of $I_i.$ 
Further let $\pi:S^1 \times S^2 \rightarrow S^1$ be projection onto the first factor, $j$ be the inclusion  $C \hookrightarrow S^1 \times S^2$, ${g=f_1\circ f_2 \circ \pi:S^1 \times S^2 \rightarrow S^1}$ and $h=g \circ j.$ Let $M(g)$ and $M(h)$ be the mapping cylinders of $g$ and $h$, respectively. That is, 
\[M(g)=[(S^1 \times S^2 \times [0,1]) \sqcup S^1]/\sim_g \ \ \text{and} \ \
 M(h)=[(C \times [0,1]) \sqcup S^1]/\sim_h\]
where $\sim_g$ and $\sim_h$ are generated by $(x,0) \sim_g g(x)$ and $(y,0) \sim_h h(y)$, respectively.  

\begin{figure}[!ht]
\centering 
\vspace{-2.8in}
\includegraphics[height=5in]{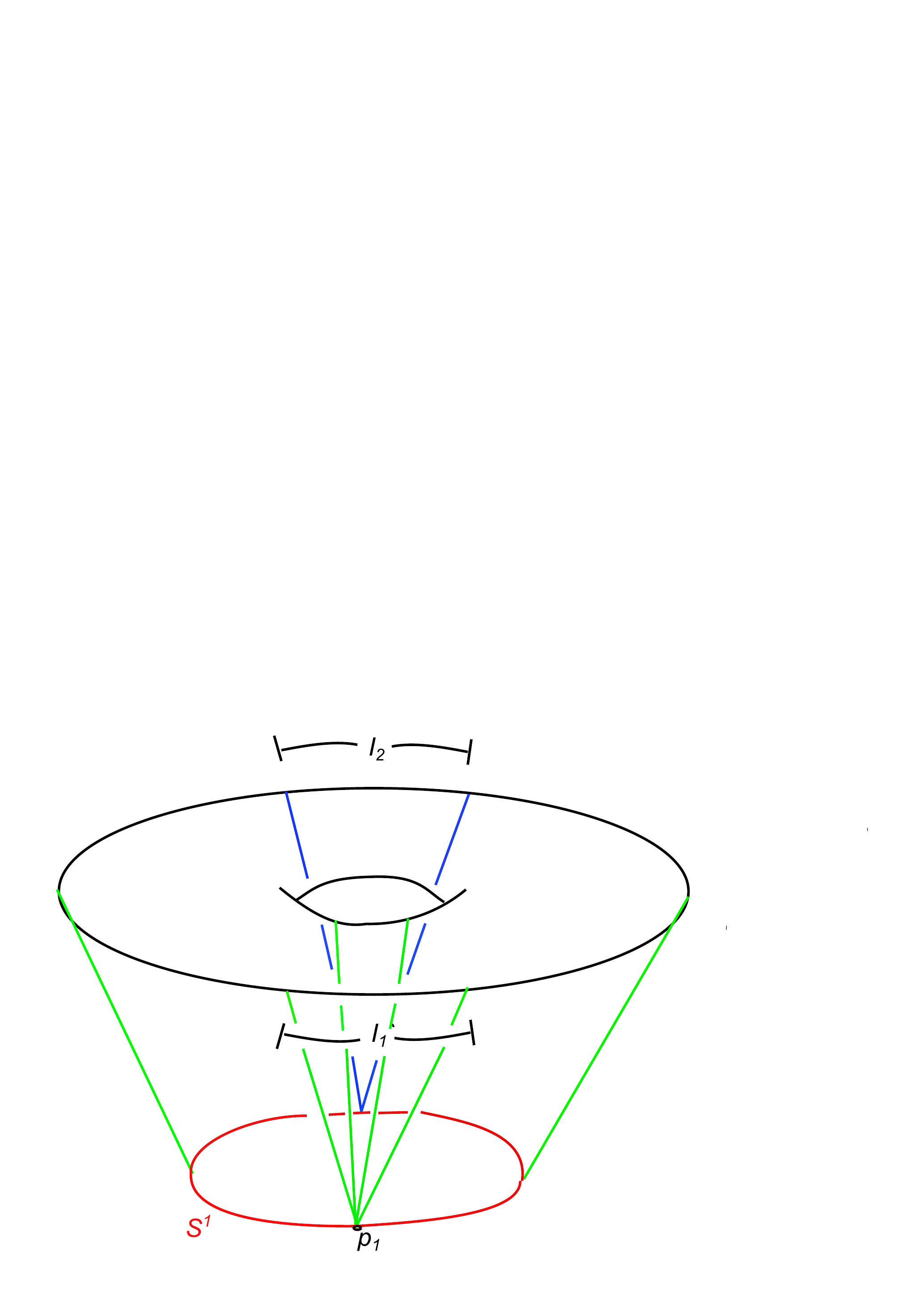}                   

\caption{$M(g)$ the Mapping Cylinder of $g$}
\end{figure}

\begin{figure}[!ht]
\vspace{-3.5in}
\includegraphics[height=6in]{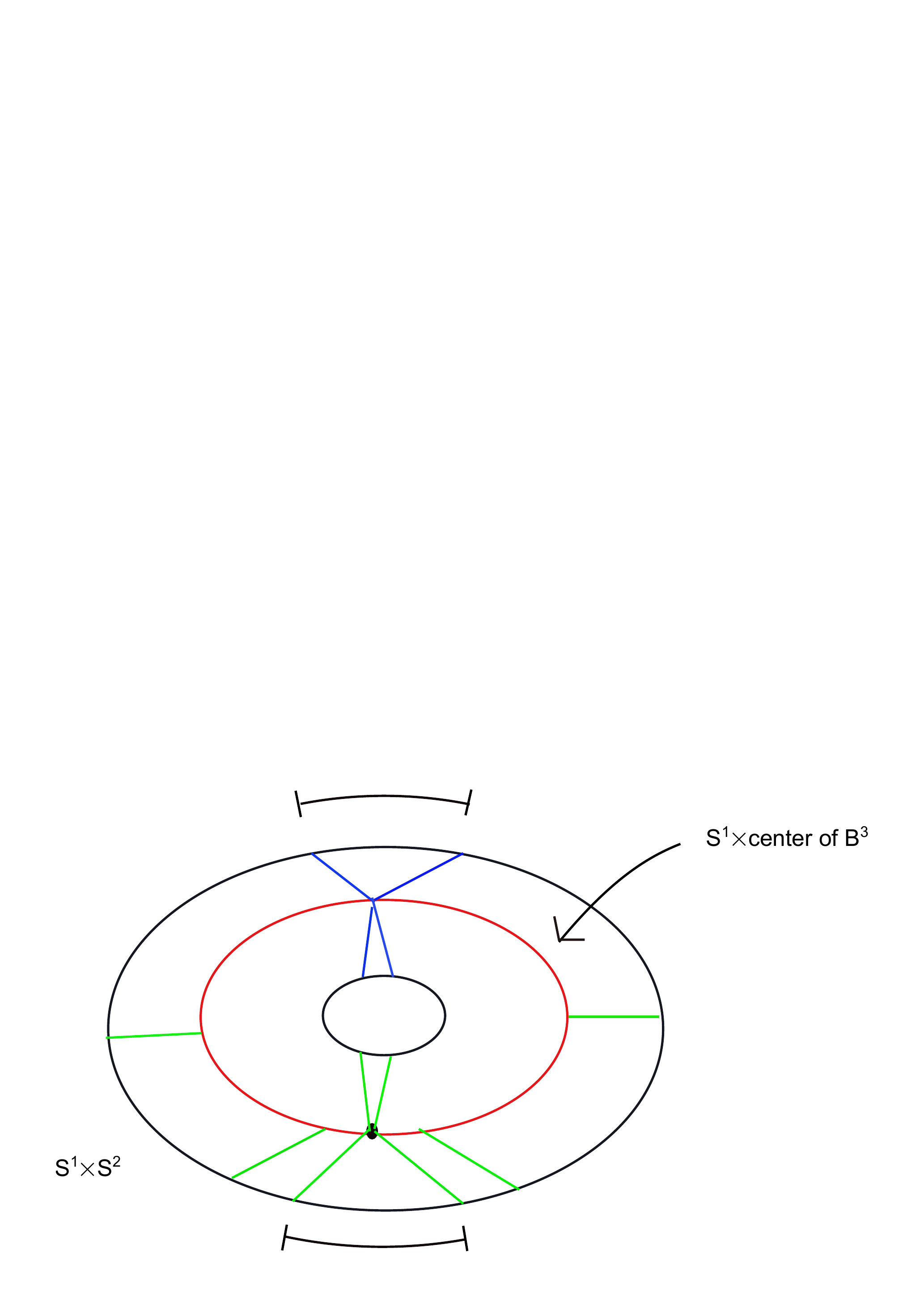}           
\centering
\caption{$M(g)\approx S^1 \times \mathbb{B}^3$}
\end{figure} 

From the illustrations of $M(g)$ we see that the ``cylinder lines fill up" $S^1 \times S^2$ yielding $M(g)$ homeomorphic to $S^1 \times \mathbb{B}^3$. Since $h=g|_C$, $M(h)$ is a subcylinder of $M(g)$ and by a result of J.H.C. Whitehead $M(g) \searrow M(h)$  \cite{Whi}. Further, the 2-handle $h^{(2)}$ viewed as $\mathbb{B}^2 \times \mathbb{B}^2$ in our construction of $M$ collapses onto its core union the attaching tube: ${(\mathbb{B}^2 \times \{0\})\cup(S^1 \times \mathbb{B}^2).}$ 
Follow this with the collapse of $M(g)$ onto $M(h)$ to obtain the collapse:
 
\[M = S^1 \times \mathbb{B}^3 \cup_{\Psi} \mathbb{B}^2 \times \mathbb{B}^2 \searrow S^1 \times \mathbb{B}^3 \cup_\Psi [(\mathbb{B}^2 \times \{0\})\cup(S^1 \times \mathbb{B}^2)] \searrow M(h) \cup_{\Psi |^C} B^2.\] 
But from the illustration of $M(h)$ (Figure \ref{M(h)}) we can see that $M(h) \cup_{\Psi |^C} B^2$ is our Jester's hat $J$.
 
\end{proof}

\begin{figure}[!ht]
\vspace{-2.5in}
\includegraphics[height=5in]{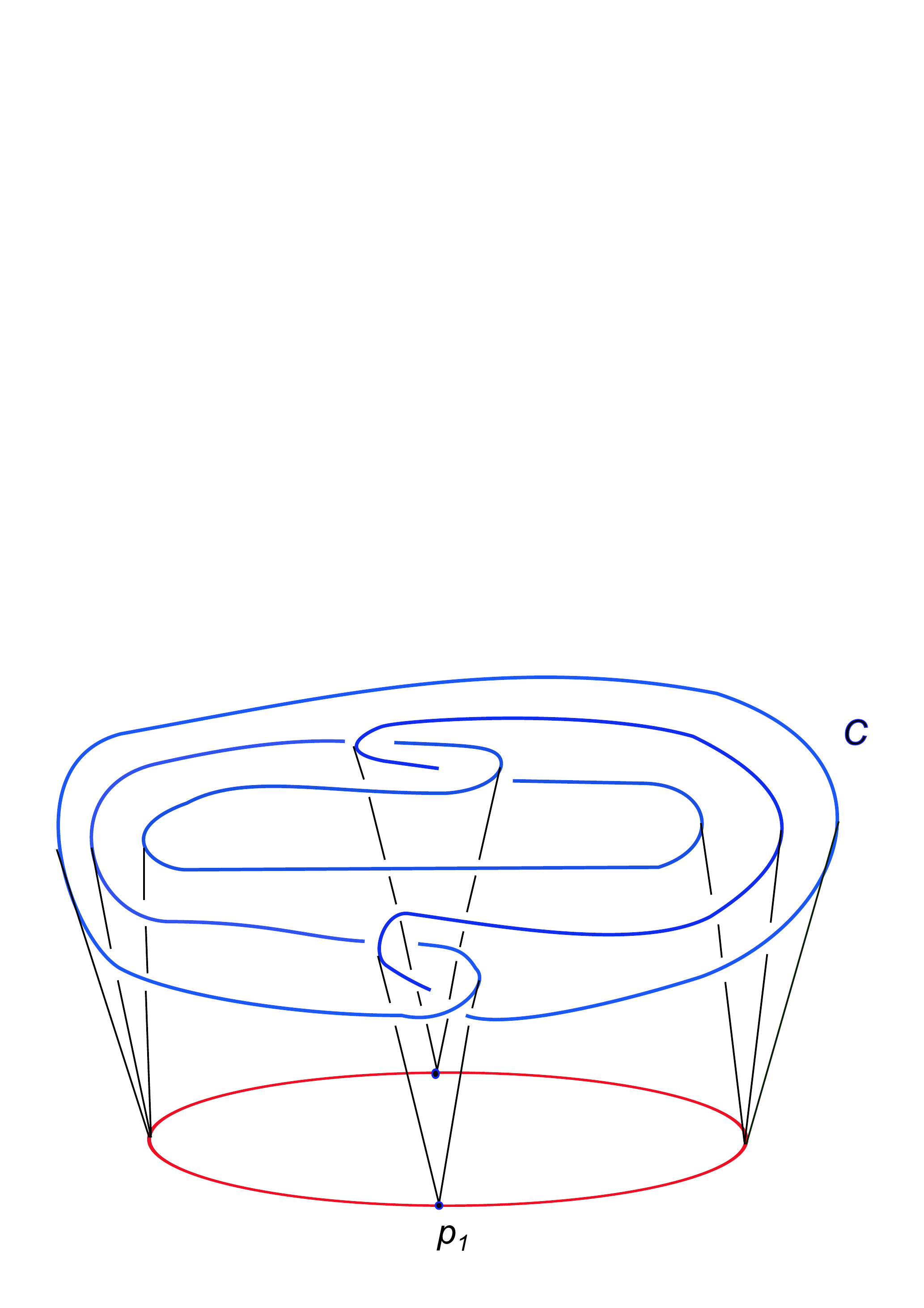}                   
\centering
\caption{The Mapping Cylinder of $h$}
\label{M(h)}
\end{figure}


\begin{cor} The Jester's manifolds split into closed $4$-balls.
\end{cor}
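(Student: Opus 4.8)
The plan is to assemble results already in hand rather than to build anything new. First, Proposition \ref{M collapses to J} shows that an arbitrary Jester's manifold $M = M_\Psi$ has the Jester's hat $J$ as a spine; in particular $M \searrow J$ with $J \subset \intr M$. Second, the discussion preceding that proposition exhibits, via Figures \ref{J splits} and \ref{J's splittands}, an explicit decomposition $J = A \cup_C B$ in which $A$, $B$, and $C = A \cap B$ are each collapsible, the collapses being given by the indicated free-face sequences. These two facts together supply precisely the hypotheses of the earlier Proposition on splitting: a PL manifold $W$ that collapses to $A \cup B$ with $A$, $B$, and $A \cap B$ all collapsible must split into closed balls.

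So the proof I would write is essentially a one-line application: take $W = M$, let $A \cup_C B = J$ be the Jester's-hat spine with its collapsible decomposition, and quote the splitting Proposition to conclude $M \approx \mathbb{B}^4 \cup_{\mathbb{B}^4} \mathbb{B}^4$. Unwinding that Proposition, I would triangulate $M$ with $J$ --- and hence $A$ and $B$ --- as subcomplexes, take regular neighborhoods $N_A$ and $N_B$, and observe that each is a ball (regular neighborhoods of collapsibles are PL balls), that $N_A \cap N_B$ is a regular neighborhood of $C$ and hence also a ball, and that $N_A \cup N_B$ is a regular neighborhood of the spine $A \cup B = J$ and so is homeomorphic to $M$.

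The step carrying the actual content, and which I regard as the main point to get right, is verifying that $A$ and $B$ can be realized simultaneously as subcomplexes of a single triangulation of $M$ sitting in $\intr M$, so that Whitehead's regular-neighborhood theory applies uniformly to $A$, $B$, and $C$ against a common triangulation. Since we work in the PL category and $J$ is a PL subcomplex of $\intr M$ with $A$ and $B$ subcomplexes of $J$, this is automatic, but it is the hinge on which the argument turns. Everything else --- the contractibility of $J$, the count of framings, the geometry of the clasps --- is irrelevant to this particular corollary, which follows formally once the spine and its collapsible splitting are in place.
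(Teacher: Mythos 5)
Your proposal is correct and is exactly the argument the paper intends: the corollary is stated without proof precisely because it follows immediately from combining Proposition \ref{M collapses to J} (the Jester's hat spine), the decomposition $J = A \cup_C B$ with $A, B, C \searrow 0$ from Section 3.3, and the earlier splitting Proposition for manifolds collapsing onto a union of collapsibles. Your unwinding of the regular-neighborhood argument matches the paper's proof of that Proposition verbatim in substance.
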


\begin{remark} While we now know that the $M_\Psi\emph{'s}$ split into closed balls, we have not demonstrated that any $M_\Psi$ is not just a ball. 
\end{remark}

\chapter{More Jester's Manifolds}

For this chapter we let $M=M_\Psi$ be an arbitrary Jester's manifold. Recall $\Psi$ is the framing $\Psi: S^1 \times \mathbb{B}^2 \rightarrow T$ and $T$ is a tubular neighborhood of the curve $C$ in $\partial (S^1 \times \mathbb{B}^3).$

\section{Pseudo 2-handles} 

Using $M$ as a model, we apply a construction due to Wright to obtain 
a collection of manifolds $\{W_i\},$ as follows. To construct $W_i,$ we start with the $S^1 \times \mathbb{B}^3$ of the Jester's manifold construction and attach a ``pseudo 2-handle", a $\mathbb{B}^4,$ along $K_i,$ the connected sum of $i$ trefoils in the boundary of $\mathbb{B}^4,$ to the curve $C$ in $\partial(S^1 \times \mathbb{B}^3).$ (See Figure \ref{pseudo 2 handle}.) That is,
\[W_i = S^1 \times \mathbb{B}^3 \cup_{\Psi_i} H.\]
Here $\Psi_i$ is a homeomorphism from a tubular neighborhood $T_i$ of $K_i$ in $\partial \mathbb{B}^4$ to $T.$ 

We define the \emph{core of the pseudo handle} to be the cone of $K_i$ with cone point the center of $\mathbb{B}^4.$ The core is then a 2-disc whose interior lies in $\intr \mathbb{B}^4.$

\begin{figure}[!ht]
\vspace{-5.5in}
\includegraphics[height=7in]{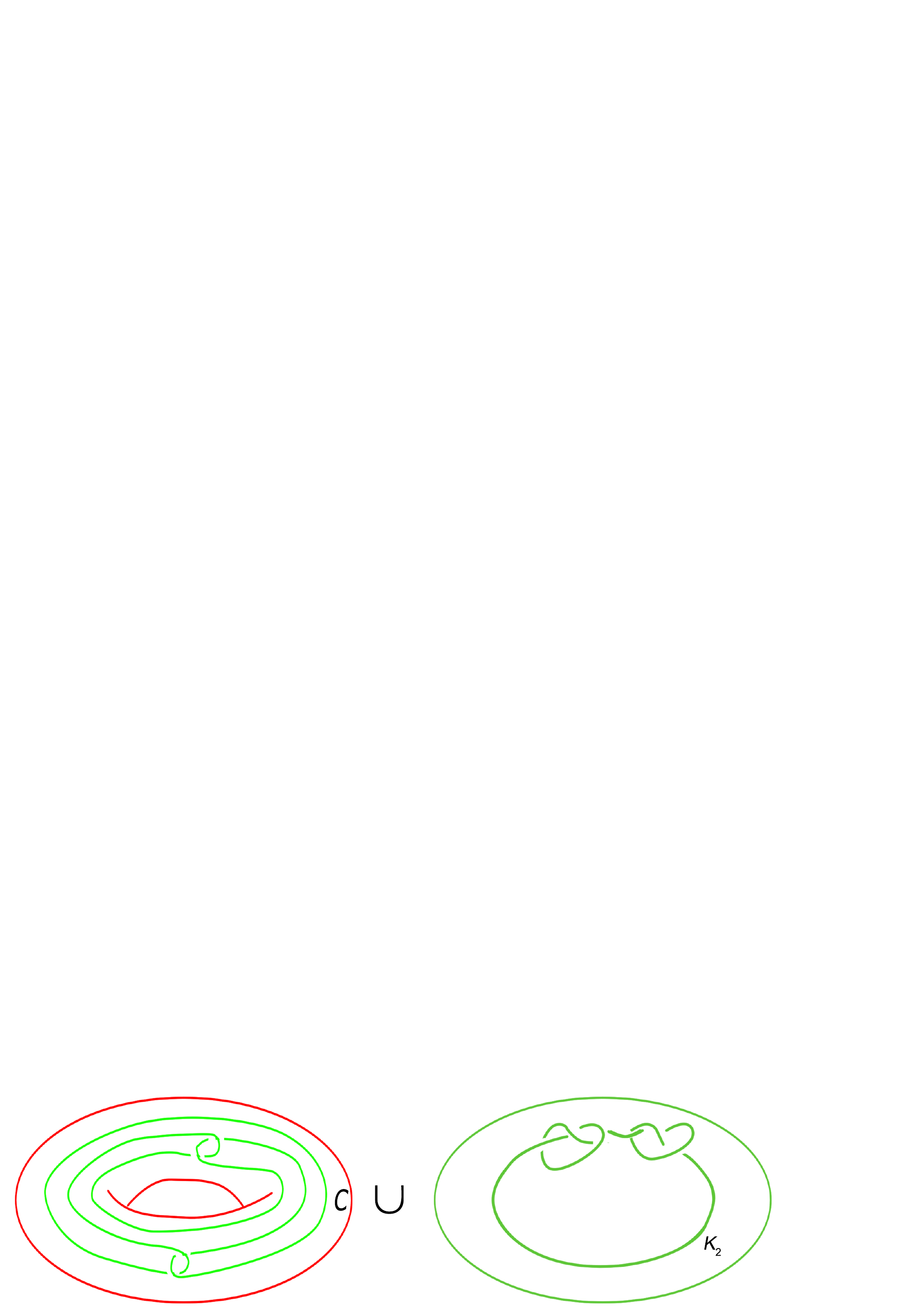} 
\label{pseudo 2 handle}                  
\centering
\caption{$S^1 \times \mathbb{B}^3$ union a degree 2 pseudo $2$-handle}
\end{figure}

\begin{prop} Each $W_i \searrow J.$
\end{prop}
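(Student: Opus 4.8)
The plan is to mirror the proof of Proposition \ref{M collapses to J} (that every Jester's manifold collapses to its Jester's hat spine $J$), adapting it to the pseudo-handle setting. The essential point is that the pseudo 2-handle $H \approx \mathbb{B}^4$, attached along $K_i$, collapses onto its core (the cone on $K_i$) together with the attaching tube, exactly as the genuine 2-handle $\mathbb{B}^2 \times \mathbb{B}^2$ collapsed onto $(\mathbb{B}^2 \times \{0\}) \cup (S^1 \times \mathbb{B}^2)$ in the earlier proof. So first I would establish that $H \searrow (\text{core of } H) \cup T_i$, where $T_i$ is the attaching tubular neighborhood of $K_i$ in $\partial \mathbb{B}^4$. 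Since $H$ is a PL $\mathbb{B}^4$ and the core is a cone on $K_i \subset \partial H$ with cone point at the center, $H$ is literally the cone $cK_i$ thickened; it collapses to the cone on $K_i$ union any regular neighborhood of $K_i$ in the boundary sphere.

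Next I would reuse verbatim the mapping-cylinder machinery from Proposition \ref{M collapses to J}. The $S^1 \times \mathbb{B}^3$ piece is independent of whether we attach a true handle or a pseudo-handle, so the maps $f_1, f_2, \pi, g, h$ and the collapse $M(g) \searrow M(h)$ carry over unchanged: we have $S^1 \times \mathbb{B}^3 = M(g) \searrow M(h)$, and $M(g) \approx S^1 \times \mathbb{B}^3$. Composing the handle collapse with the mapping-cylinder collapse gives
\[
W_i = S^1 \times \mathbb{B}^3 \cup_{\Psi_i} H \searrow S^1 \times \mathbb{B}^3 \cup_{\Psi_i} [(\text{core}) \cup T_i] \searrow M(h) \cup_{\Psi_i|^C} \mathbb{B}^2.
\]
The final step is to identify the resulting complex $M(h) \cup_{\Psi_i|^C} \mathbb{B}^2$ with the Jester's hat $J$. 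Here $M(h)$ is the mapping cylinder of $h = g|_C$, and since $C$ is a fixed curve (the preimage of the Mazur curve $\Gamma$ under the double cover, independent of $i$), $M(h)$ is exactly the same as in the original construction. The core of the pseudo-handle is a 2-disc $\mathbb{B}^2$ glued along $K_i$, but after the collapse it is glued to $M(h)$ along the image of $C$ via the attaching data, yielding the same quotient as before.

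The main obstacle I anticipate is the last identification step: one must check that attaching the core $\mathbb{B}^2$ of the pseudo-handle produces the same cell structure as $J$, even though $K_i$ (a connected sum of $i$ trefoils) is knottier than the unknotted attaching circle of a genuine 2-handle. The key insight, which I would argue carefully, is that the collapse only remembers how the core disc's boundary maps onto the spine $M(h)$ — i.e. it records the composite $h \circ (\text{attaching map})$ up to the relevant equivalence — and this composite is governed by $C$ and the framing datum $\Psi_i|^C$, not by the ambient knotting of $K_i$ inside $\partial \mathbb{B}^4$. Since the core is a cone $cK_i$ and cones are collapsible, the knotting of $K_i$ is absorbed in the collapse of the ball $H$ down to its core, leaving a single 2-cell attached to $M(h)$ by the same degree pattern as in Figure \ref{J attaching map}. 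Thus $M(h) \cup_{\Psi_i|^C} \mathbb{B}^2 \approx J$, completing the collapse $W_i \searrow J$.
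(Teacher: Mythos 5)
Your proposal is correct and follows the paper's own proof essentially step for step: the paper likewise argues that the pseudo $2$-handle $H$ collapses to its core (the cone on $K_i$) union the attaching tube $\Psi_i(T_i)$, reuses the mapping-cylinder collapse $M(g)\searrow M(h)$ from the proof that every Jester's manifold has a Jester's hat spine, and observes that the attaching tube collapses to the attaching sphere $\Psi_i(K_i)=C$, so the resulting complex is again $J$. Your closing discussion of why the knotting of $K_i$ is irrelevant to the final identification is a reasonable elaboration of a point the paper leaves implicit.
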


\begin{proof} The same proof as for every Jester's manifold collapses to $J$ (Proposition \ref{M collapses to J}) goes through with the pseudo 2-handle collapsing to its core, a  disc $B^2.$ $H$ collapses to its core union its attaching tube defined as $\Psi_i(T_i)$. $M(g)$ again collapses to $M(h)$ with the attaching tube collapsing to the attaching sphere: $\Psi_i(K_i)=C$. 

\end{proof}

\begin{cor} Each $W_i=\mathbb{B}^4 \cup_{\mathbb{B}^4} \mathbb{B}^4.$
\end{cor}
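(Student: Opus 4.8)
The plan is to reduce the statement to the splitting criterion already established in the chapter on spines. By the preceding proposition we have $W_i \searrow J$, and the discussion of the Jester's hat exhibits $J = A \cup_C B$ with $A$, $B$, and $C = A \cap B$ all collapsible. Composing these facts gives $W_i \searrow A \cup B$ where each of $A$, $B$, and $A \cap B$ collapses to a point, which is precisely the hypothesis needed to conclude that $W_i$ splits into closed balls.

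Concretely, I would first invoke the Proposition stating that if $W$ is a PL manifold, $A, B \subset \intr W$ are subcomplexes, and $W \searrow A \cup B$ with $A, B, A \cap B \searrow 0$, then $W$ splits into closed balls. To apply it I must check that $A$ and $B$ sit as subcomplexes of the interior of $W_i$: since $J$ is a spine of $W_i$ it lies in $\intr W_i$, and hence so do its subcomplexes $A$ and $B$. Fixing (in the PL category) a triangulation of $W_i$ in which $A$ and $B$ are subcomplexes then places us exactly in the hypotheses of that Proposition. Its proof produces regular neighborhoods $N_A \supset A$ and $N_B \supset B$, each a PL $4$-ball since it is a regular neighborhood of a collapsible complex; their intersection $N_A \cap N_B$ is a regular neighborhood of $C$ and hence also a PL $4$-ball; and $N_A \cup N_B$ is a regular neighborhood of the spine $A \cup B = J$, so it is homeomorphic to $W_i$. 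This exhibits $W_i \approx N_A \cup_{N_A \cap N_B} N_B$ with all three pieces PL $4$-balls, i.e. $W_i = \mathbb{B}^4 \cup_{\mathbb{B}^4} \mathbb{B}^4$, which is the assertion.

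I do not anticipate a serious obstacle, since the genuine content lives in the preceding proposition ($W_i \searrow J$) and in the Jester's hat decomposition, both already in hand. The only points deserving a sentence of care are that $W_i$ is genuinely a PL $4$-manifold and that the relevant triangulation can be chosen compatibly. For the former, the pseudo $2$-handle $H \approx \mathbb{B}^4$ is glued to $S^1 \times \mathbb{B}^3$ along the solid torus $T_i \subset \partial \mathbb{B}^4$, a codimension-zero piece of the boundary identified via $\Psi_i$ with $T \subset \partial(S^1 \times \mathbb{B}^3)$; gluing two PL $4$-manifolds along a codimension-zero portion of their boundaries again yields a PL $4$-manifold (after straightening corners). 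For the latter, the collapse $W_i \searrow J$ is PL, so we may take a triangulation of $W_i$ refining $J$ and containing $A$ and $B$ as subcomplexes, making the splitting criterion apply verbatim.
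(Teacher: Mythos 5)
Your argument is correct and follows exactly the route the paper intends: combine $W_i \searrow J$ with the decomposition $J = A \cup_C B$ into collapsibles and then invoke the proposition that a PL manifold collapsing to a union of two collapsibles with collapsible intersection splits into closed balls. The paper leaves this corollary unproved precisely because this is the intended (and only) chain of reasoning, so your write-up simply makes explicit what the paper takes as immediate.
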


\begin{remark} At this point we don't know if any of the $W_i$'s are not balls. We will address this in the next section.
\end{remark}   

\section{A Theorem of Wright}
\label{Theorem of Wright}

Applying the following theorem will yield an infinite collection of distinct $W_i$. Before we state the theorem we'll need some definitions.

\begin{defn}
	
A $3$-manifold is \emph{irreducible} if every embedded $S^2$ bounds a $\mathbb{B}^3.$

\end{defn}

\begin{defn}
A torus $S$ in a $3$-manifold $X$ is said to be \emph{incompressible in $X$} if the homomorphism induced by inclusion $\pi_1(S) \rightarrow \pi_1(X)$ is injective.

\end{defn}

\begin{defn}
A group $G$ is \emph{indecomposable} if for all subgroups $A, B$ such that ${G \approx A\ast B}$, either $A=1$ or $B=1$. (That is, $G$ contains no nontrivial free factors.)
\end{defn}

\begin{thm} 
\label{Wright}
\cite{Wri} Suppose $X$ is a compact $4$-manifold obtained from the $4$-manifold $N$ by adding a $2$-handle $H$. If \emph{cl}$(\partial X-H)$ is an orientable irreducible $3$-manifold with incompressible boundary, then there exists a countably infinite collection of compact $4$-manifolds $M_i$ such that\\
(1) $\partial M_i$ is not homeomorphic to $\partial M_j$ when $i \neq j$ \\
(2) $\pi_1 (\partial M_i) \ncong \mathbb{Z}$ and is indecomposable\\
(3) $\pi_1(\partial M_i) \ncong \pi_1(\partial M_j)$ for $i\neq j$ and hence, \emph{int}$(M_i)$ is not homeomorphic to \emph{int}$(M_j)$\\
(4) $M_i \times I$ is homeomorphic to $X \times I$ 
\end{thm}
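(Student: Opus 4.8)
The plan is to build the family $\{M_i\}$ by Wright's \emph{pseudo $2$-handle} construction, exactly as the $W_i$ are produced above. Fix a nontrivial knot (the trefoil works) and let $K_i$ be the connected sum of $i$ copies of it, sitting in $S^3=\partial\mathbb{B}^4$; glue a $4$-ball to $N$ along a tubular neighborhood $T_i$ of $K_i$ using a framing matching the attaching framing of the original handle $H$, giving $M_i=N\cup_{\Psi_i}\mathbb{B}^4$. Attaching a $2$-handle deletes an attaching solid torus from $\partial N$, so writing $P=\mathrm{cl}(\partial X-H)$ (a $3$-manifold with torus boundary $T$) the boundary splits as
\[ \partial M_i = P \cup_{T} E(K_i), \]
where $E(K_i)=\mathrm{cl}(S^3-T_i)$ is the exterior of $K_i$. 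Taking the standard handle instead replaces $E(K_i)$ by a solid torus and recovers $\partial X$, so the $M_i$ are honest modifications of $X$ localized at the handle.

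The first real step is to compute $\pi_1(\partial M_i)$ by van Kampen:
\[ \pi_1(\partial M_i) \cong \pi_1(P) \ast_{\pi_1(T)} \pi_1(E(K_i)). \]
The hypothesis that $P$ has incompressible boundary makes $\pi_1(T)\to\pi_1(P)$ injective, and since $K_i$ is nontrivial its exterior also has incompressible boundary, so $\pi_1(T)\to\pi_1(E(K_i))$ is injective; thus this is a genuine amalgam over $\mathbb{Z}^2$ with both factors strictly larger than the edge group. Next I would show $\partial M_i$ is irreducible: it is glued from the orientable irreducible pieces $P$ (hypothesis) and $E(K_i)$ (knot exteriors are irreducible) along the incompressible torus $T$, and a standard argument (push any essential sphere off the incompressible torus and into one irreducible piece) shows such a gluing is irreducible. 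Since $\partial M_i$ is closed, orientable, and irreducible, Kneser's theorem (the prime decomposition) gives that $\pi_1(\partial M_i)$ is freely indecomposable; and it contains the embedded nonabelian factor $\pi_1(E(K_i))\supset$ the trefoil group, so it is nonabelian and in particular $\ncong\mathbb{Z}$. This is (2).

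For (1) and (3) I would distinguish the irreducible (hence Haken) manifolds $\partial M_i$ by their canonical torus decomposition. Inside $E(K_i)$ the swallow–follow tori of the connected-sum structure of $K_i$ split off $i$ copies of the trefoil exterior, each a Seifert-fibered $T(2,3)$ piece; gluing to $P$ along the single far-away torus $T$ does not merge these, so the Jaco--Shalen--Johannson decomposition of $\partial M_i$ contains exactly $i$ such pieces. Because the JSJ graph of groups is an isomorphism invariant for these groups, $\pi_1(\partial M_i)\cong\pi_1(\partial M_j)$ forces $i=j$, giving (3), and Waldhausen's theorem for Haken manifolds then yields (1). The ``hence'' in (3) follows because each $\mathrm{int}(M_i)$ is collared at its single end by $\partial M_i\times[0,\infty)$, so a homeomorphism of interiors would carry one end to the other and induce $\pi_1(\partial M_i)\cong\pi_1(\partial M_j)$. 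Finally (4) is a stabilization argument: the pseudo-handle is abstractly the standard handle $\mathbb{B}^4=\mathbb{B}^2\times\mathbb{B}^2$, and $M_i$ differs from $X$ only in the knotting of the attaching region in $S^3=\partial\mathbb{B}^4$; this knotting becomes inessential after crossing with $I$ (one may introduce a cancelling handle pair, or observe the attaching data is unknotted once an extra dimension is available), so $M_i\times I\approx X\times I$.

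The main obstacle I anticipate is the combination of steps giving (3): rigorously proving the amalgamated products $\pi_1(\partial M_i)$ are pairwise non-isomorphic and freely indecomposable. This is precisely where orientability, irreducibility, and incompressibility are indispensable, since they let one replace a delicate purely algebraic analysis of these amalgams (where one must rule out exotic isomorphisms permuting edge and vertex groups) by the topological invariance of the canonical decomposition of Haken $3$-manifolds and Waldhausen rigidity.
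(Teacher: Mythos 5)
The paper never proves this theorem: it is quoted verbatim from \cite{Wri}, and the only in-house commentary is the Note that follows it, recording that Wright attaches pseudo $2$-handles along the knots $K_i$ and then \emph{extracts a subsequence} whose boundaries are distinct. Your construction matches the one the paper attributes to Wright, but your method of distinguishing the boundaries is a genuinely different and more modern route: Wright's 1976 argument predates the Jaco--Shalen--Johannson decomposition, whereas you pass to the canonical torus decomposition of the Haken manifolds $\partial M_i = P \cup_T E(K_i)$ and count trefoil-exterior pieces. That part of your argument is essentially sound. Incompressibility of $T$ on both sides makes the van Kampen decomposition a genuine amalgam; a union of orientable irreducible pieces glued along an incompressible torus is irreducible, so Kneser gives free indecomposability, and the embedded trefoil group rules out $\mathbb{Z}$; the JSJ count then gives (3), and (1) follows at once (Waldhausen is not needed there --- non-isomorphic fundamental groups already preclude a homeomorphism; Waldhausen would be relevant only for the converse). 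Two small cautions on the count: the number of trefoil-exterior JSJ pieces of $\partial M_i$ is $i$ plus a constant contributed by $P$, and for $i=1$ (no composing space) the trefoil exterior could in principle merge with a Seifert-fibered $P$ across $T$; so the honest statement is that the count is eventually strictly increasing in $i$, which still yields a countably infinite family and is in fact closer to what Wright himself obtains by taking a subsequence.

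The one genuine gap is (4), which you dispatch in a sentence (``the attaching data is unknotted once an extra dimension is available''). That claim is the actual content of Wright's paper (its title is \emph{On 4-Manifolds Cross $I$}), and it is not automatic: one must show that the attaching region of the pseudo-handle and that of the standard handle, viewed in the $4$-manifold $\partial(N\times I)$, are ambiently isotopic \emph{together with their framings}, using the fact that homotopic embedded circles in a $4$-manifold are isotopic; this is exactly why a restricted choice of the homeomorphism $\Psi_i$ is needed. The paper flags this explicitly --- conclusion (4) ``requires a more restrictive choice of attaching map $\Psi_i$'' and is omitted because it is never used. So your treatment of (1)--(3) stands as a valid alternative proof, but (4) as written is an assertion rather than an argument.
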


\begin{note}
Conclusion (4) requires a more restrictive choice of attaching map $\Psi_i.$ This conclusion is not necessary for the arguments presented in this thesis thus the omission of these restrictions.

In \cite{Wri}, Wright constructs the infinite collection of manifolds $\{M_i\}$ of the theorem as follows. For each $i=1,2,...$ he constructs a manifold by attaching to $N$ a psuedo 2-handle along $K_i$. From this sequence he exhibits a subsequence $\{M_{i_j}\}$ each term of which has a distict boundary.
\end{note}
For the proof of the following theorem we'll employ the Loop Theorem as stated in \cite[p.~101]{Rol}.

\begin{thm}\emph{(Loop Theorem)} If $X$ is a 3-manifold with boundary and the induced inclusion homomorphism $\pi_1(\partial X) \rightarrow \pi_1(X)$ has  nontrivial kernel, then there exists an embedding of a disc $D$ in $X$ such that $\partial D$ lies in $\partial X,$ and represents a nontrivial element of $\pi_1(\partial X)$. 
\end{thm}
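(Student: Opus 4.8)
The plan is to produce the embedded disc by the classical \emph{tower construction} of Papakyriakopoulos, in the streamlined form due to Stallings, which upgrades a singular spanning disc to an embedded one by climbing a finite tower of double covers and then descending.

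First I would unwind the hypothesis. Choose a nontrivial element $\alpha \in \ker\left(\pi_1(\partial X) \to \pi_1(X)\right)$ and represent it by a PL map $f : (D^2, \partial D^2) \to (X, \partial X)$ with $f|_{\partial D^2}$ realizing $\alpha$; such an $f$ exists precisely because $\alpha$ dies in $\pi_1(X)$ while $\alpha \neq 1$ in $\pi_1(\partial X)$. Among all such maps whose boundary is nontrivial in $\pi_1(\partial X)$, I would fix one that is \emph{minimal}: after a general-position perturbation its singular set $S(f) \subset D^2$, a properly embedded $1$-manifold recording the self-intersections of $f$, has the fewest components.

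Next I would build the tower. Let $N_0$ be a regular neighborhood of $f(D^2)$ in $X$, so that $f$ factors as $D^2 \to N_0 \hookrightarrow X$. If $f$ is not already an embedding, a double curve is detected homologically mod $2$, giving $H_1(N_0; \mathbb{Z}/2) \neq 0$ and hence a connected double cover $p_1 : N_1 \to N_0$; since $D^2$ is simply connected, $f$ lifts to $f_1 : D^2 \to N_1$, and lifting separates at least one pair of intersecting sheets, so $S(f_1)$ has strictly fewer components than $S(f)$. Replacing $N_1$ by a regular neighborhood of $f_1(D^2)$ and iterating produces a tower $N_k \to N_{k-1} \to \cdots \to N_0$. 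Because the number of components of the singular set is a nonnegative integer that strictly decreases at each stage, the tower terminates, and at the top $f_k$ is an embedding, yielding an embedded disc $D_k \subset N_k$ with $\partial D_k \subset \partial N_k$ nontrivial in $\pi_1(\partial N_k)$. I would then push $D_k$ down one cover at a time: the projected image may acquire new self-intersections from the covering involution, but these occur along circles and arcs that can be cleared by innermost-disc cut-and-paste surgery, producing an embedded disc at each stage and finally an embedded disc in $N_0 \subset X$.

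The hard part will be the bookkeeping that the boundary curve remains nontrivial in $\pi_1(\partial X)$ throughout the descent: a careless surgery could replace $\partial D$ by a nullhomotopic curve. Controlling this is exactly Stallings's refinement, phrased as keeping $[\partial D]$ outside a fixed normal subgroup (here the trivial subgroup) at every surgery, and it is the genuine content of the theorem. By comparison, the existence of the nontrivial double covers and the termination of the tower are comparatively routine consequences of mod-$2$ homology detection of double curves and general position.
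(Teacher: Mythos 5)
The paper does not prove this statement at all: the Loop Theorem is quoted from Rolfsen \cite[p.~101]{Rol} and used as a black box in the proof of Theorem \ref{infinite 4-splitters}, so there is no in-paper argument to compare against. Your sketch follows the correct classical template (the Papakyriakopoulos--Stallings tower), but as a proof it has a genuine gap, and one of its load-bearing claims is false. You assert that if $f$ is not an embedding then a double curve forces $H_1(N_0;\mathbb{Z}/2)\neq 0$, so that the tower can only terminate with an embedded disc at the top. This is not true: a singular disc with a single clasp-type double arc inside a $3$-ball has a simply connected regular neighborhood, yet the map is far from an embedding. In the standard proof the tower terminates when $H_1(V_k;\mathbb{Z}/2)=0$, at which point the map may still be singular; what one gains instead is that every component of $\partial V_k$ is a $2$-sphere (by ``half lives, half dies''), and the embedded disc is extracted by an innermost analysis of the graph $f_k(\partial D^2)$ inside that sphere, choosing a loop that stays outside the pulled-back normal subgroup. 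Relatedly, the quantity you monitor for termination---the number of components of the singular set---need not strictly decrease under a lift (deleting part of an arc of the singular set can increase the component count); the usual bookkeeping makes $f$ simplicial and counts identified pairs of simplexes, which does strictly drop because otherwise the image complex would lift homeomorphically into the connected double cover of its own regular neighborhood, a contradiction.

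Beyond that, the descent, which you correctly identify as ``the genuine content of the theorem,'' is exactly the part you do not carry out: without the argument that each cut-and-paste surgery on the pushed-down disc can be chosen so that some resulting boundary curve still represents a nontrivial element of $\pi_1(\partial X)$, nothing has been proved. As written, the proposal is an accurate table of contents for Stallings's proof rather than a proof; for the purposes of this thesis the appropriate move is the one the paper makes, namely to cite the result.
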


\begin{thm} 
There exists an infinite collection of closed 4-dimensional splitters. The fundamental groups of their boundaries are distinct, indecomposable, and noncyclic. 

\label{infinite 4-splitters}
\end{thm}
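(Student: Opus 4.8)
The plan is to realize the desired collection as the pseudo-handle manifolds $W_i$ and to manufacture their distinguishing boundary invariants by applying Wright's Theorem \ref{Wright} to a single base Jester's manifold. Write the base as $X = M_\Psi = N \cup_\Psi H$, where $N = S^1 \times \mathbb{B}^3$ and $H$ is the genuine $2$-handle glued along a tubular neighborhood $T = T_C$ of the curve $C$ in $\partial N = S^1 \times S^2$. Then the free part of the boundary is
\[ \cl(\partial X - H) = (S^1 \times S^2) - \intr(T_C) =: Y, \]
a compact $3$-manifold with $\partial Y = \partial T_C$ a torus, and the attaching circle is $C$ for every pseudo-handle, so this same $Y$ governs all the $W_i$. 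I would verify the three hypotheses of Theorem \ref{Wright} for $Y$ — orientable, irreducible, incompressible boundary — and then read off the theorem from Wright's conclusions together with the earlier corollary that each $W_i$ splits into closed balls.

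Orientability is immediate, as $Y$ is a codimension-$0$ submanifold of the orientable $S^1 \times S^2$. For irreducibility, let $\Sigma \subset Y$ be an embedded $2$-sphere. In $S^1 \times S^2$ it is either essential (isotopic to $\{p\} \times S^2$) or bounds a ball. The essential case is impossible: since $[C]$ generates $H_1(S^1 \times S^2) \cong \mathbb{Z}$, its algebraic intersection with any essential sphere is $\pm 1$, so such a sphere must meet $T_C$ and cannot lie in $Y$. Hence $\Sigma$ bounds a ball $B$. The connected set $T_C$ lies wholly inside or wholly outside $B$; it cannot lie inside, for then $C$ would be null-homotopic in $S^1 \times S^2$, contradicting that $[C]$ generates $\pi_1(S^1 \times S^2) \cong \mathbb{Z}$. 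Thus $B \subset Y$ and $Y$ is irreducible.

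For incompressibility I would argue by contradiction through the Loop Theorem. If $\partial Y$ were compressible, the Loop Theorem produces an embedded disc $D \subset Y$ with $\partial D$ essential on $\partial T_C$. Writing $\partial D = a\,m_C + b\,\ell_C$ for a meridian $m_C$ and longitude $\ell_C$, and noting that $m_C$ is null-homotopic while every $\ell_C$ is isotopic to $C$ in $S^1 \times S^2$, the class of $\partial D$ in $\pi_1(S^1 \times S^2) \cong \mathbb{Z}$ is $b$. Since $\partial D$ bounds $D$ it is null-homotopic, forcing $b = 0$ and hence $\partial D = \pm m_C$; so $m_C$ would be trivial in $\pi_1(Y)$. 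But $m_C$ is nontrivial: the covering $p$ restricts to a double cover $Y \to (S^1 \times S^2) - \intr(T_\Gamma)$ carrying $m_C$ to $m_\Gamma$, which is nontrivial by Proposition \ref{gamma nontrivial}, and injectivity of $p_\ast$ gives $m_C \neq 1$. This contradiction shows $\partial Y$ is incompressible.

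With the hypotheses in hand, Theorem \ref{Wright} yields a countably infinite family of compact $4$-manifolds, which we identify with our $W_i$, whose boundary groups $\pi_1(\partial W_i)$ are pairwise non-isomorphic, indecomposable, and not isomorphic to $\mathbb{Z}$; and each $W_i = \mathbb{B}^4 \cup_{\mathbb{B}^4} \mathbb{B}^4$ is a closed splitter by the corollary. To upgrade ``$\ncong \mathbb{Z}$'' to \emph{noncyclic}, I would write $\partial W_i = Y \cup_{\partial T_C} E(K_i)$, where $E(K_i)$ is the exterior of $K_i = \#^i(\text{trefoil})$. The torus $\partial T_C$ is incompressible in $Y$ (above) and in $E(K_i)$ (as $K_i$ is a nontrivial knot), hence incompressible in $\partial W_i$, so $\pi_1(\partial T_C) \cong \mathbb{Z}^2$ injects into $\pi_1(\partial W_i)$, which is therefore noncyclic. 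I expect the incompressibility of $\partial Y$ — pinning $\partial D$ down to a meridian and then invoking Proposition \ref{gamma nontrivial} via the double cover — to be the crux; the remaining steps are assembly of results already established.
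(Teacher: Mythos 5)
Your proposal is correct and follows essentially the same route as the paper: verify the hypotheses of Wright's theorem for a Jester's manifold, with the crux being the incompressibility of $\partial T$ in $S^1\times S^2-\intr(T)$, established via the Loop Theorem, the slope analysis $\mu^k\lambda^j$ forced down to $\pm m$, and Proposition \ref{gamma nontrivial} pulled through the double cover. In fact your write-up is somewhat more complete than the paper's, which declares that incompressibility ``suffices'' and leaves orientability, irreducibility, and the upgrade from $\pi_1\ncong\mathbb{Z}$ to noncyclic implicit; your explicit checks of those points are welcome additions.
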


\begin{proof}

We'll show $M$ meets the hypotheses of Theorem \ref{Wright}, thus yielding a subsequence of $\{W_i\}$ as our desired collection. Recall $T$ is the tubular neighborhood of the attaching sphere $C$ in the construction of the Jester's manifold so that $\partial T = \partial\text{cl}(\partial M - h^{(2)})$. It suffices to show

\begin{claim} 
\label{bdry T incompressible}
$\partial T$ is incompressible in cl$(\partial M - h^{(2)})= S^1 \times S^2- \text{int}(T).$
\end{claim}

We will show $\text{ker}(\pi_1(\partial T) \rightarrow \pi_1(S^1 \times S^2 - \text{int}(T)))=1.$ Recall $T_\Gamma$ is the tubular neighborhood of the Mazur curve $\Gamma$ in the $S^1 \times S^2$ in the construction of the Mazur manifold (see Section 2.1). Recall further Proposition \ref{gamma nontrivial}:
Let $m_\Gamma$ be the meridian of the torus $\partial T_\Gamma.$ Then $m_\Gamma$ is nontrivial in $S^1 \times S^2 - \text{int}(T_\Gamma).$

By construction $
S^1 \times S^2 - \intr(T)$ 
is a double cover of $S^1 \times S^2 - \intr(T_\Gamma).$ 

Call the associated covering map $p$ and let $m$ be a lift of $m_\Gamma$ so $m$ is a meridian of $\partial T$. Then $p_*([m])=[m_\Gamma]\neq 1$ gives $[m] \neq 1.$ Suppose by way of contradiction that there exists an embedded disc $D$ in $S^1 \times S^2 - \intr(T)$ with $\partial D$ being a nontrivial loop in $\partial T.$ 
Choose a longitude $l$ on $\partial T$ and let $\mu= [m]$ and $\lambda=[l]$ in $\pi_1(\partial T)$ so that for some $k,j \in \mb{Z},$ $[\partial D] = \mu^k \lambda^j$ in $\pi_1(\partial T).$ 
As $C$ has algebraic index 1 in $S^1 \times S^2$ a nonzero $j$ would imply $[\partial D]$ nontrivial in $\pi_1(S^1 \times S^2 - \intr(T)).$ Thus $[\partial D]=\mu^k.$ But any loop going around meridianally more than once and longitudinally zero will not be embedded. See Figure \ref{mu^2}. Then it must be that $[\partial D]=[m]^{\pm1}.$ Since $m$ is nontrivial in $S^1 \times S^2 - \intr T$ such a $D$ cannot exist and by the Loop Theorem $\text{ker}(\pi_1(\partial T) \rightarrow \pi_1(S^1 \times S^2 - \text{int}(T)))=1.$ 
\end{proof}

\begin{figure}[!ht]
\vspace{-6.5in}
\includegraphics[height=8in]{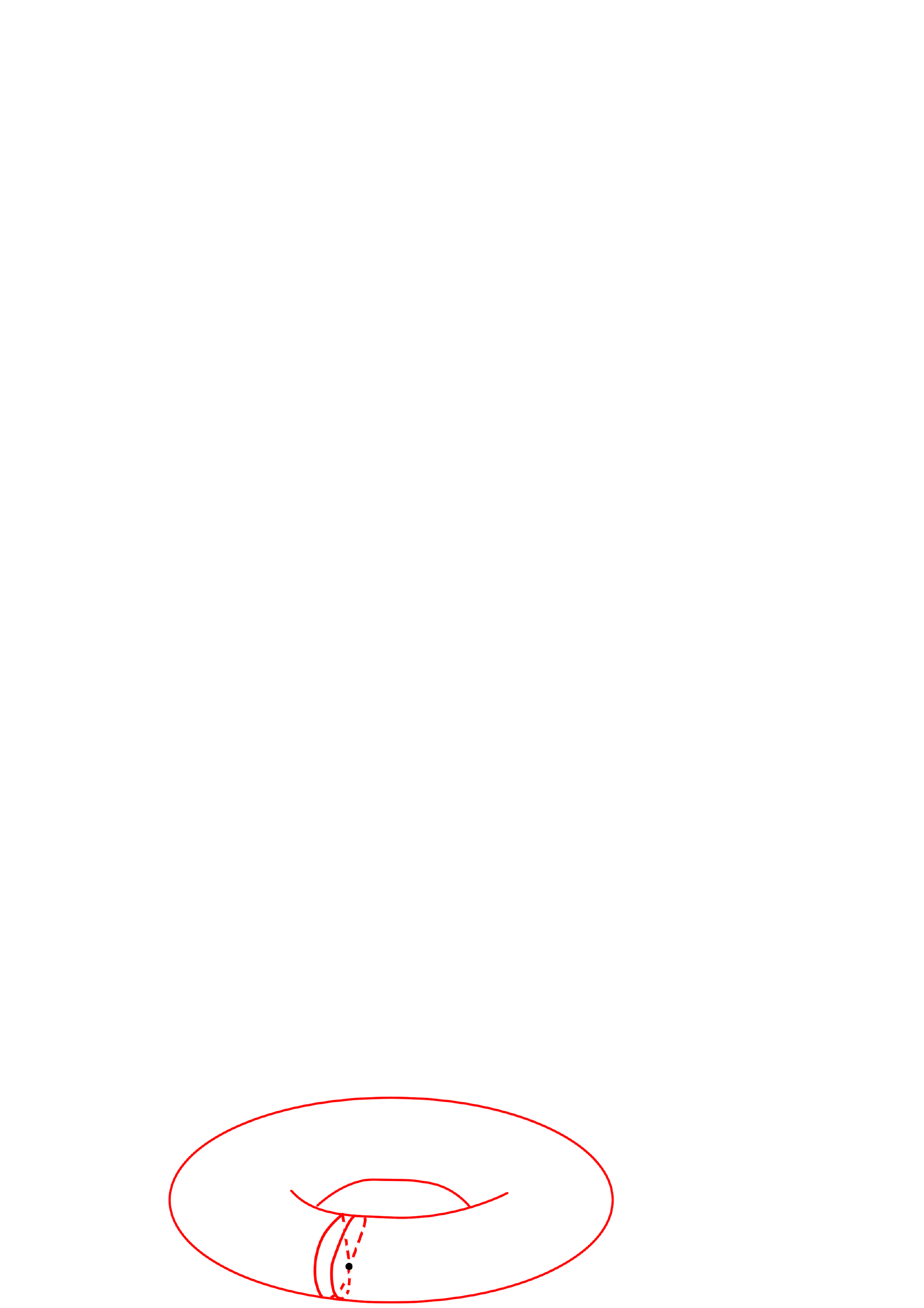}                   
\centering
\caption{$\mu^2 \lambda^0 \in \pi_1(\partial T)$}
\label{mu^2}
\end{figure}

\begin{defn}
We call any $M_i$ as yielded by the theorem when applied to any $M_\Psi$ a \emph{Jester's manifold}. 
\end{defn}

Note that for a given knot $K_i$, different choices of framing homeomorphism potentially yield different manifolds. So the variety of distinct Jester's manifolds produced by this construction is potentially much greater than we have shown.

We conclude this chapter with a theorem summarizing our accomplishments thus far.

\begin{thm} There exists an infinite collection of topologically distinct splittable compact contractible 4-manifolds. The interiors of these are topologically distinct contractible splittable open 4-manifolds.
\end{thm}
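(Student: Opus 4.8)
The plan is to recognize that this concluding theorem is a consolidation of results already assembled, applied to the collection produced by Theorem~\ref{infinite 4-splitters}. First I would fix a Jester's manifold $M=M_\Psi$ and invoke Theorem~\ref{infinite 4-splitters}, which (through Wright's Theorem~\ref{Wright}) furnishes an infinite subsequence $\{M_{i_j}\}$ of the pseudo-handle manifolds $\{W_i\}$ whose boundary fundamental groups are pairwise non-isomorphic, indecomposable, and noncyclic. This subsequence will be the desired collection, and the remaining task is only to verify the stated adjectives and to pass to interiors.

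For the compact statement I would check the properties in turn. Each $W_i$ is a compact $4$-manifold by construction, being $S^1\times\mathbb{B}^3$ with a single $\mathbb{B}^4$ pseudo-handle attached. Each $W_i$ collapses to the Jester's hat $J$ (the proposition establishing $W_i\searrow J$), so $W_i$ deformation retracts onto $J$; since $J$ is contractible, each $W_i$ is contractible. Splittability is exactly the content of the corollary stating that each $W_i=\mathbb{B}^4\cup_{\mathbb{B}^4}\mathbb{B}^4$. Finally, topological distinctness follows because a homeomorphism $W_{i_j}\approx W_{i_k}$ would restrict to a homeomorphism of boundaries, forcing $\partial W_{i_j}\approx\partial W_{i_k}$ and contradicting conclusion (1) of Theorem~\ref{Wright}.

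For the interior statement I would take interiors of the same collection. Each $\intr(W_{i_j})$ is an open splitter by Proposition~\ref{ints of splitters split}, which converts a closed splitting into an open one. Each interior is contractible because the inclusion $\intr(W_{i_j})\hookrightarrow W_{i_j}$ is a homotopy equivalence (the compact manifold deformation retracts onto the complement of an open boundary collar, which is homeomorphic to its interior), whence $\intr(W_{i_j})\simeq W_{i_j}\simeq J$ is contractible. Topological distinctness of the interiors is given verbatim by conclusion (3) of Theorem~\ref{Wright}, where distinct boundary fundamental groups force $\intr(M_{i_j})\not\approx\intr(M_{i_k})$.

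I do not anticipate a serious obstacle, since every ingredient is already in hand. The one point requiring care is the bookkeeping that the manifolds $M_i$ produced abstractly by Wright's theorem are literally the pseudo-handle manifolds $W_i$ of the previous section, as noted in the remark following Theorem~\ref{Wright}. With that identification fixed, the splitting corollary and the collapse to $J$ genuinely apply to the distinguished subsequence, so distinctness, contractibility, and splittability of the compact manifolds all transfer directly, and the interior claims follow from Proposition~\ref{ints of splitters split} together with conclusion (3) of Theorem~\ref{Wright}.
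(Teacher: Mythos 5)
Your proposal is correct and matches the paper's intent exactly: the paper states this theorem as a summary with no separate proof, relying precisely on the assembly you describe (Theorem~\ref{infinite 4-splitters} for the distinct collection, the collapse $W_i\searrow J$ for contractibility, the splitting corollary for the closed splittings, Proposition~\ref{ints of splitters split} for the open splittings, and Wright's conclusion (3) for distinctness of interiors). Your bookkeeping remark about identifying Wright's abstract $M_i$ with the pseudo-handle manifolds $W_i$ is exactly the one point the paper itself flags in the note following Theorem~\ref{Wright}.
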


\chapter{Sums of Splitters}

In this our concluding chapter, we will exhibit an uncountable collection of contractible open 4-dimensional splitters. We will do so by considering the interiors of infinite boundary connected sums of of our Jester's manifolds. These open manifolds can also be constructed as the connected sum at infinity of the interiors of the same sequence of manifolds. Using the notion of the fundamental group at infinity we will be able to show that any two such sums where one Jester's manifold appears more often as a summand in one than the other are topologically distinct. We then demonstrate a splitting for such manifolds.

\section{Some Manifold Sums and the Fundamental Group at Infinity}

We describe what we mean by the \emph{induced orientation} of the boundary of an oriented manifold $X^n$. Given a collar neighborhood of $\partial X$ which we identify as $\partial X \times [0,1]$ $(\partial X$ identified with $\partial X \times \{0\} )$ and a map $h:\mathbb{B}^{n-1} \rightarrow \partial X$ we define $\bar h$ as \[\bar{h}:\mathbb{B}^n \rightarrow \partial X \times (0,1], \ \ \ \bar{h}(x_1,x_2,...,x_n)= \left(h(x_1,x_2,...,x_{n-1}), \frac{3+x_n}{4}\right).\] (To be precise the codomain of $\bar{h}$ should be $\intr X.$) See Fig. \ref{h and h bar}.
\begin{figure}[!ht]
\includegraphics[height=2.in]{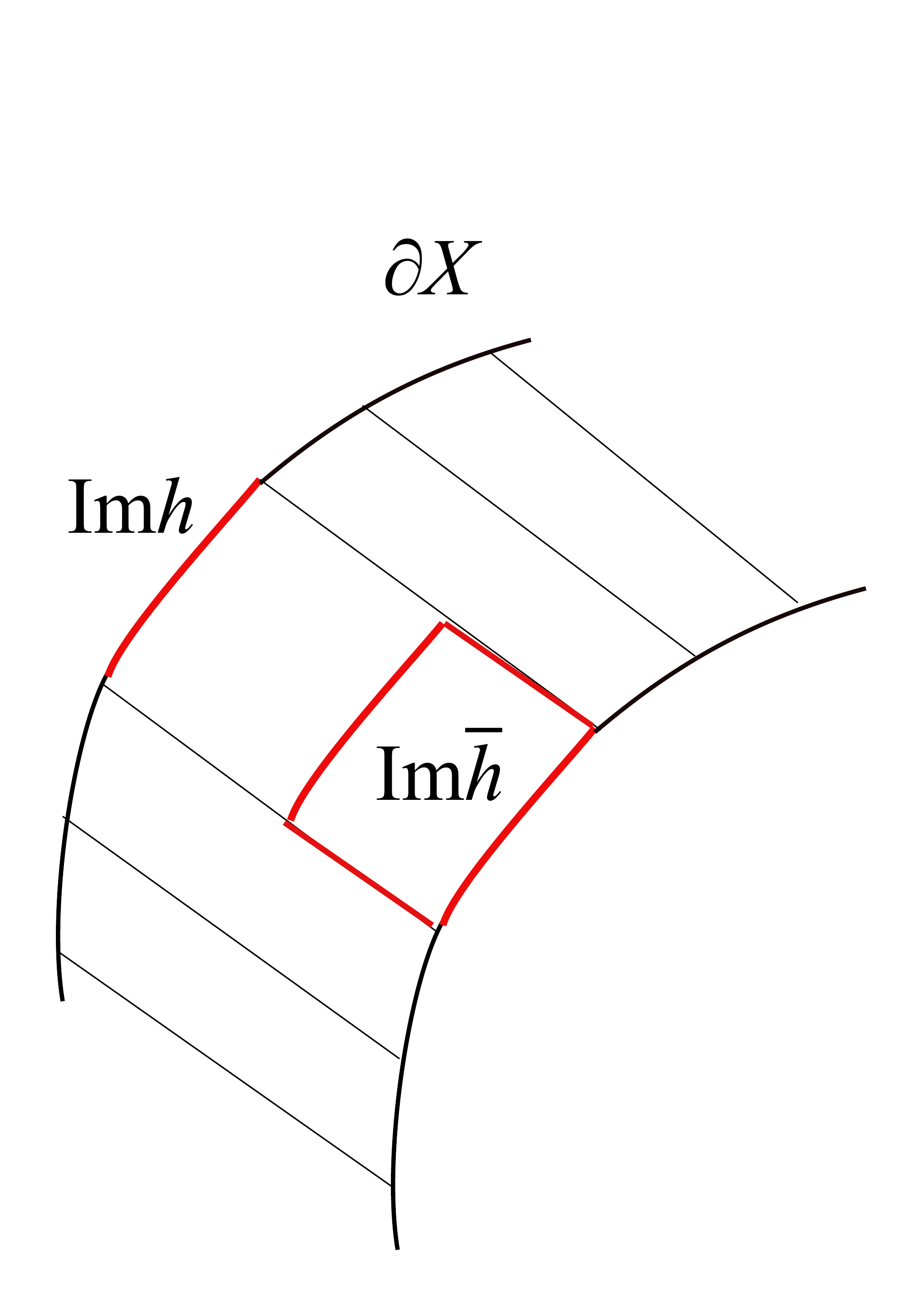} 
\centering                 
\caption{$h$ and $\bar{h}$}
\label{h and h bar}
\end{figure}
If $h:\mathbb{B}^{n-1} \rightarrow \partial X$ and $\bar{h}$ is a representative of the orientation of $X$ then the ambient isotopy class of $h$ is the \emph{induced orientation} of $\partial X$. \cite[p.~45]{RoSa}.

\begin{defn} Let $M^n$ and $N^n$ be connected oriented manifolds with nonempty boundaries. Orient $\Bd M$ and $\Bd N$ with their induced orientations and let $B_M$ and $B_N$ be tame $(n-1)$-balls in $\partial M^n$ and $\partial N^n$, respectively. Let $\phi:B_M \rightarrow B_N$ be an orientation reversing homeomorphism. Then $M^n \cup_\phi N^n$ is called a \emph{boundary connected sum} (\emph{BCS}) and is denoted $M^n \bdrysum N^n.$ 
\end{defn}

\begin{prop}
\label{bdry sum well defined}
The boundary connected sum is a connected oriented manifold which, provided $\Bd M$ and $Bd N$ are connected, does not depend on the choices of $B_i$ or $\phi_i.$ Furthermore the set of connected oriented $n$-dimensional manifolds with connected boundaries is, under the operation of connected sum, a commutative monoid (that is, associative and contains an identity) the identity being $\mathbb{B}^n$ \cite[p.~97]{Kos}. 
\end{prop}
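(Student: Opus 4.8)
The plan is to split the proposition into its three assertions---that $M \bdrysum N$ is a connected oriented manifold, that the construction is independent of the choices of balls $B_i$ and gluing map $\phi$, and that $\bdrysum$ makes the class of connected oriented manifolds with connected boundary into a commutative monoid---and to treat the independence statement as the technical heart on which the monoid axioms rest.

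First I would verify that $M \bdrysum N$ is a connected oriented manifold. Connectedness is immediate, since $M$ and $N$ are each connected and share the glued ball $B_M \approx B_N$. For the manifold structure I would argue by location of points. Interior points of $M$ or $N$, together with points of $\intr B_M$ (where two half-ball neighborhoods glue along a flat disc to form a full ball), are interior points; points of $(\bdry M - \intr B_M) \cup (\bdry N - \intr B_N)$ are boundary points. The only delicate locus is $\bdry B_M = \bdry B_N \approx S^{n-2}$, the ``corner'' where the two boundaries meet, and here tameness of the balls supplies the collars needed to straighten the corner and exhibit a genuine manifold chart. The orientations of $M$ and $N$ then assemble into a single orientation precisely because $\phi$ is orientation reversing: with the induced boundary orientations of the definition, an orientation-reversing identification is exactly what makes the two local orientations agree across $B_M$.

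Next, and this is the crux, I would establish independence of the choices. The argument has two ingredients. The first is that any two tame $(n-1)$-balls in a \emph{connected} boundary are ambiently isotopic; this is where the hypothesis that $\Bd M$ and $\Bd N$ are connected enters, and it rests on the uniqueness of tame balls up to ambient isotopy together with an isotopy extension theorem. The second is that an orientation-reversing homeomorphism $\phi : B_M \to B_N$ is unique up to isotopy, since any two orientation-reversing homeomorphisms of a disc differ by an orientation-preserving one, and orientation-preserving self-homeomorphisms of a disc are isotopic to the identity by Alexander's trick. Combining these and pushing the resulting isotopies across the gluing via isotopy extension shows that two instances of the construction, for any choices of $B_i$ and $\phi$, are homeomorphic. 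I expect this step---marshaling the ambient isotopy of tame balls and then promoting an isotopy of gluing maps to a homeomorphism of the glued manifolds---to be the main obstacle, and it is exactly the content indicated by the cited reference.

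Finally, I would deduce the monoid axioms from independence. For the identity $M \bdrysum \mb{B}^n \approx M$: gluing $\mb{B}^n$ along a boundary ball merely attaches a half-ball to $\bdry M$, which a collar argument reabsorbs into $M$ without changing its homeomorphism type. Commutativity $M \bdrysum N \approx N \bdrysum M$ is immediate from the symmetry of the construction, replacing $\phi$ by $\phi^{-1}$. For associativity $(M \bdrysum N) \bdrysum P \approx M \bdrysum (N \bdrysum P)$, I would invoke independence to relocate all gluing balls into disjoint regions of the relevant boundaries, so that both iterated sums become literally the same space---three manifolds glued along disjoint boundary balls in a linear arrangement. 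The well-definedness proved above is what licenses these repositionings, so the monoid structure is genuinely a corollary of the independence statement rather than a separate computation.
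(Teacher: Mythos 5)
Your outline is correct and follows the standard argument: reduce everything to the uniqueness of tame boundary balls up to ambient isotopy (where connectedness of $\Bd M$ and $\Bd N$ enters) plus Alexander's trick for the gluing map, and then derive the monoid axioms from that independence. The paper itself supplies no proof of this proposition---it is quoted directly from Kosinski---and your sketch is essentially the treatment given in that reference, so there is nothing to correct.
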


\begin{defn} Let $\{M^n_i\}_{i=1}^m$ ($m$ possibly $\infty$) be oriented manifolds with  \\ nonempty connected boundaries and for each $i=1,2,...$ let $B_{i,L}$ and $B_{i,R}$ be disjoint tame $(n-1)$-balls in $\partial M^n_i$. For $i>1$ let $\phi_i:B_{i,L} \rightarrow B_{i-1,R}$ be an orientation reversing homeomorphism. Let $\phi:\sqcup_{i>1} B_{i,L} \rightarrow \sqcup_{i\geq 1} B_{i,R}$ with $\phi|_{B_{i,L}}=\phi_i.$ Then $\left(\sqcup M_i\right)/\phi$ is called a \emph{boundary connected sum} (\emph{BCS}) and is denoted $M_1 \bdrysum M_2 \bdrysum \cdots \bdrysum M_m$ (or $M_1 \bdrysum M_2 \bdrysum \cdots$ when $m=\infty).$  
\end{defn}

We next prepare a description of an analogous sum for open manifolds. But first we need a proposition ensuring the existence of the desired attaching maps. 

\begin{defn} By a \emph{proper map} $p$ between spaces $Y$ and $X$ we mean a map $p:Y \rightarrow X$ such that for any compact $C \subset X$ we have $p^{-1}(C)$ is compact. 
A \emph{ray} is a proper embedding $[0,\infty)\rightarrow X.$
\end{defn}

\begin{note} Unless otherwise stated all rays will piecewise linearly embedded. We will abuse our notation for rays (as well as for some other maps) by using our symbol for the map to also mean its image.

\end{note}

\begin{prop}
\label{nhd of proper ray}
 Suppose $N$ is a regular neighborhood of a ray $r$ in an open $n$-manifold $M (n\geq 4).$ Then $(N, \partial N) \approx (\mathbb{R}^n_+,\mathbb{R}^{n-1}).$
\end{prop}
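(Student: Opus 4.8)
The plan is to realize $N$ as an increasing union of PL $n$-balls, one for each finite initial segment of the ray, and then to check that consecutive balls are glued along a single face so that the union assembles to the standard half-space. First I would triangulate $M$ so that $r$ is a full subcomplex; since $r$ is a properly embedded PL ray, it is an infinite arc, a union of edges $e_1, e_2, \ldots$ laid end to end. Writing $A_k = e_1 \cup \cdots \cup e_k$ for the arc formed by the first $k$ edges, we have $r = \bigcup_k A_k$, and properness guarantees that every compact subset of $M$ meets only finitely many edges. I would then take a nested sequence of regular neighborhoods $N_1 \subset N_2 \subset \cdots$ with $N_k$ a regular neighborhood of $A_k$ and $N = \bigcup_k N_k$ (for instance, second-derived neighborhoods in successive subdivisions). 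Because each $A_k$ is an arc, hence collapsible, the Proposition that a regular neighborhood of a collapsible complex is a ball gives $N_k \approx \mathbb{B}^n$ for every $k$.

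The core of the argument is to control how $N_k$ sits inside $N_{k+1}$. I would show that $C_k := \text{cl}(N_{k+1} - N_k)$ is a regular neighborhood of the single new edge $e_{k+1}$, that it is therefore again an $n$-ball, and that it meets $N_k$ in a single tame $(n-1)$-ball $D_k \subset \partial N_k$ (the face through which $e_{k+1}$ leaves $A_k$). Thus $N_{k+1} = N_k \cup_{D_k} C_k$ is a boundary connected sum of two balls along a face, and $\partial N_{k+1}$ is obtained from the sphere $\partial N_k$ by deleting $\intr D_k$ and gluing in the remaining face of $C_k$. Setting $E_k := \partial N_k \setminus \intr D_k$, each $E_k$ is an $(n-1)$-ball, the $E_k$ form an increasing exhaustion $\partial N = \bigcup_k E_k$, and, by choosing the neighborhoods to shrink appropriately, $E_k$ lies in the interior of $E_{k+1}$ in $\partial N$. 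Likewise $(N_k, E_k) \approx (\mathbb{B}^n, \mathbb{B}^{n-1})$ as a ball with a face in its boundary, and the inclusions $(N_k, E_k) \hookrightarrow (N_{k+1}, E_{k+1})$ are interior inclusions of such standard pairs.

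Finally I would assemble these local models into a global homeomorphism. Identifying $(N_1, E_1)$ with a standard half-ball-and-face sitting inside $(\mathbb{R}^n_+, \mathbb{R}^{n-1})$, I would inductively extend across each shell $C_k$ using the uniqueness of regular neighborhoods together with the essential uniqueness of the ball pair $(\mathbb{B}^n, \text{face})$, always arranging the images to run through a fixed exhaustion of $\mathbb{R}^n_+$; properness of $r$ ensures these images do exhaust the half-space. The direct limit is a PL homeomorphism $N \to \mathbb{R}^n_+$ carrying $\partial N$ onto $\mathbb{R}^{n-1}$; equivalently, $\partial N$ is a monotone union of $(n-1)$-balls each interior to the next and so is $\mathbb{R}^{n-1}$, with $N$ the corresponding half-space.

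I expect the main obstacle to be twofold: rigorously establishing the boundary-connected-sum/face structure of consecutive regular neighborhoods (so that $C_k$ is a ball attaching along the single disc $D_k$ and nothing new accumulates on $\partial N$), and carrying out the inductive assembly so that the limiting map is a well-defined, surjective PL homeomorphism of pairs onto the standard half-space. The hypothesis $n \geq 4$ enters precisely here: the ray has codimension $n-1 \geq 3$, so it is unknotted and its successive regular neighborhoods can be placed in the standard attaching position, keeping the end of $N$ tame and letting the monotone-union recognition of $(\mathbb{R}^n_+, \mathbb{R}^{n-1})$ go through.
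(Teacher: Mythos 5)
Your construction is viable, but it is a genuinely different route from the one the paper takes. The paper's proof is a three-line reduction: it first produces a neighborhood $U\approx\mathbb{R}^n$ of $r$ (the lemma immediately following the proposition), transports $r$ into $\mathbb{R}^n$, invokes Lemma \ref{indep of rays} to ambiently isotope $r$ onto the standard ray $\{0\}^{n-1}\times[1,\infty)$ (whose regular neighborhood is visibly $\mathbb{R}^n_+$), and finishes with uniqueness of regular neighborhoods. That argument is short because it reuses machinery the paper needs anyway, but it is where the hypothesis $n\geq 4$ genuinely enters (general position for the proper homotopy between rays). Your argument instead builds the half-space intrinsically as a monotone union of ball pairs $(N_k,E_k)\approx(\mathbb{B}^n,\mathbb{B}^{n-1})$ and recognizes the limit by a relative version of the annulus-theorem argument used in Lemma \ref{Ascending union of interiors of PL balls is n-space}. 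What this buys is independence from the ambient isotopy of rays: the homeomorphism type of the pair $(N,\partial N)$ depends only on the shell structure along $r$, not on how $r$ sits in $M$, so your proof would in fact work in all dimensions; your closing remark attributing the role of $n\geq4$ to codimension-$\geq 3$ unknotting is misplaced, since unknottedness governs the embedding of $N$ in $M$, not the intrinsic pair.

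One step of your outline is stated incorrectly as written and is exactly the place requiring care. If $N_k$ and $C_k$ are taken as derived neighborhoods of $A_k$ and of $e_{k+1}$ in a common triangulation, then $N_k\cap C_k$ is a derived neighborhood of the shared vertex $A_k\cap e_{k+1}$, hence an $n$-ball, not the $(n-1)$-ball $D_k\subset\partial N_k$ you describe. To get the face picture you must instead take $C_k$ to be a regular neighborhood of the arc $\cl(e_{k+1}-N_k)$ in $\cl(M-N_k)$, meeting $\partial N_k$ regularly; relative regular neighborhood theory (Rourke--Sanderson or Cohen) then gives that $C_k$ is an $n$-ball meeting $\partial N_k$ in a face, and the boundary-connected-sum assembly and the limiting homeomorphism onto $(\mathbb{R}^n_+,\mathbb{R}^{n-1})$ go through as you intend. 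You correctly identified this as the main obstacle; it is fixable, but it is the content of the proof on your route.
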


\begin{proof} The following lemma ensures the existence of a $n$-space neighborhood $U \approx \mathbb{R}^n$ of $r.$ Let $h$ be such a homeomorphism $h:U \rightarrow \mathbb{R}^n.$ Since all rays in $R^n$ are ambiently isotopic (see, for example, Lemma \ref{indep of rays}), and since $\mathbb{R}^n_+$ is a regular neighborhood of $(0,0,...,0)\times[1,\infty)$ any regular neighborhood $N$ of $h(r)$ will be a half space and then $h^{-1}(N)$ will be a half space regular neighborhood of $r.$ As regular neighborhoods of $r$ are homeomorphic, any regular neighborhood of $r$ will be a half space.
\end{proof}

\begin{note}
Our argument for Lemma \ref{indep of rays} is dependent on the dimension $n$ being greater than 4. One must work harder to obtain the above result for $n\leq 3.$
\end{note}

\begin{lemma} For a ray $r$ in an open $n$-manifold $M$ there exists $U$ a neighborhood of $r$ such that $U\approx \mathbb{R}^n.$

\end{lemma}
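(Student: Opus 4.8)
The plan is to realize $U$ as a monotone union of piecewise-linear $n$-balls obtained as regular neighborhoods of the compact sub-arcs exhausting $r$, and to control the nesting tightly enough --- via the collapse and uniqueness theory for regular neighborhoods recalled earlier in this chapter --- that the union is forced to be $\mathbb{R}^n$ rather than some exotic open contractible manifold.

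First, since $r$ is a proper PL ray I would fix a triangulation of $M$ in which $r$ is a (locally finite) subcomplex and write $r=\bigcup_{k\ge 1}A_k$, where $A_k=r([0,k])$ is a compact arc with $A_k\subset A_{k+1}$ and $A_k\searrow 0$. Next I would build a nested sequence of regular neighborhoods $N_1\subset\intr N_2\subset\intr N_3\subset\cdots$ with $N_k$ a regular neighborhood of $A_k$. Given $N_k$, the polyhedron $A_{k+1}\cup N_k$ collapses to $A_{k+1}$ (collapse the ball $N_k$ to its spine $A_k$, keeping fixed a collar of the point $r(k)$ at which $r([k,k+1])$ exits $N_k$, so the attached arc is undisturbed); hence a regular neighborhood $N_{k+1}$ of $A_{k+1}\cup N_k$ is simultaneously a regular neighborhood of $A_{k+1}$, is an $n$-ball (since $A_{k+1}\searrow 0$ and a regular neighborhood of a collapsible set is a ball), and contains $N_k$ in its interior. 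Set $U=\bigcup_k N_k=\bigcup_k\intr N_{k+1}$; this is open, and since $A_k\subset N_k$ it contains $r$, so $U$ is a neighborhood of $r$.

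The heart of the argument is to show that consecutive neighborhoods differ by a product collar, $\cl(N_{k+1}\setminus N_k)\approx S^{n-1}\times[0,1]$. Here I would invoke that $A_{k+1}\searrow A_k$, so by the collapse theorem $N_{k+1}$ is also a regular neighborhood of $A_k$; thus $N_k$ and $N_{k+1}$ are two regular neighborhoods of the \emph{same} polyhedron $A_k$ with $N_k\subset\intr N_{k+1}$, and uniqueness of regular neighborhoods gives that the region between them is a collar $\partial N_k\times[0,1]\approx S^{n-1}\times[0,1]$. With this established, $U=N_1\cup\bigcup_{k\ge 1}\cl(N_{k+1}\setminus N_k)$ is the ball $N_1\approx\mathbb{B}^n$ with a countable stack of external collars glued successively along the spheres $\partial N_k$; concatenating the collars identifies $U$ with $\mathbb{B}^n\cup_{S^{n-1}}(S^{n-1}\times[0,\infty))$, which is homeomorphic to $\mathbb{R}^n$ via $(u,t)\mapsto(1+t)u$.

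The main obstacle is precisely this product-collar step: a monotone union of open $n$-cells need not be $\mathbb{R}^n$ (Whitehead-type pathologies), so it is essential to arrange that each $N_k$ sits inside $N_{k+1}$ as a collared, concentric regular neighborhood. Exhausting $r$ by \emph{collapsible} arcs and combining the collapse theorem with uniqueness of regular neighborhoods is exactly what rules out such pathology; once the collars are in hand, the assembly into $\mathbb{R}^n$ is routine. I note that the dimension restriction recorded in the preceding note is needed only for the sharper statement that rays are ambiently standard, whereas the mere existence of an $\mathbb{R}^n$-neighborhood argued above is insensitive to any knotting of the embedding.
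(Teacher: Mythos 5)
Your overall strategy is the same as the paper's: exhaust $r$ by collapsible compact arcs, build a nested sequence of PL ball neighborhoods of them, and identify the union with $\mathbb{R}^n$ by showing consecutive balls differ by an annulus. But the inductive step as you have written it has a gap. You need $A_{k+1}\cup N_k$ to be collapsible in order for its regular neighborhood $N_{k+1}$ to be a ball, and your justification --- ``collapse the ball $N_k$ to its spine $A_k$, keeping fixed a collar of the point $r(k)$ at which $r([k,k+1])$ exits $N_k$'' --- does not work as stated: $r(k)$ lies in the \emph{interior} of $N_k$ (since $N_k$ is a neighborhood of $A_k\ni r(k)$), and the real danger is that the arc $r([k,k+1])$ exits $N_k$ and then re-enters it. If that happens, $N_k\cup r([k,k+1])$ is homotopy equivalent to a nontrivial wedge of circles, hence not even contractible, let alone collapsible, and $N_{k+1}$ need not be a ball. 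What is missing is an explicit arrangement that the tail of the ray meets the current ball in a controlled way. The paper secures exactly this by interposing a ball $B_i$ with $B_i\cap r([i+1,\infty))=\{r(i+1)\}$ before attaching the next arc segment; in your setup one could instead take $N_k$ to be a simplicial regular neighborhood in a second derived subdivision of a triangulation having $r$ as a full subcomplex, so that $N_k\cap r=r([0,k''])$ is a connected initial segment. Either way, this condition has to be stated and used; it is the crux of the induction.

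Two smaller remarks. Your ``main obstacle'' is misidentified: a monotone union of open $n$-cells \emph{is} an open $n$-cell by Brown's theorem (cited in this paper as [Bro] and used later in Proposition \ref{infinite csi splits}); the Whitehead pathology concerns nested solid tori, not cells. And once you have nested PL balls $N_k\subset\intr N_{k+1}$, the PL Annulus Theorem quoted alongside Lemma \ref{Ascending union of interiors of PL balls is n-space} already gives $\cl(N_{k+1}-N_k)\approx S^{n-1}\times I$ for free, so your detour through uniqueness of regular neighborhoods of $A_k$ is sound but unnecessary.
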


\begin{proof} Let $N_0$ be a regular neighborhood of $r([0,1.5])$ and note $N_0\approx \mathbb{B}^n$ as $r([0,1.5])$ is collapsible. Then we can choose $B_0^n\approx \mathbb{B}^n$ such that $N_0 \supset B_0^n \supset r([0,1])$ and $r([1,\infty]) \cap B_0=\{r(1)\}.$ See Figure \ref{ray_segment_nhds}.

\begin{figure}[!ht]
\includegraphics[height=2in]{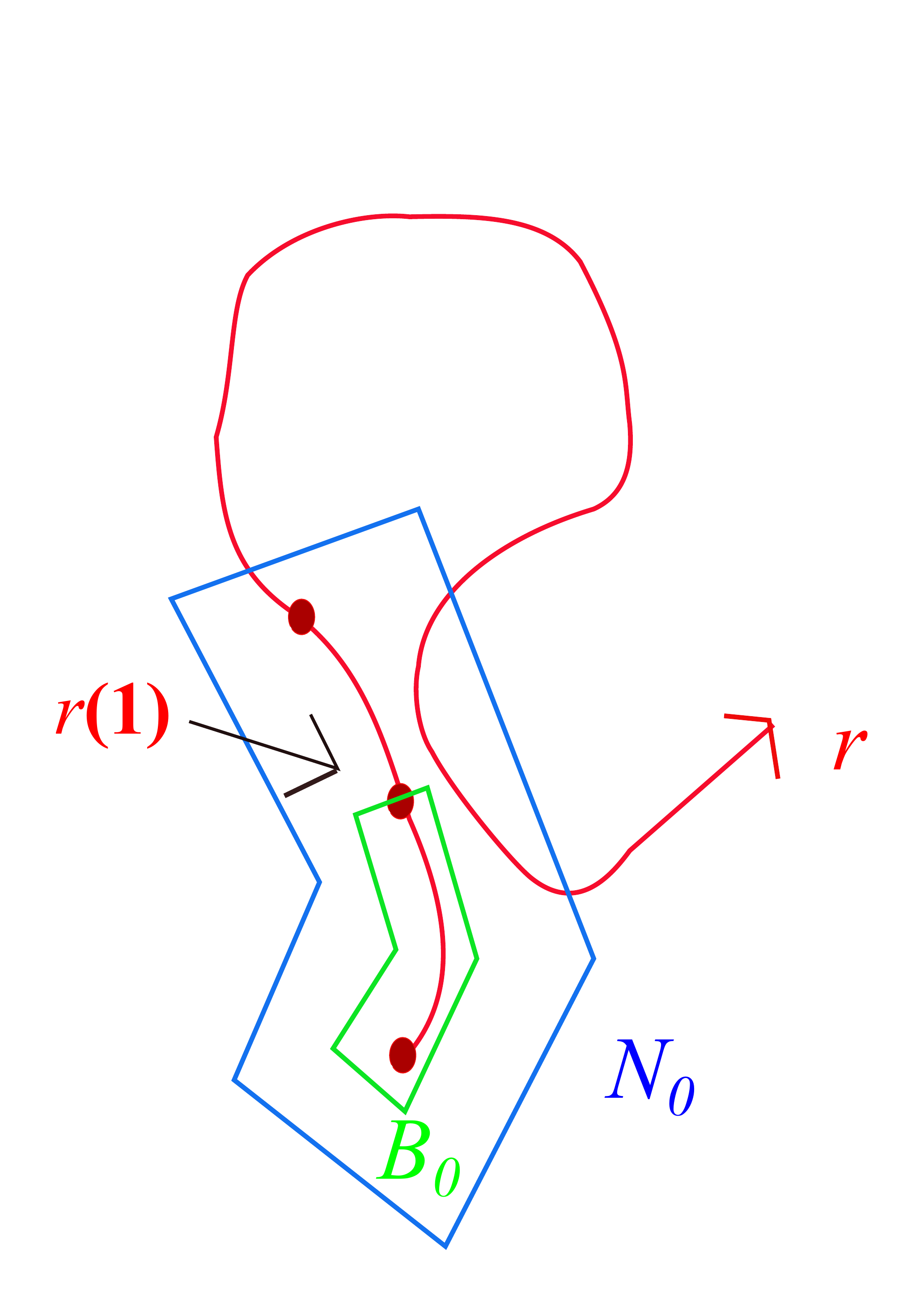}                   
\centering
\caption{Construction of a $\mathbb{R}^n$ neighborhood of $r$}
\label{ray_segment_nhds}
\end{figure}

By choosing such a $B_0$ we have that $B_0 \cup r([1,2.5])$ is collapsible. Next choose $N_1$ a regular neighborhood of $B_0 \cup r([1,2.5])$ which is a ball and thus there exists $B_1\approx \mathbb{B}^n$ such that $B_0 \cup r([1,2]) \subset \intr B_1,$  $B_1 \subset N_1,$ and $B_1 \cap r([2,\infty))=\{r(2)\}.$ Continuing in this manner we obtain for $i=0,1,2,...$ $n$-balls $B^n_i,$ with the properties $B_i \subset \intr B_{i+1}$ and $B_i$ is a neighborhood of $r([0,i-1]).$ Then $U=\bigcup_{i \geq 0} \intr B_i$ is a neighborhood of $r$ and by the following lemma we see that $U \approx \mathbb{R}^n.$  
\end{proof}

\begin{lemma}For $i=0,1,...$ and $n$-balls $B_i$ in a p.l. manifold with $B_i \subset \intr B_{i+1}$ the union $\bigcup_{i \geq 0} B_i$ is homeomorphic to $\mathbb{R}^n.$

\label{Ascending union of interiors of PL balls is n-space} 
\end{lemma}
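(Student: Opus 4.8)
The plan is to prove that an ascending union $\bigcup_{i \geq 0} B_i$ of PL $n$-balls, with $B_i \subset \intr B_{i+1}$, is homeomorphic to $\mathbb{R}^n$. The natural strategy is to build the homeomorphism with $\mathbb{R}^n$ by exhausting $\mathbb{R}^n$ itself by a nested sequence of concentric round balls $D_0 \subset D_1 \subset \cdots$ with $D_i = \{x : |x| \leq i+1\}$, and to construct a homeomorphism $\bigcup B_i \to \mathbb{R}^n = \bigcup D_i$ as a direct limit of homeomorphisms of the successive ball-pairs. The key geometric input is the \emph{annulus-type} structure: each $\cl(B_{i+1} - \intr B_i)$ should be a product region $S^{n-1} \times [0,1]$, so that the homeomorphism can be extended one annular shell at a time.

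First I would set up the inductive construction. Start with a homeomorphism $g_0 : B_0 \to D_0$ (both are PL $n$-balls, so such a homeomorphism exists). Then, supposing $g_i : B_i \to D_i$ has been built, I would extend it over the shell $\cl(B_{i+1} - \intr B_i)$ to a homeomorphism $g_{i+1} : B_{i+1} \to D_{i+1}$ agreeing with $g_i$ on $B_i$. The extension step is where the real content lies: I must identify $\cl(B_{i+1}-\intr B_i)$ with the product $\partial B_i \times [0,1]$ and similarly realize $\cl(D_{i+1}-\intr D_i)$ as $\partial D_i \times [0,1]$, then use the already-constructed boundary homeomorphism $g_i|_{\partial B_i} : \partial B_i \to \partial D_i$ crossed with the identity on $[0,1]$ to fill in the shell. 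Because each $g_{i+1}$ restricts to $g_i$, the maps are coherent and their union $g = \bigcup_i g_i : \bigcup_i B_i \to \mathbb{R}^n$ is a well-defined bijection; it is continuous and open because on each compact piece it agrees with a homeomorphism, so $g$ is a homeomorphism.

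The main obstacle, and the step requiring the most care, is justifying that $\cl(B_{i+1} - \intr B_i)$ is genuinely a product $\partial B_i \times [0,1]$ (an $S^{n-1} \times [0,1]$). This is precisely the content of an annulus-type theorem in the PL category: given one PL ball properly contained in the interior of another, the region between them is a PL annulus. In the PL setting this follows from regular neighborhood theory --- indeed $B_{i+1}$ is (up to PL homeomorphism) a regular neighborhood of $B_i$, and the uniqueness/collar structure of regular neighborhoods (J.H.C. Whitehead's theory, as cited earlier in the excerpt via \cite{RoSa}) gives that the complementary shell collars $\partial B_i$, hence is a product. I would invoke this rather than reprove it. A clean alternative is to quote directly that $\cl(B_{i+1}-\intr B_i) \approx \partial B_i \times I$ from the PL annulus theorem and then only do the bookkeeping of assembling the shell homeomorphisms.

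One technical point I would be careful about is ensuring the boundary matching is honored at each stage: the extension over the $(i+1)$st shell must use exactly the boundary values $g_i|_{\partial B_i}$ already fixed, so that no readjustment of previously-defined values is ever needed; the product structure makes this automatic since $(g_i|_{\partial B_i}) \times \mathrm{id}_{[0,1]}$ restricts correctly to the inner boundary sphere. Finally I would verify that the union $g$ is proper --- equivalently, that $g^{-1}$ is continuous --- which follows since $g$ carries the exhaustion $\{B_i\}$ onto the exhaustion $\{D_i\}$, so preimages of compact sets are compact. This completes the identification $\bigcup_{i\geq 0} B_i \approx \mathbb{R}^n$.
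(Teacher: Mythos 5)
Your proposal is correct and follows essentially the same route as the paper: exhaust $\mathbb{R}^n$ by concentric round balls and extend a homeomorphism shell by shell, using the PL annulus theorem to identify each $\cl(B_{i+1}-\intr B_i)$ with $\partial B_i \times I$ and gluing via $g_i|_{\partial B_i}\times \mathrm{id}$. Your added checks on boundary matching and properness are sensible refinements of the same argument.
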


We will apply the following theorem in our proof of the previous lemma.

\begin{thm} [PL Annulus Theorem] \cite[p.~36]{RoSa} Given $n$-balls $A$ and $B$ with $A \subset \intr B$ then $\cl(B-A) \approx S^{n-1} \times I.$ 
\end{thm}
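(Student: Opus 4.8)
The plan is to prove the result entirely within the PL category, where — unlike the topological annulus theorem — the statement is elementary and holds in all dimensions. The strategy is to compare the given pair $A \subset \intr B$ with a concrete \emph{radial} model by exhibiting a cone structure on $B$, for which the annular complement is visibly a product, and then to transport that product structure onto the pair $A \subset \intr B$ using the uniqueness of regular neighborhoods.

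First I would fix a point $p \in \intr A$ and realize $B$ as a PL cone $B = p \ast \partial B$ with cone point $p$. This is possible because every PL $n$-ball is PL homeomorphic to $\Delta^n$, and a simplex is the straight-line cone from any of its interior points; pulling such a structure back, with the cone vertex chosen inside the open set $\intr A$, gives the desired cone on $B$, where $\partial B \approx S^{n-1}$. In the resulting cone coordinates, writing each point of $B \setminus \{p\}$ uniquely as $(1-t)p + tx$ with $x \in \partial B$ and $t \in (0,1]$, the ``radial'' sub-ball $A_0 = \{(1-t)p+tx : t \le \tfrac12\}$ satisfies
\[
\cl(B - A_0) = \{(1-t)p + tx : \tfrac12 \le t \le 1\} \approx \partial B \times [\tfrac12,1] \approx S^{n-1}\times I,
\]
an entirely explicit computation. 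Shrinking the radius if necessary, I may also arrange $A_0 \subset \intr A$, since the radial balls form a neighborhood basis at $p$.

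Next I would observe that $A_0$ and $A$ are both regular neighborhoods of $\{p\}$ in $B$: each is a PL manifold neighborhood of $p$ and each collapses to $p$ (the radial ball is a cone, and a PL ball collapses to any prescribed interior point by triangulating it as a cone from that point). By the uniqueness theorem for regular neighborhoods in the regular-neighborhood theory of \cite{RoSa}, there is an ambient PL isotopy of $B$, supported in $\intr B$ and hence fixing $\partial B$, carrying $A_0$ onto $A$. Its time-one homeomorphism $h \colon B \to B$ restricts to a PL homeomorphism $\cl(B - A_0) \to \cl(B - A)$, whence
\[
\cl(B - A) \approx \cl(B - A_0) \approx S^{n-1}\times I,
\]
as required.

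The main obstacle is the appeal to uniqueness of regular neighborhoods, which is the substantive input and the one place the full machinery (derived neighborhoods, collapsibility, and the regular-neighborhood theorem) is genuinely used; everything else is either the explicit cone computation or the routine identification of $A_0$ and $A$ as regular neighborhoods of $p$. A minor point worth verifying is that a PL ball collapses to an arbitrary interior point, which follows from realizing the ball as a cone with that point as cone vertex. Notably, no restriction on $n$ is needed — this is precisely the feature distinguishing the PL annulus theorem from its topological counterpart.
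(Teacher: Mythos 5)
Your proof is correct. The paper does not prove this statement at all---it quotes the PL Annulus Theorem directly from \cite[p.~36]{RoSa}---and your argument (realize $B$ as a cone $p \ast \partial B$ with cone point $p \in \intr A$, observe that the closed complement of a radial subcone is visibly $\partial B \times I$, then identify both $A$ and the radial ball as regular neighborhoods of $\{p\}$ in $B$ and invoke uniqueness of regular neighborhoods) is essentially the standard proof found in that reference, valid in all dimensions as you note.
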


\begin{proof}[Proof of Lemma \ref{Ascending union of interiors of PL balls is n-space}] Embed $B_0$ as the unit ball in $\mathbb{R}^n$ via a map $h:B_0 \rightarrow \mathbb{R}^n.$ We then extend this embedding to an embedding of $B_1$ onto the radius 2 origin centered ball of $\mathbb{R}^n.$ We do this by identifying $(\cl(B_1-B_0),\partial B_0)$ with $(\partial B_0 \times I,\partial B_0)\approx (S^{n-1}\times I,S^{n-1}\times{0})$ and sending $p=(x,t)\in\partial B_0\times I$ to $(h(x),t).$ Continue to further extend $h$ to embed $\bigcup B_i$ onto all of $\mathbb{R}^n.$
\end{proof}

\begin{defn} For oriented, piecewise linear, open $n$-manifolds $X$ and $Y$, and rays $\alpha_X \subset X$ and $\alpha_Y \subset Y$ we define the \emph{connected sum at infinity (CSI)} of $(X,\alpha_X)$ and $(Y,\alpha_Y)$ as follows. Choose regular neighborhoods $N_X$ and $N_Y$ of $\alpha_X$ and $\alpha_Y$, respectively. Orient $\partial N_X$  with the induced orientation from the given orientation of $X-\intr N_X$ and orient $\partial N_Y$ from the given orientation of $Y-\intr N_Y.$ Then the CSI of $(X,\alpha_X)$ and $(Y,\alpha_Y)$ is
\[(X,\alpha_X)\natural (Y,\alpha_Y)= (X-\text{int}N_X)\cup_f (Y-\text{int}N_Y)\]
where $f$ is an orientation reversing p.l. homeomorphism $f: \partial N_X \rightarrow \partial N_Y$. 
\end{defn}

\begin{figure}[!ht]
\vspace{-4in}
\includegraphics[height=6in]{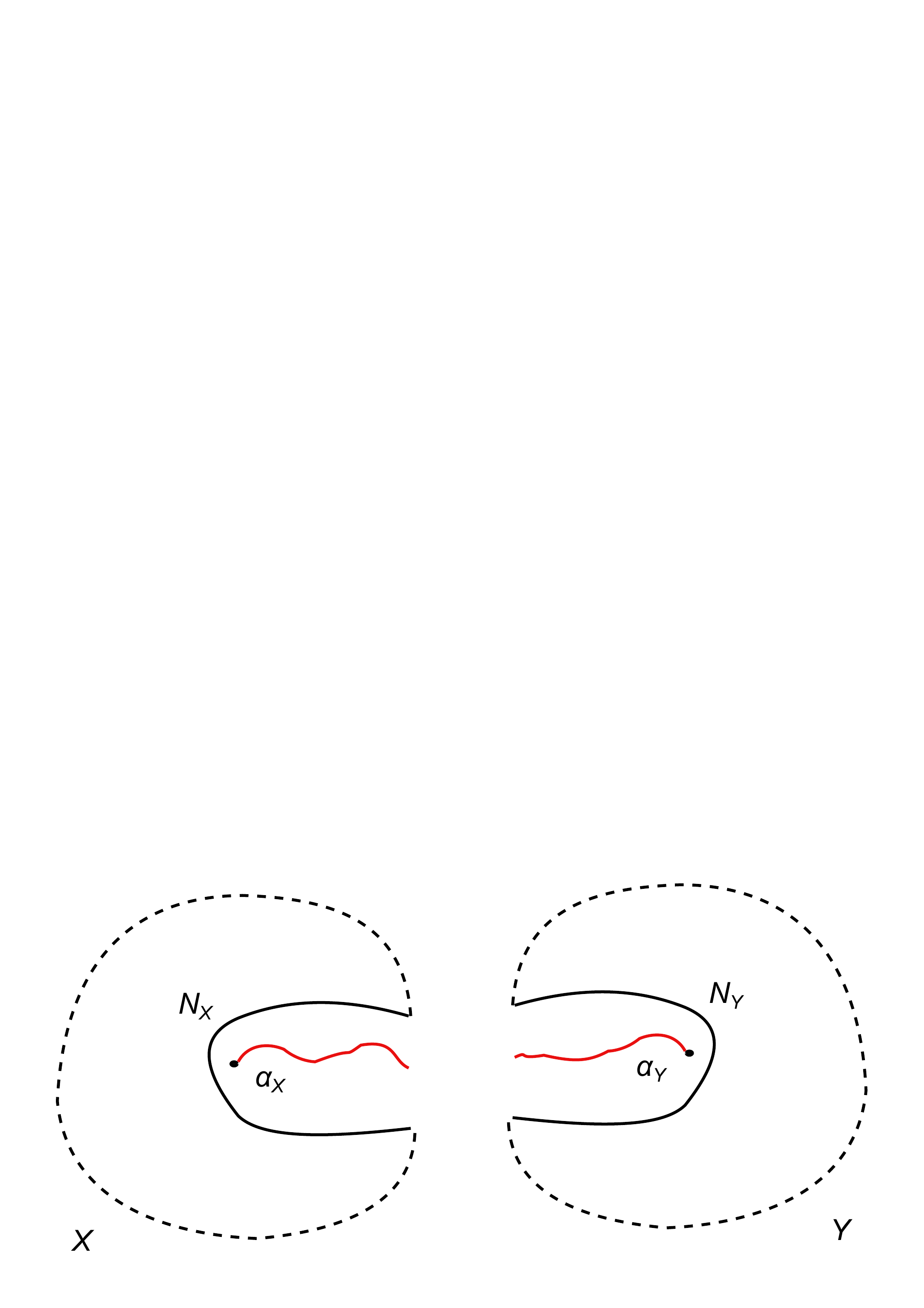} 
\centering                 
\caption{$(X,\alpha_X)\natural (Y,\alpha_Y)$}
\end{figure}

Note that we are considering regular neighborhoods of \emph{noncompact} manifolds and by the uniqueness theorem for regular neighborhoods (see \cite{Coh}) $(X,\alpha_X)\natural (Y,\alpha_Y)$ is independent of the choices of neighborhoods $N_X$ and $N_Y.$

We note that (for our conditions on the summands) our definition of $X \natural Y$ is equivalent to both Gompf's definition of \textit{end sum} \cite{Gom} and Calcut, King, and Siebenmann's definition of connected sum at infinity \cite{CKS}. 

\begin{defn} Let $\{X_i\}_{i=1}^m$ ($m$ possibly $\infty),$ be oriented, piecewise linear, open $n$-manifolds and for $i=1,2,...$ and $x=L,R$ choose rays $\alpha_{i,x} \subset X_i.$ Further choose regular neighborhoods $N_{i,x}$ of $\alpha_{i,x}$ so that $N_{i,L}\cap N_{i,R}=\emptyset.$ Orient $\partial N_{i,x}$ with the induced orientation from the given orientation of $X_i-\intr(N_{i,L}\cup N_{i,R})$ and choose orientation reversing homeomorphisms ${\phi_i:\partial N_{i,R} \rightarrow \partial N_{i+1,L}.}$ Let $\phi: \sqcup_{i\geq 1} \partial N_{i,R} \rightarrow \sqcup_{i> 1} \partial N_{i,L}$ with $\phi|_{N_{i,R}}=\phi_i.$ Let $\check{X}_1=X_1-\intr N_{1,R}$ and for $i=2,3...$ let $\check{X}_i= X_i-\intr(N_{i,L} \cup N_{i,R}).$ Then ${(\sqcup(\check{X_i}))/\phi}$ is called the \emph{connected sum at infinity} (\emph{CSI}) of $\{(X_i,\alpha_{i,L}, \alpha_{i,R})\}.$ We denote this sum as $(X_1,\alpha_{1,L},\alpha_{1,R})\natural ... \natural(X_m, \alpha_{m,L}, \alpha_{m,R})$ (or $(X_1,\alpha_{1,L},\alpha_{1,R})\natural(X_2,\alpha_{2,L}, \alpha_{2,R})\natural ...$ when $m$ is $\infty.$) See Fig. \ref{Infinite CSI}. 
\end{defn}

\begin{remark}

The connected sum at infinity of the interiors of manifolds with connected boundary is homeomorphic to the interior of their boundary connected sum.  
For a CSI of open manifolds which are not the interiors of compact manifolds (Whitehead's exotic open 3-manifold, for example \cite[p.~6]{Gui}) we do not have the luxury of utilizing this result.  
\end{remark}

\begin{figure}[!ht]
\vspace{-6.5in}
\includegraphics[height=8in]{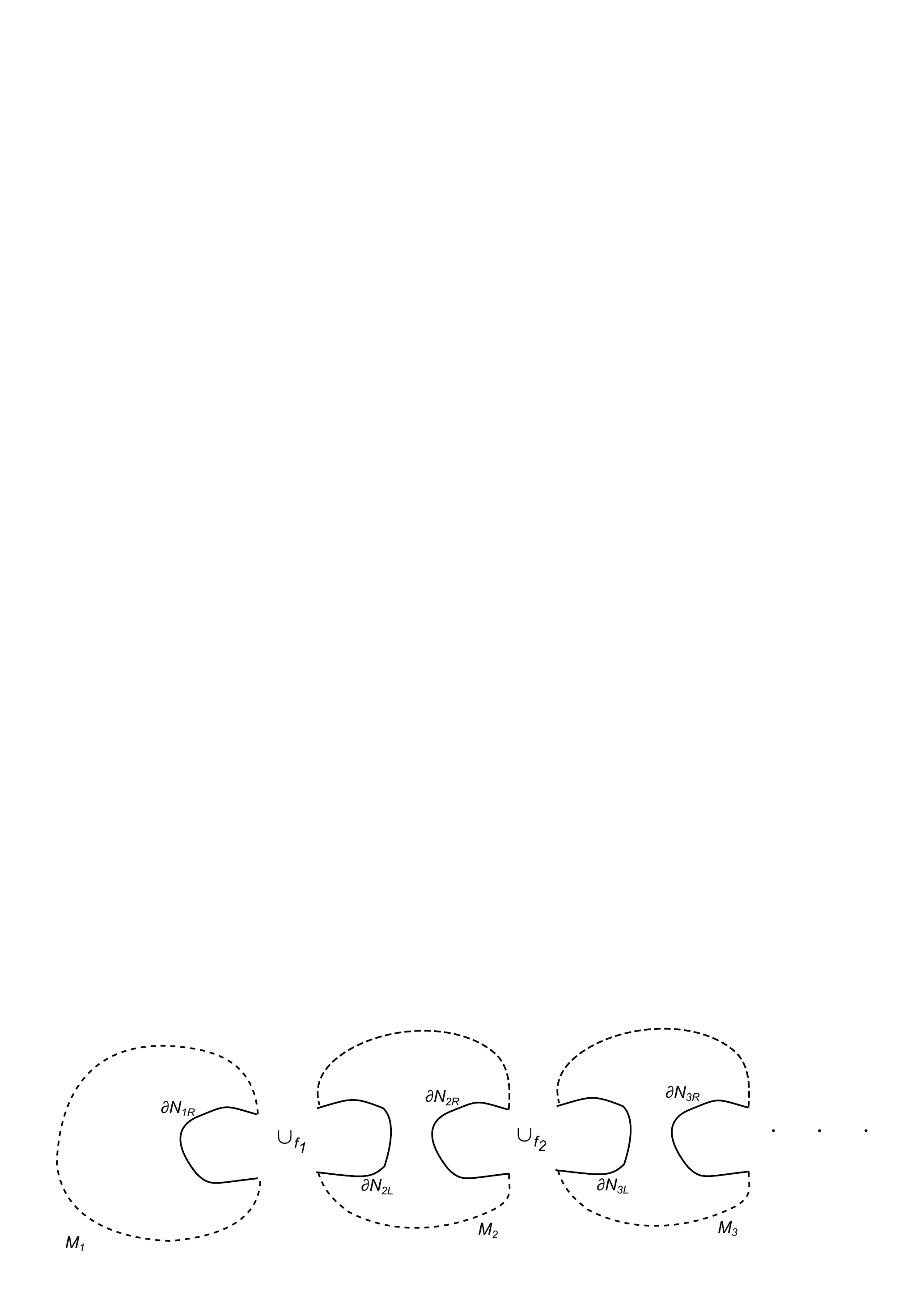}           
\centering
\caption{$\natural_{i=1}^\infty M_i$}
\label{Infinite CSI}
\end{figure}

We'll now prepare the definition of the fundamental group at infinity of a 1-ended topological space. This is an invariant of spaces which are 1-ended and satisfy the condition that any pair of proper rays can be joined by a proper homotopy. (See \cite{Gui} for a much more thorough treatment of this topic.) Let $\{G_j,\varphi_j\}$ be an inverse sequence of groups:
\begin{diagram}
G_1 &  \lTo^{\varphi_2} &
G_2 & \lTo^{\varphi_3}  &
G_3 & \lTo^{\varphi_4}  &
\cdots .
\end{diagram}
For an increasing sequence of positive integers $\{j_i\}_{i=1}^\infty$, let \[f_i=\varphi_{j_{i+1}}...\varphi_{j_i+1}\varphi_{j_i}:G_{j_i}\rightarrow G_{j_{i+1}}\] 
and call the inverse sequence $\{G_{j_i},f_i\}$ a \emph{subsequence} of the inverse sequence $\{G_j,\varphi_j\}.$

We say the inverse sequences $\{G_j,\varphi_j\}$ and  $\{H_k,\psi_k\}$ are \emph{pro-isomorphic} if there exists subsequences $\{G_{j_i},f_i\}$ and $\{H_{j_i},g_i\}$ that may be fit into a commuting ladder diagram of the form

\begin{diagram}
G_{j_{1}} & & \lTo^{f_{2}} & &
G_{j_{2}} & & \lTo^{f_{3}} & &
G_{j_{3}} & & \lTo^{f_{4}} & &
\cdots\\
& \luTo^{u_{1}} & & \ldTo^{d_{2}} & & \luTo^{u_{2}} & & \ldTo^{d_{3}}  & & \luTo^{u_{3}} \\
& & H_{k_1} & & \lTo^{g_{2}} & & H_{k_{2}}
& & \lTo^{g_{3}}  & & H_{k_{3}} & & \lTo^{g_{4}} & \cdots 
\end{diagram}
Pro-isomorphism is an equivalence relation on the set of inverse sequences of groups.

\begin{defn} We say the inverse sequence of groups $\{G_j,\varphi_j\}$ is 
\emph{stable} if it is pro-isomorphic to a constant sequence $\{H,\text{id}_H\}$, and we say $\{G_j,\varphi_j\}$ is
\emph{semistable} if it is pro-isomorphic to an $\{H_k,\psi_k\},$ where each $\psi_k$ is an epimorphism.
 
\end{defn}

We call $A\subset X$ a \emph{bounded set} (in $X$) if $\cl(X-A)$ is compact. We define a \emph{neighborhood of infinity} of  a topological space $X$ to be the complement of a bounded subset of $X.$ A \emph{closed (open) neighborhood of infinity} in $X$ is one that is closed (open) as a subset of $X.$ A closed neighborhood of infinity $N$ of a manifold $M$ with compact boundary is \emph{clean} if it is a codimension 0 submanifold disjoint from $\partial M$ and $\partial N=\Bd_M N$ has a bicollared neighborhood in $M.$ Here we are using the notation $\Bd_M N$ in the following sense. For $A$ a subset of a topological space $Z,$ $\Bd_Z A$ will denote the (topological) boundary (also known as the frontier) of $A$ in $Z$ (not to be confused with the notion of manifold boundary). We say $X$ is \emph{k-ended} if $k<\infty$ and $k$ is the least upper bound of the set of cardinalities of unbounded components of neighborhoods of infinity of $X.$ That is,
\[k=\sup\{|\{\text{unbounded components of }N\}|:N \text{ a neighborhood of infinity of }X\}.\]
In the case, the above supremum is infinite we say $X$ is \emph{infinite ended}.

By a \emph{cofinal} sequence $\{U_j\}$ of subsets of $X$ we mean $U_j \supset U_{j+1}$ and $\bigcap U_j= \emptyset.$ Now let $X$ be a 1-ended space and choose a cofinal sequence $\{U_j\}$ of connected neighborhoods of infinity of $X.$ Choose a ray (called a \emph{base ray}) $r$ in $X$ and base points $x_j \in r \cap U_j$ such that $r([r^{-1}(x_j),\infty)) \subset U_j.$ Let $G_j=\pi_1(U_j,x_j)$ and $\tau_j:G_j \rightarrow G_{j-1}$ be the homomorphism (called a \emph{bonding homomorphism}) defined as follows. Let $\iota_j: \pi_1(U_j,x_j)\rightarrow \pi_1(U_{j-1},x_j)$ be the homomorphism induced by the inclusion $U_j\hookrightarrow U_{j-1}$ and $\rho_j$ be the canonical basepoint change isomorphism. This isomorphism is induced

by the map that generates a loop $\alpha'$ based at $x_{j-1}$ from a loop $\alpha$ at $x_{j}$ by starting at $x_{j-1}$ following $r$ to $x_{j}$ traversing $\alpha$ and returning along $r$ to $x_{j-1}.$ Then $\tau_j$ is defined as $\tau_j=\rho_j \circ \iota_j$ and $\{G_j,\tau_j\}$ is a inverse sequence of groups. We then define the \emph{fundamental group of infinity (based at r)} of $X$ (denoted $\text{pro-}\pi_1(\epsilon(X),r))$ to be the pro-isomorphism class of $\{G_j,\varphi_j\}.$ It can be shown that this class is independent of the choice of $\{U_j\}.$

The following theorem can be found in \cite[pp.~29-31]{Gui}.
\begin{thm}Let $X$ be a $1$-ended space.  If $\text{pro-}\pi_1(\epsilon(X),s)$ is semistable for some ray $s$ then any two rays in $X$ are properly homotopic and conversely. Further in any such space $\text{pro-}\pi_1(\epsilon(X),r)$ is independent of base ray $r.$
\label{semistable}
\end{thm}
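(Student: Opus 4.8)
The plan is to prove the two implications together by translating proper homotopy classes of rays into the algebra of the inverse sequence $\{G_j,\tau_j\}$, and then to derive base-ray independence as a corollary of the forward implication. Throughout I fix a cofinal sequence $\{U_j\}$ of connected neighborhoods of infinity with $\overline{U}_{j+1}\subset U_j$, and I use that $X$ is nice enough (an ANR, or a locally finite CW complex, as is standard in this setting) that $\pi_1$ computations and disk fillings are available. I also use the elementary fact that every proper ray eventually enters and stays in each $U_j$: since $X-U_j$ is bounded, $\rho^{-1}(X-U_j)$ is compact, so after reparametrization any ray $\rho$ satisfies $\rho([j,\infty))\subset U_j$. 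As a first reduction, to prove ``any two rays are properly homotopic'' it suffices, by transitivity and symmetry of proper homotopy, to show that every ray is properly homotopic to the fixed base ray $s$.

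For the forward implication (semistable $\Rightarrow$ rays properly homotopic), the main idea is a level-by-level telescoping construction of a proper homotopy $H\colon[0,\infty)\times[0,1]\to X$ from an arbitrary ray $r$ to $s$. After reparametrizing so that $r([j,\infty)),\,s([j,\infty))\subset U_j$, I connect $r(j)$ to $s(j)$ by an arc $a_j$ in the path-connected set $U_j$ and form the comparison loop $\ell_j = a_j\cdot (s|_{[j,j+1]})\cdot a_{j+1}^{-1}\cdot (r|_{[j,j+1]})^{-1}\in\pi_1(U_j,r(j))$. If each $\ell_j$ can be made null-homotopic in $U_j$, then I fill the $j$-th rectangle by a homotopy supported in $U_j$ and concatenate these fillings; the resulting $H$ is automatically proper, since the $j$-th rectangle maps into $U_j$ and $\{U_j\}$ is cofinal, so the preimage of any compact set meets only finitely many rectangles. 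The role of semistability is precisely to arrange this: semistability of $\{G_j,\tau_j\}$ is equivalent to the Mittag--Leffler condition for a tower of groups, so the images $\text{im}\big(\pi_1(U_k)\to\pi_1(U_j)\big)$ stabilize, and this stabilization is exactly what lets me correct the arcs inductively (adjusting $a_{j+1}$ by a loop pushed in from a deep $U_k$) so that every corrected comparison loop bounds at the appropriate level. Equivalently, one can package the obstruction to a coherent choice of arcs as the (nonabelian) $\varprojlim^{1}$ of the tower and invoke its triviality under Mittag--Leffler.

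For the converse I would argue contrapositively: if $\{G_j,\tau_j\}$ fails to be semistable, then it fails Mittag--Leffler, so there is a level $j_0$ at which the images $\text{im}\big(\pi_1(U_k)\to\pi_1(U_{j_0})\big)$ strictly decrease along an infinite set of indices $k_1<k_2<\cdots$. I then manufacture a ray $r'$ that agrees with the base ray $r$ except that, in the region between $U_{k_i}$ and $U_{k_{i+1}}$, it splices in a loop $\beta_i$ lying in the $k_i$-image but not in the deeper $k_{i+1}$-image. A proper homotopy from $r'$ to $r$ would, read off at level $j_0$, force each $\beta_i$ to be represented by loops coming from arbitrarily deep neighborhoods of infinity, contradicting the strict decrease of images; hence $r'$ and $r$ are not properly homotopic. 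Finally, for independence of the base ray: given base rays $r,r'$, the forward implication supplies a proper homotopy $F$ between them, and restricting $F$ over the $U_j$ produces basepoint-change isomorphisms $\pi_1(U_j,x_j)\to\pi_1(U_j,x_j')$ that are compatible with the bonding homomorphisms (up to the canonical conjugations already built into the $\tau_j$); these assemble into a pro-isomorphism of $\{\pi_1(U_j,x_j),\tau_j\}$ with $\{\pi_1(U_j,x_j'),\tau_j'\}$, giving $\text{pro-}\pi_1(\epsilon(X),r)\cong\text{pro-}\pi_1(\epsilon(X),r')$.

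The hard part will be the forward implication: converting the algebraic Mittag--Leffler data into an honest proper homotopy. The two delicate points are (i) the inductive correction of the connecting arcs $a_j$ so that all comparison loops become null-homotopic at the right level — a nonabelian telescoping argument, which is where the genuine subtleties of $\varprojlim^{1}$ for towers of \emph{non-abelian} groups intervene — and (ii) the bookkeeping guaranteeing that infinitely many filling homotopies glue to a single genuinely proper map, i.e. that the $j$-th rectangle stays deep enough in the $U_j$'s. The converse and the base-ray independence are comparatively routine once the dictionary between comparison loops and bonding images is set up.
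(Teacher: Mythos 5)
The paper itself gives no proof of this theorem---it is quoted directly from Guilbault's survey \cite[pp.~29-31]{Gui}---and your outline is essentially the argument given there (and in full in Geoghegan's book): comparison loops between consecutive levels of a cofinal sequence of neighborhoods of infinity, identification of the obstruction to filling them with a class in the nonabelian $\varprojlim^1$ of the tower (trivial under Mittag--Leffler, which is equivalent to semistability), the loop-splicing construction of a non-properly-homotopic ray for the converse, and base-ray independence via the pro-isomorphism induced by a proper homotopy of base rays. Your plan is sound, with the one caveat you already flag yourself: the bare hypothesis ``$1$-ended space'' must be strengthened to something like a locally finite complex or ANR for the $\pi_1$ and disk-filling arguments to apply, which is harmless here since the paper only invokes the theorem for open manifolds.
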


We call any 1-ended manifold $X$ that meets either of the equivalent conditions of Theorem \ref{semistable} \emph{semistable}. A \emph{stable} one-ended manifold $X$ is one for which $\text{pro-}\pi_1(\epsilon(X),r))$ is stable (hence semistable and thus independent of $r$). 

We now show that if $M$ is a compact manifold with connected boundary (for example any Jester's manifold) then the interior of $M$ is 1-ended and stable. For $j=1,2,...,$ choose compacta $C_j$ in $\intr M$ such that $M-C_j$ is a product neighborhood of $\partial M$ and the corresponding neighborhoods of infinity $N_j=\intr M - C_j$ are cofinal. Note each $N_j$ has one unbounded component. Choose a neighborhood of infinity $N \subset M.$ Then there exists $k$ so that $N_k \subset N.$ The one unbounded component of $N_k$ must be contained in an unbounded component of $N.$ If $N$ had a second unbounded component then its nonempty intersection with $M-N_j$ would be unbounded. But this would contradict $M-N_j$'s compactness. Thus $N$ must have exactly one unbounded component and we have shown $M$ is 1-ended. As for $\intr M$ being stable, choose base ray $r$ in $\intr M$ and base points $x_j \in r\cup N_j.$ Then as 
\begin{eqnarray}
\pi_1(N_j)&\cong&\pi_1(\partial M \times(0,1])\nonumber\\
					&\cong& \pi_1(\partial M)\times \pi_1((0,1])\nonumber\\
					&\cong& \pi_1(\partial M)
\end{eqnarray}
we have $\{\pi_1(N_j),\tau_j\}$ is stable and thus so is $\intr M.$

\section{CSI's of Semistable Manifolds}

We'll next show that the CSI of a collection of semistable manifolds is independent of the choice of rays.

\begin{defn}
We say $N$ is a \emph{half space} of a manifold $M^n$ if $N$ is the image of an embedding $h:\mathbb{R}_+^n\rightarrow M.$
We say such an $N$ is a \emph{proper} half space if the embedding $h$ is proper. We say $N$ is a \emph{tame} half space if $h(\partial \mathbb{R}^n_+)$ is bicollared in $M^n.$
\end{defn}

\begin{prop} 
\label{open minus half sp}
If $M$ is an open, contractible manifold and $N$ is a proper and tame half space of $M$, then $M-N \approx M.$
\end{prop}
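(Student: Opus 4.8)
The plan is to reduce the statement to a collar-absorption argument. Write $P=\cl(M-N)$. Since $N=h(\mathbb{R}^n_+)$ is the image of a \emph{proper} embedding, $N$ is closed in $M$, so $M-N$ is open and $P=M-\intr N$; in particular $\intr P=M-N$. Because $N$ is \emph{tame}, its frontier $\partial N=h(\partial\mathbb{R}^n_+)$ is bicollared, and a bicollar $\partial N\times(-1,1)$ (with $\partial N\times[0,1)\subset N$ and $\partial N\times(-1,0]\subset P$) exhibits a clean decomposition $M=P\cup_{\partial N}N$ in which $P$ and $N$ are $n$-manifolds sharing the boundary $\partial N$. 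Thus it suffices to prove $\intr P\approx M=P\cup_{\partial N}N$.

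The key observation is that the attached piece $N$ is nothing but a product collar of $\partial N$ run out to infinity. Indeed, since $N$ is a half space, $(N,\partial N)\approx(\mathbb{R}^n_+,\mathbb{R}^{n-1})\approx(\partial N\times[0,\infty),\partial N\times\{0\})$, and this homeomorphism may be taken to restrict to the identity on $\partial N=\partial P$. Attaching such an infinite product collar to $P$ along $\partial P$ recovers $\intr P$: concatenating an interior collar $\partial P\times(-1,0]\hookrightarrow P$ with the external collar $N\approx\partial P\times[0,\infty)$ produces an embedded copy of $\partial P\times(-1,\infty)$ inside $M$, meeting the remainder of $P$ near the level $-1$. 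I would then fix a homeomorphism $\theta:(-1,\infty)\to(-1,0)$ that is the identity near $-1$, and define $F:M\to\intr P$ to be the identity off this collar and $(\mathrm{id}_{\partial P},\theta)$ on $\partial P\times(-1,\infty)$, carrying it onto the open interior collar $\partial P\times(-1,0)\subset\intr P$. Since $\theta$ fixes a neighborhood of $-1$, the two definitions agree on their overlap, so $F$ is a well-defined homeomorphism and $M-N=\intr P\approx M$.

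The main work — and the one step to execute with care — is arranging this reparametrizing homeomorphism so that it is genuinely the identity where the combined collar abuts the ``deep'' part $P-\partial P\times(-1,0]$, which is exactly what makes $F$ continuous and bijective; the remainder is bookkeeping. The hypotheses enter precisely here: properness gives the closed complement needed for $\intr P=M-N$, tameness supplies the bicollar making $M=P\cup_{\partial N}N$ a clean decomposition and furnishing the interior collar of $\partial P$, and the half-space hypothesis provides the product structure $N\approx\partial N\times[0,\infty)$ (compare Proposition~\ref{nhd of proper ray}). Contractibility and openness of $M$ ensure a boundaryless ambient manifold and keep the pieces connected, but the absorption itself is local to the collar. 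The same computation can be repackaged as $M-N\approx M\natural\mathbb{R}^n$, using that $\mathbb{R}^n$ minus an open half space is again a closed half space; I would nonetheless present the collar-absorption version, as it is the most self-contained.
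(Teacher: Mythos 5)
Your argument is correct, but it finishes differently from the paper, so a comparison is worth recording. The paper takes a collar $\partial N\times\lbrack 0,1\rbrack$ \emph{inside} $N$, pushes $N$ inward to $N'=N-(\partial N\times\lbrack 0,1))$ by an ambient isotopy (so $M-N\approx M-N'$), and then observes that $M-N'=(M-\intr N)\cup_{\partial N}(N-N')$ and $M=(M-\intr N)\cup_{\partial N}N$ are each ``$M-\intr N$ with a half space attached along $\partial N$''; since every homeomorphism of $\mathbb{R}^{n-1}=\partial\mathbb{R}^n_+$ extends over $\mathbb{R}^n_+$, any two such attachments give homeomorphic results, whence $M\approx M-N'\approx M-N$. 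You instead write $N$ itself as an infinite external collar $\partial P\times[0,\infty)$ of $P=M-\intr N$ and absorb it into an interior collar via a reparametrization $(-1,\infty)\to(-1,0)$. The hypotheses enter both arguments in exactly the same way; yours is more explicit (it produces an actual homeomorphism $F$), while the paper's avoids writing one down at the price of invoking an ambient isotopy and the uniqueness of half-space attachment. The one step you flag for care needs slightly more than ``$\theta$ is the identity near $-1$'': because $\partial P\approx\mathbb{R}^{n-1}$ is noncompact, the closed sub-collar $c(\partial P\times[-1/2,0])$ need not be a closed subset of $M$, so a sequence $c(y_k,t_k)$ with $t_k$ bounded away from $-1$ but $y_k$ escaping every compactum could converge to a point outside the combined collar, and there continuity of the extension by the identity is not automatic. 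The standard repair is to shrink the collar near infinity --- replace $c(\partial P\times(-1,0])$ by $c(\{(y,t):t\geq-\epsilon(y)\})$ for a continuous $\epsilon:\partial P\to(0,1)$ decaying fast enough that this set together with $N$ is closed in $M$ --- after which $F$ is continuous because it equals the identity on an open set containing the frontier of the region where it acts. This is not a defect peculiar to your route: the paper's ``ambient isotopy'' between $N$ and $N'$ quietly requires the same properness adjustment.
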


\begin{proof} Let $C\subset N$ be a collar neighborhood of $\partial N,$ $C\approx \partial N \times [0,1]$ and \\
${N'=N - (\partial N \times [0,1))}.$ Then $N$ and $N'$ are ambient isotopic in $M$, so that \\
${M- \intr N \approx M- \intr N'.}$ See Figure \ref{fig:N' in N}.

\begin{figure}[!ht]

\centering
\vspace{-.7in}
\includegraphics[height=2.5in]{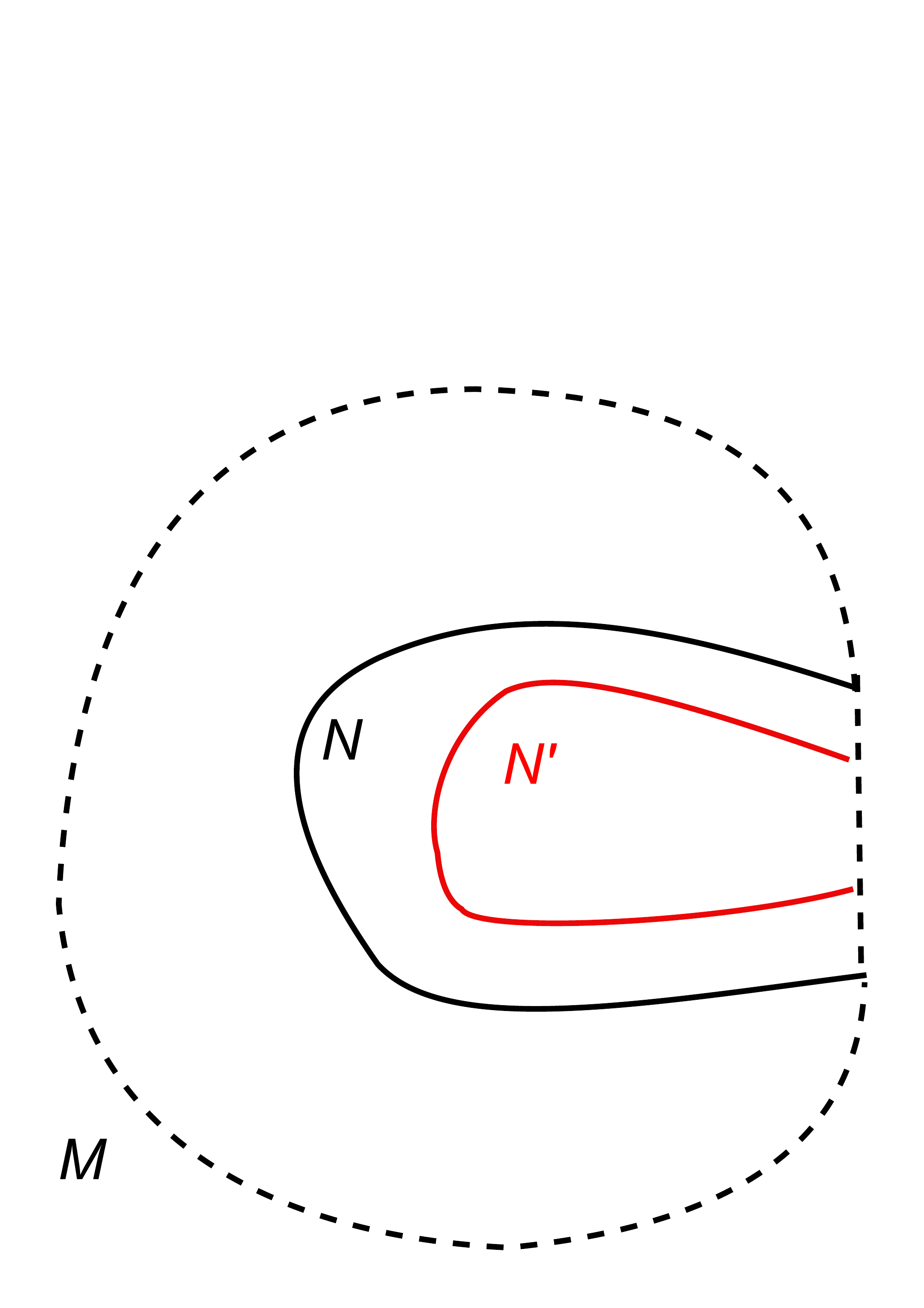}    
\caption{$N' \subset N$}
\label{fig:N' in N}
\end{figure}

Further 
\begin{eqnarray}
(N-N', \partial N)&\approx&(\partial N\times [0,1), \partial N)\nonumber \\
			&=& (\mathbb{R}^{n-1} \times [0,1), \mathbb{R}^{n-1})\nonumber\\
			&=&	(\mathbb{R}^{n}_+, \mathbb{R}^{n-1})\nonumber		
\end{eqnarray}
so $N-N'$ is a half space. Then as ${M-N' = (M- \intr N) \cup_{\partial N}(N-N')}$ we see that $M-N'$ is $M-\intr N$ union a half space. Likewise, ${M=(M-\intr N) \cup_{\partial N} N}$ is also a $M-\intr N$ union a half space. Thus $M-N \approx M-N' \approx M.$ 
\end{proof}

\begin{note}
Proposition \ref{nhd of proper ray} along with Proposition \ref{open minus half sp} imply when 
\[(X,\alpha_X)\approx(Y,\alpha_Y) \approx (\mb{R}^n,\{0\}^{n-1}\times [0,\infty))\]
 we have $(X,\alpha_X)\natural (Y,\alpha_Y)\approx \mb{R}^n.$
\label{csi of nspaces} 
\end{note}

\begin{lemma} Suppose $M^n$ $(n \geq 4)$ is a contractible, oriented, piecewise linear, semistable, open manifold. If $r$ and $r'$ are PL rays in $M^n$ and $N$ and $N'$ are regular neighborhoods of $r$ and $r'$, respectively, then there exists an orientation preserving self homeomorphism of $M^n$ taking $N$ to $N'$. 
\label{indep of rays}
\end{lemma}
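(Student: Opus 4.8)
The plan is to reduce the statement to a single ambient isotopy of $M^n$ carrying $r$ onto $r'$, and then to transport this to the regular neighborhoods using regular neighborhood uniqueness. The key observations are that an ambient isotopy is automatically orientation preserving (its time-one map is isotopic to $\mathrm{id}_{M^n}$), and that regular neighborhoods of a fixed ray are themselves interchangeable by an orientation preserving ambient isotopy. So once $r$ and $r'$ are ambiently isotopic, the desired orientation preserving self-homeomorphism taking $N$ to $N'$ follows by composition.

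First I would use the semistability hypothesis. Since $M^n$ is semistable it is, by definition, $1$-ended, so Theorem \ref{semistable} applies and tells us that any two rays in $M^n$ are properly homotopic; in particular $r$ and $r'$ are properly homotopic. (Contractibility is available and guarantees the good connectivity of the relevant neighborhoods of infinity, but the logically essential inputs at this step are just $1$-endedness and semistability.)

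Second, and this is the crux, I would upgrade the proper homotopy to a \emph{proper ambient isotopy} $H_t$ of $M^n$ with $H_0 = \mathrm{id}_{M^n}$ and $H_1(r) = r'$. A ray is a properly embedded $1$-complex, so it sits in $M^n$ with codimension $n-1 \geq 3$. In this codimension the PL unknotting and concordance-implies-isotopy techniques of Rourke and Sanderson \cite{RoSa}, together with the PL isotopy extension theorem, let one replace a proper homotopy between proper embeddings by a proper isotopy and then by an ambient isotopy. The \textbf{main obstacle} is controlling \emph{properness at infinity}: the map furnished by Theorem \ref{semistable} is only a proper map of $[0,\infty)\times[0,1]$, and the general-position adjustments must be performed on the noncompact end in a way that keeps the resulting isotopy proper, so that nothing escapes to or accumulates at infinity. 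The codimension bound $n-1 \geq 3$ (i.e. $n \geq 4$) is exactly what powers the unknotting; a more hands-on argument that puts the $2$-dimensional trace $[0,\infty)\times[0,1]\to M^n$ into embedded general position instead requires $2\cdot 2 < n$, i.e. $n \geq 5$, which is the dimension restriction recorded in the remark following Proposition \ref{nhd of proper ray} and is why that argument fails for $n \leq 3$.

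Finally I would close the argument with regular neighborhood theory. The homeomorphism $H_1$ is orientation preserving and carries $N$, a regular neighborhood of $r$, onto $H_1(N)$, a regular neighborhood of $r' = H_1(r)$. By the uniqueness theorem for regular neighborhoods in the PL category, $H_1(N)$ and $N'$ are both regular neighborhoods of $r'$ and hence are interchanged by an orientation preserving ambient isotopy supported near $r'$; let $G_1$ denote its time-one map, so $G_1\bigl(H_1(N)\bigr) = N'$. Then $G_1 \circ H_1$ is an orientation preserving self-homeomorphism of $M^n$ taking $N$ to $N'$, which is exactly the assertion of the lemma.
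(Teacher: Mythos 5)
Your first and last steps line up with the paper: semistability (via Theorem \ref{semistable}) supplies the proper homotopy $H$ between $r$ and $r'$, and regular neighborhood uniqueness closes the argument. The problem is your middle step, which is where all the content lives. You propose to upgrade the proper homotopy to a proper \emph{ambient isotopy} carrying $r$ to $r'$ by appealing to ``PL unknotting and concordance-implies-isotopy techniques of Rourke and Sanderson'' in codimension $n-1\geq 3$. But the unknotting and isotopy-extension theorems in \cite{RoSa} are stated for \emph{compact} polyhedra; the extension to proper embeddings of the noncompact complex $[0,\infty)$, with control at infinity so that the resulting ambient isotopy is itself proper, is exactly the difficulty here. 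You correctly identify this as ``the main obstacle'' and then do not overcome it -- you assert that the techniques ``let one'' do it. Since a proper ambient isotopy carrying $r$ to $r'$ immediately implies the lemma, this asserted step is essentially as strong as the statement being proved, so the proposal as written is circular at its crux rather than a proof.

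The paper avoids this entirely by never isotoping $r$ onto $r'$. Instead it works with the \emph{track} of the homotopy: for $n\geq 5$, general position embeds $H$, and since the embedded track collapses to each of $r$ and $r'$, a regular neighborhood $N''$ of the track is simultaneously a regular neighborhood of both rays; two applications of regular neighborhood uniqueness then give $(M,N)\rightarrow(M,N'')\rightarrow(M,N')$. For $n=4$ -- where, as you note, $2\cdot 2<n$ fails and the track cannot be embedded -- the paper makes the singular set a discrete set of double points, separates the two preimage families by carving the domain into two strips $S_1$ and $S_2$, each of which \emph{is} embedded, and chains through the intermediate ray $r''$ on their common border: $H(S_1)\searrow r,r''$ and $H(S_2)\searrow r'',r'$, giving a four-step chain of regular neighborhood equivalences. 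Your proposal offers no substitute for this $n=4$ argument beyond the unproved unknotting claim, so the dimension that actually matters for the application (4-manifolds) is precisely the one left uncovered. To repair your approach you would either need to prove a proper codimension-three unknotting theorem (in the spirit of \cite{CKS}), or adopt the paper's device of passing through a common regular neighborhood of the (piecewise) embedded homotopy track.
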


\begin{proof}
 By general position we can assume $r$ and $r'$ are disjoint. By semistability there exists a proper homotopy $H$ between $r$ and $r'$
\[H:[0,\infty)\times [0,1] \rightarrow M^n, \hspace{1 cm} H_0=r, \hspace{1cm} H_1=r'.\]

We'll approach the cases $n \geq 5$ and $n=4$ separately.
When $n \geq 5$, $H$ can be embedded via the general position theorem for maps \cite[p.~ 61]{RoSa}. 
Let $N$, $N'$, and $N''$ be regular neighborhoods of $r$, $r'$, and $H$, respectively (here we are abusing notation by using $r$, $r'$, and $H$ to denote both the maps and their images). See Figure \ref{fig:Htpy H}. Observe $H \searrow r$, $r'$ and hence $N''$ is a regular neighborhood of both $r$ and $r'$. As $N$ and $N''$ are both regular neighborhoods of $r$, there exists a self homeomorphism $h_1$ of $M^n$ sending $N$ to $N''$
\[h_1: M^n \rightarrow M^n, \hspace{1 cm} h_1(N)=N''.\]
Likewise, there exists $h_2$ a self homeomorhism of $M^n$ taking $N''$ to $N'$. Letting $k=h_2h_1$ we have $k$ is a self homeomorphism of $M^n$ with $k(N)=N'$. 

\begin{figure}[!ht]

\centering
\vspace{-1.5in}
\includegraphics[height=4in]{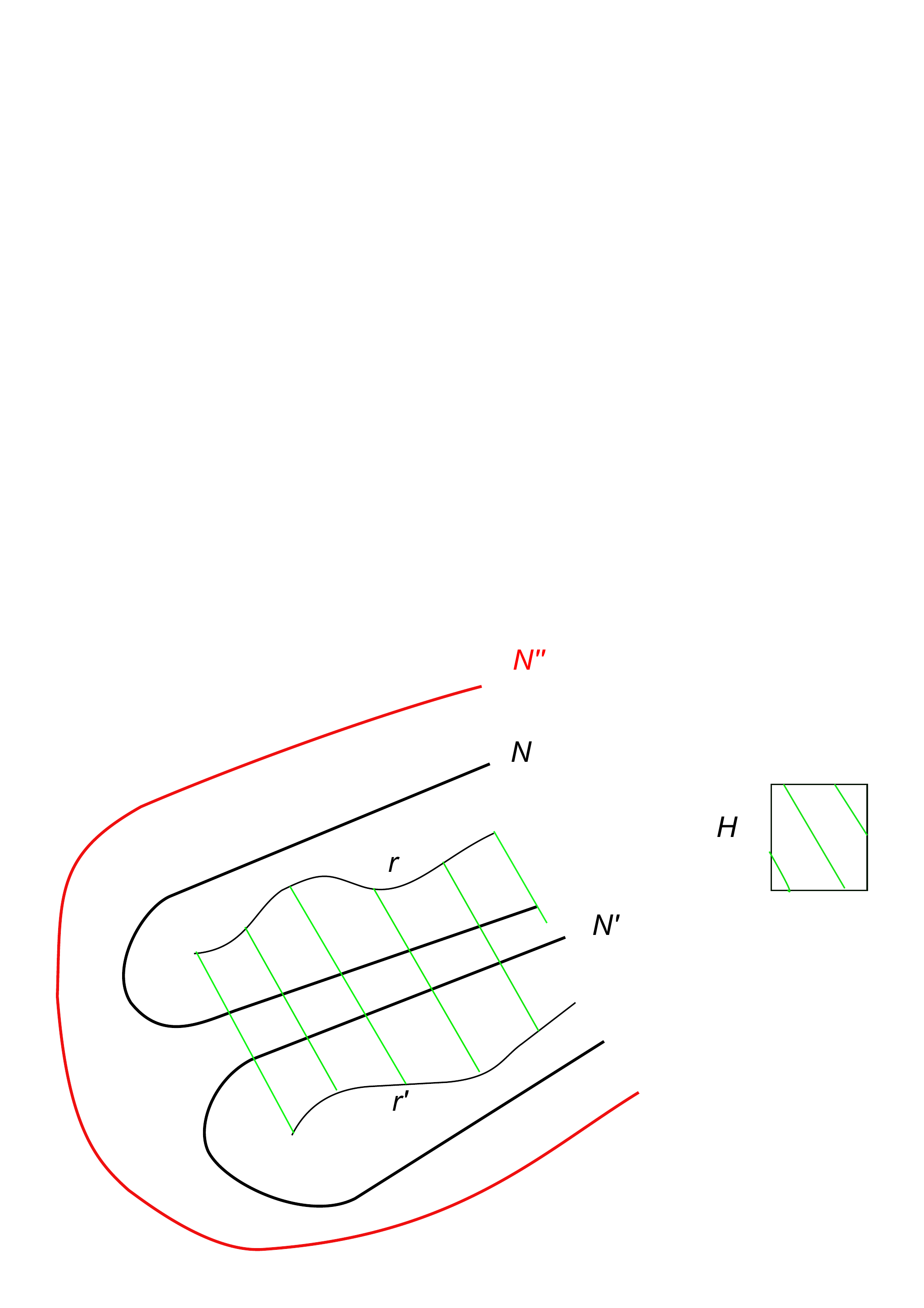}      
\caption{Homotopy Between Rays}
\label{fig:Htpy H}
\end{figure}

For $n=4$ we can cut $H$ into two embeddings and then apply the regular neighborhood theorem as we now show. The singularity set of $H$ is defined as $S(H)=\{x|H^{-1}H(x) \ne x\}$. By general position (and the fact that the dimension of $M$ is twice the dimension of the domain of $H$) we can arrange so that the singular set is discrete and that the singularities of $H$ are all double points $(x$ such that $|H^{-1}H(x)|=2).$ We partition $S(H)$ into $\{x_\alpha\}$ and $\{y_\alpha\},$ where $H(x_\alpha)=H(y_\alpha).$ We can then divide the domain of $H$ into two sides, one side containing the $x_\alpha$'s and the other side containing the $y_\alpha$'s. See Figure \ref{fig:H 1-1} (which is inspired by Fig. 37 from  \cite[p.~ 66]{RoSa}). 

\begin{figure}[!ht]

\centering
\vspace{-3in}
\includegraphics[height=6in]{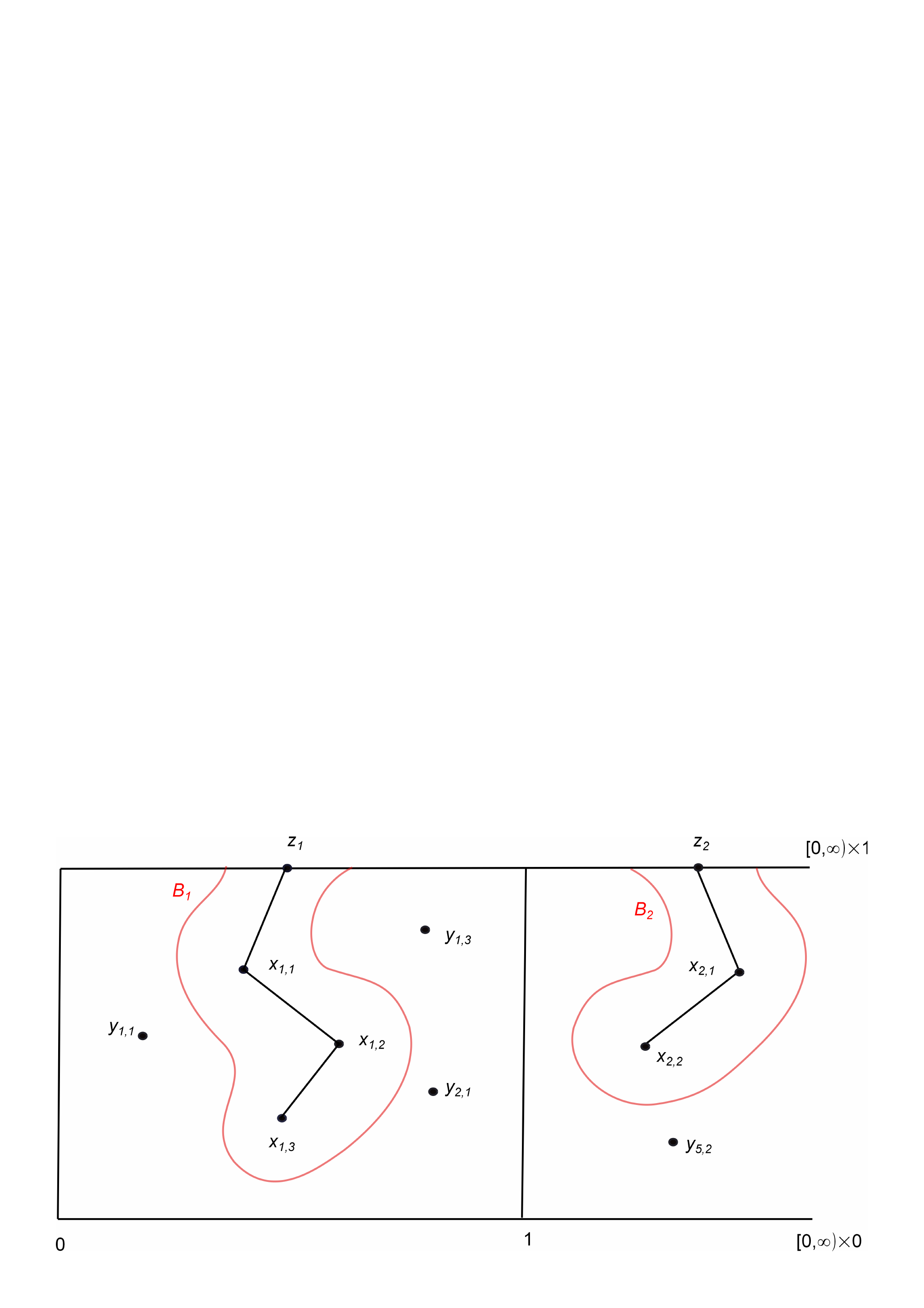}           
\caption{Dividing $H$'s Domain}
\label{fig:H 1-1}
\end{figure}

For $j=1,2,...,$ choose a point $z_j\in [j-1,j) \times 1$ and arc $\beta_j \subset [j-1,j) \times [0,1)$ joining $z_j$ and every $x_\alpha$ in  $[j-1,j) \times (0,1)$ but missing all the $y_\alpha$'s with the property that $\beta_j$ meets $[0,\infty) \times 1$ only at $z_j.$ 
Then choose $B_j$ regular neighborhood of $\beta_j$ missing each of the sets: the $y_\alpha$'s, $[0,\infty) \times 0,$ and the union of the $B_i$'s for $i<j.$  Then $B_j$ is a ball of which $B_j \cap [0,\infty) \times 0$ is a face. 
As the singular set is discrete there exists a product neighborhood of $[0, \infty) \times 1$ say $([0, \infty) \times 1) \times [a,b]$ which misses all the $y_\alpha$'s. Then the strip $S_1$ defined as 
\[S_1=\left(\left([0, \infty) \times 1\right) \times [a,b]\right)\cup \left(\cup_j B_j \right)\]
is embedded by $H$ as is its closed complement $S_2=\cl(([0,\infty)\times [0,1])-S_1)).$ See Figure \ref{Strip_S1}.

\begin{figure}[!ht]

\centering
\vspace{-3.5in}
\includegraphics[height=6in]{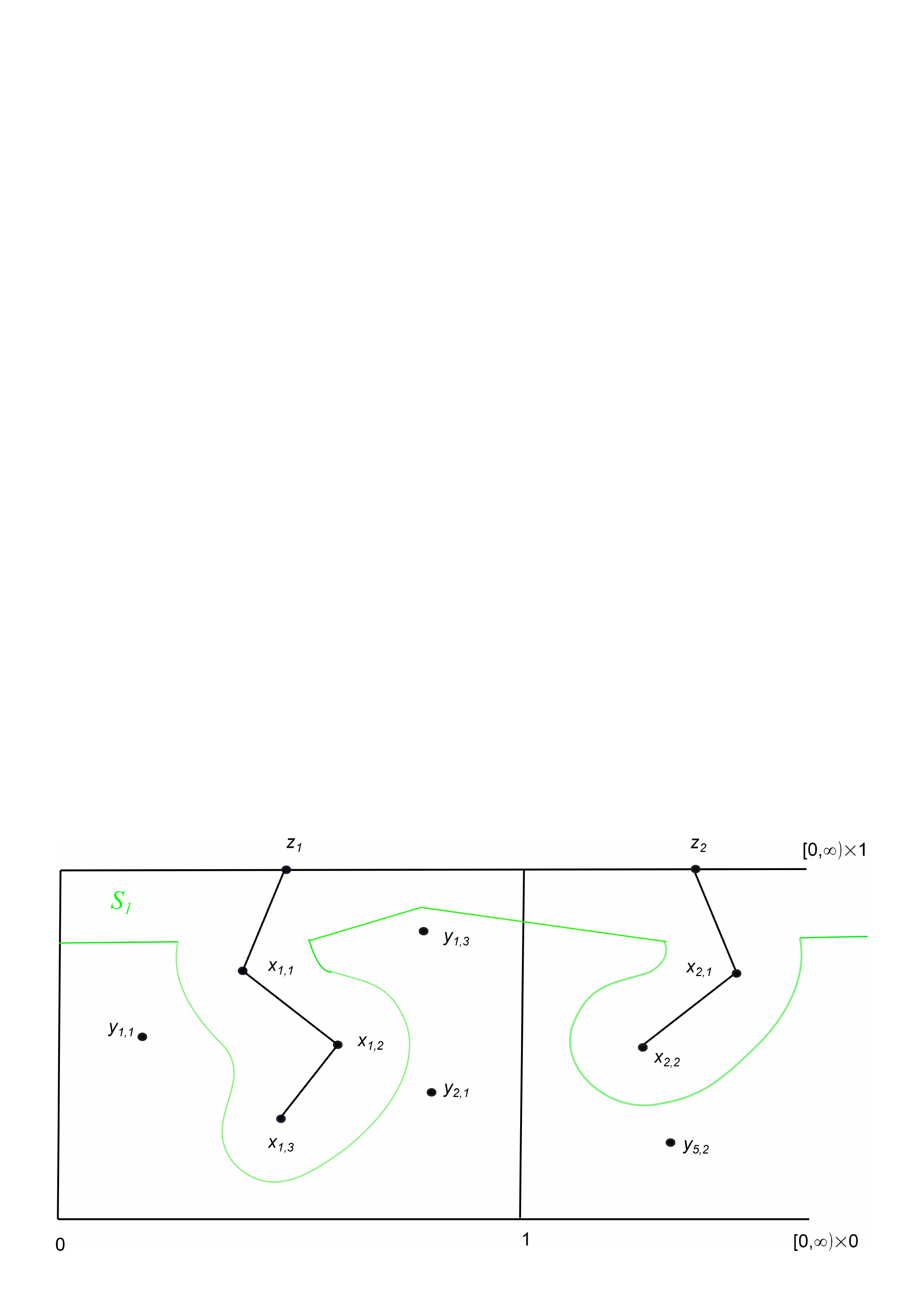}           
\caption{Strip $S_1$}
\label{Strip_S1}
\end{figure}

Let $r''$ be the image of the ``lower border of $S_1",$ $r''=H(\cl(\partial S_1 \cap ((0, \infty) \times (0,1)))).$ Note $S_1$ collapses to $[0, \infty) \times 1$ and to the lower border of $S_1.$ Likewise, $S_2$ collapses to the lower border of $S_1$ and to $[0, \infty) \times 0.$ Thus we have, ${H(S_1) \searrow r, r''}$ and ${H(S_2) \searrow r'', r'.}$ Choose regular neighborhoods $N_1,$ $N_2,$ $N,$ $N',$ and $N''$ of $H(S_1),$ $H(S_2),$ $r,$ $r',$ and $r'',$ respectively. Then $N_1$ is a regular neighborhood of both $r$ and $r''$ and $N_2$ is a regular neighborhood of each of $r''$ and $r.$ By the regular neighborhood theorem there exists homeomorphisms $(M,N) \rightarrow (M,N_1) \rightarrow (M,N'') \rightarrow (M,N_2) \rightarrow (M,N').$

\end{proof}
\begin{cor} For $n \geq 4$ and 1-ended semistable manifolds $X^n,Y^n,X_1^n, X_2^n,...$  $(X,\alpha_X) \natural (Y,\alpha_Y),$ $(X_1,\alpha_{1,L},\alpha_{1,R})\natural...\natural (X_m,\alpha_{m,L}, \alpha_{m,R})$ and \\
 $(X_1,\alpha_{1,L},\alpha_{1,R})\natural (X_2,\alpha_{2,L}, \alpha_{2,R})\natural...$ are independent of choices of rays \\ $\alpha_X$, $\alpha_Y,\alpha_{1,L},\alpha_{1,R},\alpha_{2,L}, \alpha_{2,R},....$
\end{cor}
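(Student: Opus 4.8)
The plan is to realize each change of ray by an orientation-preserving ambient self-homeomorphism of the summand in which that ray lives, supplied by Lemma \ref{indep of rays}, and then to paste these summand homeomorphisms together into a homeomorphism of the whole connected sum at infinity. For a fixed system of rays the CSI is already a well-defined homeomorphism type: it is independent of the regular neighborhoods (as noted just after the definition) and, through its equivalence with Gompf's end sum and the Calcut--King--Siebenmann construction, of the bonding homeomorphisms. Hence it suffices, given two systems of rays, to exhibit a homeomorphism between one concrete model of each sum.

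I would begin with the binary case $(X,\alpha_X)\natural(Y,\alpha_Y)$. Fix $(Y,\alpha_Y)$, replace $\alpha_X$ by a ray $\alpha_X'$, and choose regular neighborhoods $N,N'$ of $\alpha_X,\alpha_X'$. Lemma \ref{indep of rays} gives an orientation-preserving self-homeomorphism $h\colon X\to X$ with $h(N)=N'$; it restricts to a homeomorphism $X-\intr N\to X-\intr N'$ taking $\partial N$ onto $\partial N'$ and, since $h$ preserves orientation, taking the induced orientation of $\partial N$ to that of $\partial N'$. Using $h$ on $X-\intr N$, the identity on $Y-\intr N_Y$, and the new bonding map $f'=f\circ(h|_{\partial N})^{-1}$ (again orientation reversing), the two pieces agree along the gluing hypersurface and descend to a homeomorphism of the two sums. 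Changing $\alpha_Y$ is symmetric, and changing both rays is a composition.

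For the finite and infinite sums the only genuinely new feature is that each interior summand $X_i$ carries two rays $\alpha_{i,L},\alpha_{i,R}$ with disjoint neighborhoods, so I need a single orientation-preserving $h_i\colon X_i\to X_i$ sending both $N_{i,L},N_{i,R}$ to $N_{i,L}',N_{i,R}'$. I would first move $N_{i,L}$ to $N_{i,L}'$ by the Lemma, then move the image of the second neighborhood by a relative refinement of the Lemma that leaves $N_{i,L}'$ fixed: by Propositions \ref{nhd of proper ray} and \ref{open minus half sp} the complement $X_i-N_{i,L}'$ is again open, contractible, semistable, and 1-ended, so Theorem \ref{semistable} supplies a proper homotopy between the two versions of the second ray that lies in this complement; running the regular-neighborhood argument of Lemma \ref{indep of rays} within $X_i-N_{i,L}'$ yields an ambient homeomorphism of $X_i$ supported there, hence fixing $N_{i,L}'$ pointwise. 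Restricting each $h_i$ to $\check X_i$ and absorbing the boundary maps into redefined bonding homeomorphisms $\phi_i'=h_{i+1}\circ\phi_i\circ(h_i|)^{-1}$, the homeomorphisms $\{h_i|_{\check X_i}\}$ agree along every gluing hypersurface, and because the identification pattern is locally finite---each $\check X_i$ meets only $\check X_{i-1}$ and $\check X_{i+1}$---they paste into a homeomorphism of the two sums.

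I expect the main obstacle to be this relative refinement of Lemma \ref{indep of rays}: moving one ray while pinning down a neighborhood of a disjoint ray in the same summand. The orientation-preservation clause of the Lemma makes the interface bookkeeping routine (it is exactly what keeps each $\phi_i'$ orientation reversing), but the substantive point is the support control---one must confirm that the complement of a half-space neighborhood is still semistable and 1-ended so that Theorem \ref{semistable} applies, and that the regular-neighborhood homeomorphisms can be taken to be the identity off a neighborhood of the connecting homotopy, and so supported away from the fixed neighborhood. This is also where contractibility of the summands enters, through Proposition \ref{open minus half sp}; it holds for the interiors of Jester's manifolds, to which the corollary is applied.
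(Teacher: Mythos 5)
Your proposal is correct and follows exactly the route the paper intends: the paper states this corollary with no written proof, leaving it as an immediate consequence of Lemma \ref{indep of rays}, and your argument simply supplies the pasting details. Your observation that each interior summand carries two rays and therefore needs a single self-homeomorphism moving both neighborhoods at once---handled by a relative, supported version of the lemma in the complement of the first half-space, which by Proposition \ref{open minus half sp} is again a contractible semistable open manifold---is a genuine detail the paper glosses over, and your resolution of it is sound.
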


As a result of the corollary, when considering 1-ended semistable $n$-manifolds $(n\geq 4)$ $X$ and $Y$ we will use the notations $X \natural Y,$ $X_1\natural...\natural X_m,$ and $X_1\natural X_2 \natural ...$  for the unique CSI's of $X$ and $Y,$ $X_1,X_2,...,X_m,$ and $X_1, X_2,....$

The following proposition can be justified by an application of Van Kampen's Theorem. 

\begin{prop}
Let $X$ and $Y$ be 1-ended semistable open $n$-manifolds $(n\geq 4).$ Then $\pi_1(X\natural Y)\cong \pi_1(X)*\pi_1(Y).$
\label{pi_1 CSI}
\end{prop}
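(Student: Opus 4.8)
The plan is to apply the Seifert–van Kampen theorem to the decomposition of the connected sum at infinity along the common boundary $\partial N_X \cong \partial N_Y$. Recall that by definition $X \natural Y = (X - \intr N_X) \cup_f (Y - \intr N_Y)$, where the gluing is along the separating submanifold $P := \partial N_X$. First I would set $U = X - \intr N_X$ and $V = Y - \intr N_Y$, so that $X \natural Y = U \cup_P V$ with $P = U \cap V$. The key computation is that $\pi_1(P) = 1$: by Proposition \ref{nhd of proper ray}, each $N$ is a half space with $(N,\partial N) \approx (\mathbb{R}^n_+, \mathbb{R}^{n-1})$, so $P \approx \mathbb{R}^{n-1}$ is simply connected (here $n \geq 4$ so $n-1 \geq 3$, but simple connectivity of $\mathbb{R}^{n-1}$ holds regardless). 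With $\pi_1(P) = 1$, van Kampen collapses the amalgamated product to a free product, giving $\pi_1(U \cup_P V) \cong \pi_1(U) * \pi_1(V)$.

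The remaining step is to identify $\pi_1(U)$ with $\pi_1(X)$ (and similarly for $V$ and $Y$). Here I would use that $U = X - \intr N$ is obtained from $X$ by deleting the interior of a tame proper half space. I would argue that the inclusion $U \hookrightarrow X$ is a $\pi_1$-isomorphism: since $N$ is a regular neighborhood of a ray and $(N,\partial N) \approx (\mathbb{R}^n_+, \mathbb{R}^{n-1})$, removing $\intr N$ amounts to removing a collar-like half space whose frontier is the $(n-1)$-ball/half space $\partial N \approx \mathbb{R}^{n-1}$. One clean way is to observe that $X = U \cup_{\partial N} N$ is itself a van Kampen decomposition with $N$ contractible and $\partial N$ simply connected, so $\pi_1(X) \cong \pi_1(U) * \pi_1(N) \cong \pi_1(U) * 1 \cong \pi_1(U)$; applying the same reasoning to $Y$ and $V$ closes the argument. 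Assembling the two isomorphisms with the free-product identity above yields $\pi_1(X \natural Y) \cong \pi_1(X) * \pi_1(Y)$.

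There is one technical hygiene issue to watch, and I expect it to be the main obstacle: the hypotheses of van Kampen require the pieces $U$, $V$ (and their intersection $P$) to be open, or at least to admit open thickenings that deformation retract onto them and whose overlap is path-connected. Since $U$ and $V$ are closed codimension-0 submanifolds meeting along the bicollared frontier $P$, I would first replace them by slightly enlarged open neighborhoods $\widetilde{U} \supset U$ and $\widetilde{V} \supset V$ obtained by pushing into the bicollar of $P$; these deformation retract onto $U$ and $V$ respectively, their intersection deformation retracts onto $P \approx \mathbb{R}^{n-1}$, and $\widetilde U \cap \widetilde V$ is path-connected, so the theorem applies without changing any fundamental group. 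Connectedness of all the pieces is guaranteed because $X$ and $Y$ are (being contractible manifolds) connected and removing a half space whose frontier is connected leaves a connected complement. Once this open-thickening bookkeeping is in place, every $\pi_1$ identification is immediate from the contractibility of the half spaces $N$ and the simple connectivity of $P$.
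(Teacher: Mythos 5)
Your argument is correct and is exactly the approach the paper intends: the paper's entire ``proof'' is the one-line remark that the proposition follows from an application of Van Kampen's Theorem, and you have supplied the details (simple connectivity of $\partial N_X\approx\mathbb{R}^{n-1}$, the identification $\pi_1(X-\intr N_X)\cong\pi_1(X)$ via a second Van Kampen decomposition, and the open-thickening bookkeeping along the bicollar). The only cosmetic slip is your appeal to contractibility of $X$ and $Y$, which is not a hypothesis here; it is used only to get connectedness of the pieces, which follows anyway since $X$ and $Y$ are connected and every component of $X-\intr N_X$ must meet the connected frontier $\partial N_X$.
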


\section{Some Combinatorial Group Theory and Uncountable Jester's Manifold Sums}

The primary goal of this section is the following. 

\begin{thm}
The set of homeomorphism classes of all possible CSI's of interiors of Jester's manifolds is uncountable.
\label{uncountable Jester sums}
\end{thm}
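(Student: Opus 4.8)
The plan is to exhibit an uncountable family of CSI's that are pairwise non-homeomorphic, by encoding each CSI as a sequence of Jester's manifold summands and distinguishing the resulting open manifolds via their fundamental group at infinity. First I would invoke Theorem \ref{infinite 4-splitters} to fix a countably infinite collection $\{M_i\}_{i=1}^\infty$ of Jester's manifolds whose boundary groups $\pi_1(\partial M_i)$ are pairwise non-isomorphic, indecomposable, and non-cyclic. For each infinite binary (or more generally, each function $s:\mathbb{N}\to\mathbb{N}$) sequence, form the CSI $\natural_{k=1}^\infty \intr M_{s(k)}$ of the interiors. By the Corollary following Lemma \ref{indep of rays}, each such CSI is well defined independent of the choice of rays, since the interior of a compact manifold with connected boundary is $1$-ended and stable (hence semistable), as established at the end of Section 5.1.

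Next I would compute the fundamental group at infinity of such a sum. Each $\intr M_k$ is stable with $\text{pro-}\pi_1$ equal to the constant sequence on $\pi_1(\partial M_k)$. Using the CSI structure, a cofinal sequence of neighborhoods of infinity for $\natural_k \intr M_{s(k)}$ can be built so that the $j$-th neighborhood of infinity carries fundamental group the free product of the boundary groups of all summands from index $j$ onward, joined through the ray tubes; this should yield (via Proposition \ref{pi_1 CSI} applied end-to-end, or rather its proper/infinite analogue) an inverse sequence whose bonding maps are the inclusions of free products $\cdots \leftarrow \ast_{k\geq j}\pi_1(\partial M_{s(k)}) \leftarrow \ast_{k\geq j+1}\pi_1(\partial M_{s(k)})$. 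The pro-isomorphism class of this inverse sequence of free products is the invariant I would extract.

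The key combinatorial step, which I expect to be the main obstacle, is to recover the \emph{multiset} of summand groups (how many times each $\pi_1(\partial M_i)$ appears) from the pro-isomorphism type of this inverse sequence of free products. This is where indecomposability and the Grushko/Kurosh theory enter: because each $\pi_1(\partial M_i)$ is indecomposable, non-cyclic, and the $\pi_1(\partial M_i)$ are mutually non-isomorphic, the free-product decomposition is essentially unique up to reordering, and passing to the inverse limit of free factors lets one detect precisely which groups occur infinitely often and which occur only finitely often. I would argue that if two sequences $s,s'$ give pro-isomorphic fundamental groups at infinity, then for each $i$ the number of occurrences of $\pi_1(\partial M_i)$ must agree (at least the ``eventual'' count must match, and a closer analysis of the tail quotients recovers exact multiplicities). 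The delicate part is handling the interaction between the free-product factors and the bonding homomorphisms in the pro-category, ensuring that stability of individual factors does not allow a pro-isomorphism to merge or permute non-isomorphic indecomposable factors.

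Finally, to conclude uncountability, I would choose an uncountable family of sequences $\{s_\alpha\}$ whose associated occurrence-multisets are pairwise distinct — for instance, for each subset $A\subseteq\mathbb{N}$ let $s_A$ use summand $M_i$ exactly once if $i\notin A$ and infinitely often if $i\in A$ (or some similar coding giving $2^{\aleph_0}$ distinct multisets). Since distinct multisets force non-pro-isomorphic fundamental groups at infinity by the previous step, and since the fundamental group at infinity is a proper-homotopy (hence homeomorphism) invariant of these $1$-ended semistable manifolds, the corresponding CSI's are pairwise non-homeomorphic. This produces uncountably many homeomorphism classes, establishing Theorem \ref{uncountable Jester sums}.
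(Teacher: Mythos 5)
Your overall architecture is the one the paper uses: take the countable family from Theorem \ref{infinite 4-splitters}, form infinite CSI's of interiors, compute $\text{pro-}\pi_1$ at infinity as an inverse sequence of free products, and use indecomposability plus the Kurosh subgroup theorem to recover the multiset of factors (this is exactly the content of Theorems \ref{uncountable CSI's} and \ref{CuKw type}, from which Theorem \ref{uncountable Jester sums} follows via Theorem \ref{Craig's no.2}). However, there is a concrete error in your computation of the fundamental group at infinity, and your key combinatorial step is then aimed at the wrong inverse sequence. You assert that the $j$-th neighborhood of infinity carries the free product of the boundary groups of all summands from index $j$ onward, with inclusion bonding maps $\ast_{k\geq j}\pi_1(\partial M_{s(k)}) \leftarrow \ast_{k\geq j+1}\pi_1(\partial M_{s(k)})$. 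This cannot be right: a neighborhood of infinity is the complement of a compactum, and a compactum in an infinite CSI meets only finitely many summands. Hence a cofinal neighborhood of infinity $W_j$ consists of deep (collar-type) neighborhoods of infinity inside the \emph{first} $j$ summands, glued along ray-tube boundaries $\approx (a,\infty)\times S^{n-2}$, together with \emph{all} of the remaining summands $X_{j+1}, X_{j+2},\dots$. Since each $\intr M_i$ is contractible, the whole summands contribute nothing to $\pi_1$; only the partial pieces contribute, each giving $\pi_1(\partial M_{s(k)})$. So $\pi_1(W_j)\cong A_1\ast\cdots\ast A_j$ (initial segments, finite free products), and the bonding map $\pi_1(W_{j+1})\to\pi_1(W_j)$ is the \emph{projection} killing $A_{j+1}$ (the factor $A_{j+1}$ includes into the simply connected $X_{j+1}\subset W_j$), restricting to the identity on $A_1\ast\cdots\ast A_j$. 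The groups grow and the bonding maps are epimorphisms -- the opposite of what you wrote.

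This matters because the Kurosh-based recovery of multiplicities must be carried out for sequences of this specific form (finite free products with coordinate projections), as in Theorem \ref{CuKw type}; applied to your sequence of tail free products under inclusion, the argument does not go through as stated (for instance, a factor occurring only at index $i$ vanishes from all the groups $\ast_{k\geq j}A_{s(k)}$ with $j>i$, so that sequence cannot obviously remember finite multiplicities). Your uncountability coding at the end (one occurrence versus infinitely many occurrences, indexed by subsets of $\mathbb{N}$) is fine and matches the hypothesis ``some element of $\mathcal{G}$ appears more times in one sequence than the other,'' and your appeal to semistability and ray-independence of the CSI is correct. To repair the proof, replace your description of the neighborhoods of infinity with the correct one above and then run the Kurosh/normal-form argument on the resulting ladder of projections of finite free products.
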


This Theorem can be obtained from Theorem \ref{infinite 4-splitters} by an application of theorem (4.1)
of Curtis and Kwun \cite{CuKw}. Since the approach used there is a bit outdated, we
will supply an alternate version of their theorem. 
The essence of our proof is the same as theirs,
but ours will take advantage of the rigorous development of the
fundamental group at infinity that has
taken place in the intervening years. The new approach is also more direct in
that it compares open manifolds directly, without reference to some discarded
boundaries. We will demonstrate shortly the following more general result, for which Theorem \ref{uncountable Jester sums} will be a corollary.

\begin{thm}
Let $\mathcal{G}$ be a collection of distinct
indecomposable groups, none of which are infinite cyclic and let $\left\{  X_{i}^{n}\right\}  $and $\left\{
Y_{j}^{n}\right\}  $ be countably infinite collections of simply connected, 1-ended
open $n$-manifolds with each $\operatorname*{pro}$-$\pi_{1}\left(
\varepsilon\left(  X_{i}\right)  \right)  $ and $\operatorname*{pro}$-$\pi
_{1}\left(  \varepsilon\left(  Y_{j}\right)  \right)  $ being stable and
pro-isomorphic to an element of $\mathcal{G}$. Then $X_1 \natural X_2 \natural ...$ and $Y_1 \natural Y_2 \natural...$ are 1-ended and semistable and if any element of $\mathcal{G}$
appears more times in one of the sequences, $\left\{  \operatorname*{pro}%
\text{-}\pi_{1}\left(  \varepsilon\left(  X_{i}\right)  \right)  \right\}  $
and $\left\{  \operatorname*{pro}\text{-}\pi_{1}\left(  \varepsilon\left(
Y_{j}\right)  \right)  \right\}  $, than it does in the other, then
$\operatorname*{pro}$-$\pi_{1}\left(  \varepsilon\left(  X_{1}\overset{}{\natural}X_{2}\overset{}{\natural}X_{3}\overset{}{\natural}\cdots\right)  \right)  $is not pro-isomorphic to \\ $\operatorname*{pro}$-$\pi_{1}\left(  \varepsilon\left(  Y_{1}\overset{}{\natural}Y_{2}\overset{}{\natural}Y_{3}\overset{}{\natural}\cdots\right)  \right)  .$\bigskip
\label{uncountable CSI's}
\end{thm}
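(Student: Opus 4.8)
The plan is to compute $\operatorname{pro}$-$\pi_1$ of an infinite CSI as an explicit inverse sequence of free products, and then to show that the resulting pro-isomorphism class remembers the multiplicity with which each group of $\mathcal{G}$ appears. First I would establish that $X_1 \natural X_2 \natural \cdots$ is $1$-ended and semistable: using the clean neighborhoods of infinity coming from the summands, a neighborhood of infinity of the infinite sum is obtained by deleting a large compact piece, and connectivity of each summand together with the tube structure of the CSI forces a single unbounded component. Semistability should follow from Theorem \ref{semistable} once we exhibit the bonding maps as epimorphisms, which the free-product structure below will make transparent. The independence of all of this from the choice of rays is already guaranteed by Lemma \ref{indep of rays} and its Corollary.

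Next I would identify a cofinal sequence of neighborhoods of infinity $U_j$ for $X_1 \natural X_2 \natural \cdots$ and compute $\pi_1(U_j)$. Choosing $U_j$ to contain the ``tails at infinity'' of all summands past a growing compact core, and applying van Kampen (as in Proposition \ref{pi_1 CSI}) together with the stability hypothesis $\operatorname{pro}$-$\pi_1(\varepsilon(X_i)) \cong G_i \in \mathcal{G}$, each $\pi_1(U_j)$ is a free product of finitely many $G_i$'s (those summands ``visible'' from infinity at stage $j$) together with free factors recording the connecting tubes. Because each summand is stable, after passing to a subsequence the bonding homomorphisms become, up to pro-isomorphism, the natural inclusions of free products $G_1 * \cdots * G_k \hookleftarrow G_1 * \cdots * G_{k+1}$ induced by collapsing the newest summand's contribution onto a point; these are epimorphisms, securing semistability, and the inverse limit data records the sequence $(G_1, G_2, \ldots)$ as a free-product expansion.

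The crux, and the hard part, is the algebraic rigidity step: showing that if $G \in \mathcal{G}$ occurs strictly more often as a factor in $\{\operatorname{pro}\text{-}\pi_1(\varepsilon(X_i))\}$ than in $\{\operatorname{pro}\text{-}\pi_1(\varepsilon(Y_j))\}$, then the two inverse sequences of free products are not pro-isomorphic. For this I would invoke the Kurosh subgroup theorem and the Grushko-type uniqueness of free-product decompositions: each $G \in \mathcal{G}$ is indecomposable and not infinite cyclic, so in any free-product decomposition of $\pi_1(U_j)$ the factors isomorphic to $G$ are determined up to conjugacy and their number is an invariant of the group. The remaining work is to promote this invariance of finite free-product multiplicities at each finite stage to an invariant of the pro-isomorphism class; here the key is that a pro-isomorphism is realized by a ladder of homomorphisms between subsequences, and the indecomposability plus non-cyclicity hypotheses prevent any factor $G$ from being ``absorbed'' into a free-group factor or split further along the bonding maps. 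I expect the main obstacle to be bookkeeping the multiplicities correctly through the ladder diagram: one must argue that the count of $G$-factors, read off in the limit, is genuinely a pro-isomorphism invariant rather than merely an invariant of each individual $\pi_1(U_j)$, and this is where the rigorous fundamental-group-at-infinity machinery (rather than the discarded-boundary approach of Curtis and Kwun) earns its keep. Finally, Theorem \ref{uncountable Jester sums} follows by choosing $\mathcal{G}$ to be the boundary groups from Theorem \ref{infinite 4-splitters} and forming sums with distinct multiplicity profiles, of which there are uncountably many.
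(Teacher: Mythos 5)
Your proposal is correct and follows essentially the same route as the paper: exhibit cofinal neighborhoods of infinity in the CSI whose fundamental groups are the free products $A_1 * \cdots * A_j$ with projection bonding maps killing the newest factor, then use the Kurosh subgroup theorem together with indecomposability and non-cyclicity to chase each factor through the pro-isomorphism ladder and show the multiset of factors is an invariant (the paper isolates this last step as a separate algebraic theorem, Theorem \ref{CuKw type}). One small correction: no extra free factors arise from the connecting tubes, since the gluing is a linear chain along the simply connected pieces $\partial N_{i,x,j} \approx (a,\infty)\times S^{n-2}$ ($n \geq 4$) --- which is fortunate, as infinite-cyclic or free free-factors would fall outside the hypotheses of the algebraic counting argument.
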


First we'll state and prove a theorem about certain types of inverse sequences of groups that will help us determine when two infinite CSI's of our Jester's manifolds are distinct. This theorem (or its discovery) and its proof are motivated by Theorem (4.1) (and its proof) in \cite{CuKw}. 

\begin{thm}
Let $A_1,A_2,...$ and $B_1,B_2,...$ be indecomposable groups none of which are infinite cyclic, and for all positive integers $j$ and $k$ let $G_j$ and $H_k$ be the free products 
\[G_j=A_1\ast A_2\ast ... \ast A_j\] \[H_k=B_1\ast B_2 \ast... \ast B_k.\] Further let 
$\varphi_j:G_j\rightarrow G_{j-1}$ and
$\psi_k:H_k\rightarrow H_{k-1}$
be the obvious projections 
such that \[\varphi_j|_{G_{j-1}}=id_{G_{j-1}}\mbox{,             } \varphi_j(A_j)=1,\] \[\psi_k|_{H_{k-1}}=id_{H_{k-1}} \mbox {   and,   }\psi_k(B_k)=1.\]
Suppose the inverse sequences $\{G_j,\varphi_j\}$ and $\{H_k, \psi_k\}$ are pro-isomorphic. That is, there exists a commutative ladder diagram as below.

\[
\begin{diagram}
G_{j_{1}} & & \lTo^{f_{2}} & &
G_{j_{2}} & & \lTo^{f_{3}} & &
G_{j_{3}} & & \lTo^{f_{4}} & &
\cdots\\
& \luTo^{u_{1}} & & \ldTo^{d_{2}} & & \luTo^{u_{2}} & & \ldTo^{d_{3}}  & & \luTo^{u_{3}} \\
& & H_{k_1} & & \lTo^{g_{2}} & & H_{k_{2}}
& & \lTo^{g_{3}}  & & H_{k_{3}} & & \lTo^{g_{4}} & \cdots
\end{diagram}
\]
\\
Here the bonding homomorphisms are the compositions
\[f_i=\varphi_{j_{i+1}}...\varphi_{j_i+1}\varphi_{j_i}
\mbox{ and } 
g_i=\psi_{k_{i+1}}...\psi_{k_i+1}\psi_{k_i}.\] Then there exists a self bijection $\Phi$ of $\mathbb{Z}_+$
such that $A_j\cong B_{\Phi(j)}.$
\label{CuKw type}
\end{thm}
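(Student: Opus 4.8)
The plan is to exploit the uniqueness of free-product decompositions (Grushko--Neumann / the Kurosh subgroup theorem in its decomposition form) together with the concrete structure of the bonding maps $\varphi_j,\psi_k$, which are retractions onto the initial free factors. The key structural fact I would use repeatedly is that each $G_j$ has a \emph{canonical} free-product decomposition into the indecomposable, non-infinite-cyclic factors $A_1,\dots,A_j$, and that $\varphi_j$ simply kills the last factor and fixes the rest. Thus the image $f_i(G_{j_{i+1}})\subset G_{j_{i+1}}$ is precisely the free factor $A_1\ast\cdots\ast A_{j_i}$, and in general the images of the bonding maps are initial segments of the free factors.

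First I would pin down, for each $A_j$, a well-defined ``destination'' factor $B_{\Phi(j)}$. Fix an index $j$ and look far enough out in the commuting ladder that the factor $A_j$ survives, i.e. choose a ladder stage $G_{j_i}$ with $j_i \ge j$. The composite $u_{i-1}\circ d_i:G_{j_i}\to G_{j_i}$ equals $f_i$, the projection killing the factors beyond $A_{j_{i-1}}$ (and symmetrically on the $H$ side). Because the $A_j$ are indecomposable and not infinite cyclic, by the uniqueness of the Grushko decomposition the homomorphism $d_i:G_{j_i}\to H_{k_i}$ must, up to conjugation, carry the free factor $A_j$ isomorphically onto one of the free factors $B_k$ of $H_{k_i}=B_1\ast\cdots\ast B_{k_i}$ (or kill it, but the commutativity $u_{i-1}d_i=f_i$ prevents an $A_j$ that survives $f_i$ from being killed). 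This assigns to each surviving $A_j$ a well-defined factor $B_{\Phi(j)}$ with $A_j\cong B_{\Phi(j)}$. The crucial point, which I would verify using the commuting squares, is that this assignment is independent of which ladder stage $i$ we read it off from, so $\Phi$ is well defined on all of $\mathbb{Z}_+$.

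Next I would show $\Phi$ is a bijection by constructing its inverse symmetrically: running the same argument with the roles of $\{G_j\}$ and $\{H_k\}$ interchanged (using the maps $u_i$ in place of $d_i$) produces a map $\Psi$ assigning to each $B_k$ a factor $A_{\Psi(k)}$ with $B_k\cong A_{\Psi(k)}$. The commutativity of the ladder diagram forces $\Psi\circ\Phi=\mathrm{id}$ and $\Phi\circ\Psi=\mathrm{id}$: tracing the factor $A_j$ through $d_i$ into $H_{k_i}$ and then back through $u_i$ into $G_{j_{i+1}}$ must agree with $f_{i+1}$, whose effect on the still-surviving factor $A_j$ is the identity. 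This yields a self-bijection $\Phi$ of $\mathbb{Z}_+$ with $A_j\cong B_{\Phi(j)}$, as required.

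The main obstacle I expect is the \emph{conjugation and tracking} bookkeeping rather than any single deep theorem. The Kurosh/Grushko machinery tells us each factor is carried to a conjugate of a single $B_k$, but the conjugating elements vary from stage to stage, so one must argue carefully that the induced factor-to-factor correspondence is genuinely stage-independent and genuinely a permutation (in particular that no two distinct $A_j$ collapse to the same $B_k$ and that every $B_k$ is hit). This is exactly where the hypothesis that each factor is indecomposable \emph{and} not infinite cyclic is essential: infinite-cyclic factors could be absorbed or split by Nielsen moves and would ruin the uniqueness of the correspondence. I would isolate the independence-of-stage claim as the technical heart of the argument and prove it by a direct diagram chase on two adjacent rungs of the ladder.
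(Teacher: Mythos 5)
Your proposal is correct and takes essentially the same route as the paper: both arguments use the commutativity of the ladder to make the diagonal maps injective on the surviving free factors, then apply the Kurosh subgroup theorem to the indecomposable, non-infinite-cyclic images to place each $A_l$ inside a conjugate of a unique $B_t$ (and symmetrically back), with the round-trip identity $u\circ d=f$ pinning down the correspondence. The only difference is bookkeeping at the end --- the paper extracts the bijection by counting copies of each isomorphism type among initial segments via two symmetric claims, while you construct $\Phi$ and its two-sided inverse directly --- and both reduce to the same diagram chase you correctly identify as the technical heart.
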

 
\begin{proof}
It suffices to show the following two claims.

{\bf Claim 1}: For each positive integer pair $(l,s)$ with $l \leq j_s$ and $s>1$ 
there exists at least as many isomorphic copies of $A_l$ among $B_1,..., B_{k_s}$ as there are among $A_1,..., A_{j_s}$.

{\bf Claim 2}: For each positive integer pair $(r,m)$ with $r \leq k_m$  
there exists at least as many isomorphic copies of $B_r$ among $A_1,..., A_{l_m}$ as there are among $B_1,..., B_{k_r}$.

We prove claim 1 and by a similar argument one can prove claim 2. We will use the following facts: in a group $C=C_1 \ast C_2 \ast ...\ast C_n$ (1) no nontrivial free factor $C_i$ is a subgroup of a conjugate of some other free factor $C_j$ and (2) every conjugate of $C_i$ meets every other factor $C_j$, $j\neq i$ trivially. These facts can be verified using normal forms  
\cite[p.~175]{LySc}.
Consider the following commutative ladder diagram.

\[
\begin{diagram}
G_{j_{s}} & & \lTo^{f_{s+1}} & &
G_{j_{s+1}} & & \lTo^{f_{s+2}} & &
G_{j_{s+2}} & & \lTo^{f_{s+3}} & &
\cdots\\
& \luTo^{u_{s}} & & \ldTo^{d_{s+1}} & & \luTo^{u_{s+1}} & & \ldTo^{d_{s+2}}  & & \luTo^{u_{s+2}} \\
& & H_{k_s} & & \lTo^{g_{s+1}} & & H_{k_{s+1}}
& & \lTo^{g_{s+2}}  & & H_{k_{s+2}} & & \lTo^{g_{s+3}} & \cdots
\end{diagram}
\]

We observe that for $i>1$,  $d_i|_{G_{j_{i-1}}}:=d_i\circ (G_{j_{i-1}} \hookrightarrow G_{j_i})$ and $u_i|_{H_{k_{i-1}}}$ are monomorphisms since $f_i|_{G_{j_{i-1}}}$ and $g_i|_{H_{k_{i-1}}}$ are. Thus $A_i \cong d_{s+1}(A_i)$ and $B_k \cong u_{s+1}(B_k)$ for $i \leq j_s$ and $k \leq k_s$. 

Choose $l\leq j_s.$  We'll show there exists $t\leq k_{s}$ such that $u_{s+1}(B_t)$ is a conjugate of $A_l$ thus exhibiting $B_t$ as an isomorphic copy of $A_l.$ Since $d_{s+2}(A_l) \cong A_l$ is indecomposable and not infinite cyclic the Kurosh Subgroup Theorem \cite[p.~ 219]{Mas} gives $d_{s+2}(A_l) \leq \beta B_t \beta^{-1}$ for some $t \leq k_{s+1}$ and $\beta \in H_{k_{s+1}}.$ Moreover, since $A_l$ survives into $G_{j_s}$ we have $t\leq k_s.$ Then the restriction $u_{s+1}|_{B_t}$ is injective and thus so is $u_{s+1}|_{\beta B_t \beta^{-1}}$ and we know $u_{s+1}(\beta B_t \beta^{-1})$ is indecomposable and not infinite cyclic. We again apply Kurosh yielding $u_{s+1}(\beta B_t \beta^{-1})$ is a subgroup of a conjugate of some $A_r.$
 Thus in $G_{j_{s+1}}$ we have   $A_l=f_{s+2}(A_l)=u_{s+1}d_{s+2}(A_l)\leq u_{s+1}(\beta B_t \beta^{-1}) \leq$ conjugate of $A_r.$ By our facts $l=r$ and we have $A_l = \beta B_t \beta^{-1} \cong B_t.$ More specifically, $t$ is the unique integer less than or equal to for which $u_{s+1}(B_t)$ is conjugate to $A_l.$

Thus we have shown the map 
\[
\Psi:\{1,2,...,j_s\} \rightarrow \{1,2,...,k_s\} \text{;   }  l\mapsto t
\]
is injective and $B_{\Psi(i)} \cong A_{i}$. This completes the proof of claim 1 and the proof of the proposition.

\end{proof}

We now apply Theorem \ref{CuKw type} to prove Theorem \ref{uncountable CSI's}.

\begin{proof}[Proof of Theorem \ref{uncountable CSI's}.]
Let $A_i$ and $B_j$ be groups such that $\text{pro-}\pi_1(\varepsilon(X_i))$ and \\ $\text{pro-}\pi_1(\varepsilon(Y_j))$ are pro-isomorphic to the constant sequences $\{A_i,id_{A_i}\}$ and $\{B_j, id_{B_j}\}.$ Then the hypothesis ``an element of $\mathcal{G}$
appears more times in one of the sequences, $\left\{  \operatorname*{pro}%
\text{-}\pi_{1}\left(  \varepsilon\left(  X_{i}\right)  \right)  \right\}  $
and $\left\{  \operatorname*{pro}\text{-}\pi_{1}\left(  \varepsilon\left(
Y_{j}\right)  \right)  \right\}  $, than it does in the other," translates as there does not exist the bijection $\Phi$ as in the conclusion of Theorem \ref{CuKw type}. Thus if we can show that $X_1\natural X_2\natural...$ and $Y_1\natural Y_2\natural...$ are 1-ended and semistable and also that $\text{pro-}\pi_1(\varepsilon(X_1\natural X_2\natural...))$ and $\text{pro-}\pi_1(\varepsilon(Y_1\natural Y_2\natural...))$ are of the forms $\{G_j,\varphi_j\}$ and $\{H_k,\psi_k\}$ in the statement of Theorem \ref{CuKw type} we will have the desired result.

For $i=1,2,...$ let $U_{i,1}\supset U_{i,2} \supset ...$ be a cofinal sequence of clean neighborhoods of infinity in $X_i$ so that $\{\pi_1(U_{i,j}), \tau_{i,j}\}\in \mbox{ pro-}\pi_1(\varepsilon(X_i))$ can be fit into a commuting ladder diagram with $\{A_i,id_{A_i}\}$

\begin{equation}
\begin{diagram}
\pi_1(U_{i,1}) & & \lTo^{\tau_{i,2}} & &
\pi_1(U_{i,2}) & & \lTo^{\tau_{i,3}} & &
\pi_1(U_{i,3}) & & \lTo^{\tau_{i,4}} & &
\cdots\\
& \luTo^{u_{i,1}} & & \ldTo^{d_{i,2}} & & \luTo^{u_{i,2}} & & \ldTo^{d_{i,3}}  & & \luTo^{u_{i,3}} \\
& & A_i & & \lTo^{id} & & A_i 
& & \lTo^{id}  & & A_i  & & \lTo^{id} & \cdots
\end{diagram}
\label{Ladder diagram of Xi}
\end{equation}
Here $\tau_{i,j}$ is the bonding homomorphism 
discussed in the definition of the fundamental group at infinity.

As in the definition of $X_1\natural X_2 \natural ...,$ for $i=1,2,...$ choose disjoint rays $r_{i,L},r_{i,R} \subset X_i$ and disjoint regular neighborhoods $N_{i,L},N_{i,R}\subset X_i$ of said rays with the additional property that for each $j$, $r_{i,x}$ meets $\Bd_{X_i} U_{i,j}$ transversely in a single point.

For $i=2,3,...$ and for $j=1,2,...$ let \[\hat{U}_{1,j}=U_{1,j}-\intr N_{1,R}\ 
 \ \text{and }\ \hat{U}_{i,j}=U_{i,j}-\intr (N_{i,L}\cup N_{i,R}).\]

\begin{figure}[!ht]
\centering
\vspace{-6in}
\includegraphics[height=8in]{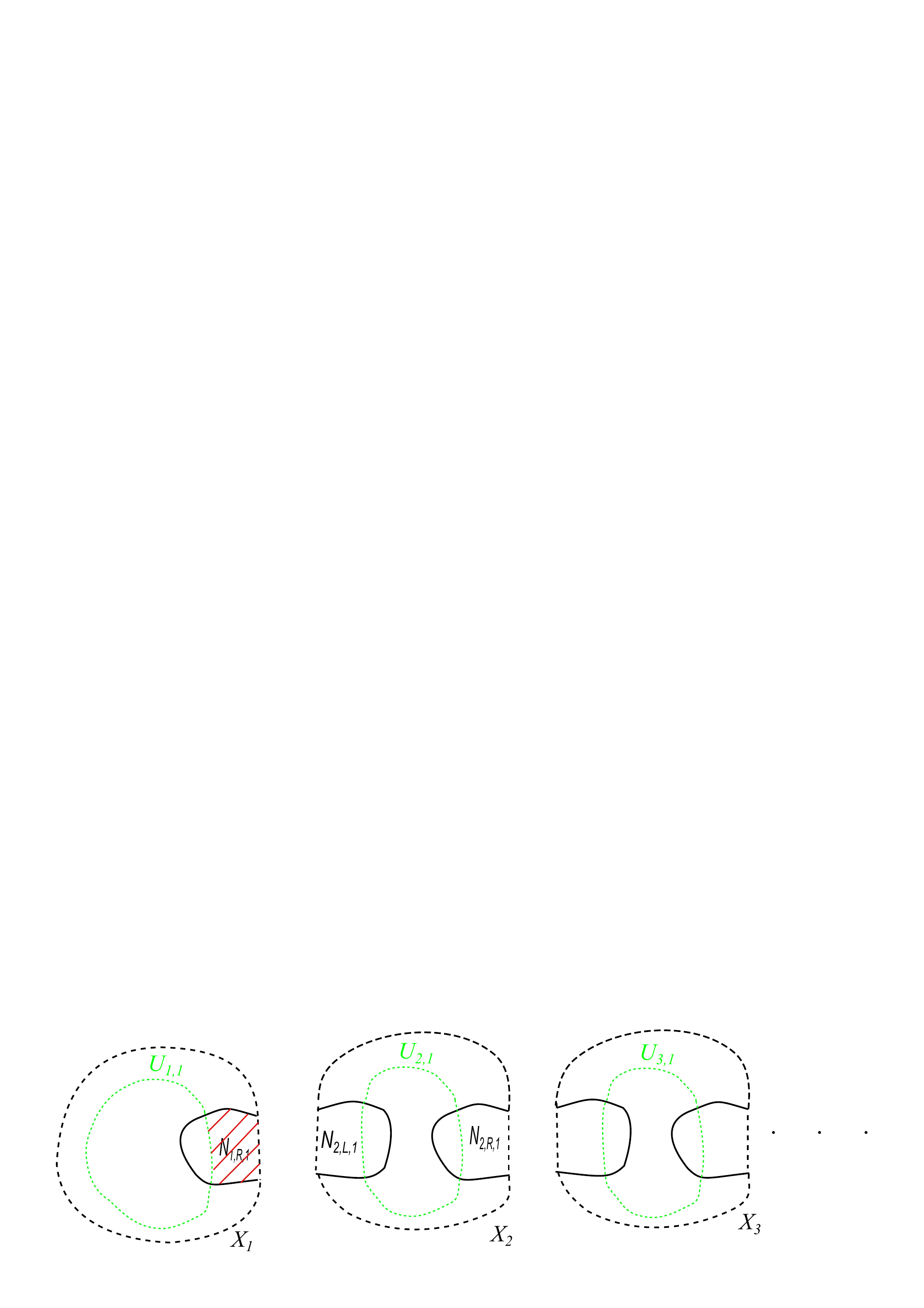}           
\caption{Neighborhoods of $\infty$}
\label{Neighborhoods of infinity}
\end{figure}

We claim $\pi_1(\hat{U}_{i,j})\cong \pi_1(U_{i,j}).$ For $i,j=1,2,...$ and $x=L,R$ let $N_{i,x,j}= U_{i,j} \cap N_{i,x}$ which is homeomorphic to $r_{i,x}((a,\infty)) \times \mathbb{B}^{n-1}$ for some $a>0$ since $r_{i,x}$ meets $\Bd_{X_i}U_{i,j}$ transversely in a single point. We see that \\ $\hat{U}_{i,j}\cap N_{i,x,j} \approx r_{i,x}((a,\infty))\times S^{n-2}$ which is simply connected as $n \geq 4.$ Thus \[\pi_1(U_{i,j})=\pi_1(\hat{U}_{i,j}\cup N_{i,L,j}\cup N_{i,R,j})\approx \pi_1(\hat{U}_{i,j}).\]

For $i=1,2,..,$ let $\hat{X}_i=X_i-N_{i,L}\approx X_i$ and \begin{eqnarray}
W_i &=& \hat{U}_{1,i}\cup_\phi \hat{U}_{2,i}\cup_\phi... \cup_\phi \hat{U}_{i,i}\cup_\phi \hat{X}_{i+1}\natural X_{i+2} \natural X_{i+3}... \nonumber 
\end{eqnarray}

\begin{figure}[!ht]
\centering
\vspace{-4.5in}
\includegraphics[height=7in]{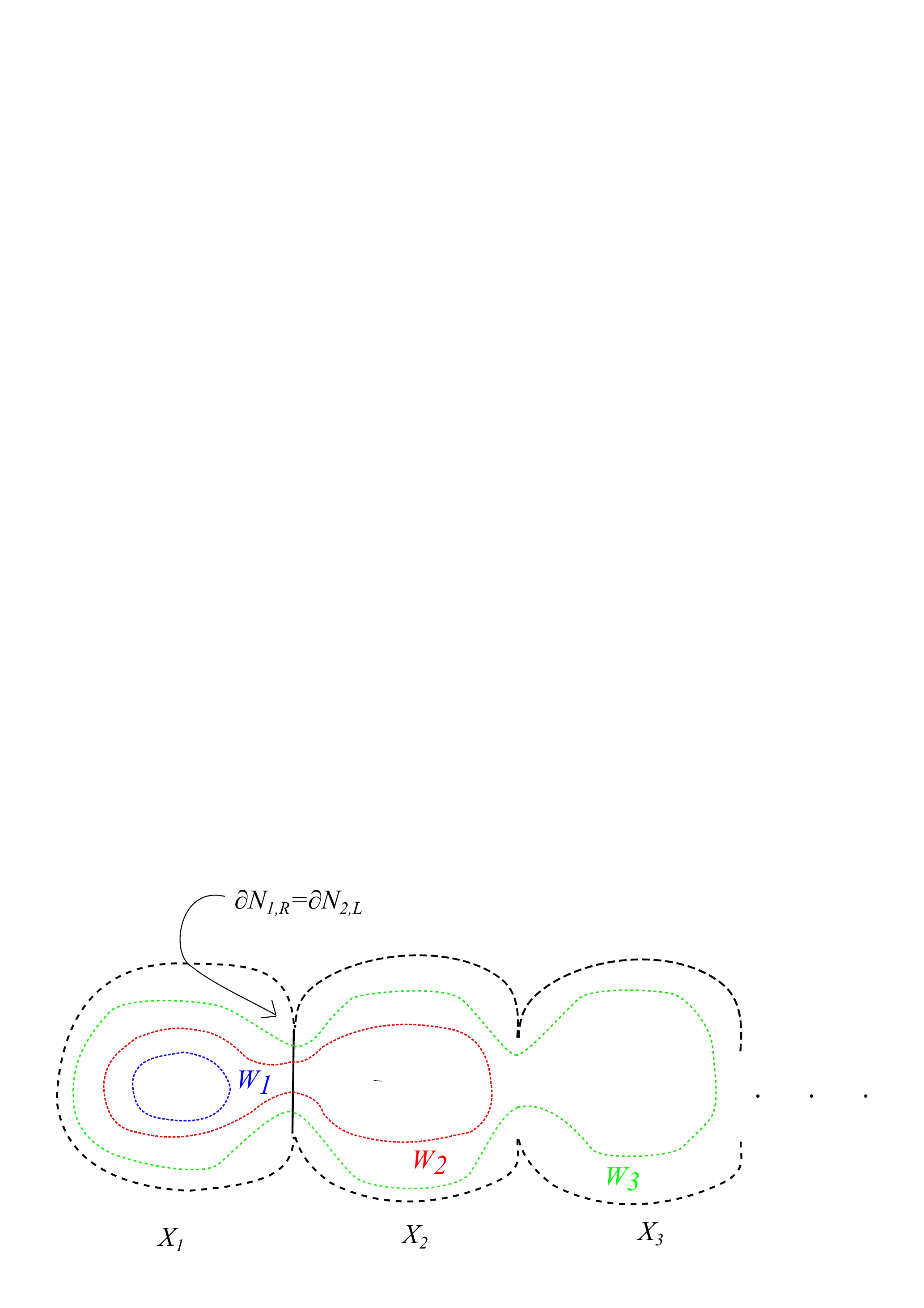}           
\caption{$W_1 \supset W_2 \supset W_3$ in $X_1\natural X_2\natural...$}
\label{Cofinal_Nhds_in_CSI}
\end{figure}

Observe that $W_1,W_2,...$ form a cofinal sequence of connected neighborhoods of infinity in $X_1\natural X_2 \natural ...$ and thus if $U$ is a neighborhood of infinity in $X_1\natural X_2 \natural ...$ then $U \supset W_i$ for some $i.$ This shows $X_1\natural X_2 \natural ...$ is 1-ended. 
Then as \[\hat{U}_{i,j} \cap \hat{U}_{i+1,j}=\partial N_{i,L,j}=\partial N_{i+1,R,j} \approx (a,\infty)\times S^{n-2},\]
\[\hat{U}_{i,i}\cap_\varphi \hat{X}_{i+1}=\partial N_{i,R,j},\]
and the $X_i$ are all simply connected we have
\[\pi_1(W_j)\cong \pi_1(U_{1,j})\ast \pi_1(U_{2,j}) \ast ... \ast \pi_1(U_{j,j}).\]

We will show $\{\pi_1(W_j),\tau_{1,j}\}$ is pro-isomorphic to $\{G_j,\varphi_j\}.$ For our base ray we choose $r_1$ the chosen base ray for $X_1.$ Let
\[1_{i,j}:\pi_1(U_{i,j})\rightarrow 1,\]
\[d_j'=d_{1,j}\ast d_{2,j} \ast ... \ast d_{j,j-1}*1_{j,j},\]
\[d_j':\pi_1(U_{1,j}) \ast \pi_1(U_{2,j}) \ast...\ast \pi_1(U_{j,j})\rightarrow A_1\ast A_2\ast...\ast A_{j-1},\]
\[u_j'=u_{1,j}\ast u_{2,j} \ast ... \ast u_{j,j-1}*u_{j,j}, \]
\[u_j':\pi_1(U_{1,j}) \ast \pi_1(U_{2,j}) \ast...\ast \pi_1(U_{j,j})\rightarrow A_1\ast A_2\ast...\ast A_{j}, \mbox{ and }\]
\[\tau_j'=\tau_{1,j}\ast\tau_{2,j}\ast...\ast\tau_{j-1,j}\ast1_{j,j}\]
\[ \tau_j':\pi_1(U_{1,j}) \ast \pi_1(U_{2,j}) \ast...\ast \pi_1(U_{j,j})\rightarrow \pi_1(U_{1,j}) \ast \pi_1(U_{2,j}) \ast...\ast \pi_1(U_{j-1,j})\]
where $d_{i,j},$ $u_{i,j},$ and $\tau_{i,j}$ are the ``up",``down", and bonding homomorphisms of the previous ladder diagram (\ref{Ladder diagram of Xi}). We then have the following commutative diagram:
\begin{diagram}
\pi_1(U_{1,1}) & & \lTo^{\tau_2'} & &
\pi_1(U_{1,2})\ast \pi_1(U_{2,2}) & & \lTo^{\tau_3'} & &
\cdots\\
& \luTo^{u_1'} & & \ldTo^{d_2'} & & \luTo^{u_2'}\\ 
& & A_1 & & \lTo^{\varphi_2} & & A_1\ast A_2 
& & \lTo^{\varphi_3}  & & \cdots
\end{diagram}

Thus $X_1\natural X_2 \natural ...$ is semistable and $\{\pi_1(W_j),\tau_j'\}$ is pro-isomorphic to $\{G_j,\varphi_j\}.$ Similarly, one can show pro-$\pi_1(\varepsilon(Y))$ is of the form $\{H_k,\psi_k\}.$
\end{proof}

\begin{thm}
Let $\mathcal{G}$ be a collection of distinct indecomposable groups none of which are infinite cyclic and let $\left\{  C_{i}^{n}\right\}  $and $\left\{
D_{j}^{n}\right\}  $ be countably infinite collections of compact simply connected
$n$-manifolds with connected boundaries that have fundamental groups lying in
$\mathcal{G}$. If any element of $\mathcal{G}$ appears more times in one of
the sequences, $\left\{  \pi_{1}\left(  \partial C_{i}^{n}\right)  \right\}  $
and $\left\{  \pi_{1}\left(  \partial D_{j}^{n}\right)  \right\}  $, than it
does in the other, then
\[
\operatorname*{int}\left(  C_{1}\overset{\partial}{\#}C_{2}\overset{\partial
}{\#}C_{3}\overset{\partial}{\#}\cdots\right)  \not \approx \operatorname*{int}\left(
D_{1}\overset{\partial}{\#}D_{2}\overset{\partial}{\#}D_{3}\overset{\partial
}{\#}\cdots\right)  .
\]
\label{Craig's no.2}
\end{thm}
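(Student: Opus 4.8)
The plan is to deduce Theorem \ref{Craig's no.2} as a corollary of Theorem \ref{uncountable CSI's} by passing from the compact manifolds and their boundary connected sums to their interiors and the corresponding connected sums at infinity. First I would set $X_i = \intr C_i$ and $Y_j = \intr D_j$. Since each $C_i$ is a compact simply connected $n$-manifold with connected boundary, $\intr C_i$ is an open simply connected $n$-manifold (it is homotopy equivalent to $C_i$), it is orientable and PL because simple connectivity forces $w_1 = 0$ and we work in the PL category, and the computation carried out at the end of Section 5.1 shows that $\intr C_i$ is 1-ended with $\text{pro-}\pi_1(\varepsilon(\intr C_i))$ stable and pro-isomorphic to the constant inverse sequence $\{\pi_1(\partial C_i), \text{id}\}$. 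By hypothesis $\pi_1(\partial C_i) \in \mathcal{G}$, which is indecomposable and not infinite cyclic. Thus the collections $\{X_i\}$ and $\{Y_j\}$ satisfy every hypothesis of Theorem \ref{uncountable CSI's}.

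Next I would translate the multiplicity condition. Because $\text{pro-}\pi_1(\varepsilon(X_i))$ is pro-isomorphic to the constant sequence determined by $\pi_1(\partial C_i)$, the number of indices $i$ at which a fixed $G \in \mathcal{G}$ occurs in the family $\{\text{pro-}\pi_1(\varepsilon(X_i))\}$ equals the number of indices $i$ at which $G$ occurs in $\{\pi_1(\partial C_i)\}$, and likewise for the $Y_j$ and $D_j$. Hence the assumption that some element of $\mathcal{G}$ appears more often in one of $\{\pi_1(\partial C_i)\}$, $\{\pi_1(\partial D_j)\}$ than in the other is precisely the hypothesis of Theorem \ref{uncountable CSI's}. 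Applying that theorem, $X_1 \natural X_2 \natural \cdots$ and $Y_1 \natural Y_2 \natural \cdots$ are 1-ended and semistable, and $\text{pro-}\pi_1(\varepsilon(X_1 \natural X_2 \natural \cdots))$ is \emph{not} pro-isomorphic to $\text{pro-}\pi_1(\varepsilon(Y_1 \natural Y_2 \natural \cdots))$.

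Finally I would convert this back to the manifolds in the statement. By the remark following the definition of the infinite connected sum at infinity, the interior of a boundary connected sum of compact manifolds with connected boundary is homeomorphic to the connected sum at infinity of their interiors; thus $\intr(C_1 \bdrysum C_2 \bdrysum \cdots) \approx X_1 \natural X_2 \natural \cdots$ and $\intr(D_1 \bdrysum D_2 \bdrysum \cdots) \approx Y_1 \natural Y_2 \natural \cdots$. Now $\text{pro-}\pi_1$ at infinity is a homeomorphism invariant of 1-ended semistable spaces: any homeomorphism carries a cofinal sequence of connected neighborhoods of infinity to another such sequence and induces isomorphisms on the $\pi_1$'s that commute with the bonding maps up to the canonical base-point change, giving a pro-isomorphism of the associated inverse sequences, and by Theorem \ref{semistable} the resulting class is independent of the base ray. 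Consequently a homeomorphism between the two interiors would force a pro-isomorphism of their fundamental pro-groups at infinity, contradicting the previous paragraph. Therefore the two interiors are not homeomorphic.

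I expect the substantive content to lie entirely in the two earlier results already in hand, namely the identification of $\intr(\,\cdot\, \bdrysum \,\cdot\,)$ with the $\natural$ of interiors and the computation that $\text{pro-}\pi_1(\varepsilon(\intr C_i))$ is the constant sequence on $\pi_1(\partial C_i)$, so the theorem is essentially a corollary of Theorem \ref{uncountable CSI's}. The one place demanding genuine care is the bookkeeping that transfers the multiplicity hypothesis faithfully across the pro-isomorphism $\text{pro-}\pi_1(\varepsilon(\intr C_i)) \cong \{\pi_1(\partial C_i), \text{id}\}$: one must work with pro-isomorphism classes rather than literal inverse sequences, and verify that counting multiplicities of elements of $\mathcal{G}$ is invariant under the passage to these classes, which is exactly the role played by the hypotheses that the groups in $\mathcal{G}$ are distinct, indecomposable, and non-cyclic.
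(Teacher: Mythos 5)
Your proposal is correct and follows exactly the paper's argument: the paper also sets $X_i=\intr C_i$, $Y_j=\intr D_j$, invokes the Section 5.1 computation that these are 1-ended and stable with pro-$\pi_1$ at infinity given by $\pi_1(\partial C_i)$, applies Theorem \ref{uncountable CSI's}, and identifies the interior of the boundary connected sum with the CSI of the interiors. You merely spell out (more explicitly than the paper) the topological invariance of the fundamental pro-group at infinity, which is a welcome but not different step.
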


\begin{proof}
Since $C_i$ and $D_i$ are compact with connected boundaries $X_i=\intr C_i$ and $Y_i=\intr D_i$ are 1-ended and stable and thus meet the hypotheses of Theorem \ref{uncountable CSI's}. Since the CSI's of the interiors are homeomorphic to the interiors of the BCS's we have the desired result.
\end{proof}

Theorem \ref{uncountable Jester sums}, which we repeat below, can now be seen to be a corollary to Theorem \ref{Craig's no.2}.\\
\textbf{Theorem \ref{uncountable Jester sums}.} \emph{The set of homeomorphism classes of all possible infinite CSI's of interiors of Jester's manifolds is uncountable.}

In the next section we will show that these manifolds split.

\section{Sums of Splitters Split}

In this section we demonstrate our main result: 

\begin{thm} There exists an uncountable collection of contractible open 4-manifolds which split as $\mathbb{R}^4 \cup_{\mathbb{R}^4} \mathbb{R}^4.$ 
\label{uncountable open 4-splitters}
\end{thm}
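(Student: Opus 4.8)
The plan is to combine two facts already assembled in the paper: first, that each compact Jester's manifold splits as a closed splitter $\mathbb{B}^4 \cup_{\mathbb{B}^4} \mathbb{B}^4$ (Corollary following Proposition \ref{M collapses to J}, extended to the $W_i$), and second, that Theorem \ref{uncountable Jester sums} produces uncountably many topologically distinct open 4-manifolds as CSI's of interiors of Jester's manifolds. The missing link is a general principle: \emph{the interior of an infinite boundary connected sum of closed splitters is an open splitter}. This is exactly the statement foreshadowed in Note \ref{AnSi}. So the heart of the proof is to establish that a boundary connected sum of closed splitters is again a closed splitter (in a suitable infinite, or ``open-ended,'' sense) and then to pass to interiors using Proposition \ref{ints of splitters split}.

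First I would set up the splitting of a boundary connected sum of two closed splitters. Suppose $M_1 = A_1 \cup_{C_1} B_1$ and $M_2 = A_2 \cup_{C_2} B_2$ with all pieces balls. I would choose the gluing ball $B_{M_1} \subset \partial M_1$ used to form $M_1 \bdrysum M_2$ so that it meets the splitting compatibly---specifically, so that $B_{M_1}$ lies half in $\partial A_1$ and half in $\partial B_1$, straddling a disc of $\partial C_1$, and symmetrically for $M_2$. Then gluing $A_1$ to $A_2$ along their half-discs produces a ball $A = A_1 \bdrysum A_2$, gluing $B_1$ to $B_2$ produces a ball $B = B_1 \bdrysum B_2$, and $C_1$ meets $C_2$ along a $(n-1)$-disc to yield a ball $C = C_1 \bdrysum C_2$. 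The point is that a boundary connected sum of two $n$-balls is again an $n$-ball, so the three pieces remain balls and we obtain $M_1 \bdrysum M_2 = A \cup_C B$, again a closed splitter. I would then verify this carefully using the monoid structure from Proposition \ref{bdry sum well defined} and the fact that tame balls in boundaries can be isotoped into position.

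Next I would iterate this to the infinite setting. For an infinite BCS $C_1 \bdrysum C_2 \bdrysum \cdots$ of closed splitters, performing the above construction consistently at each gluing site yields a decomposition of the whole sum as $\mathcal{A} \cup_{\mathcal{C}} \mathcal{B}$ where $\mathcal{A} = A_1 \bdrysum A_2 \bdrysum \cdots$, and similarly for $\mathcal{B}$ and $\mathcal{C}$. Passing to interiors, I would identify $\intr \mathcal{A}$, $\intr \mathcal{B}$, $\intr \mathcal{C}$ as ascending unions of interiors of PL balls, which by Lemma \ref{Ascending union of interiors of PL balls is n-space} are each homeomorphic to $\mathbb{R}^4$; the intersection and union statements would follow the local argument in Proposition \ref{ints of splitters split}. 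Applying this to the uncountable family from Theorem \ref{uncountable Jester sums}---whose members are precisely interiors of infinite BCS's of Jester's manifolds, each a closed splitter---yields uncountably many open 4-manifolds, each splitting as $\mathbb{R}^4 \cup_{\mathbb{R}^4} \mathbb{R}^4$, which is the desired conclusion.

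The main obstacle I anticipate is bookkeeping in the \emph{infinite} case: ensuring the gluing discs can be chosen coherently so that the three families $\{A_i\}$, $\{B_i\}$, $\{C_i\}$ sum up to honest half-spaces/Euclidean spaces rather than something with bad end behavior, and that the resulting $\intr \mathcal{A}$, $\intr \mathcal{B}$, $\intr \mathcal{C}$ really are each $\mathbb{R}^4$ (not merely ascending unions of balls whose interiors fail to be exhausted properly). Concretely, the delicate point is that each summand ball contributes a ``neck'' on both its left and right boundary discs, and I must arrange the splitting of each $M_i = A_i \cup_{C_i} B_i$ so that \emph{both} necks straddle $C_i$ compatibly; only then do consecutive pieces glue to give balls at every finite stage, setting up the ascending union. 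Verifying this compatibility at every junction simultaneously, and confirming that the half-space condition of Proposition \ref{nhd of proper ray} is met for the base rays running through the necks, is where the real care is required.
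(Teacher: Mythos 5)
Your overall strategy --- combine the uncountable family of Theorem \ref{uncountable Jester sums} with a general ``sums of splitters split'' principle --- is the same reduction the paper makes, but your route to that principle has a genuine gap at its center. You propose to choose the gluing ball $B_{M_i}\subset\partial M_i$ so that it lies half in $\partial A_i$ and half in $\partial B_i$, straddling a disc of $\partial C_i$. Nothing in the hypothesis $M_i=A_i\cup_{C_i}B_i$ with $A_i,B_i,C_i\approx\mathbb{B}^4$ guarantees that such a ball exists: the splitting is only a topological decomposition, and a priori $C_i\cap\partial M_i$ need not contain any $(n-1)$-ball along which $A_i$ and $B_i$ meet $\partial M_i$ in the standard position your gluing requires (one can show this intersection is a nonempty compact set when $M_i$ is not a ball, but that is all you get for free). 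Proposition \ref{bdry sum well defined} only says the boundary connected sum is independent of the choice of gluing ball; it does not let you isotope a gluing ball into a prescribed position relative to an internal decomposition. This straddling claim is precisely the technical heart of the theorem, and deferring it as ``where the real care is required'' leaves the proof incomplete.

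The paper sidesteps the boundary entirely. It passes to interiors first (Proposition \ref{ints of splitters split}) and then proves a lemma: if $M=A\cup_C B$ with $A,B,C\approx\mathbb{R}^n$, there is a ray $r\subset C$ that is proper in $A$ and in $B$ as well as in $C$. That lemma is where the work lives --- it builds a smooth retraction of $\cl(C)$ onto an arc running from the $A$-side of $\Bd_M C$ to the $B$-side, takes a regular value, and uses a Jordan--Brouwer parity argument to show the level-set component through that value is noncompact and properly embedded, then extracts a proper ray inside it. A regular neighborhood of that ray is simultaneously a half-space in $A$, $B$, and $C$ (Proposition \ref{nhd of proper ray} and Note \ref{csi of nspaces}), so the connected sum at infinity performed along it respects all three pieces, and Propositions \ref{CSI of splitters} and \ref{infinite csi splits} assemble the splitting $(A_1\natural A_2\natural\cdots)\cup_{C_1\natural C_2\natural\cdots}(B_1\natural B_2\natural\cdots)$, each piece being a monotone union of $\mathbb{R}^4$'s. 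If you want to salvage your BCS formulation, you would still need an analogue of this ray lemma to locate compatible gluing sites; as written, your argument assumes the conclusion of that lemma rather than proving it.
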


We'll demonstrate the above result by showing that the infinite CSI $X_1\natural X_2 \natural ...$ of certain types of splitters $X_i\approx \mathbb{R}^n \cup_{\mathbb{R}^n} \mathbb{R}^n$ $(n\geq4)$ also splits. Our argument consists of choosing our ray, regular neighborhood pairs in the definition of the CSI to lie in the intersections (the $C_i$'s) of the splittings $A_i \cup_{C_i} B_i\approx \mathbb{R}^n \cup_{\mathbb{R}^n} \mathbb{R}^n.$ This will yield the CSI to be of the form 
\begin{equation}
(A_1\natural A_2\natural...)\cup_{C_1\natural C_2\natural...} (B_1\natural B_2\natural ...)
\label{CSI splitting}
\end{equation}
which is itself an open splitting. We apply this result to our infinite sums of Jester's manifolds, an uncountable collection. The work comes in showing the existence of the desired ray, regular neighborhood pair mentioned above. We desire, for all $i$, that our ray not only lies in $C_i$ but also that it is proper in both $A_i$ and $B_i$ thus ensuring we obtain a splitting of the form (\ref{CSI splitting}).

\begin{prop}
If $\Sigma$ is a smooth properly embedded line in $\mathbb{R}^n$ and $M^{n-1}$ is a closed smooth submanifold of $\mathbb{R}^n$ intersecting $\Sigma$ transversely then $\left|\Sigma \cap M^{n-1} \right|$ is even. 
\end{prop}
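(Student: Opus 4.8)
\emph{The plan} is to realize $M^{n-1}$ as the boundary of a compact region in $\mathbb{R}^n$ and then appeal to the classification of compact $1$-manifolds. Writing $\gamma\colon\mathbb{R}\to\mathbb{R}^n$ for the proper embedding with image $\Sigma$, I would first record that the count is well defined: properness makes the preimage of the compact set $M$ compact, and transversality (with $\dim\Sigma+\dim M=n$) forces $\Sigma\cap M$ to be a $0$-dimensional submanifold, so being a compact $0$-manifold it is finite and $\left|\Sigma\cap M\right|<\infty$.

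The key step is to produce a compact codimension-$0$ submanifold-with-boundary $\Omega\subset\mathbb{R}^n$ with $\partial\Omega=M$. Since $M$ is a smooth closed hypersurface in the orientable, simply connected $\mathbb{R}^n$, it is itself orientable and, by Alexander duality, null-homologous mod $2$; the Jordan--Brouwer separation theorem then splits $\mathbb{R}^n\setminus M$ into complementary regions, exactly one of which, $U_\infty$, is unbounded. Assigning to each region the parity of the number of sheets of $M$ separating it from $U_\infty$ gives a locally constant $\mathbb{Z}/2$ function on $\mathbb{R}^n\setminus M$ that jumps across every point of $M$; letting $\Omega$ be the closure of the union of the ``odd'' regions produces the desired compact $\Omega$ with $\partial\Omega=M$. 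Note that taking merely the union of the bounded complementary regions would fail for nested hypersurfaces such as two concentric spheres, which is why the parity bookkeeping is needed.

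Finally I would run the standard boundary-counting argument. Because $\gamma$ is transverse to $\partial\Omega=M$, the preimage $\gamma^{-1}(\Omega)$ is a smooth $1$-manifold-with-boundary whose boundary is $\gamma^{-1}(\partial\Omega)=\Sigma\cap M$; properness of $\gamma$ together with compactness of $\Omega$ makes $\gamma^{-1}(\Omega)$ compact. A compact $1$-manifold-with-boundary is a finite disjoint union of circles and arcs, and hence has an even number of boundary points. Therefore $\left|\Sigma\cap M\right|$ is even.

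The main obstacle is the middle step, showing that $M$ bounds a compact region: when $M$ is disconnected or nested the construction of $\Omega$ genuinely requires the separation/parity argument rather than a naive choice. An alternative that sidesteps building $\Omega$ is to compactify to $S^n=\mathbb{R}^n\cup\{\infty\}$ and close $\Sigma$ into a loop $\widehat{\Sigma}=\Sigma\cup\{\infty\}$ (legitimate since both ends of the proper line escape to infinity); as $H_1(S^n;\mathbb{Z}/2)=0$ for $n\ge 2$, the class $[\widehat{\Sigma}]$ vanishes, so its mod-$2$ intersection number with $M$ is $0$, and that number equals $\left|\Sigma\cap M\right|\bmod 2$ because $\infty\notin M$.
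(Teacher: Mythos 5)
Your proposal is correct, and at its core it rests on the same fact the paper uses: the Jordan--Brouwer separation theorem. The paper's own proof is much terser --- it simply says that $M$ has an inside and an outside and that at each transverse intersection $\Sigma$ passes from one to the other, leaving implicit that both ends of the proper line eventually lie in the unbounded region, so the number of side-switches is even. Your version formalizes exactly this: the compact region $\Omega$ with $\partial\Omega=M$ plays the role of ``the inside,'' and the parity of $\left|\Sigma\cap M\right|$ is extracted from the classification of compact $1$-manifolds applied to $\gamma^{-1}(\Omega)$ rather than from an informal crossing count. What your write-up buys that the paper's does not is a correct treatment of the case where $M$ is disconnected or nested: the phrase ``an inside and an outside'' is literally accurate only for connected $M$, whereas your mod-$2$ parity bookkeeping on the components of $\mathbb{R}^n\setminus M$ (or, equivalently, the fact that every closed hypersurface of $\mathbb{R}^n$ bounds mod $2$) handles the general closed submanifold the proposition actually asserts. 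Your alternative via the one-point compactification and $H_1(S^n;\mathbb{Z}/2)=0$ is also sound, provided you smooth or otherwise tame the closed-up curve at $\infty$ before invoking transverse intersection theory; it is a genuinely different, purely homological route that avoids constructing $\Omega$ altogether.
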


\begin{figure}[!ht]
\centering
\vspace{-5.5in}
\includegraphics[height=7in]{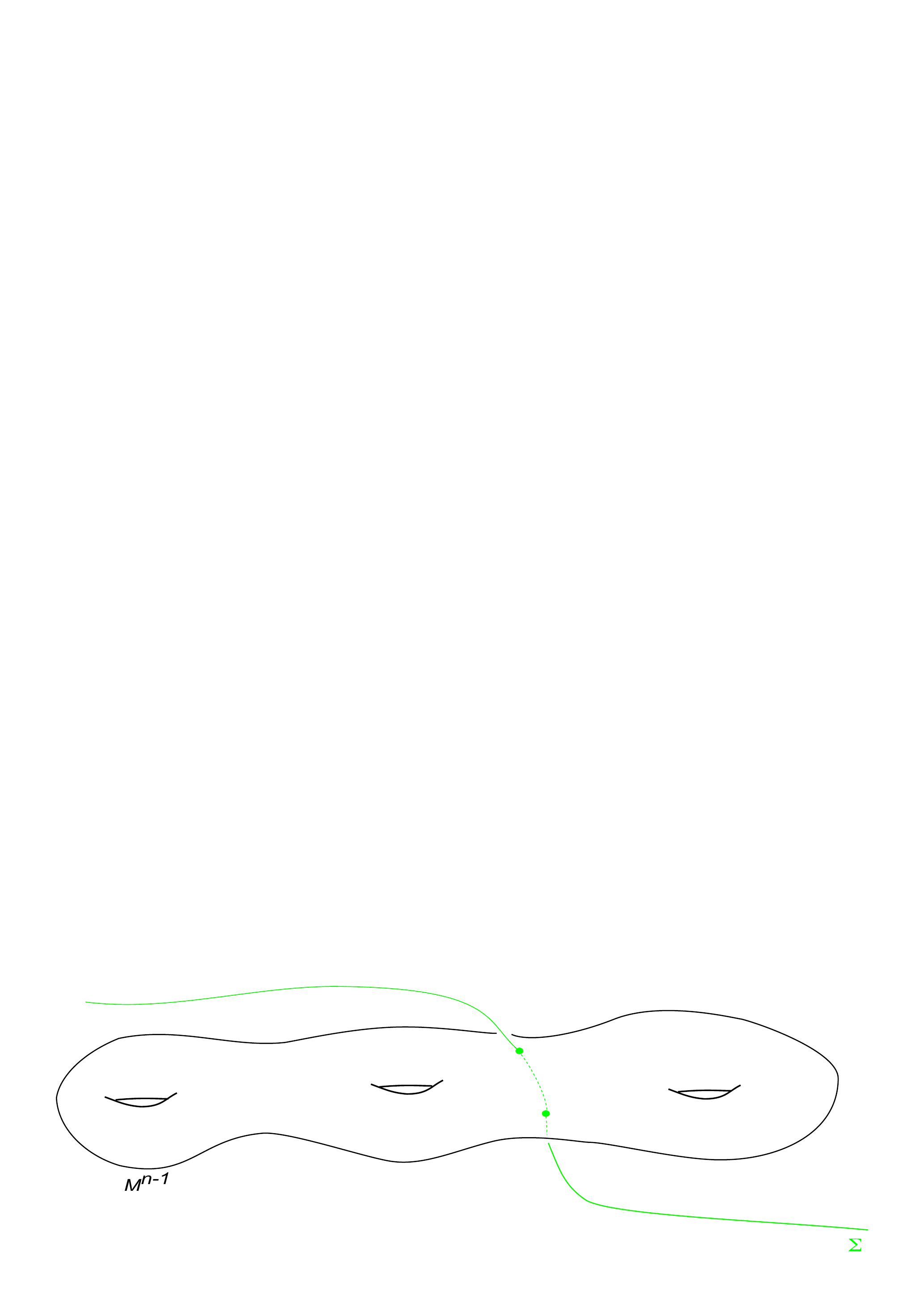}           
\caption{$\left|\Sigma \cap M^{n-1} \right|$ is even}
\end{figure}

\begin{proof}
As $M^{n-1}$ is a codimension $1$, closed submanifold of Euclidean space, the Jordan-Brouwer separation theorem gives it has an inside and an outside \cite{Ale}. Since at each intersection point of $\Sigma$ with $M,$ $\Sigma$ meets $M$ transversely,  $\Sigma$ passes from $M$'s inside to $M$'s outside or vice versa. 
\end{proof}

\begin{lemma} Suppose $M$ is a contractible $n$-manifold which splits as $M=A \cup_{C} B$, $A,B,C \approx \mathbb{R}^n.$ Then there exists a ray $r$ in $C$ which is also proper in both $A$ and $B.$
\end{lemma}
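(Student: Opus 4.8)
Fix PL homeomorphisms $g_A\colon A\to\mathbb{R}^n$ and $g_B\colon B\to\mathbb{R}^n$ and set $\rho_A=\lvert g_A\rvert$ on $A$ and $\rho_B=\lvert g_B\rvert$ on $B$. By invariance of domain $A,B,C$ are open in $M$, so $C$ is an open $1$-ended submanifold. For $R>0$ put $L_R=g_A^{-1}(\overline{\mathbb{B}}^n_R)\cup g_B^{-1}(\overline{\mathbb{B}}^n_R)$; each $L_R$ is compact and, since $\bigcup_R L_R=A\cup B=M$, the family $\{L_R\}$ is a cofinal exhaustion of $M$. The first thing I would record is the identity, valid for $x\in C$, that $\rho_A(x)>R$ and $\rho_B(x)>R$ exactly when $x\notin L_R$; that is, $\{x\in C:\rho_A(x)>R,\ \rho_B(x)>R\}=C\setminus L_R$. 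Consequently a ray $r\subset C$ is proper in $A$ and in $B$ (i.e.\ $\rho_A(r(t))\to\infty$ and $\rho_B(r(t))\to\infty$) if and only if $r$ eventually leaves every $L_R$, i.e.\ if and only if $r$ is proper \emph{as a map into $M$}. Equivalently, since any $M$-limit point of a $C$-proper ray must lie in $\text{Fr}_M C\subset(A\setminus C)\cup(B\setminus C)$, the goal is precisely to produce a ray in $C$ with no limit points in $M$.

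\textbf{Reduction to a connectivity statement about the end of $C$.} So the plan is to build a proper-into-$M$ ray whose image lies in $C$. I will assume the splitting is nondegenerate (neither $A\subset B$ nor $B\subset A$), so that $\overline{C}^{\,M}$ is noncompact; this holds in all cases to which the Lemma is applied, in particular for $C=\intr\widehat{C}$ coming from a closed-ball splitting $\widehat{A}\cup_{\widehat{C}}\widehat{B}$. Choosing a ball exhaustion $D_k=g_C^{-1}(\overline{\mathbb{B}}^n_k)$ of $C$, each neighborhood of infinity $C\setminus D_k$ of the single end of $C$ is connected (as $n\ge 2$) and unbounded in $M$ (else $\overline{C}$ would be compact). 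I would then construct $r$ as a concatenation $\gamma_1\ast\gamma_2\ast\cdots$ of PL arcs, where $x_k\in C\setminus L_k$ are chosen to run off to infinity in $M$ and $\gamma_k$ joins $x_k$ to $x_{k+1}$ while remaining in $C\setminus L_{k-1}$. Such a path simultaneously exhausts the end of $C$ and escapes every $L_k$, hence is proper into $M$; a final general-position adjustment (available since $n\ge 4>2\cdot 1$) upgrades it to a properly PL-embedded ray.

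\textbf{The main obstacle.} The crux is the existence of the connecting arcs $\gamma_k$: I must know that the ``deep'' region $C\setminus L_R$ — the points of $C$ that are far out in \emph{both} $A$ and $B$ — contains the chosen $x_k,x_{k+1}$ in a single component that stays unbounded in $M$. Phrased geometrically, I must rule out the wall-hugging behavior in which a path escaping one coordinate (say $\rho_A\to\infty$) drifts into the frontier $\text{Fr}_A C$ and thereby fails to escape $\rho_B$; this is exactly the way a $C$-proper ray can fail to be $M$-proper. Here the hypotheses should be brought to bear: viewing $A\cong\mathbb{R}^n$, I would control how an arc meets the closed coordinate hypersurfaces $\{\rho_A=R\}$ and $\{\rho_B=R\}$ using the preceding Proposition on even intersection numbers (a properly embedded line in $\mathbb{R}^n$ meets such a closed hypersurface an even number of times), together with $n\ge4$ general position, to arrange that the arcs cross outward and keep $\rho_A$ and $\rho_B$ increasing. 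I expect this non-trapping/connectivity step — certifying that the lone end of $C$ genuinely runs out to the end of $M$ rather than accumulating on $\text{Fr}_M C$ — to be the real difficulty, while the reformulation, the concatenation, and the general-position embedding are routine.
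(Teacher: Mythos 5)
Your opening reformulation is correct and worth keeping: since $L_R=g_A^{-1}(\overline{\mathbb{B}}^n_R)\cup g_B^{-1}(\overline{\mathbb{B}}^n_R)$ exhausts $M=A\cup B$ by compacta, a ray in $C$ is proper in both $A$ and $B$ if and only if it is proper as a map into $M$, i.e.\ if and only if it does not accumulate on $\Bd_M C$. But the proof stops exactly where the lemma actually lives. The step you label ``the main obstacle'' --- certifying that the deep region $\{x\in C:\rho_A(x)>R,\ \rho_B(x)>R\}$ contains a coherent unbounded piece through which the arcs $\gamma_k$ can be threaded --- is the entire content of the statement, and you offer only a hope that the even-intersection proposition plus general position will ``arrange that the arcs cross outward.'' There is no argument there. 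Worse, your concatenation scheme has a structural problem: $C\cap L_R$ is the intersection of a compactum with an open set and need not be compact in $C$, so $\{C\cap L_R\}$ is not an exhaustion of $C$ by compacta; the standard end-theoretic device (finitely many unbounded complementary components, choose a nested sequence) does not apply, and nothing you have written rules out the wall-hugging behavior in which every escape route in $C$ drifts into $\Bd_M C$.

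For contrast, here is how the paper closes that gap, by a mechanism different from the one you gesture at. Writing $S=A\cap\Bd_M C$ and $T=B\cap\Bd_M C$, one builds a smooth map $h:\cl_M C\to[-1,1]$ with $h^{-1}(-1)=S$ and $h^{-1}(1)=T$, constant on the slices of a tapered product neighborhood of an arc $\alpha$ running from $S$ to $T$. For a regular value $v\in\intr\alpha$, the component $V$ of $h^{-1}(v)$ through $v$ is a closed $(n-1)$-submanifold of $C$ meeting the properly embedded line $\intr\alpha$ transversely in exactly one point; the even-intersection proposition then forces $V$ to be \emph{noncompact} (this is where that proposition is used --- not to count crossings of coordinate spheres $\{\rho_A=R\}$, whose intersections with $C$ are not even closed in $C$). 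Since $V$ lies in $h^{-1}([-1+\epsilon_1,1-\epsilon_2])$, a set closed in $M$ and disjoint from $S\sqcup T$, a $C$-proper ray in $V$ cannot accumulate on $\Bd_M C$ and is therefore automatically proper in $A$ and in $B$. In short: the paper manufactures a properly embedded, noncompact hypersurface that is quantitatively separated from the frontier of $C$ and runs the ray inside it, whereas your proposal assumes the non-trapping property it is supposed to prove. To salvage your write-up you would need to supply this (or an equivalent) construction; as it stands the proof is incomplete.
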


\begin{proof}
We will describe a proof that uses differential topology. Analogous proofs are possible in the PL or topological categories. Let $S=A \cap \Bd _{M^n} (C)$ and $T= B \cap \Bd _{M^n} (C)$ so that $\Bd_{M^n}(C)=S \sqcup T.$
Let $\overline{C}=\cl _{M^n}(C)$ so $\overline{C}=C\cup S \cup T.$ Note $S$ and $T$ are closed in $\overline{C}.$ Let $\alpha=[-1,1]$ be an arc in $\overline{C}$ so that $\alpha \cap S = \{-1\}$ and $\alpha \cap T = \{1\}.$ Choose $N \approx \intr \alpha \times \mathbb{B}^{n-1}$ a tapered product neighborhood of $\intr \alpha$ in $C.$ That is, $\Bd_{M^n} (N)-N= \partial \alpha.$

\begin{figure}[!ht]
\centering
\includegraphics[height=2in]{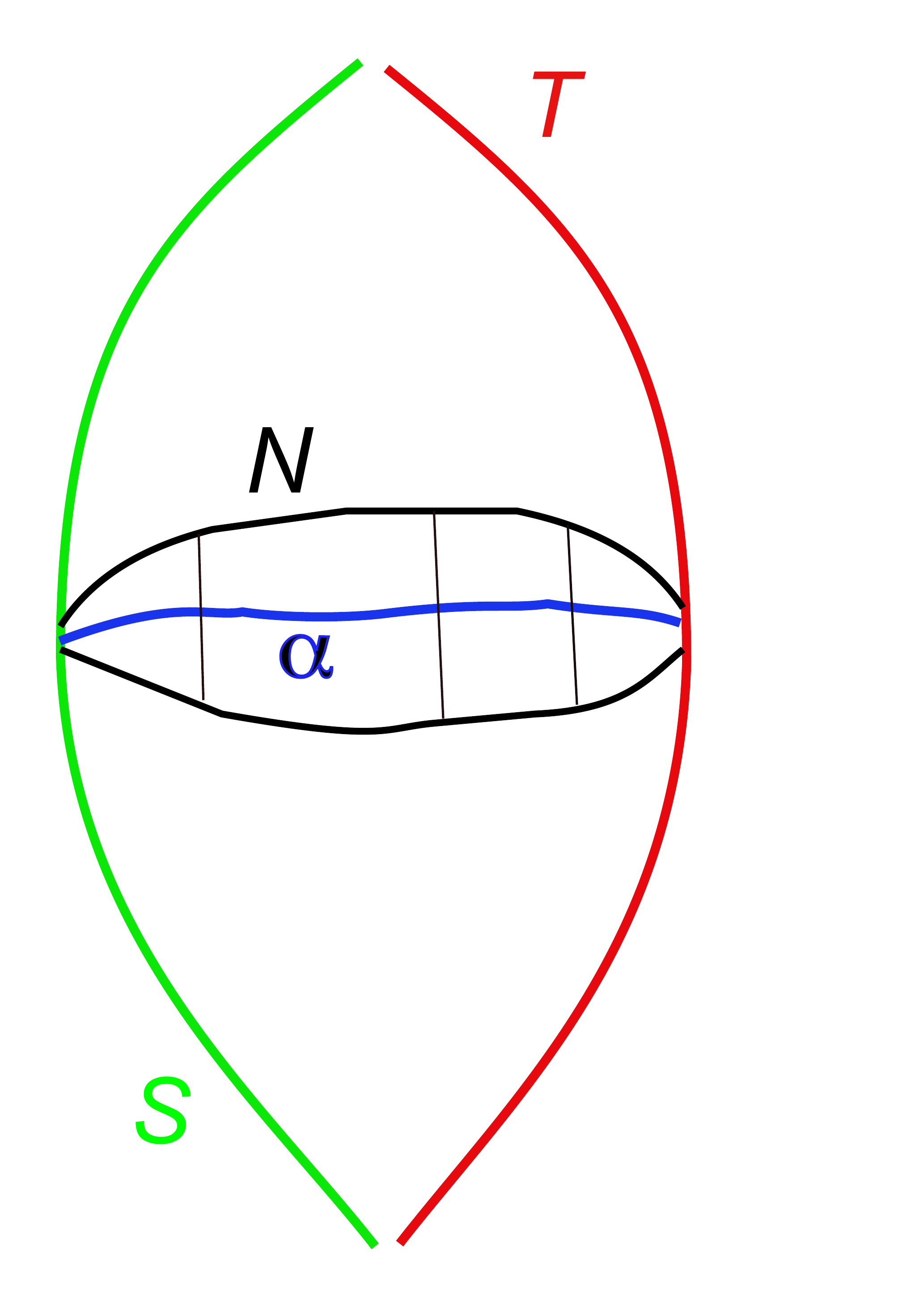}

\caption{Tapered Product Neighborhood of $\alpha$}
\end{figure}

Let $f:S \cup N \cup T \rightarrow \alpha$ be a retraction so that $f^{-1}(-1)=S$, $f^{-1}(1)=T$ and for $x \in \intr \alpha,$ $f({x} \times \mathbb{B}^{n-1})=\{x\}.$ That is, $f$ collapses $N$ along product lines. Note that for $x \in \intr \alpha,$ $f^{-1}(x)$ intersects $\alpha$ transversely precisely at $x.$ We then apply the Tietze extension theorem to get a retraction $f:\overline{C} \rightarrow \alpha.$ We choose such an $f$ that is smooth. We will now adjust $f$ with the aim that $C$ maps to $\intr \alpha.$ Let $W=f^{-1}([-1,0]) \cup N \cup T$ and $b \in C-W.$ Via Urysohn's Lemma choose $\eta:\overline{C} \rightarrow [0,1]$ such that $\eta^{-1}(0) =W$ and $\eta^{-1}(1)=\{b\}.$ Let $g=f-\eta$ so $g|_W=f_W.$ If $x \notin W$ then $\eta(x)>0$ and $g(x)=f(x)-\eta(x)<f(x)-0\leq 1.$ Thus $g^{-1}(1)=T.$ Similarly we can adjust $g$ to get, say $h$, so $h^{-1}(1)=T,$ $h^{-1}(-1)=S,$ and $h|_{S\cup N \cup T}=f|_{S\cup N \cup T}.$

Via Sard's Theorem we can choose a regular value $v$ of $h$ in $\intr \alpha$ and let $V$ be the component of $h^{-1}(v)$ containing $v$ \cite[p.~227]{Kos}. We observe that $V$ is a smooth $(n-1)$-submanifold of $C$ without boundary which is closed 
in $C$ and intersects $\alpha$ (transversely) precisely at $v$. 
If $V$ were compact, the previous proposition would yield that the number of intersections of the $C$ properly embedded line $\intr \alpha$ with $V$ would be even. Thus $V$ is noncompact and hence is $C$ unbounded. 
We claim $V$ is embedded properly in $C.$ For suppose $K$ is a compactum in $C$ and let $\iota:V \hookrightarrow C$ be the inclusion map. Then $V\cap K=\iota^{-1}(K)$ is a closed subset of $K$ and is hence compact thus showing $\iota$ is proper. There then exists a ray $r$ in $V$ which is proper in $C.$ 

\begin{figure}[!ht]
\centering
\includegraphics[height=2in]{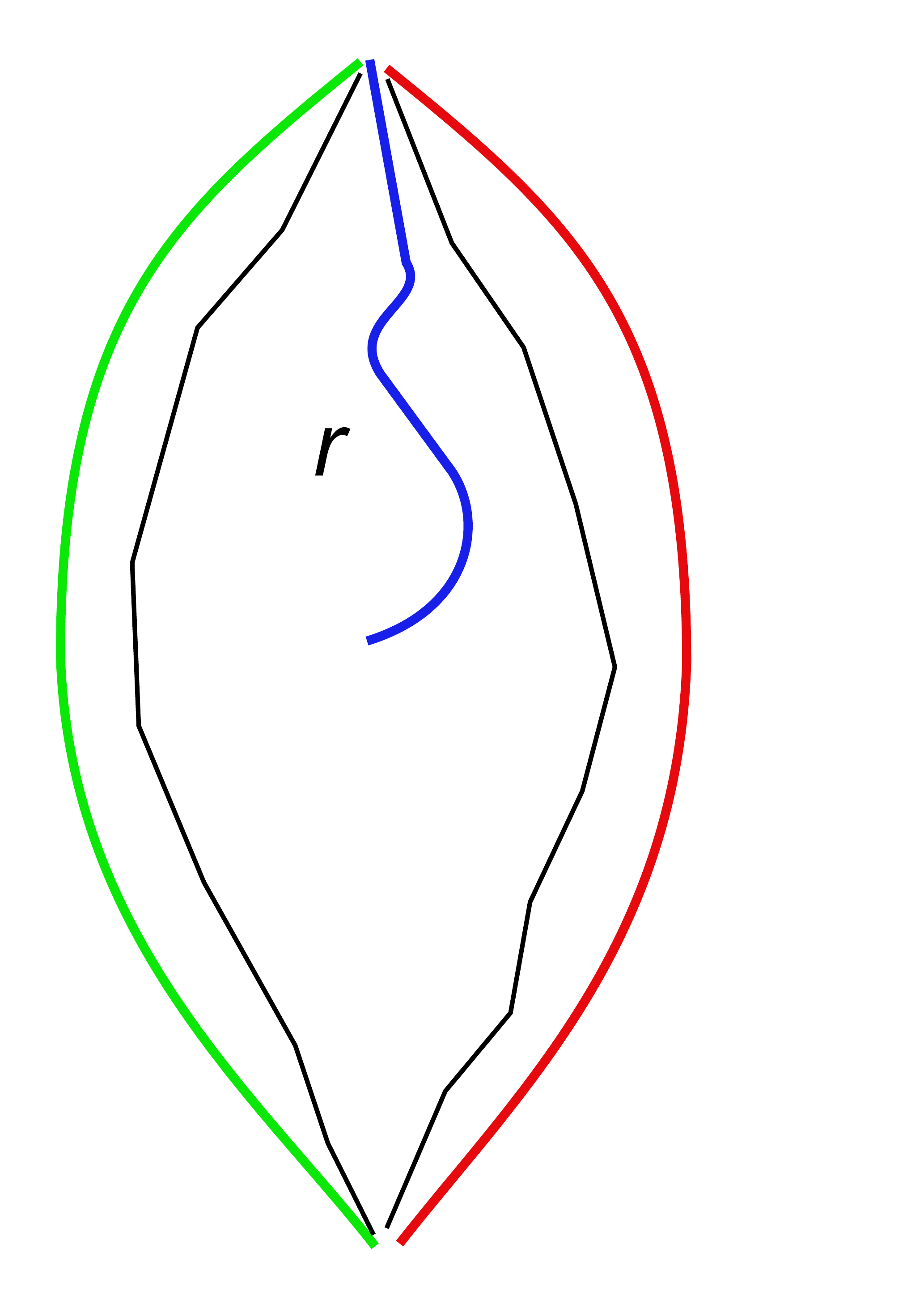}                
\caption{$N' \supset S\sqcup T$}
\end{figure}

We now show $r$ is proper in both $A$ and $B.$ Let $K$ be a compact subset of $A$. We claim the end of $r$ lies outside of $K.$ Again by Sard, there exists $\epsilon_1$ and $\epsilon_2$ sufficiently small so that $-1+\epsilon_1<v<1-\epsilon_2$ are regular values of $h.$ Let $T'=h^{-1}([-1+\epsilon_1,1-\epsilon_2]),$ a closed subset of $C.$ Then $K'=K\cap T'$ is a compact subset of $C.$ Therefore, $r$ eventually stays outside of $K'$. But since $r$ lives in $T',$ when it leaves $K'$ it also leaves $K.$
Thus $r$ is proper in $A$ and a similar argument can be made to show $r$ is proper in $B.$
\end{proof}

Recall Proposition \ref{ints of splitters split} which says that the interior of a closed splitter is an open splitter.

\begin{cor} Suppose $M^n$ is a compact contractible $n$-manifold such that \\
${M=A \cup_C B,}$ with $A,B,C \approx \mb{B}^n.$ Then there exists a ray $r$ in \emph{int}$C$ which is also proper in both \emph{int}$A$ and \emph{int}$B.$
\end{cor}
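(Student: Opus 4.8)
The plan is to reduce the corollary directly to the preceding lemma by passing to interiors. The lemma already produces, for a manifold $M^n$ that splits as $A \cup_C B$ with $A,B,C \approx \mathbb{R}^n$, a ray $r$ in $C$ that is proper in both $A$ and $B$. So the entire task is to manufacture such an $\mathbb{R}^n$-splitting out of the given $\mathbb{B}^n$-splitting, apply the lemma, and then check that the resulting ray sits inside the \emph{interiors} as required.

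First I would invoke Proposition \ref{ints of splitters split}, which is exactly the statement recalled immediately before the corollary: if $M = A \cup_C B$ with $A,B,C \approx \mathbb{B}^n$, then $\intr M = \intr A \cup_{\intr C} \intr B$ with $\intr A, \intr B, \intr C \approx \mathbb{R}^n$. Thus $\intr M$ is a contractible open $n$-manifold that splits into open balls, with splitting pieces $\intr A$, $\intr B$, and intersection $\intr C$. This puts us precisely in the hypothesis of the lemma, applied to the open manifold $\intr M$ with its splitting $\intr A \cup_{\intr C} \intr B$.

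Next I would apply the lemma to $\intr M$: it yields a ray $r \subset \intr C$ that is proper in both $\intr A$ and $\intr B$. Since $\intr C$ is the intersection piece of the open splitting of $\intr M$, this $r$ lies in $\intr C$, is proper in $\intr A$, and is proper in $\intr B$ — which is exactly the conclusion sought. The only point deserving a sentence of care is that $M$ is assumed compact and contractible, so $\intr M$ is a contractible open manifold, matching the lemma's standing hypothesis that $M$ be a contractible manifold splitting into open balls; contractibility of $M$ transfers to $\intr M$ since $\intr M$ is a deformation retract of $M$ (or simply because $M \approx \intr M$ when the pieces are balls, and in any case the lemma's proof only uses the splitting structure).

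\begin{proof}
By Proposition \ref{ints of splitters split}, the hypothesis $M = A \cup_C B$ with $A,B,C \approx \mathbb{B}^n$ gives
\[\intr M = \intr A \cup_{\intr C} \intr B, \qquad \intr A, \intr B, \intr C \approx \mathbb{R}^n.\]
Since $M$ is a compact contractible manifold, $\intr M$ is a contractible open $n$-manifold, and the displayed equation exhibits it as a splitting into open balls with intersection piece $\intr C$. Applying the preceding Lemma to the contractible manifold $\intr M$ with the splitting $\intr A \cup_{\intr C} \intr B$ produces a ray $r$ in $\intr C$ which is proper in both $\intr A$ and $\intr B$. This is precisely the desired conclusion.
\end{proof}

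The step I expect to be essentially trivial is the whole argument: once Proposition \ref{ints of splitters split} is in hand, the corollary is a one-line specialization of the lemma. No genuine obstacle arises, since all the real work — locating a ray that is simultaneously proper in the two open splitting pieces — was already carried out in the lemma via the Sard/Tietze/Urysohn construction and the even-intersection proposition.
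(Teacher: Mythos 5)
Your proposal is correct and matches the paper's intended argument exactly: the paper recalls Proposition \ref{ints of splitters split} immediately before the corollary precisely so that the corollary follows by applying that proposition to obtain the open splitting $\intr M = \intr A \cup_{\intr C} \intr B$ and then invoking the preceding lemma. The one point you flag for care (contractibility of $\intr M$) is handled correctly via the collar/deformation-retract observation.
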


\begin{prop} Let $M_1$ and $M_2$ be contractible, piecewise linear, open  \\ $n$-manifolds $(n\geq4)$ which split as $M_i=A_i \cup_{C_i} B_i$, with $A_i,B_i,C_i \approx \mathbb{R}^n.$ Further let $r_i\subset C_i$ be a ray in $C_i$ which is also proper in both $A_i$ and $B_i.$  Then the connected sum at infinity of $(M_1,r_1)$ and $(M_2,r_2)$ also splits: $(M_1,r_1)\natural (M_2,r_2) =A \cup_{C} B$ with $A,B,C \approx \mathbb{R}^n$.
\label{CSI of splitters}
\end{prop}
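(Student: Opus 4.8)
The plan is to choose all of the ray--neighborhood data defining the connected sum at infinity so that it lies inside the two overlaps $C_1$ and $C_2$; then the three pieces of the induced decomposition of $(M_1,r_1)\natural(M_2,r_2)$ are themselves connected sums at infinity of copies of $\mathbb{R}^n$, and hence are again copies of $\mathbb{R}^n$.

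First I would record two preliminary facts. Since each $A_i$ and $B_i$ is an open $n$-submanifold of $M_i$ homeomorphic to $\mathbb{R}^n$, invariance of domain shows $A_i$ and $B_i$ are open in $M_i$; consequently $C_i=A_i\cap B_i$ is open in $M_i$, and in particular open in each of $A_i$ and $B_i$. Next, although $r_i$ is assumed proper only in $A_i$ and $B_i$, it is in fact proper in $M_i=A_i\cup B_i$: if some sequence $t_k\to\infty$ had $r_i(t_k)$ converging to a point $p\in M_i$, then $p$ lies in one of the open sets $A_i,B_i$, and a compact neighborhood of $p$ inside that set would contradict properness there. Hence $r_i$ is a genuine ray in $M_i$, and by Proposition \ref{nhd of proper ray} it has a regular neighborhood in $M_i$ homeomorphic to $(\mathbb{R}^n_+,\mathbb{R}^{n-1})$.

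The crucial step is the choice of neighborhoods. Because $r_i\subset C_i$ with $C_i$ open, I would choose the regular neighborhood $N_i$ of $r_i$ in $M_i$ small enough that $N_i\subset C_i$. Since a regular neighborhood of a polyhedron in a PL manifold that is contained in an open submanifold is a regular neighborhood in that submanifold, this single $N_i$ serves simultaneously as a regular neighborhood of $r_i$ in $M_i$, $A_i$, $B_i$, and $C_i$. I would then form the connected sum at infinity with these $N_i$ and an orientation-reversing gluing $f\colon\partial N_1\to\partial N_2$, and set
\[A=(A_1-\intr N_1)\cup_f(A_2-\intr N_2),\]
\[B=(B_1-\intr N_1)\cup_f(B_2-\intr N_2),\]
\[C=(C_1-\intr N_1)\cup_f(C_2-\intr N_2).\]
Because the entire gluing locus $\partial N_i$ sits inside $C_i\subseteq A_i\cap B_i$, the identities $A\cup B=(M_1,r_1)\natural(M_2,r_2)$ and $A\cap B=C$ follow from $A_i\cup B_i=M_i$ and $A_i\cap B_i=C_i$; moreover near $\partial N_i$ all four manifolds (and the four induced orientations) agree, so $f$ remains orientation-reversing for each sub-sum. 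By construction $A$ is exactly the connected sum at infinity $(A_1,r_1)\natural(A_2,r_2)$, and likewise for $B$ and $C$ --- and here is where properness of $r_i$ in $A_i$ and $B_i$ is essential, since it makes $r_i$ a legitimate ray in each of those manifolds.

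It remains to identify the three pieces. Since $A_i,B_i,C_i\approx\mathbb{R}^n$ are $1$-ended, contractible, and semistable, the corollary to Lemma \ref{indep of rays} lets me replace $r_i$ by a standard ray, and Note \ref{csi of nspaces} then gives $A\approx\mathbb{R}^n\natural\mathbb{R}^n\approx\mathbb{R}^n$, with $B$ and $C$ handled identically. Thus $A,B,C\approx\mathbb{R}^n$, $A\cup B=(M_1,r_1)\natural(M_2,r_2)$, and $A\cap B=C$, which is the asserted open splitting. I expect the only real obstacle to be the claim in the crucial step that one regular neighborhood works for all four manifolds at once with matching induced orientations; once that is in place, the rest rests on the already-established facts that connected sums at infinity of Euclidean spaces are Euclidean (Note \ref{csi of nspaces}) and are independent of the chosen rays.
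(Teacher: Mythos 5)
Your proposal is correct and follows essentially the same route as the paper: choose a single regular neighborhood $N_i$ of $r_i$ that works simultaneously for $A_i$, $B_i$, $C_i$, and $M_i$, glue the complements to exhibit $A$, $B$, and $C$ as connected sums at infinity of pairs of $\mathbb{R}^n$'s, and conclude via Note \ref{csi of nspaces}. Your added verifications (openness of $C_i$, properness of $r_i$ in $M_i$, and the compatibility of the one neighborhood with all four ambient manifolds) are details the paper leaves implicit, but the argument is the same.
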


An immediate corollary is:

\begin{cor} Let $M_1$ and $M_2$ be contractible, piecewise linear, semistable, open $n$-manifolds $(n\geq4)$ which split as $M_i=A_i \cup_{C_i} B_i$, $A_i,B_i,C_i \approx \mathbb{R}^n.$  Then the connected sum at infinity of $M_1$ and $M_2$ also splits: $M_1\natural M_2 =A \cup_{C} B$ with $A,B,C \approx \mathbb{R}^n$.
\end{cor}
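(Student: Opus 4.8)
The plan is to choose the ray neighborhoods used in forming the connected sum at infinity so that they lie entirely inside the overlaps $C_i$ and are bounded away from the frontiers of the $C_i$; a single pair of such neighborhoods then serves $A_i$, $B_i$, and $C_i$ simultaneously, and the whole decomposition $M_i=A_i\cup_{C_i}B_i$ survives the surgery. Following the notation of the preceding lemma, write $\Bd_{M_i}C_i=S_i\sqcup T_i$ with $S_i\subset A_i$ and $T_i\subset B_i$ (these are disjoint because $A_i\cap B_i=C_i$ is disjoint from its own frontier). I would produce regular neighborhoods $N_i$ of $r_i$ with $\cl_{M_i}N_i\subset C_i$, glue $M_1-\intr N_1$ to $M_2-\intr N_2$ by a single orientation-reversing homeomorphism $f:\partial N_1\to\partial N_2$, and then set
\[A=(A_1-\intr N_1)\cup_f(A_2-\intr N_2),\quad B=(B_1-\intr N_1)\cup_f(B_2-\intr N_2),\quad C=(C_1-\intr N_1)\cup_f(C_2-\intr N_2).\]

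The step I expect to require the most care — and the crux of the argument — is producing such $N_i$, i.e. showing that although $r_i$ lives only in the overlap $C_i$, it can be thickened to a neighborhood whose closure stays off $S_i\sqcup T_i$. The leverage is exactly the hypothesis that $r_i$ be proper in both $A_i$ and $B_i$. First I would observe that this forces $r_i$ to be proper in all of $M_i$: any limit point of $r_i$ at infinity would lie in $A_i\cup B_i=M_i$, hence in the open set $A_i$ or the open set $B_i$, contradicting properness there. The same reasoning shows $r_i$ cannot accumulate on $\Bd_{M_i}C_i$, since each such frontier point lies in $A_i$ or in $B_i$. Consequently $r_i([0,\infty))$ is a closed subpolyhedron of $M_i$ contained in the open set $C_i$ and disjoint from $\Bd_{M_i}C_i$, so a sufficiently thin regular neighborhood $N_i$ may be taken with $\cl_{M_i}N_i\subset C_i$. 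Being a regular neighborhood of a proper ray in an open $n$-manifold $(n\geq 4)$, Proposition \ref{nhd of proper ray} gives $(N_i,\partial N_i)\approx(\mathbb{R}^n_+,\mathbb{R}^{n-1})$; and since $N_i\subset C_i\subset A_i,B_i$, this one $N_i$ is at once a half-space regular neighborhood of $r_i$ in $A_i$, in $B_i$, and in $C_i$, and the single map $f$ serves as an orientation-reversing gluing in all three cases (the induced orientations on $\partial N_i$ restrict compatibly).

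With this choice the remaining verification is formal. Because $N_i\subset C_i$, we have $(A_i-\intr N_i)\cup(B_i-\intr N_i)=M_i-\intr N_i$ and $(A_i-\intr N_i)\cap(B_i-\intr N_i)=C_i-\intr N_i$, and since the gluing is carried out along $\partial N_i\subset C_i$ by $f$, it follows that $A\cup B=(M_1,r_1)\natural(M_2,r_2)$ while $A\cap B=C$. Finally, each of $A$, $B$, and $C$ is by construction the connected sum at infinity of two copies of $\mathbb{R}^n$ along proper rays: identifying $(A_i,r_i)$, $(B_i,r_i)$, and $(C_i,r_i)$ with the standard pair $(\mathbb{R}^n,\{0\}^{n-1}\times[0,\infty))$ by way of Lemma \ref{indep of rays} (each piece is contractible, oriented, PL, open, and semistable) and then invoking Note \ref{csi of nspaces}, I obtain $A,B,C\approx\mathbb{R}^n$. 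Hence $(M_1,r_1)\natural(M_2,r_2)=A\cup_C B$ with $A,B,C\approx\mathbb{R}^n$, which is the desired open splitting.
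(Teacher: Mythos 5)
Your proposal is correct and follows essentially the same route as the paper: it obtains the rays from the preceding lemma, forms regular neighborhoods of them lying inside the $C_i$, and identifies $A$, $B$, and $C$ as connected sums at infinity of two copies of $\mathbb{R}^n$ exactly as in the paper's proof of Proposition \ref{CSI of splitters}, with semistability (via Lemma \ref{indep of rays}) making the unadorned notation $M_1\natural M_2$ legitimate. Your added verification that properness of $r_i$ in $A_i$ and $B_i$ forces properness in $M_i$ and keeps $r_i$ off $\Bd_{M_i}C_i$, so that $N_i$ can be squeezed into $C_i$, merely fills in a detail the paper leaves implicit in the phrase ``a ($A_i$, $B_i$, and $C_i$) regular neighborhood.''
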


\begin{figure}[!ht]
\centering
\vspace{-5.5in}
\includegraphics[height=8in]{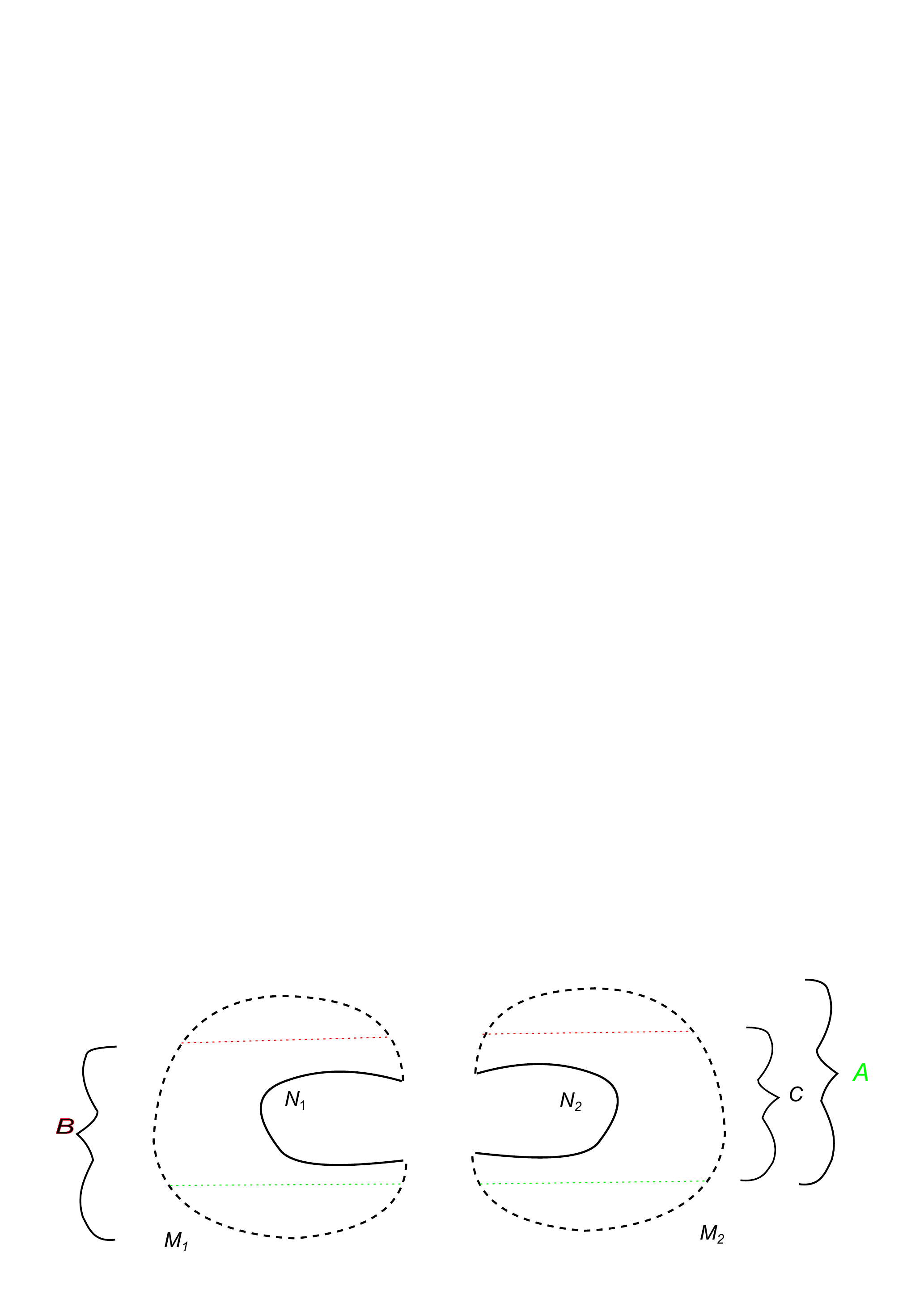}                 
\caption{$M_1 \natural M_2$ Splits}
\label{nat sum splits}
\end{figure}

\begin{proof}[Proof of Proposition \ref{CSI of splitters}] For $i=1,2,$ let $N_i$ be a ($A_i, B_i,$ and $C_i$) regular neighborhood of $r_i$. For $X_i = M_i,A_i,B_i,C_i$, let $\hat{X}_i = X_i-\intr(N_i).$ Given an orientation reversing homeomorphism $f:\partial N_1 \rightarrow \partial N_2$ we have $(M_1,r_1) \natural (M_2,r_2)=\hat{M}_1 \cup_{f} \hat{M}_2.$ Let $A=\hat{A}_1 \cup_{f} \hat{A}_2$ and observe that $A=(A_1,r_1) \natural (A_2,r_2).$ Likewise we let ${B=\hat{B}_1 \cup_f \hat{B}_2=(B_1,r_1) \natural (B_2,r_2)}$ and $C=\hat{C}_1 \cup_f \hat{C}_2=(C_1,r_1) \natural (C_2,r_2)$ and we see that $(M_1,r_1) \natural (M_2,r_2) = A \cup_C B$. From Note \ref{csi of nspaces} we know each of $A,B,$ and $C$ are $\mb{R}^n$'s as they are each the connected sum at infinity of two $\mb{R}^n$'s. See Figure \ref{nat sum splits}.
\end{proof}

\begin{prop} For $i=1,2,...,$ let $M_i$ be a contractible, open \\
$n$-manifold ($n\geq4$) such that $M_i=A_i \cup_{C_i} B_i$ with $A_i,B_i,C_i \approx \mathbb{R}^n$ for all $i.$ Further let $r_{i,L}$ and $r_{i,R}$ be disjoint rays in $C_i$ that are also proper in both $A_i$ and $B_i.$ Then
	\[M:=\natural_{i=1}^\infty (M_i,r_{i,L},r_{i,R}) \approx A\cup_C B \]
with $A,B,C \approx \mb{R}^n.$
\label{infinite csi splits}
\end{prop}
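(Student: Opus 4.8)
The plan is to imitate the proof of the binary case (Proposition \ref{CSI of splitters}) and to reduce the entire statement to one recognition lemma: that an infinite connected sum at infinity of copies of $\mathbb{R}^n$, taken along proper rays, is again $\mathbb{R}^n$. First I would, for each $i$, fix a PL triangulation of $M_i$ in which $A_i$, $B_i$, $C_i$ are subcomplexes and $r_{i,L}$, $r_{i,R}$ are subcomplexes lying in $C_i$. Taking regular neighborhoods $N_{i,L}$, $N_{i,R}$ of the rays with respect to such a triangulation, the regular neighborhood theorem guarantees that each $N_{i,x}$ restricts simultaneously to regular neighborhoods $N_{i,x}\cap A_i$, $N_{i,x}\cap B_i$, $N_{i,x}\cap C_i$ of the same ray inside $A_i$, $B_i$, $C_i$. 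Because the rays are proper in all three manifolds, Proposition \ref{nhd of proper ray} makes each of these pieces a half space, so $N_{i,x}$ itself splits compatibly with the decomposition, $N_{i,x}=(N_{i,x}\cap A_i)\cup_{(N_{i,x}\cap C_i)}(N_{i,x}\cap B_i)$, and likewise for its frontier $\partial N_{i,x}\approx\mathbb{R}^{n-1}$.

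Next I would choose the gluing homeomorphisms $\phi_i:\partial N_{i,R}\to\partial N_{i+1,L}$ appearing in the definition of the infinite CSI so that they carry the $A$-, $B$-, and $C$-pieces of $\partial N_{i,R}$ onto the corresponding pieces of $\partial N_{i+1,L}$; this is possible since both frontiers are split copies of $\mathbb{R}^{n-1}$ whose pieces match up. With this compatibility, reading off the definition of $\natural$ exactly as in Proposition \ref{CSI of splitters} gives
\[ M = A\cup_C B,\qquad A=\natural_i (A_i,r_{i,L},r_{i,R}),\quad B=\natural_i(B_i,\ldots),\quad C=\natural_i(C_i,\ldots),\]
with $C=A\cap B$ because the summand overlaps $A_i\cap B_i=C_i$ are respected by every gluing. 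It therefore remains only to prove the recognition lemma and apply it to each of $A$, $B$, and $C$.

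For the recognition lemma I would argue by exhaustion. Writing $X_i\approx\mathbb{R}^n$ and letting $\check X_i$ denote the truncated summands, the infinite sum is $\bigcup_m P_m$ where $P_m=\check X_1\cup_\phi\cdots\cup_\phi\check X_m$; the only unglued face of $P_m$ is $\partial N_{m,R}\approx\mathbb{R}^{n-1}$, so $P_m\subset\intr P_{m+1}$ and $\bigcup_m P_m$ is the whole CSI. Now $P_m$ is precisely the finite CSI $F_m:=X_1\natural\cdots\natural X_m$ with the closed half space $N_{m,R}$ deleted, and $F_m\approx\mathbb{R}^n$ by iterating the binary Note \ref{csi of nspaces} (associativity being justified by the Corollary to Lemma \ref{indep of rays}). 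Since $N_{m,R}$ is a proper tame half space in the open contractible manifold $F_m$, Proposition \ref{open minus half sp} yields $\intr P_m = F_m-N_{m,R}\approx F_m\approx\mathbb{R}^n$. Thus $M=\bigcup_m\intr P_m$ is an ascending union of nested open sets each homeomorphic to $\mathbb{R}^n$; choosing an exhausting nested sequence of PL $n$-balls $B_1\subset\intr B_2\subset\cdots$ with $\bigcup_m B_m=M$ (possible because any compactum of $M$ lies in some $\intr P_m\approx\mathbb{R}^n$) and invoking Lemma \ref{Ascending union of interiors of PL balls is n-space} gives $M\approx\mathbb{R}^n$.

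The hard part will be the bookkeeping of the first two paragraphs rather than the recognition lemma: one must verify that a \emph{single} regular neighborhood of each ray restricts to a regular neighborhood in all three of $A_i$, $B_i$, $C_i$ at once, and that the gluing homeomorphisms can be chosen to respect the tripartite decomposition uniformly across infinitely many summands, so that the global identification $M=A\cup_C B$ with $C=A\cap B$ is genuine. Once that compatibility is in place, the remaining step — recognizing the infinite CSI of $\mathbb{R}^n$'s as $\mathbb{R}^n$ — is handled cleanly by Proposition \ref{open minus half sp} together with the ascending-union-of-balls Lemma \ref{Ascending union of interiors of PL balls is n-space}, and the conclusion follows.
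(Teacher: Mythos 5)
Your proposal is correct and follows essentially the same route as the paper: take regular neighborhoods of the rays lying in the $C_i$'s that serve simultaneously as regular neighborhoods in $A_i$, $B_i$, and $C_i$, identify $A$, $B$, $C$ as the corresponding CSI's of the $A_i$'s, $B_i$'s, $C_i$'s, and recognize each as an ascending union of copies of $\mathbb{R}^n$. The only cosmetic difference is the last step, where the paper invokes Brown's monotone union theorem directly while you reprove that fact via Proposition \ref{open minus half sp} and Lemma \ref{Ascending union of interiors of PL balls is n-space}.
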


\begin{cor} For $i=1,2,...,$ let $M_i$ be a contractible, semistable, open $n$-manifold ($n\geq4$). If $M_i=A_i \cup_{C_i} B_i$ with $A_i,B_i,C_i \approx \mathbb{R}^n$ for all $i$ then
	\[M:=\natural_{i=1}^\infty M_i \approx A\cup_C B \]
with $A,B,C \approx \mb{R}^n.$
\end{cor}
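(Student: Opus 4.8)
The plan is to imitate the two-summand argument of Proposition \ref{CSI of splitters}, carrying out every deletion and gluing inside the overlaps $C_i$, and then to supply the one genuinely new ingredient: that a countable connected sum at infinity of copies of $\mathbb{R}^n$ is again $\mathbb{R}^n$. Since each $C_i$ is open in $M_i$ (its frontier being $S_i \sqcup T_i$, as in the preceding ray lemma), it is a neighborhood of both $r_{i,L}$ and $r_{i,R}$ in $M_i$, $A_i$ and $B_i$ at once. Using that the rays are disjoint and proper, I would choose disjoint regular neighborhoods $N_{i,L},N_{i,R}\subset C_i$; by the uniqueness theorem for regular neighborhoods these simultaneously serve as regular neighborhoods of the rays in $A_i$, $B_i$, $C_i$ and $M_i$, each a half space with $\partial N_{i,x}\approx\mathbb{R}^{n-1}$ (Proposition \ref{nhd of proper ray}). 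For $X_i\in\{M_i,A_i,B_i,C_i\}$ I set $\check X_1=X_1-\intr N_{1,R}$ and $\check X_i=X_i-\intr(N_{i,L}\cup N_{i,R})$ for $i\ge 2$, and form the infinite CSI with gluings $\phi_i\colon\partial N_{i,R}\to\partial N_{i+1,L}$. Because every $N_{i,x}$ lies in $C_i$, the same maps $\phi_i$ assemble $M=\natural_i M_i$, $A=\natural_i A_i$, $B=\natural_i B_i$ and $C=\natural_i C_i$ compatibly.

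Next, exactly as in the finite case, I would verify that this is a splitting. Level by level $\check A_i\cup\check B_i=\check M_i$ and $\check A_i\cap\check B_i=\check C_i$, obtained by deleting the same $\intr N_{i,x}$ from $A_i\cup B_i=M_i$ and $A_i\cap B_i=C_i$; since each $\phi_i$ identifies part of $\Bd\check C_i$ with part of $\Bd\check C_{i+1}$, passing to the quotient gives $A\cup B=M$ and $A\cap B=C$, i.e. $M=A\cup_C B$. The rays are also proper in $C_i$ (compacta of $C_i$ are compacta of $A_i$), and each of $A_i,B_i,C_i\approx\mathbb{R}^n$ is simply connected, $1$-ended and semistable, so all of these sums are defined and, by the corollary to Lemma \ref{indep of rays}, independent of the ray choices.

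It remains to show $A,B,C\approx\mathbb{R}^n$, which reduces to proving that an infinite CSI of copies of $\mathbb{R}^n$ is $\mathbb{R}^n$ (our proper rays become standard after an ambient isotopy by Lemma \ref{indep of rays}). For $A$, let $P_k$ be the image in $A$ of $\check A_1\cup\cdots\cup\check A_k$, so that $P_k=(A_1\natural\cdots\natural A_k)-\intr N_{k,R}$. Iterating the two-summand Note \ref{csi of nspaces} gives $A_1\natural\cdots\natural A_k\approx\mathbb{R}^n$, whence $P_k\approx\mathbb{R}^n_+$ by Propositions \ref{nhd of proper ray} and \ref{open minus half sp}. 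The $P_k$ increase, satisfy $P_k\subset\intr_A P_{k+1}$ (the free face $\partial N_{k,R}$ becomes interior once $\check A_{k+1}$ is glued on), and exhaust $A$. Exhausting each half space $P_k$ by PL $n$-balls and diagonalizing, I would extract PL $n$-balls $D_1\subset\intr D_2\subset\cdots$ with $\bigcup_j D_j=A$; Lemma \ref{Ascending union of interiors of PL balls is n-space} then gives $A\approx\mathbb{R}^n$, and the same argument handles $B$ and $C$.

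The main obstacle is this last step: converting the heuristic ``one half space together with infinitely many slabs is $\mathbb{R}^n$'' into a genuine monotone ball exhaustion. The delicate points are that each $P_k$ is a noncompact half space, so no single ball swallows a whole $P_k$ and a diagonal construction is unavoidable, and that the isotopies straightening the proper rays to standard ones must be organized to stay compatible with both the half-space model and the gluing maps $\phi_i$; both are routine but require care with the uniqueness theorem for regular neighborhoods of noncompact polyhedra.
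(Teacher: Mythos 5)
Your proposal is correct and follows the paper's proof of Proposition \ref{infinite csi splits} almost exactly: the paper likewise chooses the regular neighborhoods of $r_{i,L},r_{i,R}$ to serve simultaneously as $A_i$-, $B_i$-, and $C_i$-regular neighborhoods, forms the partial unions $\check{C}_j=(\natural_{i=1}^{j}C_i)-N_{j,R}$ (and similarly $\check{A}_j,\check{B}_j$), and observes that these are nested copies of $\mathbb{R}^n$ exhausting $C$ (resp.\ $A$, $B$), with $M=A\cup_C B$ read off levelwise. The one place you diverge is the final step: the paper simply quotes M.~Brown's theorem that a monotone union of open $n$-cells is an open $n$-cell \cite{Bro} and is done, whereas you keep the boundary $\partial N_{k,R}$, identify each partial union $P_k$ with a half space via Proposition \ref{open minus half sp}, and then manufacture a nested PL $n$-ball exhaustion by diagonalizing, so as to invoke Lemma \ref{Ascending union of interiors of PL balls is n-space}. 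Your route works (the inclusion $P_k\subset\intr_A P_{k+1}$ and the fact that any compactum in an open $n$-cell lies inside a PL ball make the diagonal argument go through), and it has the virtue of staying entirely within the PL machinery already developed in the paper; the cost is the extra bookkeeping you flag, all of which Brown's theorem renders unnecessary. Either way the conclusion for the corollary follows from the proposition by semistability and Lemma \ref{indep of rays}, as you note.
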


\begin{proof}[Proof of Proposition \ref{infinite csi splits}]
For $i=1,2,...,$ choose disjoint $A_i, B_i,$ and $C_i$ regular neighborhoods $N_{i,L},$ $N_{i,R}$  of $r_{i,L}$ and $r_{1,R},$ respectively. 
For $j=1,2,...,$ let 

\[ \check{C}_j= (C_1- N_{1R})\cup(C_2-[\intr N_{2L}\cup N_{2R}]) \cup...\cup(C_j-[\intr N_{j,L}\cup N_{j,R}])\]

\begin{figure}[!ht]
\centering
\vspace{-5.5in}
\includegraphics[height=8in]{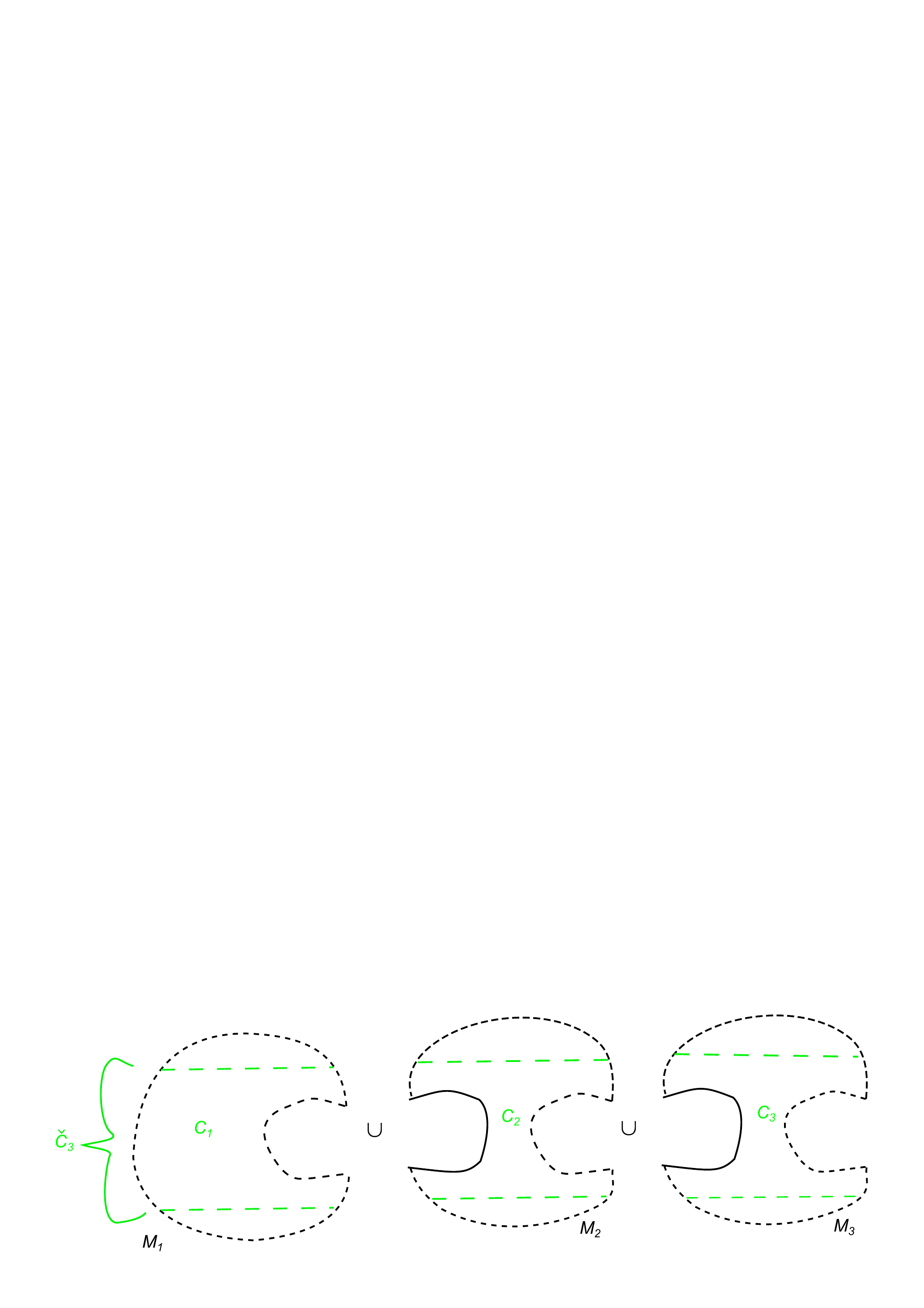}           
\caption{$\check{C}_3$}
\end{figure}

Then $\check{C}_j=(\natural _{i=1}^j (C_i,r_i)) -N_{j,R} \approx \mb{R}^n - \mb{R}^n_+ \approx \mb{R}^n$ and $\check{C}_j \subset \check{C}_{j+1}.$ 
Let $C=\cup \check{C}_j$, so that $C$ is an ascending union of $\mathbb{R}^n$'s and thus is itself an $\mathbb{R}^n$ \cite{Bro}.
Let
	
	\[ \check{A}_j= (A_1-\intr N_{1R})\cup(A_2-[\intr N_{2L}\cup N_{2R}]) \cup...\cup(A_j-[\intr N_{j,L}\cup N_{j,R}]),\] 
	
	\[ \check{B}_j= (B_1-\intr N_{1R})\cup(B_2-[\intr N_{2L}\cup N_{2R}]) \cup...\cup(B_j-[\intr N_{j,L}\cup N_{j,R}]),\]
$A= \cup \check{A}_j,$ and $B= \cup \check{B}_j$ so that $A,B \approx \mb{R}^n$ and $M=A\cup_C B$. 
\end{proof}


We have demonstrated that any CSI of interiors of Jester's manifolds splits and thus have demonstrated 

\textbf{Theorem \ref{uncountable open 4-splitters}.} \emph{There exists an uncountable collection of contractible open 4-manifolds which split as $\mathbb{R}^4 \cup_{\mathbb{R}^4} \mathbb{R}^4.$} 

Recall Note \ref{AnSi} in which we reported the result of Ancel and Siebenman which states that a Davis manifold generated by $C$ is homeomorphic to the interior of an alternating boundary connected sum $\intr(C\bdrysum -C \bdrysum C \bdrysum -C \bdrysum ...)$ where $-C$ is a copy of $C$ with the opposite orientation. We have now proved

\begin{cor}
There exists (non-$\mathbb{R}^4$) 4-dimensional Davis manifold splitters.
\end{cor}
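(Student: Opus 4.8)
The plan is to combine the Ancel--Siebenmann description of a Davis manifold with the splitting results just established for infinite connected sums at infinity. Recall from Note \ref{AnSi} that a Davis manifold generated by a compact contractible $C$ is homeomorphic to the interior of the alternating boundary connected sum $\intr(C \bdrysum -C \bdrysum C \bdrysum -C \bdrysum \cdots)$. By the Remark relating CSI's of interiors to interiors of boundary connected sums, this same space is homeomorphic to the infinite connected sum at infinity $\intr C \natural \intr(-C) \natural \intr C \natural \cdots$ of the open manifolds $\intr(\pm C)$.

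The key observation is that we are free to choose $C$ to be one of our Jester's manifolds. First I would invoke Theorem \ref{infinite 4-splitters} (or its Corollary) to obtain a compact contractible $4$-manifold $C$ that is a closed splitter, $C = A \cup_{C'} B$ with $A,B,C' \approx \mathbb{B}^4$, and which is not homeomorphic to $\mathbb{B}^4$ (so that the resulting Davis manifold is genuinely non-$\mathbb{R}^4$). By Proposition \ref{ints of splitters split}, $\intr C$ is then an open splitter, $\intr C \approx \mathbb{R}^4 \cup_{\mathbb{R}^4} \mathbb{R}^4$. The orientation-reversed copy $-C$ is likewise a closed splitter, so $\intr(-C)$ is an open splitter as well. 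Each of these open summands is $1$-ended and stable, as established for interiors of compact manifolds with connected boundary in Section 4.1, hence semistable, so the connected sum at infinity is well defined and independent of the chosen rays by the Corollary to Lemma \ref{indep of rays}.

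Next I would apply Proposition \ref{infinite csi splits}: since each summand $\intr(\pm C)$ splits as a union of three copies of $\mathbb{R}^4$, the infinite CSI $\natural_{i=1}^\infty \intr(\pm C)$ splits as $\mathbb{R}^4 \cup_{\mathbb{R}^4} \mathbb{R}^4$. To do so legitimately I must produce, in each summand, disjoint rays lying in the central piece $C'_i$ that are proper in both $A_i$ and $B_i$; this is exactly what the Corollary to the Lemma on rays (the one following Proposition \ref{CSI of splitters}'s preliminary Lemma) guarantees for the interior of a compact contractible splitter. Identifying the resulting CSI with $\intr(C \bdrysum -C \bdrysum \cdots)$ via the Ancel--Siebenmann and Remark identifications then exhibits the Davis manifold itself as an open splitter.

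The main obstacle is bookkeeping rather than a genuine difficulty: I must check that the splitting produced by Proposition \ref{infinite csi splits} is compatible with the \emph{alternating} orientations appearing in the Ancel--Siebenmann formula, and that the homeomorphism between the infinite CSI and the interior of the alternating boundary connected sum carries the CSI splitting to a splitting of the Davis manifold. Since Proposition \ref{infinite csi splits} is stated for an arbitrary sequence of contractible open splitters and the orientation of each summand does not affect whether $\intr(\pm C)$ splits into copies of $\mathbb{R}^4$, the alternation causes no trouble; one simply feeds the sequence $\intr C, \intr(-C), \intr C, \dots$ into the proposition. Thus the existence of non-$\mathbb{R}^4$ $4$-dimensional Davis manifold splitters follows immediately.
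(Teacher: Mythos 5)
Your proposal is correct and follows essentially the same route the paper intends: identify the Davis manifold generated by a Jester's manifold $C$ with the interior of the alternating boundary connected sum (Note 1.9), hence with the CSI of the interiors of the summands, and then apply the sums-of-splitters-split machinery of Section 5.4, with non-triviality of $\pi_1(\partial C)$ from Theorem 4.8 guaranteeing the result is not $\mathbb{R}^4$. The paper leaves these steps implicit, so your more careful treatment of the rays and of the alternating orientations is a faithful elaboration rather than a different argument.
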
 

\addcontentsline{toc}{chapter}{Bibliography}

\end{document}